\let\hide\iffalse
\let\unhide\fi
\newtheorem{theorem}{Theorem}[section]
\newtheorem{definition}[theorem]{Definition}
\newtheorem{lemma}[theorem]{Lemma}
\newtheorem{proposition}[theorem]{Proposition}
\newtheorem{remark}[theorem]{Remark}
\let\p=\partial
\let\O=\Omega
\newcommand{\R}{\mathbb{R}}
\newcommand{\T}{\mathbb{T}}
\renewcommand{\S}{\mathbb{S}}
\newcommand{\be}{\begin{equation}}
\newcommand{\bm}{\begin{multline}}
\newcommand{\ee}{\end{equation}}
\newcommand{\xb}{x_{\mathbf{b}}}
\newcommand{\tb}{t_{\mathbf{b}}}
\newcommand{\vb}{v_{\mathbf{b}}}
\newcommand{\Bes}{\begin{eqnarray*}}
	\newcommand{\Ees}{\end{eqnarray*}}
\newcommand{\Be}{\begin{equation}}
\newcommand{\Ee}{\end{equation}}
\def\p{\partial}
\def\O{\Omega}
\def\R{\mathbb{R}}
\def\B{\begin{equation}}
\def\E{\end{equation}}
\def\BN{\begin{eqnarray*}}
\def\EN{\end{eqnarray*}}
\numberwithin{equation}{section}
\begin{document}

\title[On $C^{2}$ solution of the free-transport equation in a disk]{On $C^{2}$ solution of the free-transport equation in a disk}

\author[G. Ko]{Gyounghun Ko}
\address[GK]{Department of Mathematics, Pohang University of Science and Technology, South Korea}
\email{gyounghun347@postech.ac.kr}
\author[D. Lee]{Donghyun Lee}
\address[DL]{Department of Mathematics, Pohang University of Science and Technology, South Korea}
\email{donglee@postech.ac.kr}

\begin{abstract}
	The free transport operator of probability density function $f(t,x,v)$ is one the most fundamental operator which is widely used in many areas of PDE theory including kinetic theory, in particular. When it comes to general boundary problems in kinetic theory, however, it is well-known that high order regularity is very hard to obtain in general. In this paper, we study the free transport equation in a disk with the specular reflection boundary condition. We obtain initial-boundary compatibility conditions for $C^{1}_{t,x,v}$ and $C^{2}_{t,x,v}$ regularity of the solution. We also provide regularity estimates. \\ 
\end{abstract}

\date{\today}
\keywords{}
\maketitle

\thispagestyle{empty}
\setcounter{tocdepth}{2}
\tableofcontents

\section{Introduction}
 The free transport equation (or free transport operator) is one of the most important ones in a wide area of mathematics. When we consider a probability density function $f : \R_{+}\times \O \times\R^{d}\rightarrow \R_{+}$,  the free transport equation is written by 
 \[
 	\p_{t}f + v\cdot\nabla_{x}f = 0.
 \]
 Above equation is very simple and has explicit solution $f(t,x,v) = f_0(x-vt, v)$ when initial data $f_0$ is smooth and spatial domain is $\R^{d}$ or $\T^{d}$. However, if we consider general boundary problems, it becomes very complicated. One of the most important and ideal boundary conditions in kinetic theory is the specular reflection boundary condition,  
 \Be \label{BC}
 f(t,x,v) = f(t,x,R_{x}v), \quad R_{x} = I - 2n(x)\otimes n(x),\quad x\in \p\O,
 \Ee
 where $n(x)$ is outward unit normal vector on the boundary $\p\O$ when $\p\O$ is smooth. \eqref{BC} is motivated by billiard model and we usually analyze the problem through characteristics:
 \begin{equation} \label{XV heu}
 \begin{split}
 X(s;t,x,v) &:= \text{position of a particle at time $s$ which was at phase space $(t,x,v)$}, \\
 V(s;t,x,v) &:= \text{velocity of a particle at time $s$ which was at phase space $(t,x,v)$}, \\
 \end{split}
 \end{equation}
 where $X(s;t,x,v)$ and $V(s;t,x,v)$ satisfy the following Hamiltonian structure,
 \Be \label{Ham}
 \frac{d}{ds}X(s;t,x,v) = V(s;t,x,v),\quad \frac{d}{ds}V(s;t,x,v) = 0, \\
 \Ee
 under billiard-like reflection condition on the boundary. Explicit formulation of $(X(s;t,x,v), V(s;t,x,v))$ will be given right after Definition \ref{notation}.
 Since $X(t;t,x,v) = x$, $V(t;t,x,v)=v$ by definition, we can easily guess the following solution,
 \[
 	f(t,x,v) = f_{0}(X(0;t,x,v), V(0;t,x,v)),
 \]
 which is same as $f_0(x-vt, v)$ when $\O=\R^{d}$.  However, unlike to whole space case, regularity of the solution $f(t,x,v)$ depends on the regularity of trajectory \eqref{XV heu}. More precisely, when $X(0;t,x,v)\in \p\O$, differentiability of \eqref{XV heu} break down in general. This means that for any time $t>0$, there exist corresponding $(x_{*},v_{*}) \in \O\times \R^{d}$ such that $f(t, \cdot, \cdot)$ is not differentiable at the point. Or equivalently, for any $(x,v)\in \O\times \R^{d}$, there exists some corresponding time $t$ such that $f(\cdot, x, v)$ is not differentiable at that time.  \\
 
 Now let us consider general kinetic model which has hyperbolic structure, such as hard sphere or general cut-off Boltzmann equations. (Of course, there are lots of other kinetic literature which consider various boundary condition problems.) Although the Boltzmann equation (or other general kinetic equations) is much more complicated than the free transport equation, the recent development of the Boltzmann (or kinetic) boundary problems shows the regularity issue of the problems very well. \\
 \indent In $\T^{3}$ or $\R^{3}$, many results have been known using high order regularity function spaces. We refer to some classical works such as  \cite{DV, StrainJAMS,GuoVPB,GuoVMB}. (We note that the apriori assumption of \cite{DV} also covers some boundary condition problems, including specular reflection \eqref{BC}.) More recently, in the case of non-cutoff Boltzmann equation (which has regularizing effect), it is known that the solution is $C^{
 \infty}$ by \cite{CILS}.\\
 \indent However, when it comes to general boundary condition problems, a way of getting sufficient high order regularity estimate is not known and low regularity approach has been widely used. By defining mild solution, low regularity $L^{\infty}$ solutions have been studied after \cite{Guo10}. In \cite{LY2004}, they studied the pointwise estimate for the Green function of the linearized Boltzmann equation in $\R$. They also obtained weighted $L^\infty$ decay of the Boltzmann equation using the Green function approach. In \cite{UY2006}, they constructed $L^2\cap L^\infty_{\beta}$ solution to the Boltzmann equation in the whole space using Duhamel's principle and the spectral theory. Recently, authors in \cite{LLWW2022} provide new analysis to derive $L^\infty$ estimate rather than using Green function and Duhamel's principle. In addition, there are lots of references \cite{CKLVPB,DHWY2017,DHWZ2019,DKL2020,DW2019,KimLee,KimLeeNonconvex,KLP2022,LY2017}, where the low regularity solutions were studied for the cut-off type Boltzmann equation. We also refer to recent results \cite{AMSY2020,DLSS2021,GHJO2020,KGH2020}, etc., which deal with low regularity solution of the kinetic equation whose collision operator has regularizing effect such as non-cutoff Boltzmann or Landau equation. In fact, however, there are only a few results known about regularity of the Boltzmann equation with boundary conditions. We refer to \cite{GKTT2016,GKTT2017,Kim2011,KimLee2021}. \\
\indent As briefly explained above, regularity issue of boundary condition problems is very fundamental problem. In fact, even without complicated collision type operators, regularity of the free transport equation with boundary conditions has not been studied thoroughly to the best of author's knowledge. \\

\subsection{Statements of main theorems}
In this paper, we study classical $C^{2}_{t,x,v}$ regularity of  the free transport equation in a 2D disk,
\Be \label{eq}
	\p_{t}f + v\cdot\nabla_{x}f = 0,\quad x\in \O:=\{ x\in\R^{2}  \ : \  |x| < 1\},
\Ee
with the specular reflection boundary condition \eqref{BC}. Note that $n(x)=x \in \p\O$, since we consider unit disk $\p\O = \{x\in\R^{2} : |x|=1 \}$. Our aim is to find initial-boundary compatibility conditions of initial data $f_0$ for $C^{1}$ and $C^{2}$ regularity of the solution $f(t,x,v)$. We expect the solution will be mild solution $f(t,x,v)=f_0(X(0;t,x,v), V(0;t,x,v))$ surely (See Definition \ref{notation} for $X$ and $V$.) By performing derivative of $f$ in terms of $t,x,v$ directly (up to second order), we will find some conditions of $f_0$ which contains first and second derivative in both $x$ and $v$. (See \eqref{C1 cond},  \eqref{C2 cond34}, \eqref{C2 cond 1}, and \eqref{C2 cond 2}.)  \\
\hide
\Be  
	f(t,x,v) = f(t,x,R_{x}v), \quad R_{x} = I - 2n(x)\otimes n(x),\quad x\in \p\O
\Ee
where $n(x)$ is outward unit normal vector 
\unhide

In general, for smooth bounded domain $\O$, we define
	\begin{equation*}
		\O = \{ x\in \R^{2} : \xi(x) < 0 \},\quad \p\O = \{ x\in \R^{2} : \xi(x) = 0 \}.
	\end{equation*}
	In the case of unit disk, we may choose 
	\begin{equation*}
	\xi(x) = \frac{1}{2}|x|^{2} - \frac{1}{2},
	\end{equation*}
	and hence
	\begin{equation*}
	\nabla\xi(x) = x,\quad
	\nabla^{2}\xi(x) = I.   \\
	\end{equation*}

Now let us define some notation to precisely describe characteristics $X(s;t,x,v)$ and $V(s;t,x,v)$. 
\begin{definition} \label{notation}
	Considering \eqref{Ham}, we define basic notations
	\begin{equation} \notag
	\begin{split}
	\tb(x,v)  &:=  \sup \big\{  s \geq 0 :  x - sv \in \Omega \big\} , \\
	\xb(x,v)  &:=  x - \tb(x,v)v = X ( t- \tb(t,x,v);t,x,v)  \  \text{1st bouncing point backward in time}, \\
	\vb(x,v)  &:=  v = \lim_{s\rightarrow\tb(t,x,v)}V ( t- s;t,x,v), \\
		t^{k}(t,x,v) & := t^{k-1} - \tb (x^{k-1}, v^{k-1}), \ \text{k-th bouncing time backward in time},  \ t^{1}(t,x,v) := t-\tb(x,v),\\
		x^{k}(x,v) &:= x^{k-1} - \tb(x^{k-1}, v^{k-1}) v^{k-1} = X(t^{k}; t^{k-1}, x^{k-1}, v^{k-1})  \\
		&= \text{k-th bouncing point backward in time},\quad \xb := x^{1}, \\
		v^{k}   &=  
		R_{x^{k}} v^{k-1} = R_{x^{k}} \lim_{s\rightarrow t^{k}-} V(s; t^{k-1}, x^{k-1}, v^{k-1}), \\
	\end{split}
	\end{equation}
	where $R_{x^{k}}$ is defined in \eqref{BC}. We set $(t^0,x^0,v^0)=(t,x,v)$ and define the specular characteristics as
	\begin{equation}\label{XV}
	\begin{split}
	X(s;t,x,v) &= \sum_{k} \mathbf{1}_{s \in ( t^{k+1}, t^{k}]}
	X(s;t^{k}, x^{k}, v^{k}),  \\
	V(s;t,x,v) &= \sum_{k} \mathbf{1}_{s \in ( t^{k+1}, t^{k}]}
	V(s;t^{k}, x^{k}, v^{k}).
	\end{split}
	\end{equation}
\end{definition}


We also use $\gamma_{\pm}$ and $\gamma_{0}$ notation to denote 
\begin{equation} \notag 
\begin{split}
\gamma_{+} &:= \{ (x,v)\in \p\O\times \R^{2} : v\cdot n(x) > 0  \},  \\
\gamma_{0} &:= \{ (x,v)\in \p\O\times \R^{2} : v\cdot n(x) = 0  \},  \\
\gamma_{-} &:= \{ (x,v)\in \p\O\times \R^{2} : v\cdot n(x) < 0  \}.  \\
\end{split}
\end{equation}

Note that unit disk $\O$ is uniformly convex and its linear trajectory \eqref{XV} is well-defined if $x\in\O$  (see velocity lemma \cite{Guo10} for example). However, we want to investigate regularity up to boundary $\overline{\O}$, so we carefully exclude $\gamma_0$ from $\overline{\O}\times \R^{2}$ since we do not define characteristics starting from (backward in time) $\gamma_0$. Hence, using \eqref{XV} and \eqref{Ham}, it is natural to write \eqref{eq} as the following mild formulation,
\Be \label{solution}
	f(t,x,v) = f_{0}(X(0;t,x,v), V(0;t,x,v)),\quad (x,v) \in \mathcal{I} :=  \{\overline{\O}\times \R^{2} \}\backslash \gamma_{0}. \\
\Ee
\\
\indent Meanwhile, to study regularity of \eqref{solution}, the following quantity is very important,
	\begin{equation} \label{def A}
	\begin{split}
	A_{v, y} 
	&:= \left[\left((v\cdot n(y))I+(n(y) \otimes v) \right)\left(I-\frac{v\otimes n(y)}{v\cdot n(y)}\right)\right] ,\quad (y,v)\in \{\p\O\times \R^2\} \backslash \gamma_0. \\
	\end{split}
	\end{equation}
	Notice that $A_{v,y}$ is a matrix-valued function $A_{v,y}: \{\R^d\times\partial \O\}\backslash \gamma_0 \rightarrow \R^d\times \R^d $. ($d=2$ in this paper in particular) In fact, $A_{v, y}$ can be written as 
	\begin{equation*} 
	\begin{split}
	A_{v, y} 
	&= \nabla_{y}\big( (v\cdot n(y))n(y)\big) =\left( (v \cdot n(y) ) \nabla_y n(y) + (n(y)\otimes v ) \nabla_y  n(y)\right),\\
	\end{split}
	\end{equation*}
	which is identical to \eqref{def A} by \eqref{normal}.  \\
	
	Throughout this paper, we denote the $v$-derivative of the $i$-th column of the matrix $A_{v,y}$ by $\nabla_v A^i_{v,y}$, where $A^i$ be the $i$-th column of a matrix $A$ for $1\leq i \leq d$. For fixed $i$, it means that
	\begin{equation*}
		\nabla_{v} A^i_{v,x^1} =\left. \left(\nabla_v  A^i_{v,y}\right)\right|_{y=x^1}. 
	\end{equation*}
	 It is important to note that we carefully distinguish between $\nabla_v A^i_{v,x^1}$ and $\nabla_v (A^i_{v,x^1(x,v)})$.	 \\
	
\hide

\begin{proposition}
	[Faa di Bruno formula] For higher order $n$-derivatives, the following formula would be useful.
	\[
		(f\circ H)^{(n)} = \sum_{\sum_{j=1}^{n}j m_{j}=n} \frac{n!}{m_{1}!\cdots m_{n}!} \big( f^{(m_{1}+\cdots+m_{n})}\circ H \big) \prod_{j=1}^{n} \Big( \frac{H^{(j)}}{j!} \Big)^{m_{j}}
	\]
\end{proposition}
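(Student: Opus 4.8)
The plan is to prove the identity in two stages: first I would derive the ``unlabelled'' higher chain rule as a sum over set partitions, and then regroup the resulting terms by block sizes and count them, which is exactly what produces the stated multinomial coefficients. The only analytic inputs are the ordinary product rule and the first-order chain rule $(f\circ H)'=(f'\circ H)\,H'$; everything else is bookkeeping.

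\textbf{Stage 1: the partition form.} I would first prove by induction on $n$ that
\[
(f\circ H)^{(n)}=\sum_{\pi}\big(f^{(|\pi|)}\circ H\big)\prod_{B\in\pi}H^{(|B|)},
\]
where $\pi$ runs over all partitions of $\{1,\dots,n\}$ into nonempty blocks, $|\pi|$ is the number of blocks, and $|B|$ the size of a block $B$. The base case $n=1$ is the chain rule. For the inductive step, differentiate the formula for $n$ term by term: by the product rule the derivative of the term attached to a partition $\pi$ of $\{1,\dots,n\}$ splits into the piece where the derivative hits $f^{(|\pi|)}\circ H$ --- producing $\big(f^{(|\pi|+1)}\circ H\big)\,H'\prod_{B\in\pi}H^{(|B|)}$, which is exactly the term indexed by the partition of $\{1,\dots,n+1\}$ obtained from $\pi$ by adjoining $\{n+1\}$ as a fresh singleton --- and, for each $B'\in\pi$, the piece where the derivative hits the factor $H^{(|B'|)}$ and turns it into $H^{(|B'|+1)}$, which is the term indexed by the partition obtained from $\pi$ by inserting $n+1$ into $B'$. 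Looking at the block containing $n+1$ shows that every partition of $\{1,\dots,n+1\}$ is produced exactly once, so the identity holds for $n+1$.

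\textbf{Stage 2: regrouping and counting.} Two partitions contribute identical analytic expressions precisely when they have the same \emph{type} $(m_1,\dots,m_n)$, meaning exactly $m_j$ blocks of size $j$ for every $j$; such a type is admissible iff $\sum_{j=1}^{n} j\,m_j=n$, and then $|\pi|=\sum_j m_j$ and $\prod_{B\in\pi}H^{(|B|)}=\prod_{j=1}^{n}\big(H^{(j)}\big)^{m_j}$. It remains to count the partitions of a given type: distributing the $n$ labels into an ordered list of blocks of the prescribed sizes can be done in $\dfrac{n!}{\prod_{j=1}^{n}(j!)^{m_j}}$ ways, a multinomial coefficient, after which one divides by $\prod_{j=1}^{n}m_j!$ to kill the overcount from permuting blocks of equal size, giving $\dfrac{n!}{\prod_{j=1}^{n}(j!)^{m_j}\,m_j!}$ partitions of type $(m_1,\dots,m_n)$. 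Substituting this count into the Stage 1 formula and absorbing the factors $\prod_j (j!)^{-m_j}$ into $\prod_{j=1}^{n}\big(H^{(j)}/j!\big)^{m_j}$ gives the asserted identity.

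\textbf{Main obstacle.} There is essentially no analytic difficulty; the work is entirely combinatorial bookkeeping, namely verifying in Stage 1 that the induction enumerates each partition of $\{1,\dots,n+1\}$ with multiplicity one and carrying out the type-count in Stage 2 without error. A self-contained alternative that avoids partitions is to induct directly on the closed form: differentiating the $n$-term sum sends the multi-index $(m_1,\dots,m_n)$ to $(m_1+1,m_2,\dots,m_n,0)$ when the derivative lands on $f^{(\sum_j m_j)}\circ H$, and, for each $j$ with $m_j\ge 1$, to the multi-index with $m_j$ decreased by one and $m_{j+1}$ increased by one when it lands on $(H^{(j)}/j!)^{m_j}$; collecting the contributions to a fixed target multi-index then reduces to an elementary identity among the coefficients $n!/(m_1!\cdots m_n!)$. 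I would prefer the partition route, which isolates two standard facts and keeps the combinatorial structure transparent; and since the formula is classical, a citation-level justification would also suffice, the argument above being the textbook one.
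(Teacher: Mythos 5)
Your argument is correct and complete. The paper itself states this proposition without proof (indeed, it sits in a commented-out block of the source and is treated as a classical fact), so there is nothing to compare against; your two-stage argument --- the set-partition form of the higher chain rule established by induction, followed by grouping partitions by type and counting $\frac{n!}{\prod_{j}(j!)^{m_j}\,m_j!}$ partitions of each type so that the $(j!)^{-m_j}$ factors are absorbed into $\big(H^{(j)}/j!\big)^{m_j}$ --- is the standard textbook proof and needs no repair.
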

\unhide
\begin{remark}
	Assume $f_{0}$ satisfies \eqref{BC}. If $f_{0} \in C^{1} _{x,v}( \overline{\O}\times \R^{2})$, then
	\Be \label{C1_v trivial}
	\nabla_{v}f_{0}(x,v) = \nabla_{v}f_{0}(x,R_{x}v)R_{x},\quad \forall x\in\p\O,\quad \forall v\in\R^{2},
	\Ee
	also hold. Similalry, if $f_{0} \in C^{2}_{x,v}(  \overline{\O}\times \R^{2})$, then 
	\Be \label{C2_v trivial}
	\nabla_{vv}f_{0}(x,v) = R_{x}\nabla_{vv}f_{0}(x,R_{x}v)R_{x},\quad \forall x\in\p\O,\quad \forall v\in\R^{2},
	\Ee
	also holds as well as \eqref{C1_v trivial}. \\
\end{remark}
\begin{theorem} [$C^{1}$ regularity] \label{thm 1}
	Let $f_{0}$ be $C^{1}_{x,v}( \overline{\O}\times\R^{2})$ which satisfies \eqref{BC}. If initial data $f_{0}$ satisfies 
	\Be \label{C1 cond}
		\Big[ \nabla_x f_0( x,v) + \nabla_v f_0(x, v) \frac{ (Qv)\otimes (Qv) }{v\cdot n}  \Big] R_{x} 
		=
		\nabla_x f_0(x, R_{x}v) 
		+ \nabla_v f_0(x, R_{x}v) \frac{ (QR_{x}v)\otimes (QR_{x}v) }{R_{x}v\cdot n},\quad (x,v)\in \gamma_{-},
	\Ee
then $f(t,x,v)$ defined in \eqref{solution} is a unique $C^{1}_{t,x,v}(\R_{+}\times  \mathcal{I})$ solution of \eqref{eq}. We also note that if \eqref{C1 cond} holds, then it also holds for $(x,v)\in \gamma_{+}$. Here, $Q$ is counterclockwise rotation by $\frac{\pi}{2}$ in $\R^{2}$. Moreover, if the initial condition \eqref{C1 cond} does not hold, then $f(t,x,v)$ is not of class $C^1_{t,x,v}$ at time $t$ such that $t^k(t,x,v)=0$ for some $k$. 

\begin{remark}[Example of initial data satisfying \eqref{C1 cond}] \label{example}
In \eqref{C1 cond}, we consider the following special case
\begin{equation} \label{specialcase}
 \nabla_x f_0(x,v)R_x = \nabla_x f_0(x,R_xv) \quad \textrm{and} \quad \nabla_vf_0(x,v)\frac{(Qv)\otimes (Qv)}{v\cdot n}R_x = \nabla_v f_0(x,R_xv) \frac{(QR_xv)\otimes (QR_xv)}{R_xv\cdot n},
 \end{equation}
for $(x,v)\in \gamma_-$. Since $Q^TR_xQ=-R_x$ and $v\cdot n=-R_xv\cdot n$, we derive 
 \begin{equation*}
 \nabla_v f_0(x,v) \cdot (Qv) = \nabla_v f_0(x,R_xv)\cdot (QR_xv),
 \end{equation*}
 from the second condition above. Here, $A^T$ means transpose of a matrix $A$. From \eqref{C1_v trivial}, we get
 \begin{equation*}
 \nabla_v f_0(x,R_xv)\cdot (R_xQv) = \nabla_v f_0(x,R_xv) \cdot (QR_xv),
 \end{equation*}
 which implies that $\nabla_vf_0(x,v)\cdot (Qv) = \nabla_v f_0(x,R_xv)\cdot (R_xQv) =0$ because $R_xQ= -QR_x$. It means that $\nabla_v f_0(x,v)$ is parallel to $v$. Then, $ f_0(x,v)$ is a radial function with respect to $v$. Since the second condition in \eqref{specialcase} also holds for $(x,v)\in \gamma_+$, we deduce that a direction of $\nabla_v f_0(x,v)$ is $v^T$ for $v\in \gamma_- \cup\gamma_+$. In other words, 
 \begin{equation*}
 f_0(x,v)=G(x,\vert v \vert), \quad (x,v)\in \gamma_-\cup \gamma_+,
 \end{equation*}
 where $G$ is a real-valued $C^1_{x,v}$ function. Moreover, $f_0$ can be continuously extended to $\gamma_0$ to satisfy $f_0 \in C^1_{x,v}(\partial \O\times \R^2)$. From the first condition $\nabla_x f_0 (x,v)R_x = \nabla_x f_0(x,R_xv)$ in \eqref{specialcase}, we have 
 \begin{equation*}
\nabla_x G(x,\vert v \vert) R_x = \nabla_x G(x,\vert v \vert).
 \end{equation*}
Thus, $\nabla_x G(x,\vert v \vert)$ is orthogonal to $n(x)=x$, which means that the directional derivative $\nabla_x f_0(x,v) \cdot n(x)$ be 0 for $x\in \partial \O$. In conclusion, $f_0(x,v)=G(x,\vert v \vert)$ such that $\nabla_x f_0(x,v) \cdot n(x)=0$ for all $(x,v)\in \partial \O\times \R^2$  whenever $f_0$ satisfies \eqref{specialcase} for $(x,v)\in \gamma_-$.
\end{remark}
	\hide
	\[
	 	Q =
	 	\begin{pmatrix}
	 	 \vert & \vert & \vert \\
	 	 \hat{x}\times \widehat{v\times x} & \hat{x} & \widehat{v\times x} \\
	 	 \vert & \vert & \vert  
	 	\end{pmatrix}
	 	\begin{pmatrix}
	 		0 & -1 & 0 \\
	 		1 & 0 & 0 \\
	 		0 &  0 & 1 
	 	\end{pmatrix}
	 	\begin{pmatrix}
	 	\vert & \vert & \vert \\
	 	\hat{x}\times \widehat{v\times x} & \hat{x} & \widehat{v\times x} \\
	 	\vert & \vert & \vert  
	 	\end{pmatrix}^{-1}.
	\]
	\unhide
\end{theorem}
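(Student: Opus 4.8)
The plan is to compute the derivatives of the mild solution $f(t,x,v) = f_0(X(0;t,x,v),V(0;t,x,v))$ directly and track exactly where regularity can fail. First I would establish the explicit formula for $X(0;t,x,v)$ and $V(0;t,x,v)$ on a single bounce interval: when $t^1(t,x,v) \le 0$, there is no bounce before time $0$ and $(X,V)(0;t,x,v) = (x-tv, v)$, which is manifestly $C^\infty$; when $t^1(t,x,v) > 0$, there is at least one bounce, and the first (backward) bounce contributes the reflection operator $R_{x^1}$ acting on $v$, together with the bounce point $x^1(x,v) = x - \tb(x,v)v$ and bounce time $t^1 = t - \tb(x,v)$. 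The key regularity obstruction is the non-smoothness of $\tb(x,v)$ — and hence of $x^1$, $v^1$, $t^1$ — along $\gamma_0$, where $v\cdot n(x^1) \to 0$ and a grazing trajectory occurs; this is the source of the factor $v\cdot n$ in the denominators in \eqref{C1 cond}. I would use the implicit function theorem applied to $\xi(x-sv) = 0$ to get $\nabla_x \tb$, $\nabla_v \tb$ in terms of $v\cdot n(x^1)$, which explains the appearance of $\dfrac{(Qv)\otimes(Qv)}{v\cdot n}$: on the unit disk, $Q v$ is tangent to the boundary at $x^1$ and $(Qv)\otimes(Qv)$ is (up to scaling) the projection onto the tangent line, so the matrix $A_{v,x^1}$ of \eqref{def A} encodes precisely $\nabla_x\big((v\cdot n(x^1))n(x^1)\big)$.

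The core computation is then: differentiate $f(t,x,v) = f_0(x^1 - t^1 v^1, v^1)$ with respect to $t$, $x$, and $v$ on the region $\{t^1 > 0\}$, and separately on $\{t^1 < 0\}$ where $f(t,x,v) = f_0(x-tv,v)$, and demand that the two one-sided limits agree as $t^1 \to 0^+$, i.e. as the first backward bounce time hits the initial time. On $\{t^1 < 0\}$ the derivatives at $t^1 = 0$ are $\partial_t f = -v\cdot\nabla_x f_0(x^1,v)$, $\nabla_x f = \nabla_x f_0(x^1,v)$, $\nabla_v f = -t^1\nabla_x f_0 + \nabla_v f_0 \to \nabla_v f_0(x^1,v)$ (using $t^1\to 0$). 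On $\{t^1>0\}$ the same derivatives, evaluated in the limit $t^1\to 0^+$ (so that $x^1-t^1 v^1 \to x^1 = x$ on the boundary and $v^1 \to R_x v$), produce $\nabla_x f_0(x, R_x v)$ composed with $\nabla_x$ of the bounce data plus $\nabla_v f_0(x,R_x v)$ composed with $\nabla_v v^1 = \nabla_v(R_{x^1} v)$; carefully expanding $\nabla_x v^1$ and $\nabla_v v^1$ via the chain rule and the formula for $R_{x^1} = I - 2n(x^1)\otimes n(x^1)$ yields exactly the operator $R_x$ acting on the right in the left-hand side of \eqref{C1 cond} and the correction term $\nabla_v f_0 \cdot \frac{(Q R_x v)\otimes(QR_x v)}{R_x v\cdot n}$ on the right. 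Matching the two sides is the content of \eqref{C1 cond}. The hypothesis $f_0 \in C^1_{x,v}$ together with \eqref{BC} and its differentiated consequence \eqref{C1_v trivial} is used to rewrite terms and to check the symmetry claim ($\gamma_-$ version implies $\gamma_+$ version) by applying the identity at $(x, R_x v)$ and using $R_x^2 = I$.

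With the matching established, I would assemble the proof of $C^1_{t,x,v}$ regularity: away from the bounce-time locus $\{(t,x,v): t^k(t,x,v) = 0 \text{ for some }k\}$ the solution is locally a finite composition of smooth maps, hence $C^1$ (indeed $C^\infty$) there; on that locus, the one-sided limits from the two adjacent bounce-count strata coincide by \eqref{C1 cond} applied at the relevant bounce point, and a standard gluing lemma for one-sided-differentiable functions upgrades continuity-of-derivative-limits to genuine $C^1$. Uniqueness follows because any $C^1$ solution must satisfy the transport equation along characteristics and hence equal the mild solution. For the converse ("only if") statement, I would exhibit that when \eqref{C1 cond} fails at some $(x_*, v_*)\in\gamma_-$, the jump between the two one-sided derivative limits at a time $t$ with $t^k(t,x_*,v_*) = 0$ is exactly the nonzero difference of the two sides of \eqref{C1 cond}, so $f$ cannot be $C^1$ there. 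The main obstacle I anticipate is purely computational bookkeeping: correctly differentiating the composition $x^1 - t^1 v^1$ — in particular disentangling $\nabla_v(A^i_{v,x^1(x,v)})$ from $\nabla_v A^i_{v,x^1}$ as the paper warns — and verifying that all the $v\cdot n$ factors combine into the stated tensor $\frac{(Qv)\otimes(Qv)}{v\cdot n}$ rather than into something merely proportional to it; getting the rotation $Q$ and the sign conventions ($R_x Q = -Q R_x$, $Q^T R_x Q = -R_x$) exactly right is where errors would creep in.
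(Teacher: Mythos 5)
Your proposal follows essentially the same route as the paper: reduce to the first bounce, compute one-sided derivatives of the mild solution on the two strata separated by the locus $\{t^1=0\}$, and match them using $\nabla_{x,v}\tb$ from the implicit function theorem together with the identity $R_xA_{v,x}=\frac{(Qv)\otimes(Qv)}{v\cdot n}$; the $\gamma_+$ claim, uniqueness, and the converse ("not $C^1$ when \eqref{C1 cond} fails") are handled the same way. One concrete slip to fix: on the no-bounce side you write $\nabla_v f=-t^1\nabla_xf_0+\nabla_vf_0\to\nabla_vf_0(x^1,v)$ "using $t^1\to 0$", but since $f=f_0(x-tv,v)$ there, the coefficient is $-t$ (the full elapsed time, equal to $\tb$ at the matching locus), not $-t^1$, and it does not vanish in the limit. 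This is not cosmetic: the condition \eqref{C1 cond} arises precisely because the $O(1)$ parts of the two one-sided limits cancel via \eqref{C1_v trivial}, while the $O(t)$ parts --- $-t\,\nabla_xf_0(x,v)$ versus $-t\,\nabla_xf_0(x,R_xv)R_x+2t\,\nabla_vf_0(x,R_xv)A_{v,x}$ --- must be equated and the common factor $-t$ divided out. If the no-bounce limit of $\nabla_vf$ really were just $\nabla_vf_0(x^1,v)$, the matching would instead force $\nabla_xf_0(x,R_xv)R_x=2\nabla_vf_0(x,R_xv)A_{v,x}$, which combined with the $\nabla_x$-matching condition \eqref{c_x} would wrongly yield $\nabla_xf_0=0$ on $\partial\O$. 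With the coefficient corrected to $-t$, your computation lands exactly on the paper's condition \eqref{c_v} and hence on \eqref{C1 cond}.
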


\begin{theorem} [$C^{2}$ regularity] \label{thm 2}
	Let $f_{0}$ be $C^{2}_{x,v}( \overline{\O}\times\R^{2})$ which satisfies \eqref{BC} and \eqref{C1 cond}. (The condition \eqref{C1 cond} was necessary to satisfy $f(t,x,v)\in C^1_{t,x,v}$ in Theorem \ref{thm 1}). If we assume 
	\Be \label{C2 cond34}
		\nabla_{x}f_0(x, R_{x}v) \parallel (R_{x}v)^{T},\quad \nabla_{v}f_0(x, R_{x}v) \parallel (R_{x}v)^{T},
	\Ee
	and
	\begin{eqnarray}  
	&&R_{x} \Big[ \nabla_{xv}f_{0}(x,v) + \nabla_{vv}f_{0}(x,v) \frac{ (Qv)\otimes (Qv)}{v\cdot n} \Big] R_{x}
	= \nabla_{xv}f_{0}(x, R_xv)  + \nabla_{vv}f_{0}(x, R_xv) \frac{(QR_xv)\otimes (QR_xv)}{R_xv\cdot n}  \notag \\
	&&\quad\hspace{7.5cm} + 
	R_{x}
	\begin{bmatrix}
	\nabla_{v}f_{0}(x , R_xv) \mathcal{J}_1
	\\
	\nabla_{v}f_{0}(x , R_xv) \mathcal{J}_2
	\end{bmatrix} 
	R_{x},
	\label{C2 cond 1} \\
	&&R_{x}\Big[ \nabla_{xx}f_{0}(x,v)  + \nabla_{vx}f_{0}(x, v) \frac{ (Qv)\otimes (Qv)}{v\cdot n}  + \frac{ (Qv)\otimes (Qv)}{v\cdot n}  \nabla_{xv}f_{0}(x, v) \Big]  R_{x}  \notag \\
	&&\quad = \nabla_{xx}f_{0}(x, R_xv)  +  \nabla_{vx}f_{0}(x, R_xv)\frac{(QR_xv)\otimes (QR_xv)}{R_xv\cdot n}
	+ \frac{(QR_xv)\otimes (QR_xv)}{R_xv\cdot n} \nabla_{xv}f_{0}(x, R_xv)   \notag \\
	&&\quad \quad
		-2R_x
		\begin{bmatrix}
		\nabla_{v}f_{0}(x, R_xv) \nabla_{v}A^{1}_{v,x}
		\\
		\nabla_{v}f_{0}(x, R_xv) \nabla_{v}A^{2}_{v,x} 
		\end{bmatrix} R_xA_{v,x}R_x 
		+ 
		A_{v,x}\begin{bmatrix}
		\nabla_{v}f_{0}(x, R_xv) \mathcal{J}_1  \\
		\nabla_{v}f_{0}(x, R_xv) \mathcal{J}_2
		\end{bmatrix}R_x   
	\notag \\  
	&&\quad \quad
		- 2 R_x
		\begin{bmatrix}
		\nabla_{v}f_{0}(x, R_xv) \mathcal{K}_1
		\\
		\nabla_{v}f_{0}(x, R_xv) \mathcal{K}_2
		\end{bmatrix} R_x,
	\label{C2 cond 2}
	\end{eqnarray}
	where $x=(x_1,x_2), \; v=(v_1,v_2)$, and 
	\begin{align*}
		&\mathcal{J}_1:=\frac{1}{v\cdot x} \begin{bmatrix} 
	-4v_2x_1x_2  & 4v_1x_1x_2 \\ 
	-2v_2(x_2^2-x_1^2) & 2v_1(x_2^2-x_1^2)
	\end{bmatrix}, \quad 
	\mathcal{J}_2:= \frac{1}{v\cdot x}\begin{bmatrix} 
	-2v_2(x_2^2-x_1^2) & 2v_1(x_2^2-x_1^2)\\
	4v_2x_1x_2  & -4v_1x_1x_2 
	\end{bmatrix},\\
	&\mathcal{K}_1:=\begin{bmatrix}
	\dfrac{4v_1^2v_2^2x_1^3 +2v_1v_2^3(3x_1^2x_2-x_2^3)+ 2v_2^4(3x_1x_2^2+x_1^3)}{(v\cdot x)^3} & \dfrac{-4v_1^3v_2x_1^3-2v_1^2v_2^2(3x_1^2x_2-x_2^3)-2v_1v_2^3(3x_1x_2^2+x_1^3)}{(v\cdot x)^3}\\
	\dfrac{4v_2^4x_2^3+2v_1v_2^3(3x_1x_2^2-x_1^3)+2v_1^2v_2^2(3x_1^2x_2+x_2^3)}{(v\cdot x)^3} & \dfrac{-4v_1v_2^3x_2^3-2v_1^2v_2^2(3x_1x_2^2-x_1^3)-2v_1^3v_2(3x_1^2x_2+x_2^3)}{(v\cdot x)^3}
	\end{bmatrix},\\
	&\mathcal{K}_2 := \begin{bmatrix}
	\dfrac{-4v_1^3v_2x_1^3-2v_1v_2^3(3x_1x_2^2+x_1^3) -2v_1^2v_2^2(3x_1^2x_2-x_2^3)}{(v\cdot x)^3} & \dfrac{4v_1^4x_1^3 +2v_1^2v_2^2(3x_1x_2^2+x_1^3)+2v_1^3v_2 (3x_1^2x_2-x_2^3)}{(v \cdot x)^3}\\
	\dfrac{-4v_1v_2^3x_2^3 -2v_1^3v_2(3x_1^2x_2+x_2^3) -2v_1^2v_2^2(3x_1x_2^2-x_1^3)}{(v\cdot x)^3} & \dfrac{4v_1^2 v_2^2 x_2^3 +2v_1^4(3x_1^2x_2+x_2^3)+2v_1^3v_2(3x_1x_2^2-x_1^3)}{(v \cdot x)^3}
	\end{bmatrix},
	\end{align*}
for all $(x,v)\in \gamma_-$, then $f(t,x,v)$ defined in \eqref{solution} is a unique $C^{2}_{t,x,v}(\R_{+}\times  \mathcal{I})$ solution of \eqref{eq}.
	 In this case, $f_0(x,v)=G(x,\vert v \vert)$ satisfying $\nabla_x f_0(x, v)=0$ for $x \in \partial \O$, where $G$ is a real-valued $C^2_{x,v}$ function. Additionally, $f(t,x,v)$ is not of class $C^2_{t,x,v}$ at time $t$ such that $t^k(t,x,v)=0$ for some $k$ if one of the initial conditions \eqref{C2 cond34}, \eqref{C2 cond 1}, and \eqref{C2 cond 2} for $(x,v)\in \gamma_-$ is not satisfied.	  
\end{theorem}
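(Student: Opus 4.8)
The plan is to differentiate the mild solution \eqref{solution} directly and to localize around the only place where $C^{2}$ regularity can fail, namely the set of $(t,x,v)$ with $t^{k}(t,x,v)=0$ for some $k$ --- i.e. where the specular trajectory grazes $\p\O$ exactly at time $s=0$. First I would record that $f$ is $C^{\infty}$, hence $C^{2}$, away from this locus: if all $t^{k}(t,x,v)\ne 0$, then near $(t,x,v)$ the reflection count is constant and $f=f_{0}(x^{k}-t^{k}v^{k},v^{k})$, where $\tb,\xb$ and thus $t^{k},x^{k},v^{k}$ are smooth functions of $(t,x,v)$ (implicit function theorem for $\xi$ at transversal points $v\cdot n\ne 0$; on a disk the successive chords have equal length, so the $t^{k}$ are equispaced and do not accumulate). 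On each strip between consecutive bouncing hyperplanes all derivatives of $f$ extend continuously up to $\{t^{k}=0\}$, the boundary points there being transversal. Hence $f\in C^{2}(\R_{+}\times\mathcal{I})$ iff, at each such hyperplane, the one-sided first and second derivatives of $f$ from the two adjacent strips coincide.

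Next, using the flow identity $f(t,x,v)=f(t^{k-1},x^{k-1},v^{k-1})$ together with the smooth dependence of $(t^{k-1},x^{k-1},v^{k-1})$ on $(t,x,v)$ away from grazing, I would reduce the matching at $\{t^{k}=0\}$ to a single reflection: for $(x,v)$ with $(\xb(x,v),v)\in\gamma_{-}$ transversal, compare
\[
f=f_{0}(x-tv,v)\ \ (t<\tb(x,v)),\qquad f=f_{0}\big(\xb(x,v)-(t-\tb(x,v))R_{\xb}v,\ R_{\xb}v\big)\ \ (t>\tb(x,v)),
\]
which agree at $t=\tb(x,v)$ by \eqref{BC}. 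Differentiating both representations and letting $t\to\tb(x,v)^{\pm}$ --- using the disk identities $\nabla\xi(x)=x$, $\nabla^{2}\xi(x)=I$, the explicit first derivatives of $\tb$, and of the reflected chord $(\xb,R_{\xb}v)$, all governed by $A_{v,\xb}=\nabla_{\xb}\big((v\cdot n)n\big)$ --- one finds the first-derivative jump vanishes exactly when \eqref{C1 cond} holds; this is Theorem \ref{thm 1}, which I take as given (including the propagation of \eqref{C1 cond} to $\gamma_{+}$).

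For the second order I would differentiate once more. The post-reflection second derivatives of $f$ split into: (i) genuine second derivatives of $f_{0}$, conjugated by $R_{x}$ and by the Jacobian of the reflected chord, which produce the $R_{x}(\cdots)R_{x}$ blocks and the $(Qv)\otimes(Qv)/(v\cdot n)$ corrections from $\nabla\tb$; and (ii) first derivatives of $f_{0}$ contracted against second derivatives of the geometric maps $t\mapsto\tb$, $x\mapsto\xb$, $v\mapsto R_{\xb}v$, which --- on differentiating \eqref{def A} once more with $n(x)=x$ --- are precisely $\nabla_{v}A^{i}_{v,x}$ and the explicit matrices $\mathcal{J}_{i},\mathcal{K}_{i}$ of the statement. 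Equating the one-sided limits of $\p_{tt}f,\ \p_{t}\nabla_{x}f,\ \p_{t}\nabla_{v}f,\ \nabla_{xx}f,\ \nabla_{xv}f,\ \nabla_{vv}f$ at $t=\tb(x,v)$ and removing redundancies with \eqref{C1 cond}, \eqref{C1_v trivial}, \eqref{C2_v trivial} yields exactly \eqref{C2 cond34}, \eqref{C2 cond 1}, \eqref{C2 cond 2} on $\gamma_{-}$ (the remaining mixed identities follow from these and the first-order relations). Conversely, if \eqref{C2 cond34}--\eqref{C2 cond 2} hold, all one-sided first and second derivatives agree at every bouncing hyperplane, so the reduction gives $f\in C^{2}(\R_{+}\times\mathcal{I})$; $f$ solves \eqref{eq} by construction and is the unique such solution, since any $C^{1}$ solution is constant along the specular characteristics. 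If one of \eqref{C2 cond34}--\eqref{C2 cond 2} fails at a point of $\gamma_{-}$, tracing it back produces $(t,x,v)\in\R_{+}\times\mathcal{I}$ with $t^{k}(t,x,v)=0$ at which a second derivative of $f$ jumps, so $f\notin C^{2}_{t,x,v}$ there.

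Finally, for the rigidity statement, the second part of \eqref{C2 cond34} gives $\nabla_{v}f_{0}(x,R_{x}v)\parallel(R_{x}v)^{T}$; since $R_{x}v$ sweeps all outgoing velocities as $(x,v)$ sweeps $\gamma_{-}$ --- and, by \eqref{C1_v trivial} and continuity ($f_{0}\in C^{2}$), all velocities --- we get $\nabla_{v}f_{0}(x,\cdot)\parallel v$ for $x\in\p\O$, i.e. $f_{0}(x,v)=G(x,|v|)$ on $\p\O$ for a real-valued $C^{2}$ function $G$; then $\nabla_{x}f_{0}(x,v)=\nabla_{x}G(x,|v|)$ is independent of the direction of $v$ while the first part of \eqref{C2 cond34} forces it parallel to $(R_{x}v)^{T}$ for every $v$, so it vanishes on $\p\O$. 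I expect the main obstacle to be the second-order computation in the preceding paragraph: differentiating the geometric maps twice produces many terms weighted by $(v\cdot n)^{-1},(v\cdot n)^{-2},(v\cdot n)^{-3}$, and the real work is to organize them so the matching collapses into the closed forms \eqref{C2 cond 1}--\eqref{C2 cond 2} with the explicit $\mathcal{J}_{i},\mathcal{K}_{i}$, while verifying that on $\mathcal{I}$ (where $\gamma_{0}$ is excluded) none of these quantities actually blows up.
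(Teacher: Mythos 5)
Your overall strategy---reduce to a single reflection, compute the one-sided first and second derivatives of the mild solution on either side of the bounce, and match them---is exactly the paper's. The reduction to $k=1$, the dismissal of the $\partial_t$-derivatives as redundant, and the rigidity argument ($f_0=G(x,|v|)$ with $\nabla_x f_0=0$ on $\partial\O$) all line up with what is done there.

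There is, however, one step in your outline that would not go through as written. You list the matchings for $\nabla_{xx}f,\ \nabla_{xv}f,\ \nabla_{vv}f$ and assert that they ``yield exactly'' \eqref{C2 cond34}, \eqref{C2 cond 1}, \eqref{C2 cond 2}. In fact the raw matchings produce four conditions, one each for $\nabla_{xx},\nabla_{xv},\nabla_{vx},\nabla_{vv}$ (the paper's \eqref{Cond2 1}--\eqref{Cond2 4}), and none of them is \eqref{C2 cond34}. That condition appears only when one additionally demands that the matched $\nabla_{xv}$ and $\nabla_{vx}$ be mutual transposes and that the matched $\nabla_{xx}$ be symmetric: the post-reflection expressions carry non-symmetric first-order blocks built from $\nabla_x(R^i_{x^1(x,v)})$, $\nabla_v A^i_{v,x^1}$ and $\nabla_x(A^i_{v,x^1(x,v)})$, and transpose compatibility forces $\nabla_v f_0(x,R_xv)$ and $\nabla_x f_0(x,R_xv)$ to annihilate certain explicit column vectors. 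The nontrivial point, which your plan does not anticipate, is that these constraints do \emph{not} force $\nabla_{x,v}f_0=0$ (which would trivialize \eqref{C1 cond}): one must check, via the determinant computations in Lemmas \ref{d_RA} and \ref{dx_A}, that every relevant $2\times2$ coefficient matrix has rank one with common kernel spanned by $v$, so the admissible gradients form a line and the constraint reads precisely $\nabla_{x}f_0(x,R_xv)\parallel(R_xv)^T$ and $\nabla_{v}f_0(x,R_xv)\parallel(R_xv)^T$. You should therefore add the separate $\nabla_{vx}$ matching and this transpose-compatibility analysis as an explicit step; with that inserted, the rest of your plan is sound.
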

\begin{remark} (Higher regularity)
	If we want higher regularity such as $C^{3}$ and $C^{4}$, we should assume additional initial-boundary compatibility conditions for those regularities as we assumed \eqref{C2 cond34}-\eqref{C2 cond 2} in Theorem \ref{thm 2} for $C^{2}$ as well as \eqref{C1 cond}. Although the computation for higher regularity is available in principle, we should carefully check whether the additional conditions for higher regularity make lower regularity conditions trivial or not. Here, the trivial condition for \eqref{C2 cond 2} means
	\[
		\nabla_{x,v}f_0(x,v) = 0,\quad \forall (x,v)\in\gamma_{-}.
	\]
	In fact, the answer is given in Section 1.2. Because of very nontrivial null structure of \eqref{1st order}, imposing \eqref{C2 cond34}-\eqref{C2 cond 2} does not make \eqref{C1 cond} trivial, fortunately. Once we find a new initial-boundary compatibility condition for $C^{3}$, for example, we also have to check 
	\Be \notag
	\begin{split}
		&\text{Do additional compatibility conditions for $C^{3}$ regularity  make } \\
		&\quad \text{\eqref{C1 cond} or \eqref{C2 cond34}-\eqref{C2 cond 2} trivial, e.g. $\nabla f_0 = \nabla^{2}f_0=0$ on $\gamma_{-}$?}
	\end{split}
	\Ee
	Whenever we gain conditions for higher order regularity, initial-boundary compatibility conditions are stacked and they might make lower order compatibility conditions just trivial ones. It is a very interesting question, but they require very complicated geometric considerations and  obtaining higher order condition itself will be also very painful. But, if we impose very strong (trivial) high order initial-boundary compatibility conditions
	\[
		\nabla_{x,v}^{i}f_0(x,v) = 0,\quad \forall(x,v)\in\gamma_{-},\quad 1\leq i \leq k,
	\]
	then we will get $C^{k}$ regularity of the solution.
\end{remark}	
\begin{remark} (Necessary conditions for $C^{2}$ regularity)
In Theorem \ref{thm 2}, initial conditions \eqref{C2 cond 1} and \eqref{C2 cond 2} are sufficient conditions for $f \in C^2_{t,x,v}$. Although these contain non-symmetric complicated first-order terms, we can obtain simpler necessary conditions. 
Observe that the null space of $\mathcal{J}_i, \mathcal{K}_i$ is spanned by $v$, i.e.,
\begin{equation} \label{null J,K}
	\mathcal{J}_i v =0, \quad \mathcal{K}_i v =0, \quad i=1,2.
\end{equation}
Multiplying the reflection matrix $R_x$ on both sides in \eqref{C2 cond 1} and \eqref{C2 cond 2},  we get necessary conditions for $C^{2}$ solution,
\hide
\begin{align*}
	&\nabla_{xv} f_0(x,v) +\nabla_{vv} f_0(x,v)\frac{ (Qv)\otimes (Qv)}{v\cdot n} = R_x \left[\nabla_{xv}f_{0}(x, R_xv)  + \nabla_{vv}f_{0}(x, R_xv) \frac{(QR_xv)\otimes (QR_xv)}{R_xv\cdot n} \right]R_x \\
	&\hspace{6.5cm} + 
	\begin{bmatrix}
	\nabla_{v}f_{0}(x , R_xv) \mathcal{J}_1
	\\
	\nabla_{v}f_{0}(x , R_xv) \mathcal{J}_2
	\end{bmatrix},\\
&\nabla_{xx}f_{0}(x,v)  + \nabla_{vx}f_{0}(x, v) \frac{ (Qv)\otimes (Qv)}{v\cdot n}  + \frac{ (Qv)\otimes (Qv)}{v\cdot n}  \nabla_{xv}f_{0}(x, v)\\
&\quad = R_x \left[ \nabla_{xx}f_{0}(x, R_xv)  +  \nabla_{vx}f_{0}(x, R_xv)\frac{(QR_xv)\otimes (QR_xv)}{R_xv\cdot n}
	+ \frac{(QR_xv)\otimes (QR_xv)}{R_xv\cdot n} \nabla_{xv}f_{0}(x, R_xv)\right]R_x \\ 
	&\quad \quad -2
		\begin{bmatrix}
		\nabla_{v}f_{0}(x, R_xv) \nabla_{v}A^{1}_{v,x}
		\\
		\nabla_{v}f_{0}(x, R_xv) \nabla_{v}A^{2}_{v,x} 
		\end{bmatrix} R_xA_{v,x}
		+ 
		R_xA_{v,x}\begin{bmatrix}
		\nabla_{v}f_{0}(x, R_xv) \mathcal{J}_1  \\
		\nabla_{v}f_{0}(x, R_xv) \mathcal{J}_2
		\end{bmatrix}  
		- 2
		\begin{bmatrix}
		\nabla_{v}f_{0}(x, R_xv) \mathcal{K}_1
		\\
		\nabla_{v}f_{0}(x, R_xv) \mathcal{K}_2
		\end{bmatrix} ,
\end{align*}
\unhide

\begin{equation} \label{C2 nec cond}
	\begin{split}
	&v^T \left[ \nabla_{xv} f_0(x,v) +\nabla_{vv} f_0(x,v)\frac{ (Qv)\otimes (Qv)}{v\cdot n}\right] v=(R_xv)^T \left[\nabla_{xv}f_{0}(x, R_xv)  + \nabla_{vv}f_{0}(x, R_xv) \frac{(QR_xv)\otimes (QR_xv)}{R_xv\cdot n} \right](R_xv),\\
	&v^T \left[ \nabla_{xx}f_{0}(x,v)  + \nabla_{vx}f_{0}(x, v) \frac{ (Qv)\otimes (Qv)}{v\cdot n}  + \frac{ (Qv)\otimes (Qv)}{v\cdot n}  \nabla_{xv}f_{0}(x, v)\right]v \\
	&=(R_xv)^T \left[\nabla_{xx}f_{0}(x, R_xv)  +  \nabla_{vx}f_{0}(x, R_xv)\frac{(QR_xv)\otimes (QR_xv)}{R_xv\cdot n}
	+ \frac{(QR_xv)\otimes (QR_xv)}{R_xv\cdot n} \nabla_{xv}f_{0}(x, R_xv)\right](R_xv),
	\end{split}
\end{equation}
for all $(x,v) \in \gamma_-$, where we used $R_x^2 =I$, \eqref{null J,K}, and $A_{v,x}v=0$ in Lemma \ref{lem_RA}.
\end{remark}
\begin{remark}\label{extension C2 cond34}
Using \eqref{C1_v trivial} and \eqref{C2 cond34} yields that 
\begin{equation} \label{f0 gamma+}
	\nabla_v f_0(x,v) \parallel v^T,
\end{equation}
for all $(x,v) \in \gamma_-\cup \gamma_+$. From \eqref{f0 gamma+}, we have 
\begin{equation*}
	\nabla_v f_0(x,v) \frac{(Qv) \otimes (Qv)}{v\cdot n} R_x = \nabla_v f_0(x,R_xv)\frac{(QR_xv)\otimes(QR_xv)}{R_x v\cdot n}.
\end{equation*}
Thus, the condition \eqref{C1 cond} in Theorem \ref{thm 1} becomes
\begin{equation*}
	\nabla_x f_0(x,v) R_x = \nabla_x f_0(x,R_xv).
\end{equation*}
Similarly, by \eqref{C2 cond34} and the above result, we have 
\begin{equation*}
	\nabla_x f_0(x,v) \parallel v^T,
\end{equation*}
for all $(x,v) \in \gamma_-\cup \gamma_+$. Hence, we conclude that \eqref{C2 cond34} can be extended to $\gamma_-\cup\gamma_+$ under conditions \eqref{C1 cond} and \eqref{C2 cond34} for $(x,v)\in\gamma_-$. 
\end{remark}
\begin{remark}[Extension to 3D sphere]
	By symmetry, Theorem \ref{thm 1} and \ref{thm 2} also hold for three dimensional sphere if the rotation operator $Q$ is properly redefined in the plane spanned  by $\{x, v\}$ for $x\in \p\O$, $x\nparallel v\neq 0$. \\
\end{remark}

\begin{theorem} [Regularity estimates] \label{thm 3}
	The $C^{1}(\R_{+}\times \mathcal{I})$ and $C^{2}(\R_{+}\times \mathcal{I})$ solutions of Theorem \ref{thm 1} and \ref{thm 2} enjoy the following regularity estimates :
	\Be \label{C1 bound}
	\|f\|_{C^{1}_{t,x,v}} \lesssim \|f_0\|_{C^{1}} \frac{|v|}{|v\cdot n(\xb)|^{2}} \langle v \rangle^{2}(1 + |v|t),
	\Ee
	\Be \label{C2 bound}
	\|f\|_{C^{2}_{t,x,v}} \lesssim \|f_0\|_{C^{2}} \frac{|v|^{2}}{|v\cdot n(\xb)|^{4}} \langle v \rangle^{4}(1 + |v|t)^{2},
	\Ee
	where $\xb = \xb(x,v)$ and $\langle v \rangle := 1 + |v|$. 
\end{theorem}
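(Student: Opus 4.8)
The plan is to differentiate the explicit mild solution \eqref{solution} and reduce the estimates to pointwise bounds on the first and second derivatives of the characteristic end-point map $\Psi(t,x,v):=\big(X(0;t,x,v),V(0;t,x,v)\big)$. Since $f=f_0\circ\Psi$, the chain rule applied once and twice gives schematically
\[
\nabla_{t,x,v}f=\big((\nabla_{x,v}f_0)\circ\Psi\big)\,\nabla\Psi,\qquad
\nabla^2_{t,x,v}f=\big((\nabla^2_{x,v}f_0)\circ\Psi\big)(\nabla\Psi)^{\otimes2}+\big((\nabla_{x,v}f_0)\circ\Psi\big)\,\nabla^2\Psi ,
\]
so that $\|f\|_{C^1_{t,x,v}}\ls\|f_0\|_{C^1}\big(1+\|\nabla\Psi\|_\infty\big)$ and $\|f\|_{C^2_{t,x,v}}\ls\|f_0\|_{C^2}\big(1+\|\nabla\Psi\|_\infty^2+\|\nabla^2\Psi\|_\infty\big)$ on $\R_+\times\mathcal{I}$. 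Hence \eqref{C1 bound} and \eqref{C2 bound} follow once one shows
\[
\|\nabla\Psi(t,x,v)\|\ls \frac{|v|}{|v\cdot n(\xb)|^{2}}\langle v\rangle^{2}(1+|v|t),\qquad
\|\nabla^2\Psi(t,x,v)\|\ls \Big(\frac{|v|}{|v\cdot n(\xb)|^{2}}\langle v\rangle^{2}(1+|v|t)\Big)^{2};
\]
that is, it suffices to bound $\nabla^2\Psi$ by the square of the bound for $\nabla\Psi$, and the individual pieces of $\nabla\Psi$ and $\nabla^2\Psi$ are exactly the per-flight expressions already produced in the proofs of Theorems \ref{thm 1} and \ref{thm 2}.

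Two geometric features of the disk drive the bound. First, the conservation laws along the specular trajectory: $|v|$ is invariant and the angular momentum $L:=x\wedge v$ is invariant across reflections, so the normal speed is the same at every bounce point, $|v^k\cdot n(x^k)|=\sqrt{|v|^2-L^2}=|v\cdot n(\xb)|$ for all $k$, and the discriminant entering $\tb(x,v)$ equals exactly $|v\cdot n(\xb)|^2$; consequently $\nabla_{x,v}\tb$, $\nabla_{x,v}\xb$ and their second derivatives are controlled by negative powers of $|v\cdot n(\xb)|$, e.g. $|\nabla_{x,v}\xb|\ls 1+|v|/|v\cdot n(\xb)|$. Second, by rotational symmetry the backward bounce points $x^k$ and velocities $v^k$ are rigid motions of $\xb$ and $v$ by the cumulative angle $(k-1)\psi$ (a fixed reflection also appears in $v^k$), where the per-bounce rotation angle $\psi=\psi(x,v)$ depends on $(x,v)$ only through the ratio $|v\cdot n(\xb)|/|v|$ and satisfies $|\nabla_{x,v}\psi|\ls|v|/|v\cdot n(\xb)|$; moreover all inter-bounce times equal $\tau=2|v\cdot n(\xb)|/|v|^2$ while the first backward flight lasts at most $2/|v|$, so the number of reflections in $[0,t]$ obeys $N(t,x,v)\le 1+|v|^2t/\big(2|v\cdot n(\xb)|\big)$.

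With these inputs, $\Psi(t,x,v)$ is assembled by locating the index $k_*\le N$ of the last reflection before time $0$ and writing $V(0)=\pm\,\mathcal{R}_{(k_*-1)\psi}\,v$ and $X(0)=\mathcal{R}_{(k_*-1)\psi}\,\xb+\sigma\,w$ with $|\sigma|\le\tau$ and $|w|=|v|$ (here $\mathcal{R}_\theta$ is rotation by $\theta$), and then differentiating. The only genuine growth in the reflection count comes from $\partial_{x,v}$ hitting $\mathcal{R}_{(k_*-1)\psi}$, i.e. the linear factor $(k_*-1)\nabla_{x,v}\psi$ — this is precisely the shear behaviour of the circular billiard map in Birkhoff coordinates. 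Combining $k_*\le N\le 1+|v|^2t/\big(2|v\cdot n(\xb)|\big)$ with $|\nabla_{x,v}\psi|\ls|v|/|v\cdot n(\xb)|$ and the bounds on $\nabla\tb,\nabla\xb,\nabla\tau$, the dominant component $\nabla_x V(0)=(k_*-1)(\nabla_x\psi)\,\mathcal{R}'_{(k_*-1)\psi}v$ obeys
\[
|\nabla\Psi|\ls \frac{N|v|^2}{|v\cdot n(\xb)|}\ls \frac{|v|^2}{|v\cdot n(\xb)|^{2}}\big(|v\cdot n(\xb)|+|v|^2t\big)\ls \frac{|v|^{2}}{|v\cdot n(\xb)|^{2}}\,|v|(1+|v|t),
\]
which is absorbed in the claimed bound via $|v|^3\le\langle v\rangle^2|v|$. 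The second derivative is handled the same way: the top-order contributions $(k_*-1)^2(\nabla\psi)^2$ and $(k_*-1)\nabla^2\psi$ produce at most $\big(N|v|^2/|v\cdot n(\xb)|\big)^2$, i.e. the square of the first-order bound. Finally $\nabla_t\Psi$ is harmless, because $X(0;t+\delta,x,v)=X(-\delta;t,x,v)$ (only the reflection times shift, not the points $x^k,v^k$, which depend on $(x,v)$ alone), whence $|\partial_tX(0)|=|V(0)|=|v|$ and $\partial_tV(0)=0$ away from the exceptional times $t^k(t,x,v)=0$.

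The main obstacle is the bookkeeping: one must carry $\nabla$ and $\nabla^2$ of the composed characteristic map through an unbounded number of reflections while tracking precisely which powers of $|v\cdot n(\xb)|^{-1}$ and $\langle v\rangle$ accumulate, and in particular justify the heuristic that each derivative costs only one power of the reflection count $N$ — i.e. that the integrability of the circular billiard yields polynomial, not exponential, growth in $N$. A secondary delicate point is the behaviour near $\gamma_0$ and near the times $t^k(t,x,v)=0$: the estimates hold uniformly on $\R_+\times\mathcal{I}$ but genuinely degenerate as $|v\cdot n(\xb)|\to0$, and this degeneracy is exactly the source of the weights $|v\cdot n(\xb)|^{-2}$ and $|v\cdot n(\xb)|^{-4}$ in \eqref{C1 bound} and \eqref{C2 bound}.
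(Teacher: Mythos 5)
Your proposal follows essentially the same route as the paper's proof: reduce everything to first and second derivatives of the characteristic map via the chain rule, represent the bounce points and velocities as rotations $Q_{\theta}^{k}$ of $(\xb,v)$, bound the bounce count by $l\lesssim\frac{|v|}{|v\cdot n(\xb)|}(1+|v|t)$, and observe that each derivative hitting the rotation costs one factor of $l\,\nabla_{x,v}\theta$ — this is exactly the content of Lemmas \ref{der theta}, \ref{X,V}, \ref{est der X,V} and \ref{2nd est der X,V}. The one slip is your claimed bound $|\nabla_{v}\psi|\lesssim|v|/|v\cdot n(\xb)|$: the correct estimate (Lemma \ref{der theta} together with \eqref{tb esti}) is $|\nabla_{v}\theta|\lesssim 1/|v|$, which is \emph{not} dominated by $|v|/|v\cdot n(\xb)|$ when $|v|$ is small, but this does not affect \eqref{C1 bound} or \eqref{C2 bound} since their right-hand sides are dictated by the worst component $\nabla_{x}V$.
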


\subsection{Brief sketch of proofs and some important remarks}
In this paper, our aim is to analyze regularity of mild form \eqref{solution} where characteristic $(X(0;t, x, v), V(0;t, x,v))$ is well-defined (by excluding $\gamma_0$). If backward in time position $X(0;t,x,v) \notin \p\O$, the characteristic is also a smooth function and we expect that the regularity of \eqref{solution} will be the same as initial data $f_0$ by the chain rule. When $X(0;t,x,v) \in \p\O$, however, the derivative via the chain rule does not work anymore because of discontinuous behavior of velocity $V(0;t,x,v)$. Depending on perturbed  directions, we obtain different directional derivatives. In fact, we can split directions into two pieces: one gives bouncing and the other does not. See \eqref{R12_v} and \eqref{set R_vel} for $C^{1}_{v}$ for example. By matching these directional derivatives and performing some symmetrization, we obtain  symmetrized initial-boundary compatibility condition \eqref{C1 cond}. Of course, \eqref{C1_v trivial} also holds, but \eqref{C1_v trivial} is gained by taking the $v$-derivative of \eqref{BC} directly. We note that both $C^{1}_{x}$ and $C^{1}_{v}$ conditions yield identical initial compatibility condition \eqref{C1 cond}, and the condition for $C^{1}_{t}$ is just a necessary condition for \eqref{C1 cond}. \\
 
The analysis becomes much more complicated when we study $C^{2}$ conditions. Nearly all of our analysis consist of precise equalities, instead of estimates. This makes our business much harder. First, let us consider four cases: $\nabla_{xx}, \nabla_{xv}, \nabla_{vx}, \nabla_{vv}$. These yield very complicated initial-boundary compatibility conditions and in particular they contain derivatives of each column of reflection operator $R_x$ or $\nabla_{x,v}((n(x)\otimes n(x))v)$. It is nearly impossible to give proper geometric interpretation for each term. See \eqref{xv star1} and \eqref{xv star2} for example. \\

Nevertheless,  it is quite interesting that the four conditions from $\nabla_{xx}, \nabla_{xv}, \nabla_{vx}, \nabla_{vv}$ can be rearranged with respect to the order of time $t$. By matching all directional derivatives, we obtain \eqref{Cond2 1}--\eqref{Cond2 4} which contain both second-order terms and first-order terms. However, the conditions from $\nabla_{xx}, \nabla_{xv}, \nabla_{vx}, \nabla_{vv}$ must satisfy transpose compatibility condition
\Be \label{trans comp}
	 \nabla_{xv}^{T}=\nabla_{vx} \ \ \text{and} \ \ \nabla_{xx}^{T} = \nabla_{xx},
\Ee
since we hope the solution to be $C^{2}$. However, it is extremely hard to find any good geometric meaning or properties of some terms like
\Be \label{1st order}
	\nabla_{x}(R^{i}_{x^{1}(x,v)}),\quad \nabla_{x}(A^{i}_{v, x^{1}(x,v)}),\quad \text{for}\quad i=1,2,
\Ee
in \eqref{Cond2 1}--\eqref{Cond2 4}. If they do not have any special structures, the only way to get compatibility \eqref{trans comp} is to impose $\nabla_{x,v}f_0(x, Rv) = 0$ for all $(x,v)\in \gamma_{-}$. Then $C^{1}$ compatibility condition \eqref{C1 cond} becomes just trivial. Fortunately, however, the matrices of \eqref{1st order} have a rank $1$ structure. {\bf More surprisingly, all the null spaces are spanned by velocity $v$!} That is, from Lemma \ref{d_RA} and Lemma \ref{dx_A}, 
\[
	\nabla_{x}(R_{x^1(x,v)}^1)v =0, \quad \nabla_{x}(R_{x^1(x,v)}^2) v =0,\quad \nabla_x(-2A_{v,x^1(x,v)}^1)v =0, \quad \nabla_x (-2A_{v,x^1(x,v)}^2)v=0.
\]
From these interesting results, we can derive necessary conditions \eqref{C2 cond34} for transpose compatibility \eqref{trans comp}. By imposing \eqref{C2 cond34}, we derive $C^{2}$ conditions as in Theorem \ref{thm 2}, while keeping $C^{1}$ condition \eqref{C1 cond} nontrivial. We note that all the conditions that include $\p_{t}$ are repetitions of \eqref{Cond2 1}--\eqref{Cond2 4}.  \\

In the last section, we study $C^{1}$ and $C^{2}$  regularity estimates of the solution \eqref{solution}. Essentially the regularity estimates of  the solution come from the regularity estimates of characteristic $(X(0;t,x,v), V(0;t,x,v))$. For $C^{1}$ of $(X(0;t,x,v), V(0;t,x,v))$, we obtain Lemma \ref{est der X,V}. Note that we can find some cancellation that gives no singular bound for $\nabla_{v}X(0;t,x,v)$ which was found in \cite{GKTT2017} for general 3D convex domains. Growth in time need not to be exponential, but it is  just linear in time $t$. The second derivative of characteristic is much more complicated and nearly impossible to try to find any cancellation, because of too many terms and combinations that appear. Instead, by studying the most singular terms only, we obtain rough bounds in Lemma \ref{2nd est der X,V}.

\section{Preliminaries}
Now, let us recall standard matrix notations which will be used in this paper.  \\

\begin{definition} 
	When we perform matrix multiplications throughout this paper, we basically treat a n-dimensional vector $v$ as a {\it column} vector
	\[
	v = \begin{pmatrix}
	v_{1} \\ \vdots \\ v_{n}
	\end{pmatrix}.
	\]
	For about gradient of a smooth scalar function $a(x)$, however, we treat n-dimensional vector $\nabla a$ as a {\it row} vector,
	\[
	\nabla a(x) := (\p_{x_{1}} a, \p_{x_{2}} a, \cdots, \p_{x_{n}} a).
	\]
	For a smooth vector function $v : \R^{n}\rightarrow \R^{m}$ with $v(x)= \begin{pmatrix}
	v_{1}(x) \\ \vdots \\ v_{m}(x)
	\end{pmatrix}$, we define $\nabla_{x}v(x)$ as $m\times n$ matrix,
	\[
	\nabla_{x}v := \begin{pmatrix}
	\p_{1} v_{1} & \cdots & \p_{n} v_{1} \\
	\p_{1} v_{2} & \cdots & \p_{n} v_{2} \\
	\vdots & \vdots & \vdots \\
	\p_{1} v_{m} & \cdots & \p_{n} v_{m} \\
	\end{pmatrix}_{m\times n} 
	=
	\begin{pmatrix}
	& \nabla_{x} v_{1} &\\
	&\vdots&  \\
	&\nabla_{x} v_{m}& \\
	\end{pmatrix}_{m\times n} .
	\]
	We use $\otimes$ to denote tensor product 
	\begin{equation*}
	a\otimes b := \begin{pmatrix}
		a_{1} \\ \vdots \\ a_{m}
	\end{pmatrix}
	\begin{pmatrix}
		b_{1} & \cdots & b_{n}
	\end{pmatrix}.  \\
	\end{equation*}
\end{definition}

\begin{lemma}\label{matrix notation}
	(1) (Product rule) For scalar function $a(x)$ and vector function $v(x)$,
	\[
	 \nabla (a(x)v(x)) = a(x)\nabla v(x) + v\otimes \nabla a(x).
	\]
	(2) (Chain rule) For vector functions $v(x)$ and $w(x)$,
	\[
	\nabla (v(w(x))) = \nabla v(w(x)) \nabla w(x).
	\]
	(3) (Product rule) For vector functions,
	\[
		\nabla(v(x)\cdot w(x)) = v(x)\nabla w(x) + w(x)\nabla v(x).
	\]
	(4) For matrix $d\times d$ matrix $A(x)$ and $d\times 1$ vector $v(x)$,
	\begin{equation} \label{d_matrix}
	\begin{split}
		\nabla_{x} (A(x)v(x)) 
		&= A(x)\nabla v(x) 
		+ 
		\begin{pmatrix}
			v(x)\nabla A^{1}(x) \\
			\vdots \\
			v(x)\nabla A^{d}(x) 
		\end{pmatrix}  \\
		&= A(x)\nabla v(x) 
		+ \sum_{k=1}^{d}
		\p_{k}A(x) E_{k},
	\end{split}
	\end{equation}
	where $A^{i}(x)$ is $i$-th row of $A(x)$ and $E_{k}$ is $d\times d$ matrix whose $k$th column is $v$ and others are zero. (Here $\p_{k}A(x)$ means elementwise $x_{k}$-derivative of $A(x)$.) Moreover, if  $A = A(\theta(x))$ for some smooth $\theta:\O\rightarrow \mathbb{R}$, \\
	\Be \label{d_matrix_theta}
		\nabla_{x} (A(\theta)v(x)) 
		= A(\theta)\nabla v(x) 
		+ \p_{\theta}A(\theta)v \otimes \nabla_{x}\theta.
	\Ee
\end{lemma}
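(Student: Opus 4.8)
The plan is to prove all four identities, together with the $\theta$-specialization \eqref{d_matrix_theta}, by direct entrywise computation, since each is an immediate consequence of the ordinary product and chain rules combined with the matrix conventions fixed in the preceding Definition: the gradient of a scalar is a \emph{row} vector, $\nabla_x v$ is the matrix whose $i$-th row is $\nabla_x v_i$, and $a\otimes b$ is the outer product with $(i,j)$ entry $a_i b_j$. No analysis beyond termwise differentiation is required; the only point demanding care is the bookkeeping of row-versus-column placement, in particular the mild abuse by which a vector $v$ appearing to the \emph{left} of a matrix is read as the row vector $v^{T}$.

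For (1), I would write the $i$-th component of $a(x)v(x)$ as $a v_i$ and differentiate: $\partial_j(a v_i)=a\,\partial_j v_i + v_i\,\partial_j a$. The first summand is the $(i,j)$ entry of $a\,\nabla v$, while $v_i\,\partial_j a$ is exactly the $(i,j)$ entry of $v\otimes\nabla a$, so assembling over $i,j$ gives the claim. For (2), the $(i,j)$ entry of the left-hand side is $\partial_j\big(v_i(w(x))\big)=\sum_k \partial_k v_i(w)\,\partial_j w_k$ by the scalar chain rule, and the right-hand side $\sum_k (\nabla v(w))_{ik}(\nabla w)_{kj}$ is the same sum, so the two matrices coincide. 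For (3), $v\cdot w=\sum_i v_i w_i$ is a scalar, so $\partial_j(v\cdot w)=\sum_i (\partial_j v_i) w_i+\sum_i v_i(\partial_j w_i)$; reading the first sum as $(w^{T}\nabla v)_j$ and the second as $(v^{T}\nabla w)_j$ reproduces the stated row-vector identity, and this is precisely where the left-vector-as-row convention is invoked.

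The one identity requiring a little more care is (4). I would start from the $i$-th component $(Av)_i=\sum_k A_{ik}v_k$ and differentiate, obtaining $\partial_j(Av)_i=\sum_k A_{ik}\partial_j v_k+\sum_k v_k\,\partial_j A_{ik}$. The first family assembles into $A\,\nabla v$. For the second, since $A^i$ denotes the $i$-th row of $A$ viewed as a vector function, we have $(\nabla A^i)_{kj}=\partial_j A_{ik}$ and hence $\sum_k v_k\,\partial_j A_{ik}=(v\,\nabla A^i)_j$; stacking these rows over $i$ yields the block column vector in the first displayed equality. To pass to the second form $\sum_{k}\partial_k A\,E_k$, I would observe that $E_k=v\,e_k^{T}$, so $\partial_k A\,E_k$ has $k$-th column equal to $(\partial_k A)v$ and all other columns zero; summing over $k$ places $(\partial_j A)v$ in the $j$-th column, whose $i$-th entry is $\sum_l \partial_j A_{il}v_l$, matching the second summand above. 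Finally, for \eqref{d_matrix_theta} with $A=A(\theta(x))$, the chain rule gives $\partial_j A_{il}=\partial_\theta A_{il}\,\partial_j\theta$, so the second term becomes $(\partial_j\theta)\,(\partial_\theta A\,v)_i=(\partial_\theta A\,v\otimes\nabla_x\theta)_{ij}$, which is the claimed expression.

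The main obstacle is therefore not mathematical depth but purely notational consistency: one must track whether each object is a row or a column, honor the convention that a vector multiplying a matrix from the left is transposed, and correctly interpret $A^i$, $E_k$, and the elementwise derivative $\partial_k A$. Once these conventions are pinned down, every identity follows by matching $(i,j)$ entries, and I would present the argument in exactly that entrywise style.
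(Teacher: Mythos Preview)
Your proposal is correct and follows essentially the same approach as the paper: direct entrywise verification using the standard product and chain rules, with careful attention to the row/column conventions. The paper is in fact briefer than you---it declares that only \eqref{d_matrix_theta} ``needs some explanation'' and derives it from the second form of \eqref{d_matrix} via $\partial_k A(\theta)=\partial_\theta A(\theta)\,\partial_k\theta$ exactly as you do, leaving parts (1)--(4) as understood.
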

\begin{proof}
	Only \eqref{d_matrix_theta} needs some explanation. When $A=A(\theta(x))$, 
	\begin{equation*}
	\begin{split}
	\nabla_{x} (A(\theta)v(x)) 
	&= A(\theta)\nabla v(x) 
	+ \sum_{k=1}^{d}
	\p_{k}A(\theta) E_{k} 
	=
	A(\theta)\nabla v(x) 
	+ \sum_{k=1}^{d}
	\p_{\theta}A(\theta) \p_{k}\theta(x)E_{k} \\
	&= A(\theta)\nabla v(x) 
	+ \p_{\theta}A(\theta)
	\begin{pmatrix}
	&  &  \\
	\p_{1}\theta(x) v & \cdots & \p_{d}\theta(x)v  \\
	&  &  \\
	\end{pmatrix} \\
	&= A(\theta)\nabla v(x) 
	+ \p_{\theta}A(\theta)v \otimes \nabla_{x}\theta(x).
	\end{split}
	\end{equation*}
\end{proof}

\begin{lemma} \label{nabla xv b}
	We have the following computations where $\xb = \xb(x,v)$ and $\tb=\tb(x,v)$.  \\
	\begin{equation*}
	\begin{split}
		\nabla_{x}\tb &= \frac{n(\xb)}{v\cdot n(\xb)} ,  \\
		\nabla_{v}\tb &= -\tb\nabla_{x}\tb = -\tb\frac{n(\xb)}{v\cdot n(\xb)} ,  \\
		\nabla_{x}\xb &= I - \frac{v\otimes n(\xb)}{v\cdot n(\xb)}, \\
		\nabla_{v}\xb &= -\tb\Big(I - \frac{v\otimes n(\xb)}{v\cdot n(\xb)} \Big). \\
	\end{split}
	\end{equation*}
\end{lemma}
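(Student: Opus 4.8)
The plan is to differentiate implicitly the defining relation of $\xb$. Recall from Definition \ref{notation} that $\xb(x,v) = x - \tb(x,v)v$ lies on $\p\O$, and that for the unit disk we chose $\xi(y) = \frac12|y|^2 - \frac12$, so that $\nabla\xi(y) = y = n(y)$ for $y\in\p\O$. Hence the identity
\[
\xi\big(x - \tb(x,v)v\big) = 0
\]
holds for all $(x,v)\in\mathcal{I} = \{\overline{\O}\times\R^2\}\setminus\gamma_0$. The first step is to justify that $\tb$ is $C^1$ near any such point: the derivative of $s\mapsto \xi(x-sv)$ at $s=\tb$ equals $-\nabla\xi(\xb)\cdot v = -\,v\cdot n(\xb)$, which is nonzero precisely because $(x_{\mathbf b},v)\notin\gamma_0$ on $\mathcal{I}$, so the implicit function theorem applies and legitimizes the differentiations below (and also guarantees the denominators appearing in the statement are nonzero).

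Next, differentiate the identity $\xi(\xb)=0$ in $x$. Since $\nabla_x\xb = I - v\otimes\nabla_x\tb$, the chain rule (Lemma \ref{matrix notation}) gives
\[
0 = \nabla_x\big[\xi(\xb)\big] = n(\xb)\big(I - v\otimes\nabla_x\tb\big) = n(\xb) - (v\cdot n(\xb))\,\nabla_x\tb .
\]
Dividing by $v\cdot n(\xb)\neq 0$ yields $\nabla_x\tb = \dfrac{n(\xb)}{v\cdot n(\xb)}$, and substituting this into $\nabla_x\xb = I - v\otimes\nabla_x\tb$ gives $\nabla_x\xb = I - \dfrac{v\otimes n(\xb)}{v\cdot n(\xb)}$.

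For the $v$-derivatives, differentiating $\xb = x - \tb v$ in $v$ gives $\nabla_v\xb = -\tb I - v\otimes\nabla_v\tb$, while differentiating $\xi(\xb)=0$ in $v$ gives
\[
0 = n(\xb)\big(-\tb I - v\otimes\nabla_v\tb\big) = -\tb\,n(\xb) - (v\cdot n(\xb))\,\nabla_v\tb ,
\]
so $\nabla_v\tb = -\tb\,\dfrac{n(\xb)}{v\cdot n(\xb)} = -\tb\nabla_x\tb$. Plugging this back in, $\nabla_v\xb = -\tb I - v\otimes\nabla_v\tb = -\tb\big(I - v\otimes\nabla_x\tb\big) = -\tb\nabla_x\xb = -\tb\big(I - \dfrac{v\otimes n(\xb)}{v\cdot n(\xb)}\big)$, which closes the argument. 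There is no genuine obstacle here; the only points requiring care are the differentiability justification in the first step (handled by the non-characteristic condition $v\cdot n(\xb)\neq 0$) and keeping the row/column conventions of Section \ref{} straight, so that $v\otimes\nabla_x\tb$ and $v\otimes n(\xb)$ are understood as $2\times 2$ matrices. As an alternative one could use the explicit formula $\tb = \big(x\cdot v - \sqrt{(x\cdot v)^2 - |v|^2(|x|^2-1)}\,\big)/|v|^2$ valid on the disk and differentiate directly, but the implicit computation above is cleaner and carries over verbatim to a general smooth convex domain $\O$.
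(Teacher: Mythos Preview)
Your proposal is correct and follows essentially the same approach as the paper: implicit differentiation of the constraint $\xi(\xb)=0$ to obtain $\nabla_x\tb$ and $\nabla_v\tb$, followed by direct substitution into $\nabla_{x,v}\xb = \nabla_{x,v}(x-\tb v)$. Your additional remark justifying smoothness of $\tb$ via the implicit function theorem (using $v\cdot n(\xb)\neq 0$ on $\mathcal{I}$) is a welcome clarification that the paper leaves implicit.
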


\begin{proof}
Remind the definition of $\xb$ and $\tb$ 
\begin{equation*}
	\xb=x-\tb v, \quad \tb=\sup\{ s \; \vert \; x-sv \in \O\}. 
\end{equation*}
Since $\xi(x) =0$ for $x \in \p\O$, we have $\xi(\xb) = \xi(x-\tb v)=0$. Taking the $x\mbox{-}$derivative $\nabla_x$, we get 
\begin{align*}
	\nabla_x(\xi(\xb))&= (\nabla\xi)(\xb) -[ (\nabla \xi)(\xb) \cdot v]\nabla_x \tb \\ 
	&= 0,
\end{align*}
where the first equality comes from product rule in Lemma \ref{matrix notation}. Thus, we can derive 
\begin{equation*}
	\nabla_x \tb = \frac{( \nabla \xi)(\xb)}{[ (\nabla \xi)(\xb) \cdot v]} = \frac{ n(\xb)}{v \cdot  n(\xb) }. 
\end{equation*}
Similarly, taking the $v\mbox{-}$derivative $\nabla_v$ and product rule in Lemma \ref{matrix notation} yields 
\begin{equation*}
	\nabla_v(\xi(\xb)) = (\nabla \xi)(\xb)(-\tb I - v \otimes \nabla_v \tb)= 0,
\end{equation*}
which implies  $\nabla_v \tb = - \tb \frac{n(\xb)}{ v\cdot n(\xb)}.$ It follows from the calculation of $\nabla_x \tb$ and $\nabla_v \tb$ above that 
\begin{align*}
	\nabla_x \xb &= \nabla_x ( x- \tb v) = I - v \otimes \nabla_x \tb = I - \frac{v \otimes n(\xb)}{v \cdot n (\xb)} \\
	\nabla_v \xb &= \nabla_v (x-\tb v) = -\tb I - v \otimes \nabla_v \tb = -\tb \left(I - \frac{ v \otimes n(\xb)}{ v \cdot n(\xb)}\right).
\end{align*}
\end{proof}

\begin{lemma} \label{d_n} For $n(\xb(x,v))$, we have the following derivative rules,
	\begin{equation} \label{normal}
		\nabla_x [n(\xb)] =   I - \frac{v \otimes n(\xb)}{v \cdot n (\xb)},  \quad \nabla_v [n(\xb)] =  -\tb  \Big( I - \frac{v \otimes n(\xb)}{v \cdot n (\xb)} \Big),
	\end{equation}
	where $\xb=\xb(x,v)$. 
\end{lemma}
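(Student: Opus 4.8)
The plan is to reduce this lemma to the computations of $\nabla_x \xb$ and $\nabla_v \xb$ already carried out in Lemma \ref{nabla xv b}, exploiting the special geometry of the unit disk. The key observation is that on $\O = \{|x|<1\}$ the outward unit normal coincides with the position vector: from $\xi(x) = \frac12|x|^2 - \frac12$ we get $\nabla\xi(x) = x$, and since $|y| = 1$ on $\p\O$ this gives $n(y) = y$ for every $y \in \p\O$. Because the first backward bouncing point $\xb(x,v)$ always lies on $\p\O$ by its very definition, we obtain the pointwise identity $n(\xb(x,v)) = \xb(x,v)$, valid as an equality of functions of $(x,v)$ on $\{\overline{\O}\times\R^2\}\setminus\gamma_0$.

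Given this identity, the two derivative formulas are immediate: $\nabla_x[n(\xb)] = \nabla_x \xb$ and $\nabla_v[n(\xb)] = \nabla_v \xb$. Substituting the expressions from Lemma \ref{nabla xv b},
\begin{equation*}
\nabla_x \xb = I - \frac{v\otimes n(\xb)}{v\cdot n(\xb)}, \qquad \nabla_v \xb = -\tb\Big(I - \frac{v\otimes n(\xb)}{v\cdot n(\xb)}\Big),
\end{equation*}
yields exactly the claimed formulas \eqref{normal}. One may alternatively phrase the same step through the chain rule $\nabla_x[n(\xb)] = (\nabla n)(\xb)\,\nabla_x\xb$ using the ambient extension $n(y)=y$, so that $\nabla n \equiv I$ and the computation collapses to $\nabla_x \xb$ again; the outcome is identical.

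There is essentially no technical obstacle here: the lemma is really just packaging the disk-specific fact $n(y)=y$ together with Lemma \ref{nabla xv b} into the ready-to-use form that the later second-order computations will need. The only point deserving a word of care is that $n$ is a priori defined only on $\p\O$, so differentiating the composition $n(\xb)$ tacitly uses the natural extension $n(y) = \nabla\xi(y) = y$; on $\p\O$ this agrees with the unit normal, which suffices since $\xb$ takes values there.
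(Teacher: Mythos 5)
Your proof is correct, and it takes a genuinely shorter route than the paper's. The paper proves the lemma by differentiating the general expression $n(\xb)=\nabla\xi(\xb)/|\nabla\xi(\xb)|$ with the chain and product rules, arriving at $\nabla_x[n(\xb)]=\frac{1}{|\nabla\xi(\xb)|}(I-n(\xb)\otimes n(\xb))(\nabla^2\xi)(\xb)\nabla_x\xb$, and only then specializes to the disk ($|\nabla\xi|=1$, $\nabla^2\xi=I$) and uses the algebraic cancellation $(n\otimes n)\big(I-\frac{v\otimes n}{v\cdot n}\big)=0$ to collapse the product. You instead use the disk-specific identity $n(y)=y$ on $\p\O$ (which the paper itself records in the introduction), so that $n(\xb)=\xb$ as functions of $(x,v)$ and the lemma reduces verbatim to Lemma \ref{nabla xv b}. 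Both arguments are valid; yours is more economical but tied to the unit disk, whereas the paper's computation is the one you would need for a general uniformly convex domain, where $n(\xb)\neq\xb$ and the curvature term $(\nabla^2\xi)(\xb)$ genuinely enters. Your parenthetical remark about the extension of $n$ off $\p\O$ is the right thing to worry about and is harmlessly resolved here: since the range of $\nabla_x\xb=I-\frac{v\otimes n(\xb)}{v\cdot n(\xb)}$ is tangent to $\p\O$ at $\xb$, the chain-rule expression depends only on the restriction of the extension to the boundary, so any smooth extension agreeing with the unit normal there gives the same answer.
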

\begin{proof}
For $\nabla_x n(\xb)$, we apply the chain rule in Lemma \ref{matrix notation} to $(\nabla \xi)(\xb)$ and $\frac{1}{\vert (\nabla \xi)(\xb)\vert }$ respectively. Because $\nabla \xi (x) \neq 0 $ at the boundary $x \in \p\O$ in a circle, it is possible to apply the chain rule to  $\frac{1}{\vert (\nabla \xi)(\xb)\vert }$. Taking $x\mbox{-}$derivative $\nabla_x$, one obtains 
\begin{align*}
	\nabla_x [(\nabla \xi)(\xb)]  &= (\nabla ^2 \xi)(\xb)\nabla_x \xb, \\ 
	\nabla_x \left[ \frac{1}{ \vert (\nabla \xi)(\xb) \vert} \right] & =- \frac{(\nabla\xi)(\xb) (\nabla^2\xi)(\xb) \nabla_x\xb}{\vert (\nabla \xi)(\xb) \vert^3}. 
\end{align*}
Hence, 
\begin{align*}
	\nabla_x [n(\xb)] = \nabla_x \left[ \frac{ (\nabla \xi)(\xb)}{ \vert (\nabla \xi)(\xb) \vert } \right] &= \frac{1}{ \vert (\nabla \xi)(\xb) \vert } \nabla_x [ (\nabla \xi)(\xb) ] + (\nabla \xi)(\xb) \otimes \nabla_x \left [ \frac{1}{\vert (\nabla \xi)(\xb) \vert} \right] \\ 
	& = \frac{1}{ \vert (\nabla \xi)(\xb) \vert }(\nabla ^2 \xi)(\xb)\nabla_x \xb - \nabla \xi(\xb) \otimes \frac{(\nabla\xi)(\xb) (\nabla^2\xi)(\xb) \nabla_x\xb}{\vert (\nabla \xi)(\xb) \vert^3}\\	
	&= \frac{1}{ \vert (\nabla \xi)(\xb) \vert }\Big( I - n(\xb) \otimes n(\xb)\Big) (\nabla^2 \xi)(\xb) \nabla_x \xb.
\end{align*}
Since $|\nabla\xi(\xb)| =1 $ and $\nabla^{2}\xi = I_{2}$, we deduce
\begin{align*}
\nabla_x [n(\xb)] &= \Big( I - n(\xb)\otimes n(\xb) \Big) \Big( I - \frac{v \otimes n(\xb)}{v \cdot n (\xb)} \Big) \\
&= I - \frac{v \otimes n(\xb)}{v \cdot n (\xb)} - n(\xb)\otimes n(\xb) +  n(\xb)\otimes n(\xb) \\
&= I - \frac{v \otimes n(\xb)}{v \cdot n (\xb)}.  
\end{align*}
The case for $\nabla_v [n(\xb)]$ is nearly same with extra term $-\tb$ which comes from Lemma \ref{nabla xv b}. \\
\end{proof} 

\hide
\begin{lemma}
	For fixed $x\in\O$, we can classify direction $\S^{2}$ into three parts,
	\begin{equation*}
	\begin{split}
		R_{0} &:= \{ \hat{r}\in \S^{2} : \nabla_{x}\tb(x,v)\}  \\
		R_{1} &:= \{ \hat{r}\in \S^{2} : \}  \\
		R_{2} &:= \{ \hat{r}\in \S^{2} : \}
	\end{split}
	\end{equation*}
\end{lemma}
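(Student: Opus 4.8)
\emph{Remark on the statement.} As printed in the source the lemma is a stub: the three sets $R_0,R_1,R_2\subset\S^2$ are only partially written. I first fix their intended meaning, which is forced by the discussion following \eqref{solution} (``we can split directions into two pieces: one gives bouncing and the other does not'') and by the cited displays \eqref{R12_v} and \eqref{set R_vel}. Fix $x\in\O$ and a velocity $v$ whose backward trajectory meets $\p\O$ transversally, i.e. $(x,v)\notin\gamma_0$, so that $v\cdot n(\xb)\neq 0$; this is the standing nondegeneracy hypothesis inherited from $\mathcal I$. Recalling $\nabla_x\tb=n(\xb)/(v\cdot n(\xb))$ from Lemma \ref{nabla xv b}, the natural completion is
\[
R_0:=\{\hat r\in\S^2:\ \hat r\cdot\nabla_x\tb(x,v)=0\},\quad
R_1:=\{\hat r\in\S^2:\ \hat r\cdot\nabla_x\tb(x,v)>0\},\quad
R_2:=\{\hat r\in\S^2:\ \hat r\cdot\nabla_x\tb(x,v)<0\},
\]
and the claim to be proved is that $\S^2=R_0\sqcup R_1\sqcup R_2$ is a disjoint partition in which $R_1$ is exactly the set of perturbation directions along which the backward trajectory acquires one additional reflection before the evaluation time (bouncing), $R_2$ the directions keeping the same bounce count, and $R_0$ the critical ``grazing'' equator separating them. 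Since $1/(v\cdot n(\xb))\neq 0$, note $R_0=\{\hat r\cdot n(\xb)=0\}$, while the assignment of $R_1$ versus $R_2$ follows the sign convention fixed in \eqref{set R_vel}.

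\emph{Plan.} The set-theoretic partition is immediate from the trichotomy of the real number $\hat r\cdot\nabla_x\tb$, so the content is geometric. First I would make the perturbation explicit: replacing $x$ by $x+\e\hat r$ (the velocity case is identical after Lemma \ref{nabla xv b}, since $\nabla_v\tb=-\tb\,\nabla_x\tb$ lies on the same line), I Taylor-expand the bounce time to get $\tb(x+\e\hat r,v)=\tb(x,v)+\e\,\hat r\cdot\nabla_x\tb(x,v)+o(\e)$. Because $v\cdot n(\xb)\neq 0$, the exit map is $C^1$ near the unperturbed configuration, so this expansion is legitimate and its linear coefficient is exactly $\hat r\cdot\nabla_x\tb$. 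Second, I would compare the perturbed bounce time with the evaluation time: at the critical configuration the unperturbed backward trajectory reaches $\p\O$ precisely at the evaluation time, so the sign of the linear coefficient decides whether the perturbed reflection occurs before or after that time. Thus $\hat r\cdot\nabla_x\tb>0$ forces an extra reflection ($R_1$) and $\hat r\cdot\nabla_x\tb<0$ forbids it ($R_2$); the implicit function theorem applied to the exit-time equation $\xi\big((x+\e\hat r)-s v\big)=0$ supplies the required smoothness and strict monotonicity in $\e$.

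\emph{Degenerate equator and payoff.} Third, I would treat $R_0=\{\hat r\cdot n(\xb)=0\}$, the tangential perturbations, where the linear term vanishes and the bounce count is decided at second order. Differentiating $\xi\big((x+\e\hat r)-s(\e)v\big)=0$ twice and using uniform convexity of the disk ($\nabla^2\xi=I$, $|\nabla\xi(\xb)|=1$) gives $s''(0)=1/(v\cdot n(\xb))$, a quantity of definite sign; hence the quadratic coefficient of the perturbed bounce time is $\tfrac12/(v\cdot n(\xb))\neq 0$, which shows $R_0$ is a genuine codimension-one separating set (a great circle in $\S^2$, of measure zero) rather than an open region. Finally I would record the structural payoff: letting $\hat r$ range over $R_1$ versus $R_2$ produces two one-sided directional derivatives of $f$ carrying different expressions for $V(0;t,x,v)$ (reflected versus unreflected), and requiring that these agree across $R_0$ is precisely what yields the compatibility condition \eqref{C1 cond}; thus the classification is the input that drives Theorem \ref{thm 1}.

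\emph{Main obstacle.} The genuinely delicate step is the equator $R_0$: there the linearization is blind and the dichotomy must be resolved at second order, where the coefficient $1/(v\cdot n(\xb))$ blows up as $v\cdot n(\xb)\to 0$ --- exactly the reason $\gamma_0$ is excluded from $\mathcal I$. Controlling this grazing regime, and verifying that the second-order sign furnished by uniform convexity is consistent on both sides of $R_0$ so that the equator is truly one-dimensional and of measure zero, is the heart of the argument; everything else reduces to bookkeeping via Lemma \ref{nabla xv b} and the implicit function theorem.
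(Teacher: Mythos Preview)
Your completion of the stub and the core of your argument --- computing $\frac{\p}{\p\e}\tb(x+\e\hat r,v)\big|_{\e=0}=\nabla_x\tb(x,v)\cdot\hat r=\dfrac{\hat r\cdot n(\xb)}{v\cdot n(\xb)}$ via Lemma~\ref{nabla xv b} and reading off the trichotomy from the sign --- is exactly what the paper does. The paper's proof is literally that one displayed line and nothing more; the partition is immediate once the directional derivative is written down.

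Where you diverge is in everything past that. Your second-order analysis of the equator $R_0$ (computing $s''(0)=1/(v\cdot n(\xb))$ from $\nabla^2\xi=I$), the implicit-function-theorem justification, and the discussion of how the two one-sided derivatives feed into \eqref{C1 cond} are all additions not present in the paper's proof. In fact the lemma itself is commented out in the source (it sits inside a \verb|\hide|\ldots\verb|\unhide| block), and in the live text the paper abandons the three-way split in favor of the two-part decompositions \eqref{set R_vel} and \eqref{set R_sp}, where the equator is simply absorbed into the closed half via $\leq$ or $\geq$. So your careful treatment of $R_0$ as a genuine measure-zero separating set, while correct and more informative, is resolving a subtlety the paper chose to bypass rather than confront. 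Your version would make the lemma self-contained and explain \emph{why} the equator can be safely merged into either side; the paper's version just declares the two halves and moves on.
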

\begin{proof}
	(i) From Proposition \eqref{nabla xv b} 
	\[
		\frac{\p}{\p\varepsilon}\tb(x+\varepsilon\hat{r}, v)\vert_{\varepsilon=0} = \nabla_{x}\tb(x,v)\cdot\hat{r} = \frac{\hat{r}\cdot n(\xb(x,v))}{v\cdot n(\xb(x,v))}
	\]	
\end{proof}
\unhide

\section{Initial-boundary compatibility condition for $C^{1}_{t,x,v}$}


\begin{lemma} \label{lem_RA}
	Recall definition \eqref{def A} of the matrix $A_{v,x}$. We have the following identities, for $(x,v) \in \{\p\O \times \R^d\} \backslash \gamma_0$,
	\begin{equation} \label{RA}
		\begin{split}
			R_xA_{v,x} &= \frac{1}{v\cdot n(x)} Q(v\otimes v)Q^{T} = \frac{1}{v\cdot n(x)} (Qv)\otimes (Qv),  \\
			A_{v,x} R_x &=  \frac{1}{v\cdot n(x)} R_xQ(v\otimes v)Q^{T}R_x = -\frac{1}{R_xv\cdot n(x)} (QR_xv)\otimes (QR_xv),  \\
		\end{split}
	\end{equation} 
	\begin{equation} \label{A2}
		\begin{split}
			A^{2}_{v,x} &=  \frac{1}{(v\cdot n(x))^{2}} (QR_xv\otimes QR_xv)(Qv\otimes Qv),
		\end{split}
	\end{equation}
	\begin{equation} \label{Av=0}
		A_{v,x}v =0,
	\end{equation} 
where $Q := Q_{\frac{\pi}{2}}$ is counterclockwise rotation by angle $\frac{\pi}{2}$. 
\end{lemma}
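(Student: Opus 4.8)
The plan is to reduce the whole lemma to a single compact rank-one representation of $A_{v,x}$; once that is available, all of \eqref{RA}--\eqref{Av=0} become one-line consequences of a handful of elementary two-dimensional identities for the reflection $R_x$ and the rotation $Q$. So first I would assemble the toolbox. Writing $n=n(x)$ (a unit vector) and $R_x=I-2n\otimes n$, I record: $R_x^{T}=R_x$, $R_x^{2}=I$, $R_x n=-n$, $R_x(Qn)=Qn$; $Q^{T}=Q^{-1}=-Q$ and $Q^{2}=-I$; $(Qv)\cdot v=0$; and $\{n,Qn\}$ is an orthonormal basis of $\mathbb{R}^{2}$, so $I=n\otimes n+(Qn)\otimes(Qn)$. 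Two derived facts get used repeatedly: from $R_x n=-n$ and $R_x(Qn)=Qn$ one obtains $R_xQ=-QR_x$ (equivalently $Q^{T}R_xQ=-R_x$, the relation already quoted in the introduction); and from symmetry of $R_x$ with $R_x n=-n$ one obtains $(R_xv)\cdot n=v\cdot(R_xn)=-(v\cdot n)$, which is exactly the sign that reconciles the two normalizing factors $\tfrac{1}{v\cdot n}$ and $-\tfrac{1}{R_xv\cdot n}$ in \eqref{RA}.

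Next I would extract the rank-one form of $A_{v,x}$. Since $(v\otimes n)v=(v\cdot n)v$, the factor $P:=I-\tfrac{v\otimes n}{v\cdot n}$ in \eqref{def A} annihilates $v$ and fixes $Qn$ (because $n\cdot Qn=0$), so $P$ is the rank-one projection onto $n^{\perp}=\mathrm{span}(Qn)$ along $\mathrm{span}(v)$; explicitly
\[
I-\frac{v\otimes n}{v\cdot n}\;=\;\frac{(Qn)\otimes(Qv)}{v\cdot n},
\]
which one checks by evaluating both sides on $n$ and on $Qn$ (or from the $2\times2$ adjugate identity $(\mathrm{tr}\,M)\,I-M=-QM^{T}Q$ with $M=v\otimes n$, using $Q(n\otimes v)Q=-(Qn)\otimes(Qv)$). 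Plugging this into \eqref{def A} and using $(a\otimes b)(c\otimes d)=(b\cdot c)\,a\otimes d$ on $(n\otimes v)\big((Qn)\otimes(Qv)\big)=(v\cdot Qn)\,n\otimes(Qv)$ gives
\[
A_{v,x}\;=\;\Big[(Qn)+\frac{v\cdot Qn}{v\cdot n}\,n\Big]\otimes(Qv).
\]
Applying $Q^{T}$ to the bracketed vector and using $Q^{T}n=-Qn$ together with $v=(v\cdot n)n+(v\cdot Qn)(Qn)$ gives $Q^{T}\big[(v\cdot n)(Qn)+(v\cdot Qn)n\big]=2(v\cdot n)n-v=-R_xv$, so the bracketed vector is $-\tfrac{1}{v\cdot n}QR_xv$, and hence
\[
A_{v,x}\;=\;-\frac{1}{v\cdot n(x)}\,(QR_xv)\otimes(Qv).
\]

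From this representation the four claimed identities follow immediately, using only Step 1 and the rules $(a\otimes b)(c\otimes d)=(b\cdot c)\,a\otimes d$ and $(a\otimes b)M=a\otimes(M^{T}b)$. For \eqref{Av=0}, $A_{v,x}v$ is a multiple of $(Qv)\cdot v=0$. For the first line of \eqref{RA}, $R_x\big((QR_xv)\otimes(Qv)\big)=(R_xQR_xv)\otimes(Qv)=-(Qv)\otimes(Qv)$ because $R_xQR_x=-Q$, so $R_xA_{v,x}=\tfrac{1}{v\cdot n}(Qv)\otimes(Qv)=\tfrac{1}{v\cdot n}Q(v\otimes v)Q^{T}$. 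For the second line, $\big((QR_xv)\otimes(Qv)\big)R_x=(QR_xv)\otimes(R_xQv)=-(QR_xv)\otimes(QR_xv)$ (using $R_xQv=-QR_xv$), so $A_{v,x}R_x=\tfrac{1}{v\cdot n}(QR_xv)\otimes(QR_xv)$; rewriting $\tfrac{1}{v\cdot n}=-\tfrac{1}{R_xv\cdot n}$ and $(QR_xv)\otimes(QR_xv)=R_x\big((Qv)\otimes(Qv)\big)R_x=R_xQ(v\otimes v)Q^{T}R_x$ yields both stated forms. For \eqref{A2}, $A_{v,x}^{2}=\tfrac{1}{(v\cdot n)^{2}}\big((Qv)\cdot(QR_xv)\big)(QR_xv)\otimes(Qv)=\tfrac{1}{(v\cdot n)^{2}}(v\cdot R_xv)\,(QR_xv)\otimes(Qv)$, which is precisely $\tfrac{1}{(v\cdot n)^{2}}\big((QR_xv)\otimes(QR_xv)\big)\big((Qv)\otimes(Qv)\big)$.

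I do not expect a genuine obstacle: the argument is essentially bookkeeping. The one mildly substantive step — and the engine of the whole proof — is the rank-one identity for $P=I-\tfrac{v\otimes n}{v\cdot n}$ in the second paragraph; after that, the only points requiring care are retaining the transpose in $(a\otimes b)M=a\otimes(M^{T}b)$ when $R_x$ or $Q$ hits a tensor product from the right, and the sign flip $(R_xv)\cdot n=-(v\cdot n)$ that makes the two normalizations in \eqref{RA} consistent.
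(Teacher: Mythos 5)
Your proof is correct and is in essence the same argument as the paper's: both exploit the two\-/dimensional rank\-/one structure (the paper via $(v\cdot n)I-n\otimes v=Qv\otimes Qn$ after conjugating by $R_x$, you via $I-\frac{v\otimes n}{v\cdot n}=\frac{(Qn)\otimes(Qv)}{v\cdot n}$) together with the anticommutation $R_xQ=-QR_x$. The only difference is organizational: you isolate the explicit factorization $A_{v,x}=-\frac{1}{v\cdot n}(QR_xv)\otimes(Qv)$, which the paper never states but which makes \eqref{RA}, \eqref{A2}, and \eqref{Av=0} immediate one\-/liners.
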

\begin{proof}
	We compute
	\begin{equation*}
		\begin{split}
			R_xA_{v,x}R_x &:= \left[\left((v\cdot n(x))I - (n(x) \otimes v) \right)\left(I + \frac{v\otimes n(x)}{v\cdot n(x)}\right)\right] \\
			&= \big(Qv \otimes Qn(x)\big)\left(I + \frac{v\otimes n(x)}{v\cdot n(x)}\right).
		\end{split}
	\end{equation*}
	Now let us define $\tau(x)= Q_{-\frac{\pi}{2}}n(x)$ as tangential vector at $x\in\p\O$. ($n$ as y-axis and $\tau$ as x-axis) Then,
	\begin{equation*}
		\begin{split}
			R_xA_{v,x}R_x &:=  Qv \otimes \Big( -\tau - \frac{v\cdot\tau}{v\cdot n(x)}n(x) \Big) \\
			&= -\frac{1}{v\cdot n(x)} Qv\otimes \Big( (v\cdot n(x))\tau + (v\cdot\tau)n(x) \Big)  \\
			&= -\frac{1}{v\cdot n(x)} Qv\otimes  \big( R_xQ^{T}v \big) \\
			&= \frac{1}{v\cdot n(x)} Qv\otimes  \big( R_xQv \big) \\
			&= \frac{1}{v\cdot n(x)} Q(v\otimes v)Q^{T}R_x, \\
		\end{split}
	\end{equation*}
	and we get \eqref{RA} using $R_xQ=-R_xQ^T$, because
	\[
	Q^{T}R_xQ = I - 2Q^{T}(n(x)\otimes n(x))Q = I - 2\tau\otimes\tau = -R_x.  \\
	\]
	\eqref{A2} is simply obatined by \eqref{RA}. By definition of $A_{v,x}$ in \eqref{def A}, one obtains that 
	\begin{align*}
		A_{v,x}v =  \left[\left((v\cdot n(x))I+(n(x) \otimes v) \right)\left(I-\frac{v\otimes n(x)}{v\cdot n(x)}\right)\right]v=\left((v\cdot n(x))I+(n(x)\otimes v))\right)(v-v)=0.
	\end{align*}
\end{proof}
Now, throughout this section, we study $C^{1}_{t,x,v}(\R_{+}\times \O\times \R^{2})$ of $f(t,x,v)$ of \eqref{solution} when
\Be \label{t1 zero}
	0 = t^{1}(t,x,v) \ \text{or equivalently} \ t = \tb(x,v).
\Ee

\subsection{$C^{1}_{v}$ condition of $f$}
Since we assume \eqref{t1 zero}, $X(0;t,x,v) = x^{1}(x,v) = \xb(x,v) \in \partial \O$. To derive compatibility condition for $C^{1}_{v}$ of $f(t,x,v)$, we consider $v$-perturbation and use the following notation for perturbed trajectory: 
\begin{equation} \label{XV epsilon v}
	X^{\epsilon}(0) := X(0;t,x,v+\epsilon \hat{r}) , \quad V^{\epsilon}(0):=V(0;t,x,v+\epsilon \hat{r} ), 
\end{equation}
where $\hat{r}\in\R^{2}$ is a unit-vector. As $\epsilon \rightarrow 0$, we simply get
\begin{equation*} 
	\lim_{\epsilon \rightarrow 0} X(0;t,x,v+\epsilon \hat{r}) = x^{1}(x,v) = \xb(x,v),
\end{equation*}
from continuity of $X(0;t,x,v)$ in $v$. However, $V(0;t,x,v)$ is not continuous in $v$ because of \eqref{BC}. Explicitly, from Lemma \ref{nabla xv b},
\Be \label{R12_v}
\frac{\p}{\p\varepsilon}\tb(x, v+\varepsilon\hat{r})\vert_{\varepsilon=0} = \nabla_{v}\tb(x,v)\cdot\hat{r} = -\tb\frac{\hat{r}\cdot n(\xb(x,v))}{v\cdot n(\xb(x,v))},\quad \text{where}\quad v\cdot n(\xb(x,v)) < 0.
\Ee	
So we define, for fixed $(x,v)$, $v\neq 0$,
\begin{equation} \label{set R_vel}
\begin{split}
	R_{vel, 1} &:= \{ \hat{r}\in \S^{2} :  \hat{r}\cdot n(\xb(x,v)) < 0  \},  \\
	R_{vel, 2} &:= \{ \hat{r}\in \S^{2} :  \hat{r}\cdot n(\xb(x,v)) \geq 0  \}.  \\
\end{split}
\end{equation}
Then from \eqref{R12_v}, $\nabla_{v}\tb(x,v)\cdot\hat{r} > 0$ when $\hat{r}\in R_{vel, 1}$ and $\nabla_{v}\tb(x,v)\cdot\hat{r} \leq 0$ when $\hat{r}\in R_{vel, 2}$. Therefore, for two unit vectors $\hat{r}_1\in R_{vel, 1}$ and $\hat{r}_2\in R_{vel, 2}$, by continuity argument,
\begin{equation*} 
	\lim_{\epsilon \rightarrow 0+} V(0;t,x,v+\epsilon \hat{r}_1) = v, \quad \lim_{\epsilon \rightarrow 0+} V (0;t,x,v+\epsilon \hat{r}_2) = v^1=R_{x^{1}}v.  \\
\end{equation*}

We consider directional derivatives with respect to $\hat{r}_1$ and $\hat{r}_2$. If $f$ belongs to the $C^1_v$ class, $\nabla_v f(t,x,v)$ exists and directional derivatives of $f$ with respect to $\hat{r}_1,\hat{r}_2$ will be $\nabla_v f(t,x,v) \hat{r}_1,\;\nabla_v f(t,x,v) \hat{r}_2$. Using \eqref{BC}, we have $f_{0}(x^{1}, v) = f_{0}(x^{1}, v^{1})$ and hence
\begin{align*}
	\nabla_v f(t,x,v) \hat{r}_1 &= \lim _{\epsilon\rightarrow 0+} \frac{1}{\epsilon}\left ( f(t,x,v+\epsilon \hat{r}_1) - f(t,x,v) \right )\\ 
	&=\lim_{\epsilon \rightarrow 0+} \frac{1}{\epsilon}\left( f_0(X(0;t,x,v+\epsilon \hat{r}_1),V(0;t,x,v+\epsilon \hat{r}_1)) - f_0(X(0;t,x,v),V(0;t,x,v)) \right)\\
	&=\lim_{\epsilon \rightarrow 0+} \frac{1}{\epsilon} \left( f_0(X^{\epsilon}(0), V^{\epsilon}(0))- f_0 (X^{\epsilon}(0),v)+f_0(X^{\epsilon}(0),v) -f_0(X(0),v) \right) \\
	&=\nabla_x f_0(X(0),v) \cdot \lim_{s\rightarrow 0+} \nabla_v X(s) \hat{r}_1+ \nabla_v f_0(X(0),v) \lim_{s \rightarrow 0+} \nabla_v V(s)\hat{r}_1,  \\
	\nabla_v f(t,x,v) \hat{r}_2 &= \lim _{\epsilon\rightarrow 0+} \frac{1}{\epsilon}\left ( f(t,x,v+\epsilon \hat{r}_2) - f(t,x,v) \right )\\ 
	&=\lim_{\epsilon \rightarrow 0+} \frac{1}{\epsilon}\left( f_0(X(0;t,x,v+\epsilon \hat{r}_2),V(0;t,x,v+\epsilon \hat{r}_2)) - f_0(X(0;t,x,v),V(0;t,x,v)) \right)\\
	&=\lim_{\epsilon \rightarrow 0+} \frac{1}{\epsilon} \left( f_0(X^{\epsilon}(0), V^{\epsilon}(0))- f_0 (X^{\epsilon}(0),v^{1})+f_0(X^{\epsilon}(0),v^{1}) -f_0(X(0),v^{1}) \right) \\
	&=\nabla_x f_0(X(0),v^{1}) \cdot \lim_{s\rightarrow 0-} \nabla_v X(s) \hat{r}_2+ \nabla_v f_0(X(0),v^{1}) \lim_{s \rightarrow 0-} \nabla_v V(s)\hat{r}_2,
\end{align*}
which implies
\begin{eqnarray}  
		&& \nabla_v f(t,x,v) =\nabla_x f_0(X(0),v) \lim_{s\rightarrow 0+} \nabla_v X(s)+ \nabla_v f_0(X(0),v) \lim_{s \rightarrow 0+} \nabla_v V(s), \label{case12 r1}\\
		&& \nabla_v f(t,x,v) =\nabla_x f_0(X(0),v^{1})  \lim_{s\rightarrow 0-} \nabla_v X(s)+ \nabla_v f_0(X(0),v^{1}) \lim_{s \rightarrow 0-} \nabla_v V(s). \label{case12 r2}
\end{eqnarray}

\noindent Since
\Be \label{nabla XV_v+}
	\lim_{s\rightarrow 0+} \nabla_v X(s)= \lim_{s\rightarrow 0+}\nabla_{v}(x-v(t-s)) = -t I_{2\times 2}, \quad \lim_{s \rightarrow 0+} \nabla_v V(s) = \lim_{s \rightarrow 0+} \nabla_v v =  I_{2\times 2},
\Ee
$\nabla_v f(t,x,v)$ of \eqref{case12 r1} becomes
\begin{equation} \label{c_1}
	\nabla_v f(t,x,v) = -t \nabla_x f_0(X(0),v) + \nabla_v f_0(X(0),v). \\
\end{equation} 
For \eqref{case12 r2}, using the product rule in Lemma \ref{matrix notation} and \eqref{normal} in Lemma \ref{d_n}, we have 
\begin{equation} \label{nabla XV_v-}
\begin{split}
	\lim_{s\rightarrow 0-} \nabla_v X(s)& = \lim_{s\rightarrow 0-} \nabla_v (x^1 - (t^1+s)v^1)= \lim_{s\rightarrow 0-} \nabla_v x^1 +  v^{1}\otimes\nabla_{v}\tb \\
	&= -t\left(I-\frac{v\otimes n(x^1)}{v\cdot n(x^1)}\right) -  t\frac{v^1 \otimes n(x^1)}{ v \cdot n(x^1)} \\
	&= -t \Big( I -\frac{1}{v\cdot n(x^1)} \big( 2(v\cdot n(x^{1}))n(x^{1}) \big)\otimes n(x^{1}) \Big) = -tR_{x^{1}}, \\
	\lim_{s \rightarrow 0-} \nabla_v V(s) &= \lim_{s \rightarrow 0-} \nabla_v (R_{x^{1}}v) \\
	&=\lim_{s \rightarrow 0-} \left( I- 2(v \cdot n(x^1) ) \nabla_v n(x^1) -2 n(x^1)\otimes n(x^1) -2 (n(x^1)\otimes v ) \nabla_v n(x^1)\right)\\
	&=R_{x^{1}} +2t(v\cdot n(x^1)) \left( I - \frac{ v \otimes n(x^1)}{ v \cdot n(x^1)} \right) +2t (n(x^1) \otimes v)\left( I - \frac{ v \otimes n(x^1)}{ v \cdot n(x^1)} \right) \\
	&= R_{x^{1}} + 2t A_{v, x^{1}},
\end{split}
\end{equation}
where $A_{v, x^{1}}$ is defined in \eqref{def A}. Hence, using \eqref{nabla XV_v-}, $\nabla_v f(t,x,v)$ in \eqref{case12 r2} becomes
\begin{align} \label{c_2} 
	\begin{split}
	\nabla_v f(t,x,v) &= -t \nabla_x f_0(X(0),R_{x^{1}}v) R_{x^{1}} \\
	&\quad +\nabla_v f_0(X(0),R_{x^{1}}v)R_{x^{1}} + t\nabla_v f_0(X(0),R_{x^{1}}v)\left[2\left((v\cdot n(x^1))I+(n(x^1) \otimes v) \right)\left(I-\frac{v\otimes n(x^1)}{v\cdot n(x^1)}\right)\right] \\
	&= -t \nabla_x f_0(x^{1},R_{x^1}v) R_{x^{1}} + \nabla_v f_0(x^{1},R_{x^{1}}v) (R_{x^{1}} + 2tA_{v, x^{1}}),
	\end{split}
\end{align}
where we used $v\cdot n (x^1) = - v^1 \cdot n(x^1)$. Meanwhile, taking $\nabla_{v}$ to specular reflection \eqref{BC} directly, we get
\Be \label{comp_v}
\nabla_{v}f_{0}(x,v) =\nabla_{v}f_{0}(x,R_{x}v)R_{x}, \quad \forall x\in\p\O. 
\Ee
Comparing \eqref{c_1}, \eqref{c_2}, and \eqref{comp_v}, we deduce  
\begin{align} \label{c_v}
	\begin{split}
	\nabla_x f_0( x^{1},v) &= \nabla_x f_0(x^{1},R_{x^1}v) R_{x^{1}}   - 2\nabla_v f_0(x^{1}, R_{x^{1}}v)A_{v,x^{1}},\quad (x^{1}, v)\in \gamma_{-}.
	\end{split}
\end{align}

\subsection{$C^{1}_{x}$ condition of $f$}
Recall we assumed \eqref{t1 zero}. Similar to previous subsection, we define $x$-perturbed trajectory,  
\begin{equation} \label{XV epsilon x}
	X^{\epsilon}(0) :=X(0;t,x+\epsilon \hat{r}, v ), \quad V^{\epsilon}(0) := V(0;t,x+\epsilon \hat{r}, v),
\end{equation}
where $\hat{r}\in\R^{2}$ is a unit-vector. As $\epsilon \rightarrow 0$, we simply get
\begin{equation*} 
	\lim_{\epsilon \rightarrow 0} X(0;t,x+\epsilon\hat{r},v) = x^{1}(x,v).
\end{equation*}
Similar to previous subsection, using Lemma \ref{nabla xv b},
\Be \label{R12_x}
\frac{\p}{\p\varepsilon}\tb(x+\varepsilon\hat{r}, v)\big\vert_{\varepsilon=0} = \nabla_{x}\tb(x,v)\cdot\hat{r} = \frac{\hat{r}\cdot n(\xb(x,v))}{v\cdot n(\xb(x,v))},\quad \text{where}\quad v\cdot n(\xb(x,v)) < 0.
\Ee	
So we define, for fixed $(x,v)$, $v\neq 0$,
\begin{equation} \label{set R_sp}
\begin{split}
R_{sp, 1} &:= \{ \hat{r}\in \S^{2} :  \hat{r}\cdot n(\xb(x,v)) > 0  \},  \\
R_{sp, 2} &:= \{ \hat{r}\in \S^{2} :  \hat{r}\cdot n(\xb(x,v)) \leq 0  \}.  \\
\end{split}
\end{equation}
Then from \eqref{R12_x}, $\nabla_{x}\tb(x,v)\cdot\hat{r} > 0$ when $\hat{r}\in R_{sp, 1}$ and $\nabla_{x}\tb(x,v)\cdot\hat{r} \leq 0$ when $\hat{r}\in R_{sp, 2}$. Therefore, for two unit vectors $\hat{r}_1\in R_{sp, 1}$ and $\hat{r}_2\in R_{sp, 2}$, by continuity argument,
\begin{equation*}
\lim_{\epsilon \rightarrow 0+} V(0;t,x,v+\epsilon \hat{r}_1) = v, \quad \lim_{\epsilon \rightarrow 0+} V (0;t,x,v+\epsilon \hat{r}_2) = v^1=R_{x^{1}}v. 
\end{equation*}

Using similar arguments in previous subsection, we obtain 
\begin{eqnarray}
&& \nabla_x f(t,x,v) \hat{r}_{1} =\nabla_x f_0(X(0),v) \lim_{s\rightarrow 0+} \nabla_x X(s)\hat{r}_{1} + \nabla_v f_0(X(0),v) \lim_{s \rightarrow 0+} \nabla_x V(s)\hat{r}_{1},  \label{case12 r1 x}\\ 
		&& \nabla_x f(t,x,v)\hat{r}_{2} =\nabla_x f_0(X(0),Rv)  \lim_{s\rightarrow 0-} \nabla_x X(s) \hat{r}_{2} + \nabla_v f_0(X(0),Rv) \lim_{s \rightarrow 0-} \nabla_x V(s) \hat{r}_{2}. \label{case12 r2 x} 
\end{eqnarray}
Since 
\Be \label{nabla XV_x+}
	\lim_{s\rightarrow 0+} \nabla_x X(s) = I_{2\times 2},\quad \lim _{s \rightarrow 0+} \nabla_x V(s)=0_{2 \times 2},
\Ee
$\nabla_{x}f(t,x,v)$ of \eqref{case12 r1 x} becomes
\begin{equation} \label{c_3}
	\nabla_x f(t,x,v) = \nabla_x f_0(X(0),v). 
\end{equation}
For $\nabla_{x}f(t,x,v)$ of \eqref{case12 r2 x}, we apply the product rule in Lemma \ref{matrix notation} and \eqref{normal} in Lemma \ref{d_n} to get
\begin{equation} \label{nabla XV_x-}
\begin{split}
\lim_{s\rightarrow 0-} \nabla_x X(s)& = \lim_{s\rightarrow 0-} \nabla_x (x^1 - (t^1+s)v^1)=\left(I-\frac{v\otimes n(x^1)}{v\cdot n(x^1)}\right)  + \frac{v^1 \otimes n(x^1)}{ v \cdot n(x^1)} = R_{x^{1}},\\ 
	\lim_{s \rightarrow 0-} \nabla_x V(s) &= \lim_{s \rightarrow 0-} \nabla_x (R_{x^{1}}v) \\
	&=\lim_{s \rightarrow 0-} \left( - 2(v \cdot n(x^1) ) \nabla_x n(x^1) -2 (n(x^1)\otimes v ) \nabla_x  n(x^1)\right)\\
	&=-2(v\cdot n(x^1)) \left( I - \frac{ v \otimes n(x^1)}{ v \cdot n(x^1)} \right) -2 (n(x^1) \otimes v)\left( I - \frac{ v \otimes n(x^1)}{ v \cdot n(x^1)} \right) \\
	&= -2 A_{v,x^{1}}.
\end{split}
\end{equation}
Hence, using \eqref{nabla XV_x-}, $\nabla_x f(t,x,v)$ in \eqref{case12 r2 x} becomes
\begin{align} \label{c_4}
	\begin{split}
		\nabla_x f(t,x,v) &= \nabla_x f_0(X(0), R_{x^{1}}v) R_{x^{1}}  - 2\nabla_v f_0(X(0),R_{x^{1}}v) A_{v,x^{1}}.
	\end{split}
\end{align}
Combining \eqref{c_3} and \eqref{c_4}, 
\begin{align}\label{c_x}
\begin{split}
	\nabla_x f_0( x^{1},v) &= \nabla_x f_0(x^{1},R_{x^{1}}v) R_{x^{1}}  - 2\nabla_v f_0(x^{1}, R_{x^{1}}v) A_{v,x^{1}},\quad (x^{1}, v)\in \gamma_{-},
	\end{split}
\end{align}	
which is identical to \eqref{c_v}. \\

\hide
which exactly coincides with \eqref{c_v}. We rewrite compatibility condition as 
\Be
	\begin{split}
		\nabla_x f_0( x^{1},v) &= \nabla_x f_0(x^{1},R_{x^{1}}v) R_{x^{1}} 
		- \nabla_v f_0(x^{1}, R_{x^{1}}v)\left[2\left((v\cdot n(x^1))I+(n(x^1) \otimes v) \right)\left(I-\frac{v\otimes n(x^1)}{v\cdot n(x^1)}\right)\right],\quad (x^{1},v) \in \gamma_{-}.
	\end{split}
\Ee
\unhide

\subsection{$C^{1}_{t}$ condition of $f$} 
To check the $C^1_t$ condition, we define
\begin{align}\label{Perb_t}
	X^\epsilon(0):=X(0;t+\epsilon, x,v), \quad V^\epsilon(0):= V(0;t+\epsilon,x,v). 
\end{align}
More specifically, 
\begin{align*}
	X^\epsilon(0)=x^1-(t^1+\epsilon) R_{x^{1}}v, \quad V^\epsilon(0)= R_{x^{1}}v,\quad \epsilon > 0,
\end{align*}
and 
\begin{align*}
	X^\epsilon(0)=x-(t+\epsilon)v,\quad  V^\epsilon(0)=v,\quad \epsilon < 0. 
\end{align*}
Thus, the case ($\epsilon>0$) describes the situation after bounce (backward in time) and the case ($\epsilon<0$) describes the situation just before bounce (backward in time). Then, for $\epsilon>0$, 

\begin{align*}
	f_t(t,x,v)&= \lim_{\epsilon\rightarrow0+}\frac{f(t+\epsilon,x,v)-f(t,x,v)}{\epsilon} \\
	&=\lim_{\epsilon\rightarrow 0+} \frac{f_0(X^\epsilon(0),V^\epsilon(0))-f_0(X(0),V(0))}{\epsilon}\\
	&=\lim_{\epsilon\rightarrow 0+} \frac{f_0(X^\epsilon(0),R_{x^{1}}v)-f_0(X(0),R_{x^{1}}v)}{\epsilon}\\
	&=\nabla_x f_0(x^1,R_{x^{1}}v) \lim_{\epsilon \rightarrow 0+} \frac{X^\epsilon(0)-X(0)}{\epsilon}\\
	&=-\nabla_x f_0(x^1,R_{x^{1}}v)R_{x^{1}}v. 
\end{align*}
We only consider the situation just before collision and then  
\begin{align*}
	f_t(t,x,v)&= \lim_{\epsilon\rightarrow0-}\frac{f(t+\epsilon,x,v)-f(t,x,v)}{\epsilon} \\
	&= \lim_{\epsilon\rightarrow0-}\frac{f_0(X^\epsilon(0),v)-f_0(X(0),v)}{\epsilon}\\
	&= \nabla_xf_0(x^1,v)  \lim_{\epsilon\rightarrow0-} \frac{X^\epsilon(0)-X(0)}{\epsilon}\\
	&=-\nabla_x f_0(x^1,v)v.
\end{align*}
Thus, we derive a $C^1_t$ condition 
\begin{equation} \label{c_t}
	 \nabla_x f_0(x^1,v)v = \nabla_x f_0(x^1,R_{x^{1}}v)R_{x^{1}}v,\quad (x^{1}, v)\in \gamma_{-}.
\end{equation}
Actually, \eqref{c_t} is just particular case of \eqref{c_v}, because of \eqref{Av=0}.

\hide
{\color{blue}
\begin{remark} \label{trivial case}
Let us consider trivial case : $f(t,x,v) = f_{0}(v)$, spatially independent case. Since specular reflection  holds for all $x\in \p\O$, $f_{0}$ should be radial function, $f_{0}(v) = f_{0}(|v|)$. \eqref{Cond} also holds for this case, because vector $\nabla_{v}f_{0}(x^{1},Rv)$ has $Rv$ direction and
\Be 
\begin{split}
	&\underbrace{(Rv)}_{\text{row vector}}\left((v\cdot n(x^1))I+(n(x^1) \otimes v) \right)\left(I-\frac{v\otimes n(x^1)}{v\cdot n(x^1)}\right)  \\
	&= \left((v\cdot n(x^1)) (Rv) + (Rv\cdot n(x^1)) v \right)\left(I-\frac{v\otimes n(x^1)}{v\cdot n(x^1)}\right)  \\
	&= (v\cdot n(x^1)) Rv - (v\cdot n(x^{1}))v - (Rv\cdot v)n(x^{1}) + |v|^{2}n(x^{1})  \\
	&= (v\cdot n(x^1)) \big(v - 2(v\cdot n(x^{1}))n(x^{1}) \big)- (v\cdot n(x^{1}))v - \big(|v|^{2} - 2|v\cdot n(x^{1})|^{2}\big) n(x^{1}) + |v|^{2}n(x^{1})  \\
	&= 0.
\end{split}
\Ee
\end{remark}
} 
\unhide



\subsection{Proof of  Theorem \ref{thm 1}}

\begin{proof} [Proof of Theorem \ref{thm 1}]
		If $0 \neq t^{k}$ for any $k\in \mathbb{N}$, then $X(0;t,x,v)$ and $V(0;t,x,v)$ are both smooth function of $(t,x,v)$. By chain rule and $f_0\in C^{1}_{x,v}$, $f(t,x,v)$ of \eqref{solution} is also $C^{1}_{t,x,v}$. \\
		Now let us assume $ 0 = t^{k}(t,x,v)$ for some $k\in \mathbb{N}$. From discontinuous property of $V(0;t,x,v)$, we consider the following two cases:
		\begin{align*}
			& \quad \lim_{s\rightarrow 0+} \nabla_v V(s) \textcolor{blue}{ ( \text{or} \ \nabla_vX(s))} = \underbrace{ \lim_{s\rightarrow 0+} \frac{\partial V(s)\textcolor{blue}{( \text{or} \  \partial X(s))}}{\partial(t^{k-1},x^{k-1}, v^{k-1})} } \frac{\partial(t^{k-1},x^{k-1}, v^{k-1})}{\partial v},\\
			& \quad \lim_{s\rightarrow 0-} \nabla_v V(s) \textcolor{blue}{( \text{or} \ \nabla_vX(s))}= \underbrace{  \lim_{s\rightarrow 0-} \frac{\partial V(s)\textcolor{blue}{( \text{or} \ \partial X(s))}}{\partial(t^{k},x^{k}, v^{k})}\frac{\partial(t^{k},x^k,v^k)}{\partial(t^{k-1},x^{k-1},v^{k-1})} } \frac{\partial(t^{k-1},x^{k-1}, v^{k-1})}{\partial v}.
		\end{align*}
		First, we note that the factor $\displaystyle \frac{\partial(t^{k-1},x^{k-1}, v^{k-1})}{\partial v}$ which is common for both of above is smooth. From Lemma \ref{nabla xv b}, $t^{1}(t,x,v) = t - \tb(x,v)$, $x^{1}(x,v) = x - \tb(x,v) v$,  and $v^{1}(x,v) = R_{\xb(x,v)}v$ are all smooth functions of $(x,v)$ if $(t^{1}, x^{1}, v^{1})$ is nongrazing at $x^{1}$. Now, let us consider the mapping 
		\[
			(t^{1}, x^{1}, v^{1}) \mapsto 	(t^{2}, x^{2}, v^{2}) 
		\]
		which is smooth by 
		\[
			t^{2} = t^{1} - \tb(x^{1}, v^{1}),\quad x^{2} = x^{1} - v^{1}\tb(x^{1}, v^{1}),\quad v^{2} = R_{x^{2}}v^{1}.
		\]
		(Note that the derivative of $\tb$ on $\p\O \times \R^{3}_{v}$ can be performed by its local parametrization.)
		By the chain rule, we easily derive that $(t^{k}, x^{k}, v^{k})$ is smooth in $(x, v)$. For explicit computation and their Jacobian, we refer to \cite{KimLee}. 
		Now, it suffices to compare above two underbraced terms only. {\bf It means that no generality is lost by setting $k=1$.}  \\ 

	Initial-boundary compatibility conditions for $C^{1}_{t,x,v}$ were obtained in \eqref{c_v}, \eqref{c_x}, and \eqref{c_t}. Since compatibility conditions \eqref{c_x} and \eqref{c_t} are covered by \eqref{c_v}, $f(t,x,v)\in C^{1}_{t,x,v}$ once \eqref{c_v} holds. To change \eqref{c_v} into more symmetric presentation \eqref{C1 cond}, we apply \eqref{comp_v} and multiply invertible matrix $R_{x^{1}}$ on both sides from the right to obtain
	\begin{equation*}
	\begin{split}
		&\big( \nabla_x f_0(x^{1},v) + \nabla_v f_0(x^{1}, v) R_{x^1}A_{v,x^{1}} \big) R_{x^1} = \nabla_x f_0(x^1, R_{x^1}v) 
		- \nabla_v f_0(x^1, R_{x^1}v) A_{v,x^{1}}R_{x^1}.  \\
	\end{split}
	\end{equation*}
	This yields
	\begin{equation*}
	\begin{split}
		\Big[ \nabla_x f_0( x,v) + \nabla_v f_0(x, v) \frac{ (Qv)\otimes (Qv) }{v\cdot n(x)}  \Big]R_x 
		&=
		\nabla_x f_0(x, R_xv) 
		+ \nabla_v f_0(x, R_xv) \frac{ (QR_xv)\otimes (QR_xv) }{R_xv\cdot n(x)},
	\end{split}
	\end{equation*}
	by \eqref{RA}. \\
	
	Now we claim that compatibility condition \eqref{c_v} also holds for $(x^{1}, v)\in \gamma_{+}$. 
	By multiplying $R_{x^{1}}$ both sides and using $R^2_{x^1} = I$, $R_{x^1}n(x^1) = -n(x^1)$, and \eqref{comp_v}, we obtain
	\Be \label{C1 gamma+}
	\begin{split}
		\nabla_x f_0( x^{1}, R_{x^1}v) &= \nabla_x f_0(x^{1}, v) R_{x^{1}} 
		+ 2 \nabla_v f_0(x^{1}, v) \underbrace{ R_{x^1}\left[\left((v\cdot n(x^1))I+(n(x^1) \otimes v) \right)\left(I-\frac{v\otimes n(x^1)}{v\cdot n(x^1)}\right)\right] R_{x^1} }.
	\end{split}
	\Ee
	Since $R_{x^1}=R^{T}_{x^1}$ (transpose), the underbraced term is written as
	\Be 
	\begin{split}
		&R_{x^{1}} A_{v,x^{1}} R_{x^{1}} \\
		&= R_{x^{1}} \left[\left((v\cdot n(x^1))I+(n(x^1) \otimes v) \right)\left(I-\frac{v\otimes n(x^1)}{v\cdot n(x^1)}\right)\right] R_{x^{1}}  \\
		&= -(R_{x^1}v\cdot n(x^{1}))I - R_{x^1}v\otimes R_{x^1}n(x^{1}) + R_{x^1}n(x^{1})\otimes R_{x^1}v - \frac{R_{x^1}n(x^{1})\otimes R_{x^1}n(x^{1})}{v\cdot n(x^{1})}|R_{x^1}v|^{2}  \\
		&= - \left[\left((R_{x^1}v\cdot n(x^1))I+(n(x^1) \otimes R_{x^1}v) \right)\left(I-\frac{R_{x^1}v\otimes n(x^1)}{R_{x^1}v\cdot n(x^1)}\right)\right] \\
		&= -A_{R_{x^1}v,x^{1}},
	\end{split}
	\Ee
	and hence \eqref{C1 gamma+} is identical to \eqref{c_v} when $(x^{1},v)\in \gamma_{+}$. 
	
	Finally, we will prove that $f(t,x,v)$ is not of class $C^1_{t,x,v}$ at time $t$ such that $t^k(t,x,v)=0$ for some $k$ if \eqref{C1 cond} does not hold. As we used the chain rule, we set $t^1(t,x,v)=0$. Thus, it suffices to prove that $f(t,x,v)$ is not of class $C^1_{t,x,v}$ at time $t$ which satisfies $t^1(t,x,v)=0$ if \eqref{C1 cond} is not satisfied for $(X(0;t,x,v),v)\in \gamma_-$. Remind directional derivatives with respect to $\hat{r}_1$ and $\hat{r}_2$ to get $f \in C^1_{t,x,v}(\R_+\times \mathcal{I})$. In $C^1_v$ case, we deduced two conditions \eqref{case12 r1} and \eqref{case12 r2} from directional derivatives. However, if initial data $f_0$ does not satisfy the condition \eqref{C1 cond} at $(X(0;t,x,v),v)\in \gamma_-$, two conditions cannot coincide. It means that $f(t,x,v)$ is not $C^1_v$ at $t$ such that $t^1(t,x,v)=0$. Similar to $C^1_{t,x}$ cases, we get the same result. 
\end{proof}

\section{Initial-boundary compatibility condition for $C^{2}_{t,x,v}$}
As mentioned in the beginning of the previous section, we treat the problem \eqref{eq} as 2D problem in a unit disk $\{x\in\R^{2} : |x| < 1 \}$. 
And, throughout this section, we use the following notation to interchange column and row for notational convenience,
\[
	\begin{pmatrix}
	a \\ b
	\end{pmatrix}
	\stackrel{c\leftrightarrow r}{=}	
	\begin{pmatrix}
	a & b
	\end{pmatrix}
	,\quad 
	\begin{pmatrix}
	a & b
	\end{pmatrix}
	\stackrel{r\leftrightarrow c}{=}	
	\begin{pmatrix}
	a \\ b
	\end{pmatrix}.  \\
\]

Similar to previous section, we assume \eqref{t1 zero}, i.e., $0=t^{1}(t,x,v)$. We also assume $f_0$ satisfies specular reflection \eqref{BC} and $C^{1}_{t,x,v}$ compatibility condition \eqref{c_v} (or \eqref{C1 cond}) in this section. \\

\subsection{Condition for $\nabla_{xv}$}
Similar to previous section, we split perturbed direction into \eqref{set R_sp}. We also note that $\nabla_{v}f(t,x,v)$ can be written as \eqref{c_1} or \eqref{c_2}, which are identical by assuming \eqref{c_v}. First, using \eqref{c_1}, $\hat{r}_{1}$ of \eqref{set R_sp}, and notation \eqref{XV epsilon x}

\begin{equation} \label{nabla_xv f case1}
\begin{split}
	&\nabla_{xv} f(t,x,v) \hat{r}_1 \stackrel{c\leftrightarrow r}{=}	 \lim _{\epsilon\rightarrow 0+} \frac{1}{\epsilon}\left ( \nabla_{v}f(t,x+\epsilon \hat{r}_1,v) - \nabla_{v}f(t,x,v) \right ) \\ 
	&=\lim_{\epsilon \rightarrow 0+} \frac{1}{\epsilon}\Big( \nabla_{v}\big[ f_0(X(0;t,x+\epsilon \hat{r}_1,v),V(0;t,x+\epsilon \hat{r}_1,v)) \big]  - \big(   -t \nabla_x f_0(X(0),v) + \nabla_v f_0(X(0),v) \big)\Big)  \\
	&=\lim_{\epsilon \rightarrow 0+} \frac{1}{\epsilon} \Big\{ \nabla_{x}f_{0}(X^{\varepsilon}(0), V^{\varepsilon}(0))  \nabla_{v}X^{\varepsilon}(0) + \nabla_{v}f_{0}(X^{\varepsilon}(0), V^{\varepsilon}(0)) \nabla_{v}V^{\varepsilon}(0) \\
	&\quad\quad\quad\quad  - \big(   -t \nabla_x f_0(X(0),v) + \nabla_v f_0(X(0),v) \big) \Big\}   \\
	&=\lim_{\epsilon \rightarrow 0+} \frac{1}{\epsilon} \Big\{ -t\big[ \nabla_{x}f_{0}(X^{\varepsilon}(0), V^{\varepsilon}(0)) - \nabla_{x}f_{0}(X(0), v) \big]
	+
	\big[ \nabla_{v}f_{0}(X^{\varepsilon}(0), V^{\varepsilon}(0)) - \nabla_{v}f_{0}(X(0), v) \big] \Big\}   \\
	&=\lim_{\epsilon \rightarrow 0+} \frac{1}{\epsilon} \Big\{ -t\big[ \nabla_{x}f_{0}(X^{\varepsilon}(0), v) - \nabla_{x}f_{0}(X(0), v) \big]
	+
	\big[ \nabla_{v}f_{0}(X^{\varepsilon}(0), v) - \nabla_{v}f_{0}(X(0), v) \big] \Big\}   \\
	&\stackrel{r\leftrightarrow c}{=}	 \nabla_{xx}f_{0}(X(0),v)\lim_{s\rightarrow 0+} \nabla_{x}X(s) (-t )\hat{r}_1 + \nabla_{xv}f_{0}(X(0),v)\lim_{s\rightarrow 0+}\nabla_{x}X(s) \hat{r}_1 \\
	&= \Big( \nabla_{xx}f_{0}(x^{1},v) (-t ) + \nabla_{xv}f_{0}(x^{1},v) \Big) \hat{r}_1,
\end{split}
\end{equation}
where we have used \eqref{nabla XV_v+}, \eqref{nabla XV_x+}, $\nabla_{v}X^{\varepsilon}(0)=-t I_{2}$., and $\nabla_{v}V^{\varepsilon}(0)= I_{2}$. Similarly, using \eqref{c_2} and $\hat{r}_{2}$ of \eqref{set R_sp},
\begin{equation} \notag
\begin{split}
&\nabla_{xv} f(t,x,v) \hat{r}_2 \stackrel{c\leftrightarrow r}{=}	 \lim _{\epsilon\rightarrow 0+} \frac{1}{\epsilon}\left ( \nabla_{v}f(t,x+\epsilon \hat{r}_2,v) - \nabla_{v}f(t,x,v) \right )\\ 
&=\lim_{\epsilon \rightarrow 0+} \frac{1}{\epsilon} \Big\{ \nabla_{x}f_{0}(X^{\varepsilon}(0), V^{\varepsilon}(0))  \nabla_{v}X^{\varepsilon}(0) + \nabla_{v}f_{0}(X^{\varepsilon}(0), V^{\varepsilon}(0))   \nabla_{v}V^{\varepsilon}(0) \\
&\quad\quad\quad   - \big(  -t \nabla_x f_0(X(0),R_{x^1}v) R_{x^{1}} + \nabla_v f_0(X(0),R_{x^1}v) (R_{x^1} + 2tA_{v, x^{1}})\big) \Big\}    \\
&=\lim_{\epsilon \rightarrow 0+} \frac{1}{\epsilon} \Big\{\nabla_{x}f_{0}(X^{\varepsilon}(0), V^{\varepsilon}(0))  \nabla_{v}X^{\varepsilon}(0) + t \nabla_x f_0(X(0),R_{x^1}v) R_{x^{1}} \Big\}  \\
&\quad + \lim_{\epsilon \rightarrow 0+} \frac{1}{\epsilon} \Big\{ \nabla_{v}f_{0}(X^{\varepsilon}(0), V^{\varepsilon}(0)) \nabla_{v}V^{\varepsilon}(0) - \nabla_v f_0(X(0),R_{x^1}v) (R_{x^1} + 2tA_{v, x^{1}}) \Big\}    \\
&:=   I_{xv,1} + I_{xv,2} .
\end{split}
\end{equation}
Using \eqref{nabla XV_v-} and \eqref{nabla XV_x-},
\begin{equation*}
\begin{split}
	I_{xv,1} &:= \lim_{\epsilon \rightarrow 0+} \frac{1}{\epsilon} \Big\{\nabla_{x}f_{0}(X^{\varepsilon}(0), V^{\varepsilon}(0))  \nabla_{v}X^{\varepsilon}(0) - \nabla_{x}f_{0}(X^{\varepsilon}(0), V^{\varepsilon}(0))  \lim_{s\rightarrow 0-}\nabla_{v}X(s) \\
	&\quad\quad \quad\quad\quad  + \nabla_{x}f_{0}(X^{\varepsilon}(0), V^{\varepsilon}(0)) \lim_{s\rightarrow 0-} \nabla_{v}X(s) + t \nabla_x f_0(X(0),R_{x^1}v) R_{x^{1}} \Big\} ,\quad\quad \lim_{s\rightarrow 0-} \nabla_{v}X(s) = -tR_{x^{1}}, \\
	&=	 \nabla_{x}f_{0}(x^{1}, R_{x^1}v) \lim_{\epsilon \rightarrow 0} \frac{1}{\epsilon} \Big( \nabla_{v}X^{\varepsilon}(0) - \lim_{s\rightarrow 0-} \nabla_{v}X(s) \Big) 
	+ \lim_{\epsilon \rightarrow 0+} \frac{1}{\epsilon}\Big( \nabla_{x}f_{0}(X^{\varepsilon}(0),V^{\varepsilon}(0)) - \nabla_{x}f_{0}(X(0), R_{x^1}v) \Big) (-tR_{x^1})  \\
	&\stackrel{r\leftrightarrow c}{=} \Big[ \nabla_{x}f_{0}(x^{1}, R_{x^1}v) \lim_{\epsilon \rightarrow 0+} \frac{1}{\epsilon} \Big( \nabla_{v}X^{\varepsilon}(0) - \lim_{s\rightarrow 0-} \nabla_{v}X(s) \Big) \Big]^{T}  \\
	&\quad + (-tR_{x^1}) \Big( \nabla_{xx}f_{0}(x^{1}, R_{x^1}v) \lim_{s\rightarrow 0-}\nabla_{x}X(s) + \nabla_{vx}f_{0}(x^{1}, R_{x^1}v)\lim_{s\rightarrow 0-}\nabla_{x}V(s)   \Big) \hat{r}_{2}   \\
	&= 
		\underbrace{ \Big[ \nabla_{x}f_{0}(x^{1}, R_{x^1}v) \lim_{\epsilon \rightarrow 0+} \frac{1}{\epsilon} \Big( \nabla_{v}X(0;t, x+\epsilon \hat{r}_{2}, v) - \lim_{s\rightarrow 0-} \nabla_{v}X(s) \Big) \Big]^{T} }_{:=(*)_{xv,1}\hat{r}_{2} }   \\
	&\quad
	 + (-tR_{x^1})\big[  \nabla_{xx}f_{0}(x^{1}, R_{x^1}v) R_{x^1}  +  \nabla_{vx}f_{0}(x^{1}, R_{x^1}v)  (-2A_{v,x^{1}}) \big] \hat{r}_{2},  \\
\end{split}
\end{equation*}
and
\begin{equation*}
\begin{split}
I_{xv,2} &:= \lim_{\epsilon \rightarrow 0+} \frac{1}{\epsilon} \Big\{ \nabla_{v}f_{0}(X^{\varepsilon}(0), V^{\varepsilon}(0)) \nabla_{v}V^{\varepsilon}(0)  - \nabla_{v}f_{0}(X^{\varepsilon}(0), V^{\varepsilon}(0)) \lim_{s\rightarrow 0-}\nabla_{v}V(s) \\
&\quad\quad\quad\quad + \nabla_{v}f_{0}(X^{\varepsilon}(0), V^{\varepsilon}(0)) (R_{x^{1}} + 2tA_{v,x^{1}})  - \nabla_v f_0(X(0),R_{x^1}v) (R_{x^1} + 2tA_{v, x^{1}})\Big\} \\
&= \nabla_{v}f_{0}(x^{1}, R_{x^1}v)\lim_{\epsilon \rightarrow 0+} \frac{1}{\epsilon} \Big( \nabla_{v}V^{\varepsilon}(0)  - \lim_{s\rightarrow 0-}\nabla_{v}V(s) \Big)  \\
&\quad\quad\quad\quad + \lim_{\epsilon \rightarrow 0+} \frac{1}{\epsilon} \Big( \nabla_{v}f_{0}(X^{\varepsilon}(0), V^{\varepsilon}(0))  -  \nabla_{v}f_{0}(x^{1}, R_{x^1}v)  \Big) (R_{x^{1}} + 2tA_{v,x^{1}})  \\
&\stackrel{r\leftrightarrow c}{=} \Big[ \nabla_{v}f_{0}(x^{1}, R_{x^1}v)\lim_{\epsilon \rightarrow 0+} \frac{1}{\epsilon} \Big( \nabla_{v}V^{\varepsilon}(0)  - \lim_{s\rightarrow 0-}\nabla_{v}V(s) \Big)  \Big]^{T}  \\
&\quad\quad\quad\quad +  (R_{x^{1}} + 2tA^{T}_{v,x^{1}})\Big( \nabla_{xv}f_{0}(x^{1}, R_{x^1}v) \lim_{s\rightarrow 0-} \nabla_{x}X(s) + \nabla_{vv}f_{0}(x^{1}, R_{x^1}v)\lim_{s\rightarrow 0-}\nabla_{x}V(s)  \Big) \hat{r}_{2} \\
&= 
\underbrace{ \Big[ \nabla_{v}f_{0}(x^{1}, R_{x^1}v)\lim_{\epsilon \rightarrow 0+} \frac{1}{\epsilon} \Big( \nabla_{v}V(0;t,x+\epsilon \hat{r}_{2}, v)  - \lim_{s\rightarrow 0-}\nabla_{v}V(s) \Big) \Big]^{T} }_{:=(*)_{xv,2} \hat{r}_{2}}  \\
&\quad\quad\quad\quad + (R_{x^{1}} + 2tA^{T}_{v,x^{1}}) \big[ \nabla_{xv}f_{0}(x^{1}, R_{x^1}v) R_{x^{1}}  + \nabla_{vv}f_{0}(x^{1}, R_{x^1}v) (-2A_{v, x^{1}}) \big] \hat{r}_{2}.   \\
\end{split}
\end{equation*}
Now we compute two underbraced $(*)_{xv,1}$ and $(*)_{xv,2}$  \\
\begin{equation} \label{xv star1}
\begin{split}
(*)_{xv,1} \hat{r}_{2} &= \Big[ \nabla_{x}f_{0}(x^{1}, R_{x^1}v) \lim_{\epsilon \rightarrow 0+} \frac{1}{\epsilon} \Big( \nabla_{v}X^{\varepsilon}(0) - \lim_{s\rightarrow 0-} \nabla_{v}X(s) \Big) \Big]^{T} \\
&= 
\Big[  \nabla_{x}f_{0}(x^{1}, R_{x^1}v) \lim_{s \rightarrow 0-}\nabla_{x}(\p_{v_{1}}X(s)) \hat{r}_{2},  \nabla_{x}f_{0}(x^{1}, R_{x^1}v) \lim_{s \rightarrow 0-}\nabla_{x}(\p_{v_{2}}X(s)) \hat{r}_{2} \Big]^{T}  \\
&= \begin{bmatrix}
\nabla_{x}f_{0}(x^{1}, R_{x^1}v) \lim_{s \rightarrow 0-}\nabla_{x}(\p_{v_{1}}X(s)) 
\\
\nabla_{x}f_{0}(x^{1}, R_{x^1}v) \lim_{s \rightarrow 0-}\nabla_{x}(\p_{v_{2}}X(s)) 
\end{bmatrix}
\hat{r}_{2}  \\
&= \begin{bmatrix}
\nabla_{x}f_{0}(x^{1}, R_{x^1}v) \nabla_{x}(-t R_{x^{1}(x,v)}^1)
\\
\nabla_{x}f_{0}(x^{1}, R_{x^1}v) \nabla_{x}(-t R_{x^{1}(x,v)}^2)
\end{bmatrix}
\hat{r}_{2}.  \\
\end{split}
\end{equation}
Similarly,
\begin{equation} \label{xv star2}
\begin{split}
(*)_{xv,2} \hat{r}_{2}
&= \begin{bmatrix}
\nabla_{v}f_{0}(x^{1}, R_{x^1}v) \lim_{s \rightarrow 0-}\nabla_{x}(\p_{v_{1}}V(s)) 
\\
\nabla_{v}f_{0}(x^{1}, R_{x^1}v) \lim_{s \rightarrow 0-}\nabla_{x}(\p_{v_{2}}V(s)) 
\end{bmatrix}
\hat{r}_{2}  \\
&= \begin{bmatrix}
\nabla_{v}f_{0}(x^{1}, R_{x^1}v) \nabla_{x}(R_{x^{1}(x,v)}^1 + 2t A_{v,x^{1}(x,v)}^1)
\\
\nabla_{v}f_{0}(x^{1}, R_{x^1}v) \nabla_{x}(R_{x^{1}(x,v)}^2 + 2t A_{v,x^{1}(x,v)}^2)
\end{bmatrix}
\hat{r}_{2},  \\
\end{split}
\end{equation}
where $A^i$ means $i$th column of matrix $A$. Therefore,  
\begin{equation} \label{nabla_xv f case2}
\begin{split}
	\nabla_{xv}f(t,x,v)   
	&= \underline{(*)_{xv,1}}_{\eqref{xv star1}} + \underline{(*)_{xv,2}}_{\eqref{xv star2}}  \\
	&\quad + (-tR_{x^1})\big[  \nabla_{xx}f_{0}(x^{1}, R_{x^1}v) R_{x^1}  +  \nabla_{vx}f_{0}(x^{1}, R_{x^1}v)  (-2A_{v,x^{1}}) \big]   \\
	&\quad + (R_{x^{1}} + 2tA^{T}_{v,x^{1}}) \big[ \nabla_{xv}f_{0}(x^{1}, R_{x^1}v) R_{x^{1}}  + \nabla_{vv}f_{0}(x^{1}, R_{x^1}v) (-2A_{v, x^{1}}) \big].  \\
\end{split}
\end{equation}
 From \eqref{nabla_xv f case1} and \eqref{nabla_xv f case2}, we get the following compatibility condition 
\begin{equation} \label{xv comp}
\begin{split}	
	 &(-t)\nabla_{xx}f_{0}(x^{1},v)   + \nabla_{xv}f_{0}(x^{1},v)  \\
	 &=  \underline{(*)_{xv,1}}_{\eqref{xv star1}} + \underline{(*)_{xv,2}}_{\eqref{xv star2}}  \\
	 &\quad + (-tR_{x^1}) \nabla_{xx}f_{0}(x^{1}, R_{x^1}v) R_{x^1} + (-tR_{x^1})\nabla_{vx}f_{0}(x^{1}, R_{x^1}v) (-2A_{v,x^{1}}) \\
	 &\quad + (R_{x^{1}} + 2tA^{T}_{v,x^{1}}) \nabla_{xv}f_{0}(x^{1}, R_{x^1}v) R_{x^{1}} 
      + (R_{x^{1}} + 2tA^{T}_{v,x^{1}})  \nabla_{vv}f_{0}(x^{1}, R_{x^1}v) (-2A_{v, x^{1}}) . 
\end{split}
\end{equation}

\subsection{Condition for $\nabla_{vv}$} 
We split perturbed direction into \eqref{set R_vel}. $\nabla_{v}f(t,x,v)$ can be written as \eqref{c_1} or \eqref{c_2}. Using \eqref{c_1}, $\hat{r}_{1}$ of \eqref{set R_vel}, and notation \eqref{XV epsilon v}, \\
\begin{equation} \label{nabla_vv f case1}
\begin{split}
&\nabla_{vv} f(t,x,v) \hat{r}_1 \\
&\stackrel{c\leftrightarrow r}{=} \lim_{\epsilon \rightarrow 0+} \frac{1}{\epsilon} \Big\{ -t\big[ \nabla_{x}f_{0}(X^{\varepsilon}(0), V^{\varepsilon}(0)) - \nabla_{x}f_{0}(X(0), v) \big]
+
\big[ \nabla_{v}f_{0}(X^{\varepsilon}(0), V^{\varepsilon}(0)) - \nabla_{v}f_{0}(X(0), v) \big] \Big\}   \\
&=\lim_{\epsilon \rightarrow 0+} \frac{1}{\epsilon} \Big\{ -t\big[ \nabla_{x}f_{0}(X^{\varepsilon}(0), v+\epsilon \hat{r}_{1} ) - \nabla_{x}f_{0}(X(0), v) \big]
+
\big[ \nabla_{v}f_{0}(X^{\varepsilon}(0), v+\epsilon \hat{r}_{1}  ) - \nabla_{v}f_{0}(X(0), v) \big] \Big\} \\
&\stackrel{r\leftrightarrow c}{=} \Big[ -t \nabla_{xx}f_{0}(X(0),v)\lim_{s\rightarrow 0+} \nabla_{v}X(s)  -t \nabla_{vx}f_{0}(X(0),v)\lim_{s\rightarrow 0+} \nabla_{v}V(s)     \\
&\quad + \nabla_{xv}f_{0}(X(0),v)\lim_{s\rightarrow 0+}\nabla_{v}X(s)  + \nabla_{vv}f_{0}(X(0),v)\lim_{s\rightarrow 0+}\nabla_{v}V(s) \Big] \hat{r}_1  \\
&= \Big[ (-t ) \nabla_{xx}f_{0}(x^{1},v) (-t) + (-t )\nabla_{vx}f_{0}(x^{1},v)  + \nabla_{xv}f_{0}(x^{1},v) (-t ) + \nabla_{vv}f_{0}(x^{1},v) \Big] \hat{r}_1,
\end{split}
\end{equation}
where we have used \eqref{nabla XV_v+} and note that we have $\nabla_{v}X^{\varepsilon}(0)=-t I_{2}$ and $\nabla_{v}V^{\varepsilon}(0)= I_{2}$ for $v+\varepsilon\hat{r}_1$ case also. \\
Similarly, using \eqref{c_2}, $\hat{r}_{2}$ of \eqref{set R_vel}, and notation \eqref{XV epsilon v}, 
\begin{equation*}
\begin{split}
&\nabla_{vv} f(t,x,v) \hat{r}_{2} \\
&\stackrel{c\leftrightarrow r}{=} \lim_{\epsilon \rightarrow 0+} \frac{1}{\epsilon} \Big\{\nabla_{x}f_{0}(X^{\varepsilon}(0), V^{\varepsilon}(0))  \nabla_{v}X^{\varepsilon}(0) + t \nabla_x f_0(X(0),R_{x^1}v) R_{x^{1}} \Big\}  \\
&\quad + \lim_{\epsilon \rightarrow 0+} \frac{1}{\epsilon} \Big\{ \nabla_{v}f_{0}(X^{\varepsilon}(0), V^{\varepsilon}(0)) \nabla_{v}V^{\varepsilon}(0) - \nabla_v f_0(X(0),R_{x^1}v) (R_{x^1} + 2tA_{v, x^{1}}) \Big\}  \\
&:=   I_{vv,1} + I_{vv,2},
\end{split}
\end{equation*}
and each $I_{vv,1},I_{vv,2}$ are estimated by 
\begin{equation*}
\begin{split}
I_{vv,1} &:= \lim_{\epsilon \rightarrow 0+} \frac{1}{\epsilon} \Big\{\nabla_{x}f_{0}(X^{\varepsilon}(0), V^{\varepsilon}(0))  \nabla_{v}X^{\varepsilon}(0) - \nabla_{x}f_{0}(X^{\varepsilon}(0), V^{\varepsilon}(0))  \lim_{s\rightarrow 0-}\nabla_{v}X(s) \\
&\quad\quad \quad\quad\quad  + \nabla_{x}f_{0}(X^{\varepsilon}(0), V^{\varepsilon}(0)) \lim_{s\rightarrow 0-} \nabla_{v}X(s) + t \nabla_x f_0(X(0),R_{x^1}v) R_{x^{1}} \Big\} ,\quad \lim_{s\rightarrow 0-} \nabla_{v}X(s) = -tR_{x^{1}}, \\
&\stackrel{r \leftrightarrow c}{= } \Big[ \nabla_{x}f_{0}(x^{1}, R_{x^1}v) \lim_{\epsilon \rightarrow 0+} \frac{1}{\epsilon} \Big( \nabla_{v}X^{\varepsilon}(0) - \lim_{s\rightarrow 0-} \nabla_{v}X(s) \Big) \Big]^{T} \\
&\quad + (-tR_{x^1}) \Big( \nabla_{xx}f_{0}(x^{1}, R_{x^1}v) \lim_{s\rightarrow 0-}\nabla_{v}X(s)  + \nabla_{vx}f_{0}(x^{1}, R_{x^1}v)\lim_{s\rightarrow 0-}\nabla_{v}V(s)  \Big) \hat{r}_{2}  \\
&= 
\underbrace{ \Big[ \nabla_{x}f_{0}(x^{1}, R_{x^1}v) \lim_{\epsilon \rightarrow 0+} \frac{1}{\epsilon} \Big( \nabla_{v}X(0; t, x, v+\epsilon \hat{r}_{2}) - \lim_{s\rightarrow 0-} \nabla_{v}X(s) \Big) \Big]^{T} }_{(*)_{vv,1}\hat{r}_{2} }  \\
&\quad + (-tR_{x^1}) \big[ \nabla_{xx}f_{0}(x^{1}, R_{x^1}v)  (-tR_{x^1})  + \nabla_{vx}f_{0}(x^{1}, R_{x^1}v) (R_{x^1} + 2tA_{v,x^{1}}) \big] \hat{r}_{2},   \\
\end{split}
\end{equation*}
\begin{equation*}
\begin{split}
I_{vv,2} &:= \lim_{\epsilon \rightarrow 0+} \frac{1}{\epsilon} \Big\{ \nabla_{v}f_{0}(X^{\varepsilon}(0), V^{\varepsilon}(0)) \nabla_{v}V^{\varepsilon}(0)  - \nabla_{v}f_{0}(X^{\varepsilon}(0), V^{\varepsilon}(0)) \lim_{s\rightarrow 0-}\nabla_{v}V(s) \\
&\quad\quad\quad\quad + \nabla_{v}f_{0}(X^{\varepsilon}(0), V^{\varepsilon}(0)) (R_{x^{1}} + 2tA_{v,x^{1}})  - \nabla_v f_0(X(0),R_{x^1}v) (R_{x^1} + 2tA_{v, x^{1}}) \Big\} \\
&\stackrel{r\leftrightarrow c}{=}  \Big[ \nabla_{v}f_{0}(x^{1}, R_{x^1}v)\lim_{\epsilon \rightarrow 0+} \frac{1}{\epsilon} \Big( \nabla_{v}V^{\varepsilon}(0)  - \lim_{s\rightarrow 0-}\nabla_{v}V(s) \Big) \Big]^{T}    \\
&\quad\quad\quad\quad + (R_{x^{1}} + 2tA^{T}_{v,x^{1}}) \Big( \nabla_{xv}f_{0}(x^{1}, R_{x^1}v) \lim_{s\rightarrow 0-} \nabla_{v}X(s) + \nabla_{vv}f_{0}(x^{1}, R_{x^1}v)\lim_{s\rightarrow 0-}\nabla_{v}V(s) \Big) \hat{r}_{2}  \\
&= 
\underbrace{ \Big[ \nabla_{v}f_{0}(x^{1}, R_{x^1}v)\lim_{\epsilon \rightarrow 0+} \frac{1}{\epsilon} \Big( \nabla_{v}V(0; t, x, v+\epsilon \hat{r}_{2})  - \lim_{s\rightarrow 0-}\nabla_{v}V(s) \Big) \Big]^{T} }_{(*)_{vv,2} \hat{r}_{2} }   \\
&\quad\quad\quad\quad + (R_{x^{1}} + 2tA^{T}_{v,x^{1}}) \big[  \nabla_{xv}f_{0}(x^{1}, R_{x^1}v) (-tR_{x^1}) + \nabla_{vv}f_{0}(x^{1}, R_{x^1}v) (R_{x^{1}} + 2tA_{v,x^{1}}) \big] \hat{r}_{2}.  \\
\end{split}
\end{equation*}
Similar to \eqref{xv star1} and \eqref{xv star2}, using Lemma \ref{nabla xv b},
\begin{equation} \label{vv star1}  
\begin{split}
	(*)_{vv,1} \hat{r}_{2} 
	&= \begin{bmatrix}
	\nabla_{x}f_{0}(x^{1}, R_{x^1}v) \nabla_{v}(-t R_{x^{1}(x,v)}^1)
	\\
	\nabla_{x}f_{0}(x^{1}, R_{x^1}v) \nabla_{v}(-t R_{x^{1}(x,v)}^2)
	\end{bmatrix}
	\hat{r}_{2}  
	= 
	t^{2} 
	\begin{bmatrix}
	\nabla_{x}f_{0}(x^{1}, R_{x^1}v) \nabla_{x}(R_{x^{1}(x,v)}^1)
	\\
	\nabla_{x}f_{0}(x^{1}, R_{x^1}v) \nabla_{x}(R_{x^{1}(x,v)}^2)
	\end{bmatrix}
	\hat{r}_{2},  \\
\end{split}
\end{equation}
\begin{equation}  \label{vv star2}  
\begin{split}
(*)_{vv,2} \hat{r}_{2}
&= \begin{bmatrix}
\nabla_{v}f_{0}(x^{1}, R_{x^1}v) \nabla_{v}(R_{x^{1}(x,v)}^1 + 2t A_{v,x^{1}(x,v)}^1)
\\
\nabla_{v}f_{0}(x^{1}, R_{x^1}v) \nabla_{v}(R_{x^{1}(x,v)}^2 + 2t A_{v,x^{1}(x,v)}^2)
\end{bmatrix}
\hat{r}_{2}  \\
&=
-t
\begin{bmatrix}
\nabla_{v}f_{0}(x^{1}, R_{x^1}v) \nabla_{x}(R_{x^{1}(x,v)}^1)
\\
\nabla_{v}f_{0}(x^{1}, R_{x^1}v) \nabla_{x}(R_{x^{1}(x,v)}^2)
\end{bmatrix}
\hat{r}_{2}  
-
2t^{2}
\begin{bmatrix}
\nabla_{v}f_{0}(x^{1}, R_{x^1}v) \nabla_{x}(A_{v,x^{1}(x,v)}^1)
\\
\nabla_{v}f_{0}(x^{1}, R_{x^1}v) \nabla_{x}(A_{v,x^{1}(x,v)}^2)
\end{bmatrix}
\hat{r}_{2} 
\\
&\quad +
2t
\begin{bmatrix}
\nabla_{v}f_{0}(x^{1}, R_{x^1}v) \nabla_{v}A^{1}_{v,x^{1}}
\\
\nabla_{v}f_{0}(x^{1}, R_{x^1}v) \nabla_{v}A^{2}_{v,x^{1}} 
\end{bmatrix}
\hat{r}_{2}. 
\\
\end{split}
\end{equation}
Hence, we get
\begin{equation} \label{nabla_vv f case2}
\begin{split}
&\nabla_{vv} f(t,x,v) \\
&= \underline{(*)_{vv,1}}_{\eqref{vv star1}} + \underline{(*)_{vv,2}}_{\eqref{vv star2}}  \\
&\quad + (-tR_{x^1}) \big[ \nabla_{xx}f_{0}(x^{1}, R_{x^1}v)  (-tR_{x^1})  + \nabla_{vx}f_{0}(x^{1}, R_{x^1}v) (R_{x^1} + 2tA_{v,x^{1}}) \big]    \\
&\quad + (R_{x^{1}} + 2tA^{T}_{v,x^{1}}) \big[  \nabla_{xv}f_{0}(x^{1}, R_{x^1}v) (-tR_{x^1}) + \nabla_{vv}f_{0}(x^{1}, R_{x^1}v) (R_{x^{1}} + 2tA_{v,x^{1}}) \big].   \\
\end{split}
\end{equation}
 Then from \eqref{nabla_vv f case1} and \eqref{nabla_vv f case2} we get the following compatibility condition.
\begin{equation} \label{vv comp}
\begin{split}	
	&(-t ) \nabla_{xx}f_{0}(x^{1},v) (-t) + (-t )\nabla_{vx}f_{0}(x^{1},v)  + \nabla_{xv}f_{0}(x^{1},v) (-t ) + \nabla_{vv}f_{0}(x^{1},v)  \\	
	&= \underline{(*)_{vv,1}}_{\eqref{vv star1}} + \underline{(*)_{vv,2}}_{\eqref{vv star2}}  \\
	&\quad + (-tR_{x^1}) \nabla_{xx}f_{0}(x^{1}, R_{x^1}v)  (-tR_{x^1})  + (-tR_{x^1})\nabla_{vx}f_{0}(x^{1}, R_{x^1}v) (R_{x^1} + 2tA_{v,x^{1}})    \\
	&\quad + (R_{x^{1}} + 2tA^{T}_{v,x^{1}}) \nabla_{xv}f_{0}(x^{1}, R_{x^1}v) (-tR_{x^1}) + (R_{x^{1}} + 2tA^{T}_{v,x^{1}}) \nabla_{vv}f_{0}(x^{1}, R_{x^1}v) (R_{x^{1}} + 2tA_{v,x^{1}}).     \\
\end{split}
\end{equation}

\subsection{Condition for $\nabla_{xx}$} 
We split perturbed direction into \eqref{set R_sp}. $\nabla_{x}f(t,x,v)$ can be written as \eqref{c_3} or \eqref{c_4}, which are identical due to \eqref{c_v}. Using \eqref{c_3}, $\hat{r}_{1}$ of \eqref{set R_sp}, and notation \eqref{XV epsilon x}, \\

\begin{equation} \label{nabla_xx f case1}
\begin{split}
&\nabla_{xx} f(t,x,v) \hat{r}_1 \stackrel{c \leftrightarrow r}{=} \lim _{\epsilon\rightarrow 0+} \frac{1}{\epsilon}\left ( \nabla_{x}f(t,x+\epsilon \hat{r}_1,v) - \nabla_{x}f(t,x,v) \right ) \\ 
&= \lim_{\epsilon \rightarrow 0+} \frac{1}{\epsilon}\Big( \nabla_{x}\big[ f_0(X(0;t,x+\epsilon \hat{r}_1,v),V(0;t,x+\epsilon \hat{r}_1,v)) \big]  -  \nabla_{x}f_{0}(X(0),v) \Big)   \\
&=\lim_{\epsilon \rightarrow 0+} \frac{1}{\epsilon} \Big\{ \nabla_{x}f_{0}(X^{\varepsilon}(0), V^{\varepsilon}(0))  \nabla_{x}X^{\varepsilon}(0) + \nabla_{v}f_{0}(X^{\varepsilon}(0), V^{\varepsilon}(0)) \underbrace{\nabla_{x}V^{\varepsilon}(0)}_{=0}   - \nabla_{x}f_{0}(X(0),v) \Big\}   \\
&\stackrel{r \leftrightarrow c}{=} \nabla_{xx}f_{0}(x^{1},v) \lim_{s \rightarrow 0+}\nabla_{x}X(s)   \hat{r}_1  
= \nabla_{xx}f_{0}(x^{1},v)  \hat{r}_1,  
\end{split}
\end{equation}
where we have used \eqref{nabla XV_v+}, \eqref{nabla XV_x+}, $\nabla_{x}X^{\varepsilon}(0) = I_{2}$, and $\nabla_{x}V^{\varepsilon}(0)= 0$. Similarly, using \eqref{c_4}, $\hat{r}_{2}$ of \eqref{set R_sp}, and notation \eqref{XV epsilon x}, \\
\begin{equation} \notag
\begin{split}
&\nabla_{xx} f(t,x,v) \hat{r}_2 \stackrel{c \leftrightarrow r}{=} \lim _{\epsilon\rightarrow 0+} \frac{1}{\epsilon}\left ( \nabla_{x}f(t,x+\epsilon \hat{r}_2,v) - \nabla_{x}f(t,x,v) \right )\\ 
&=\lim_{\epsilon \rightarrow 0+} \frac{1}{\epsilon} \Big\{ \nabla_{x}f_{0}(X^{\varepsilon}(0), V^{\varepsilon}(0))  \nabla_{x}X^{\varepsilon}(0) + \nabla_{v}f_{0}(X^{\varepsilon}(0), V^{\varepsilon}(0))   \nabla_{x}V^{\varepsilon}(0) \\
&\quad\quad\quad   - \big( \nabla_x f_0(X(0), R_{x^1}v) R_{x^{1}}  - 2\nabla_v f_0(X(0),R_{x^1}v) A_{v,x^{1}} \big) \Big\}    \\
&= \lim_{\epsilon \rightarrow 0+} \frac{1}{\epsilon} \Big\{ \nabla_{x}f_{0}(X^{\varepsilon}(0), V^{\varepsilon}(0))  \nabla_{x}X^{\varepsilon}(0)   - \nabla_x f_0(X(0), R_{x^1}v) R_{x^{1}} \Big\} \\
&\quad + \lim_{\epsilon \rightarrow 0+} \frac{1}{\epsilon} \Big\{ \nabla_{v}f_{0}(X^{\varepsilon}(0), V^{\varepsilon}(0))  \nabla_{x}V^{\varepsilon}(0) + 2\nabla_v f_0(X(0),R_{x^1}v) A_{v,x^{1}} \Big\}   \\
&:=   I_{xx,1} + I_{xx,2}  ,
\end{split}
\end{equation}
where
\begin{equation*}
\begin{split}
I_{xx,1} &:= \lim_{\epsilon \rightarrow 0+} \frac{1}{\epsilon} \Big\{ \nabla_{x}f_{0}(X^{\varepsilon}(0), V^{\varepsilon}(0))  \nabla_{x}X^{\varepsilon}(0)  - \nabla_{x}f_{0}(X^{\varepsilon}(0), V^{\varepsilon}(0)) \lim_{s \rightarrow 0-}\nabla_{x}X(s)   \\
&\quad  + \Big( \nabla_{x}f_{0}(X^{\varepsilon}(0), V^{\varepsilon}(0)) R_{x^{1}}  - \nabla_x f_0(X(0), R_{x^1}v) R_{x^{1}} \Big) \Big\}  \\
&\stackrel{r \leftrightarrow c}{=} 
\underbrace{ \Big[  \nabla_{x}f_{0}(x^{1}, R_{x^1}v) \lim_{\epsilon \rightarrow 0+} \frac{1}{\epsilon} \Big(  \nabla_{x}X(0; t, x+\epsilon\hat{r}_{2}, v) - \lim_{s \rightarrow 0-}\nabla_{x}X(s)  \Big) \Big]^{T} }_{(*)_{xx,1}\hat{r}_{2}}  \\
&\quad + R_{x^{1}} \big[ \nabla_{xx}f_{0}(x^{1}, R_{x^1}v) R_{x^{1}} + \nabla_{vx}f_{0}(x^{1}, R_{x^1}v) (-2A_{v,x^{1}}) \big] \hat{r}_{2},
\end{split}
\end{equation*}
\begin{equation*}
\begin{split}
I_{xx,2} &:= \lim_{\epsilon \rightarrow 0+} \frac{1}{\epsilon} \Big\{ \nabla_{v}f_{0}(X^{\varepsilon}(0), V^{\varepsilon}(0))  \nabla_{x}V^{\varepsilon}(0) - \nabla_{v}f_{0}(X^{\varepsilon}(0), V^{\varepsilon}(0))\lim_{s \rightarrow 0-}\nabla_{x}V(s)   \\
&\quad - 2\nabla_{v}f_{0}(X^{\varepsilon}(0), V^{\varepsilon}(0)) A_{v,x^{1}} + 2\nabla_v f_0(X(0),R_{x^1}v) A_{v,x^{1}} \Big\} \\
&\stackrel{r\leftrightarrow c}{=}  \Big[ \nabla_{v}f_{0}(x^{1}, R_{x^1}v) \lim_{\epsilon \rightarrow 0+} \frac{1}{\epsilon} \Big(   \nabla_{x}V^{\varepsilon}(0) -  \lim_{s \rightarrow 0-}\nabla_{x}V(s) \Big) \Big]^{T}  \\
&\quad + (- 2A^{T}_{v,x^{1}}) \Big\{ \nabla_{xv}f_{0}(x^{1}, R_{x^1}v)\lim_{s \rightarrow 0-}\nabla_{x}X(s) + \nabla_{vv}f_{0}(x^{1}, R_{x^1}v)\lim_{s \rightarrow 0-}\nabla_{x}V(s) \Big\} \hat{r}_{2}  \\
&=  \underbrace{ \Big[ \nabla_{v}f_{0}(x^{1}, R_{x^1}v) \lim_{\epsilon \rightarrow 0+} \frac{1}{\epsilon} \Big(   \nabla_{x}V(0; t, x+\epsilon\hat{r}_{2}, v) -  \lim_{s \rightarrow 0-}\nabla_{x}V(s) \Big) \Big]^{T} }_{(*)_{xx,2}\hat{r}_{2}}   \\
&\quad + (- 2A^{T}_{v,x^{1}}) \big[ \nabla_{xv}f_{0}(x^{1}, R_{x^1}v) R_{x^{1}} + \nabla_{vv}f_{0}(x^{1}, R_{x^1}v)(-2 A_{v,x^{1}}) \big] \hat{r}_{2}.  \\
\end{split}
\end{equation*}
Similar to \eqref{xv star1} and \eqref{xv star2},
\begin{equation} \label{xx star1}  
\begin{split}
(*)_{xx,1} \hat{r}_{2} 
&= \begin{bmatrix}
\nabla_{x}f_{0}(x^{1}, R_{x^1}v) \nabla_{x}(R_{x^{1}(x,v)}^1)
\\
\nabla_{x}f_{0}(x^{1}, R_{x^1}v) \nabla_{x}(R_{x^{1}(x,v)}^2)
\end{bmatrix}
\hat{r}_{2},  \\
\end{split}
\end{equation}
\begin{equation}  \label{xx star2}  
\begin{split}
(*)_{xx,2} \hat{r}_{2}
&= \begin{bmatrix}
\nabla_{v}f_{0}(x^{1}, R_{x^1}v) \nabla_{x}(- 2 A_{v,x^{1}(x,v)}^1)
\\
\nabla_{v}f_{0}(x^{1}, R_{x^1}v) \nabla_{x}(- 2 A_{v,x^{1}(x,v)}^2)
\end{bmatrix}
\hat{r}_{2}.  \\
\end{split}
\end{equation}
Hence,
\begin{equation} \label{nabla_xx f case2}
\begin{split}
&\nabla_{xx} f(t,x,v) \\
&= \underline{(*)_{xx,1}}_{\eqref{xx star1}} + \underline{(*)_{xx,2}}_{\eqref{xx star2}}  \\
&\quad + R_{x^{1}} \big[ \nabla_{xx}f_{0}(x^{1}, R_{x^1}v) R_{x^{1}} + \nabla_{vx}f_{0}(x^{1}, R_{x^1}v) (-2A_{v,x^{1}}) \big]  \\
&\quad + (- 2A^{T}_{v,x^{1}}) \big[ \nabla_{xv}f_{0}(x^{1}, R_{x^1}v) R_{x^{1}} + \nabla_{vv}f_{0}(x^{1}, R_{x^1}v)(-2 A_{v,x^{1}}) \big]. 
\end{split}
\end{equation}
Then, from \eqref{nabla_xx f case1} and \eqref{nabla_xx f case2}, we get the following compatibility condition  
\begin{equation} \label{xx comp}
\begin{split}	
	&\nabla_{xx}f_{0}(x^{1},v)  \\
	&= \underline{(*)_{xx,1}}_{\eqref{xx star1}} + \underline{(*)_{xx,2}}_{\eqref{xx star2}}  \\
	&\quad + R_{x^{1}} \nabla_{xx}f_{0}(x^{1}, R_{x^1}v) R_{x^{1}} + R_{x^{1}} \nabla_{vx}f_{0}(x^{1}, R_{x^1}v) (-2A_{v,x^{1}})  \\
	&\quad + (- 2A^{T}_{v,x^{1}}) \nabla_{xv}f_{0}(x^{1}, R_{x^1}v) R_{x^{1}} + (- 2A^{T}_{v,x^{1}}) \nabla_{vv}f_{0}(x^{1}, R_{x^1}v)(-2 A_{v,x^{1}}). \\
\end{split}
\end{equation}

\subsection{Condition for $\nabla_{vx}$}  We split perturbed direction into \eqref{set R_vel}. $\nabla_{x}f(t,x,v)$ can be written as \eqref{c_3} or \eqref{c_4}. Using \eqref{c_3}, $\hat{r}_{1}$ of \eqref{set R_vel}, and notation \eqref{XV epsilon v}, \\
\begin{equation} \label{nabla_vx f case1}
\begin{split}
&\nabla_{vx} f(t,x,v) \hat{r}_1 \stackrel{c\leftrightarrow r}{=} \lim _{\epsilon\rightarrow 0+} \frac{1}{\epsilon}\left ( \nabla_{x}f(t,x,v+\epsilon \hat{r}_1) - \nabla_{x}f(t,x,v) \right ) \\ 
&=\lim_{\epsilon \rightarrow 0+} \frac{1}{\epsilon}\Big( \nabla_{x}\big[ f_0(X(0;t,x ,v+\epsilon \hat{r}_1),V(0;t,x ,v+\epsilon \hat{r}_1)) \big]  -  \nabla_{x}f_{0}(X(0),v) \Big)   \\
&=\lim_{\epsilon \rightarrow 0+} \frac{1}{\epsilon} \Big\{ \nabla_{x}f_{0}(X^{\varepsilon}(0), V^{\varepsilon}(0))  \nabla_{x}X^{\varepsilon}(0) + \nabla_{v}f_{0}(X^{\varepsilon}(0), V^{\varepsilon}(0))  \nabla_{x}V^{\varepsilon}(0)   - \nabla_{x}f_{0}(X(0),v) \Big\}   \\
&=\lim_{\epsilon \rightarrow 0+} \frac{1}{\epsilon} \Big\{ \nabla_{x}f_{0}(X^{\varepsilon}(0), v+\epsilon \hat{r}_{1}  )  \nabla_{x}X^{\varepsilon}(0) + \nabla_{v}f_{0}(X^{\varepsilon}(0), v+\epsilon \hat{r}_{1} ) \underbrace{ \nabla_{x}V^{\varepsilon}(0) }_{=0}  - \nabla_{x}f_{0}(X(0),v) \Big\} \\
&\stackrel{r\leftrightarrow c}{=} \nabla_{xx}f_{0}(x^{1}, v)\lim_{s \rightarrow 0+}\nabla_{v}X(s)\hat{r}_{1} + \nabla_{vx}f_{0}(x^{1}, v) \lim_{s \rightarrow 0+}\nabla_{v}V(s)\hat{r}_{1}   \\
&= \big( \nabla_{xx}f_{0}(x^{1}, v)(-t) + \nabla_{vx}f_{0}(x^{1}, v)  \big) \hat{r}_{1} ,  \\
\end{split}
\end{equation}
where we have used \eqref{nabla XV_v+}, \eqref{nabla XV_x+}, $\nabla_{x}X^{\varepsilon}(0) = I_{2}$, and $\nabla_{x}V^{\varepsilon}(0)= 0$. Similalry, using \eqref{c_4}, $\hat{r}_{2}$ of \eqref{set R_vel}, and notation \eqref{XV epsilon v}, \\
\begin{equation} \notag
\begin{split}
&\nabla_{vx} f(t,x,v) \hat{r}_2 \stackrel{c\leftrightarrow r}{=} \lim _{\epsilon\rightarrow 0+} \frac{1}{\epsilon}\left ( \nabla_{x}f(t,x,v+\epsilon \hat{r}_2) - \nabla_{x}f(t,x,v) \right )\\ 
&=\lim_{\epsilon \rightarrow 0+} \frac{1}{\epsilon} \Big\{ \nabla_{x}f_{0}(X^{\varepsilon}(0), V^{\varepsilon}(0))  \nabla_{x}X^{\varepsilon}(0) + \nabla_{v}f_{0}(X^{\varepsilon}(0), V^{\varepsilon}(0))   \nabla_{x}V^{\varepsilon}(0) \\
&\quad\quad\quad   - \big( \nabla_x f_0(X(0), R_{x^1}v) R_{x^{1}}  - 2\nabla_v f_0(X(0),R_{x^1}v) A_{v,x^{1}} \big) \Big\}  \\
&= \lim_{\epsilon \rightarrow 0+} \frac{1}{\epsilon} \Big\{ \nabla_{x}f_{0}(X^{\varepsilon}(0), V^{\varepsilon}(0))  \nabla_{x}X^{\varepsilon}(0)   - \nabla_x f_0(X(0), R_{x^1}v) R_{x^{1}} \Big\} \\
&\quad + \lim_{\epsilon \rightarrow 0+} \frac{1}{\epsilon} \Big\{ \nabla_{v}f_{0}(X^{\varepsilon}(0), V^{\varepsilon}(0))  \nabla_{x}V^{\varepsilon}(0) + 2\nabla_v f_0(X(0),R_{x^1}v) A_{v,x^{1}} \Big\}     \\
&:=   I_{vx,1} + I_{vx,2},  
\end{split}
\end{equation}
where
\begin{equation*}
\begin{split}
I_{vx,1} &:= \lim_{\epsilon \rightarrow 0+} \frac{1}{\epsilon} \Big\{ \nabla_{x}f_{0}(X^{\varepsilon}(0), V^{\varepsilon}(0))  \nabla_{x}X^{\varepsilon}(0)  - \nabla_{x}f_{0}(X^{\varepsilon}(0), V^{\varepsilon}(0)) \lim_{s \rightarrow 0-}\nabla_{x}X(s)   \\
&\quad  + \Big( \nabla_{x}f_{0}(X^{\varepsilon}(0), V^{\varepsilon}(0)) R_{x^{1}}  - \nabla_x f_0(X(0), R_{x^1}v) R_{x^{1}} \Big) \Big\}  \\
&\stackrel{r\leftrightarrow c}{=} \Big[ \nabla_{x}f_{0}(x^{1}, R_{x^1}v) \lim_{\epsilon \rightarrow 0+} \frac{1}{\epsilon} \Big(  \nabla_{x}X^{\varepsilon}(0) - \lim_{s \rightarrow 0-}\nabla_{x}X(s)  \Big) \Big]^{T}   \\
&\quad + R_{x^{1}}  \Big\{ \nabla_{xx}f_{0}(x^{1}, R_{x^1}v)\lim_{s \rightarrow 0-}\nabla_{v}X(s) + \nabla_{vx}f_{0}(x^{1}, R_{x^1}v)\lim_{s \rightarrow 0-}\nabla_{v}V(s) \Big\} \hat{r}_{2}   \\
&=  \underbrace{ \Big[ \nabla_{x}f_{0}(x^{1}, R_{x^1}v) \lim_{\epsilon \rightarrow 0+} \frac{1}{\epsilon} \Big(  \nabla_{x}X(0; t, x, v+\epsilon \hat{r}_{2}) - \lim_{s \rightarrow 0-}\nabla_{x}X(s)  \Big) \Big]^{T} }_{(*)_{vx,1}
\hat{r}_{2} }   \\
&\quad + R_{x^{1}}  \big[ \nabla_{xx}f_{0}(x^{1}, R_{x^1}v)(-tR_{x^1})  +  \nabla_{vx}f_{0}(x^{1}, R_{x^1}v) (R_{x^{1}} + 2tA_{v,x^{1}}) \big] \hat{r}_{2},
\end{split}
\end{equation*}
\begin{equation*}
\begin{split}
I_{vx,2} &:= \lim_{\epsilon \rightarrow 0+} \frac{1}{\epsilon} \Big\{ \nabla_{v}f_{0}(X^{\varepsilon}(0), V^{\varepsilon}(0))  \nabla_{x}V^{\varepsilon}(0) - \nabla_{v}f_{0}(X^{\varepsilon}(0), V^{\varepsilon}(0))\lim_{s \rightarrow 0-}\nabla_{x}V(s)   \\
&\quad - 2\nabla_{v}f_{0}(X^{\varepsilon}(0), V^{\varepsilon}(0)) A_{v,x^{1}} + 2\nabla_v f_0(X(0),R_{x^1}v) A_{v,x^{1}} \Big\} \\
&\stackrel{r\leftrightarrow c}{=} \Big[ \nabla_{v}f_{0}(x^{1}, R_{x^1}v) \lim_{\epsilon \rightarrow 0+} \frac{1}{\epsilon} \Big(   \nabla_{x}V^{\varepsilon}(0) -  \lim_{s \rightarrow 0-}\nabla_{x}V(s) \Big) \Big]^{T}  \\
&\quad + (-2A^{T}_{v,x^{1}}) \Big\{ \nabla_{xv}f_{0}(x^{1}, R_{x^1}v)\lim_{s \rightarrow 0-}\nabla_{v}X(s) + \nabla_{vv}f_{0}(x^{1}, R_{x^1}v)\lim_{s \rightarrow 0-}\nabla_{v}V(s) \Big\} \hat{r}_{2}   \\
&=  \underbrace{ \Big[ \nabla_{v}f_{0}(x^{1}, R_{x^1}v) \lim_{\epsilon \rightarrow 0+} \frac{1}{\epsilon} \Big(   \nabla_{x}V(0; t, x, v+\epsilon \hat{r}_{2}) -  \lim_{s \rightarrow 0-}\nabla_{x}V(s) \Big) \Big]^{T} }_{(*)_{vx,2}\hat{r}_{2}}   \\
&\quad + (-2A^{T}_{v,x^{1}}) \big[ \nabla_{xv}f_{0}(x^{1}, R_{x^1}v) (-tR_{x^{1}}) + \nabla_{vv}f_{0}(x^{1}, R_{x^1}v) ( R_{x^{1}} + 2tA_{v,x^{1}}) \big] \hat{r}_{2}.
\end{split}
\end{equation*}
Similar to \eqref{xv star1} and \eqref{xv star2},
\begin{equation} \label{vx star1}  
\begin{split}
(*)_{vx,1} \hat{r}_{2} 
&= \begin{bmatrix}
\nabla_{x}f_{0}(x^{1}, R_{x^1}v) \nabla_{v}(R_{x^{1}(x,v)}^1)
\\
\nabla_{x}f_{0}(x^{1}, R_{x^1}v) \nabla_{v}(R_{x^{1}(x,v)}^2)
\end{bmatrix}
\hat{r}_{2}  
= -t
\begin{bmatrix}
\nabla_{x}f_{0}(x^{1}, R_{x^1}v) \nabla_{x}(R_{x^{1}(x,v)}^1)
\\
\nabla_{x}f_{0}(x^{1}, R_{x^1}v) \nabla_{x}(R_{x^{1}(x,v)}^2)
\end{bmatrix}
\hat{r}_{2}, 
\\
\end{split}
\end{equation}
\begin{equation}  \label{vx star2}  
\begin{split}
(*)_{vx,2} \hat{r}_{2}
&= \begin{bmatrix}
\nabla_{v}f_{0}(x^{1}, R_{x^1}v) \nabla_{v}(- 2 A_{v,x^{1}(x,v)}^1)
\\
\nabla_{v}f_{0}(x^{1}, R_{x^1}v) \nabla_{v}(- 2 A_{v,x^{1}(x,v)}^2)
\end{bmatrix}
\hat{r}_{2}  \\
&=
2t
\begin{bmatrix}
\nabla_{v}f_{0}(x^{1}, R_{x^1}v) \nabla_{x}(A_{v,x^{1}(x,v)}^1)
\\
\nabla_{v}f_{0}(x^{1}, R_{x^1}v) \nabla_{x}(A_{v,x^{1}(x,v)}^2)
\end{bmatrix}
\hat{r}_{2} 
-2 
\begin{bmatrix}
\nabla_{v}f_{0}(x^{1}, R_{x^1}v) \nabla_{v}A^{1}_{v,x^{1}}
\\
\nabla_{v}f_{0}(x^{1}, R_{x^1}v) \nabla_{v}A^{2}_{v,x^{1}} 
\end{bmatrix}
\hat{r}_{2}. 
\\
\end{split}
\end{equation}
Hence,
\begin{equation} \label{nabla_vx f case2}
\begin{split}
&\nabla_{vx} f(t,x,v) \\
&= \underline{(*)_{vx,1}}_{\eqref{vx star1}} + \underline{(*)_{vx,2}}_{\eqref{vx star2}}  \\
&\quad + R_{x^{1}}  \big[ \nabla_{xx}f_{0}(x^{1}, R_{x^1}v)(-tR_{x^1})  +  \nabla_{vx}f_{0}(x^{1}, R_{x^1}v) (R_{x^{1}} + 2tA_{v,x^{1}}) \big]   \\
&\quad + (-2A^{T}_{v,x^{1}}) \big[ \nabla_{xv}f_{0}(x^{1}, R_{x^1}v) (-tR_{x^{1}}) + \nabla_{vv}f_{0}(x^{1}, R_{x^1}v) ( R_{x^{1}} + 2tA_{v,x^{1}}) \big]. 
\end{split}
\end{equation}
Then from \eqref{nabla_vx f case1} and \eqref{nabla_vx f case2} we get the following compatibility condition
\begin{equation} \label{vx comp}
\begin{split}	
	&\nabla_{xx}f_{0}(x^{1}, v)(-t) + \nabla_{vx}f_{0}(x^{1}, v)  \\
	&= \underline{(*)_{vx,1}}_{\eqref{vx star1}} + \underline{(*)_{vx,2}}_{\eqref{vx star2}}  \\
	&\quad + R_{x^{1}} \nabla_{xx}f_{0}(x^{1}, R_{x^1}v)(-tR_{x^1})  +  R_{x^{1}} \nabla_{vx}f_{0}(x^{1}, R_{x^1}v) (R_{x^{1}} + 2tA_{v,x^{1}})   \\
	&\quad + (-2A^{T}_{v,x^{1}}) \nabla_{xv}f_{0}(x^{1}, R_{x^1}v) (-tR_{x^{1}}) + (-2A^{T}_{v,x^{1}}) \nabla_{vv}f_{0}(x^{1}, R_{x^1}v) ( R_{x^{1}} + 2tA_{v,x^{1}}).  \\
\end{split}
\end{equation}

\subsection{Compatibility conditions for transpose : $\nabla_{xv}^{T} = \nabla_{vx}$ and $\nabla_{xx}^{T} = \nabla_{xx}$}

First, we claim that \eqref{xv comp}, \eqref{vv comp}, \eqref{xx comp}, and \eqref{vx comp} imply the following four conditions for $(x^{1}, v)\in \gamma_{-}$
\begin{eqnarray}  
\nabla_{xv}f_{0}(x^{1},v)  
&=&  R_{x^{1}}\nabla_{xv}f_{0}(x^{1}, R_{x^1}v) R_{x^{1}}  + R_{x^{1}}\nabla_{vv}f_{0}(x^{1}, R_{x^1}v) (-2A_{v, x^{1}})  \notag \\
&&\quad + \begin{bmatrix}
\nabla_{v}f_{0}(x^{1}, R_{x^1}v) \nabla_{x}(R_{x^{1}(x,v)}^1)  \\
\nabla_{v}f_{0}(x^{1}, R_{x^1}v) \nabla_{x}(R_{x^{1}(x,v)}^2)
\end{bmatrix} ,\quad x^{1}=x^{1}(x,v),  \label{Cond2 1}  \\
\nabla_{xx}f_{0}(x^{1},v)  
&=& R_{x^{1}} \nabla_{xx}f_{0}(x^{1}, R_{x^1}v) R_{x^{1}} + R_{x^{1}} \nabla_{vx}f_{0}(x^{1}, R_{x^1}v)(-2A_{v,x^{1}}) \notag \\
&&\quad + (-2A^{T}_{v,x^{1}}) \nabla_{xv}f_{0}(x^{1}, R_{x^1}v) R_{x^{1}} + (-2A^{T}_{v,x^{1}}) \nabla_{vv}f_{0}(x^{1}, R_{x^1}v)  (-2A_{v,x^{1}})  \notag \\
&&\quad + \begin{bmatrix}
\nabla_{x}f_{0}(x^{1}, R_{x^1}v) \nabla_{x}(R_{x^{1}(x,v)}^1)
\\
\nabla_{x}f_{0}(x^{1}, R_{x^1}v) \nabla_{x}(R_{x^{1}(x,v)}^2)
\end{bmatrix} 
- 2
\begin{bmatrix}
\nabla_{v}f_{0}(x^{1}, R_{x^1}v) \nabla_{x}(A_{v,x^{1}(x,v)}^1)
\\
\nabla_{v}f_{0}(x^{1}, R_{x^1}v) \nabla_{x}(A_{v,x^{1}(x,v)}^2)
\end{bmatrix},
\label{Cond2 2}  \\
\nabla_{vv}f_{0}(x^{1},v)  
&=& R_{x^{1}}\nabla_{vv}f_{0}(x^{1}, R_{x^1}v) R_{x^{1}},   \label{Cond2 3}  \\
\nabla_{vx}f_{0}(x^{1},v)
&=& R_{x^{1}}\nabla_{vx}f_{0}(x^{1}, R_{x^1}v) R_{x^{1}} +  (-2A^{T}_{v,x^{1}})\nabla_{vv}f_{0}(x^{1}, R_{x^1}v)R_{x^{1}} \notag \\
&&\quad -2 
\begin{bmatrix}
\nabla_{v}f_{0}(x^{1}, R_{x^1}v) \nabla_{v}A^{1}_{v,x^{1}}
\\
\nabla_{v}f_{0}(x^{1}, R_{x^1}v) \nabla_{v}A^{2}_{v,x^{1}} 
\end{bmatrix}. \label{Cond2 4} 
\end{eqnarray}
\eqref{xx comp} is just identical to \eqref{Cond2 2}. Then applying \eqref{xx comp} to \eqref{vx comp} and \eqref{xv comp}, we obtain \eqref{Cond2 1} and \eqref{Cond2 4}, respectively. Finally, applying \eqref{Cond2 1}, \eqref{Cond2 2}, and \eqref{Cond2 4} to \eqref{vv comp}, we obtain \eqref{Cond2 3} which is true by taking $\nabla_{v}^{2}$ to \eqref{BC} directly.  \\

From \eqref{Cond2 1}--\eqref{Cond2 4}, we must check conditions to guarantee necessary conditions, $\nabla_{xv}^{T} = \nabla_{vx}$ and $\nabla_{xx}^{T} = \nabla_{xx}$. \\

\subsubsection{$\nabla_{xv}^T=\nabla_{vx}$}
From \eqref{Cond2 1} and \eqref{Cond2 4}, we need 
\Be \label{T invariant}
\begin{bmatrix}
	\nabla_{v}f_{0}(x^{1}, R_{x^1}v) \nabla_{x}(R_{x^{1}(x,v)}^1)
	\\
	\nabla_{v}f_{0}(x^{1}, R_{x^1}v) \nabla_{x}(R_{x^{1}(x,v)}^2)
\end{bmatrix}^{T}
=
-2 
\begin{bmatrix}
	\nabla_{v}f_{0}(x^{1}, R_{x^1}v) \nabla_{v}A^{1}_{v,x^{1}}
	\\
	\nabla_{v}f_{0}(x^{1}, R_{x^1}v) \nabla_{v}A^{2}_{v,x^{1}}
\end{bmatrix}.
\Ee
To check \eqref{T invariant}, we explicitly compute $\nabla_x(R_{x^1(x,v)}^1),\nabla_x(R_{x^1(x,v)}^2),\nabla_v(-2A^1_{v,x^1}),$ and $\nabla_v(-2A_{v,x^1}^2)$ in the following Lemma. 
\begin{lemma} \label{d_RA}
	Recall reflection operator $R_{x^1}$ in \eqref{BC} and $A_{v,x^1}$ in \eqref{def A},
	\begin{equation*}
	A_{v,x^1} := \left[ \left((v\cdot n(x^1))I +(n(x^1)\otimes v)\right) \left(I-\frac{v\otimes n(x^1)}{v\cdot n(x^1)}\right)\right].
	\end{equation*}
	We write that $A^i$ is the $i$th column of matrix $A$ and $\nabla_vA^i_{v,y}$ be the $v$-derivative of $A_{v,y}^i$ for $1\leq i \leq 2$ and $(v,y) \in \R^2 \times \partial \O$. Then, 
	\begin{align*}
	&\nabla_x (R_{x^1(x,v)}^1) = \begin{bmatrix} 
	\dfrac{-4v_2n_1n_2}{v\cdot n(x^1)}  & \dfrac{4v_1n_1n_2}{v\cdot n(x^1)} \\ 
	\dfrac{-2v_2(n_2^2-n_1^2)}{v\cdot n(x^1)} & \dfrac{2v_1(n_2^2-n_1^2)}{v\cdot n(x^1)}
	\end{bmatrix}, \quad 
	\nabla_x (R_{x^1(x,v)}^2)= \begin{bmatrix} 
	\dfrac{-2v_2(n_2^2-n_1^2)}{v\cdot n(x^1)} & \dfrac{2v_1(n_2^2-n_1^2)}{v\cdot n(x^1)}\\
	\dfrac{4v_2n_1n_2}{v\cdot n(x^1)}  & \dfrac{-4v_1n_1n_2}{v\cdot n(x^1)} 
	\end{bmatrix},\\
	&\nabla_v(-2A_{v,x^1}^1)= \begin{bmatrix}
	-\dfrac{2v_2^2n_1}{(v\cdot n(x^1))^2} & -2n_2-\dfrac{2v_1^2n_1^2n_2}{(v\cdot n(x^1))^2} + \dfrac{4v_1v_2n_1^3}{(v\cdot n(x^1))^2} + \dfrac{2v_2^2n_1^2n_2}{(v\cdot n(x^1))^2} \\
	-\dfrac{2v_2^2n_2}{(v\cdot n(x^1))^2} & 2n_1 -\dfrac{2v_1^2 n_1n_2^2}{(v\cdot n(x^1))^2} + \dfrac{4v_1v_2 n_1^2n_2}{(v\cdot n(x^1))^2} + \dfrac{2v_2^2 n_1n_2^2}{(v\cdot n(x^1))^2}
	\end{bmatrix},\\
	&\nabla_v(-2A_{v,x^1}^2)= \begin{bmatrix}
	2n_2+\dfrac{2v_1^2n_1^2n_2}{(v\cdot n(x^1))^2} + \dfrac{4v_1v_2n_1n_2^2}{(v\cdot n(x^1))^2} - \dfrac{2v_2^2n_1^2n_2}{(v\cdot n(x^1))^2} & -\dfrac{2v_1^2n_1}{(v\cdot n(x^1))^2}  \\
	-2n_1 -\dfrac{2v_2^2 n_1n_2^2}{(v\cdot n(x^1))^2} + \dfrac{4v_1v_2 n_2^3}{(v\cdot n(x^1))^2} + \dfrac{2v_1^2 n_1n_2^2}{(v\cdot n(x^1))^2} & -\dfrac{2v_1^2n_2}{(v\cdot n(x^1))^2} 
	\end{bmatrix},
	\end{align*}
	where $v_i$ be the $i$th component of $v$. We denote the $i$th component $n_i(x,v)$ of $n(x^1)$ as $n_i$, that is, $n_i$ depends on $x,v$. Moreover, the following identity holds that 
	\begin{equation}\label{prop d_R}
	\nabla_{x}(R_{x^1(x,v)}^1)v =0, \quad \nabla_{x}(R_{x^1(x,v)}^2) v =0. 
	\end{equation}
\end{lemma}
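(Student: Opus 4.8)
The plan is to handle the two explicit matrix computations first and then deduce \eqref{prop d_R} almost for free from the structure that emerges.

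First I would compute the columns of $\nabla_x R_{x^1}$. On the unit disk $n(x^1)=x^1=:(n_1,n_2)$ with $n_1^2+n_2^2=1$, and $R_{x^1}=I-2\,n(x^1)\otimes n(x^1)$, so the $i$-th column is $R^i_{x^1}=e_i-2n_i(x^1)\,n(x^1)$. Applying the product rule of Lemma \ref{matrix notation}(1) (scalar $n_i(x^1)$, vector $n(x^1)$) together with $\nabla_x n(x^1)=I-\tfrac{v\otimes n(x^1)}{v\cdot n(x^1)}$ from \eqref{normal} — and noting $\nabla_x n_i(x^1)$ is the $i$-th row of the latter — gives
\[
	\nabla_x(R^i_{x^1})=-2\Big(n_i\big(I-\tfrac{v\otimes n(x^1)}{v\cdot n(x^1)}\big)+n(x^1)\otimes\big(e_i-\tfrac{v_i}{v\cdot n(x^1)}\,n(x^1)\big)\Big).
\]
Expanding this $2\times2$ matrix entrywise and simplifying numerators by $v\cdot n(x^1)=v_1n_1+v_2n_2$ and $n_1^2+n_2^2=1$ (e.g. $-4n_1(v\cdot n)+4v_1n_1^2=-4v_2n_1n_2$) should reproduce exactly the two stated expressions.

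Next I would compute the columns of $\nabla_v(-2A_{v,x^1})$, being careful about the convention fixed just after \eqref{def A}: in $\nabla_v A^i_{v,x^1}$ the base point is frozen at $y=x^1$, so $n=n(x^1)$ is a \emph{constant} under $\nabla_v$. Expanding the product in \eqref{def A} and using $(n\otimes v)(v\otimes n)=|v|^2\,n\otimes n$ gives the compact form $A_{v,y}=(v\cdot n)I-v\otimes n+n\otimes v-\tfrac{|v|^2}{v\cdot n}\,n\otimes n$, with $i$-th column $A^i_{v,y}=(v\cdot n)e_i-n_i\,v+v_i\,n-\tfrac{|v|^2}{v\cdot n}\,n_i\,n$. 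Differentiating in $v$ with $n$ held fixed, via $\nabla_v(v\cdot n)=n^T$, $\nabla_v v=I$, $\nabla_v v_i=e_i^T$, and $\nabla_v\!\big(\tfrac{|v|^2}{v\cdot n}\big)=\tfrac{2v^T}{v\cdot n}-\tfrac{|v|^2}{(v\cdot n)^2}\,n^T$, yields
\[
	\nabla_v A^i_{v,y}=e_i\otimes n-n_i\,I+n\otimes e_i-n_i\,n\otimes\Big(\tfrac{2v}{v\cdot n}-\tfrac{|v|^2}{(v\cdot n)^2}\,n\Big),
\]
and multiplying by $-2$, setting $n=n(x^1)$, and expanding entrywise (again reducing numerators with $n_1^2+n_2^2=1$, $|v|^2=v_1^2+v_2^2$) gives the two displayed matrices.

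Finally, \eqref{prop d_R} requires no further calculation: $R^i_{x^1}$ depends on $x$ only through $n(x^1)$, so the chain rule factors $\nabla_x(R^i_{x^1})$ through $\nabla_x n(x^1)=I-\tfrac{v\otimes n(x^1)}{v\cdot n(x^1)}$, and this matrix annihilates $v$ on the right, since $\big(I-\tfrac{v\otimes n(x^1)}{v\cdot n(x^1)}\big)v=v-\tfrac{n(x^1)\cdot v}{v\cdot n(x^1)}\,v=0$; hence $\nabla_x(R^i_{x^1})\,v=0$ for $i=1,2$. (Equivalently, dotting each row of the explicit matrices from the first step with $v$ makes the two terms cancel.) The hard part is purely clerical: correctly differentiating the quotient $|v|^2/(v\cdot n)$ in the second step and, above all, not smuggling in the chain-rule contribution through $x^1(x,v)$ — which the frozen-base-point convention for $\nabla_v A^i_{v,x^1}$ is precisely designed to exclude.
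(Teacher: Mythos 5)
Your proposal is correct and follows essentially the same route as the paper: a direct computation of each matrix using $\nabla_x[n(x^1)]=I-\tfrac{v\otimes n(x^1)}{v\cdot n(x^1)}$ from Lemma \ref{d_n} for the $x$-derivatives, and the frozen-base-point convention (so $n(x^1)$ is constant) for $\nabla_v A^i_{v,x^1}$, with your compact forms $R^i_{x^1}=e_i-2n_i\,n(x^1)$ and $A_{v,y}=(v\cdot n)I-v\otimes n+n\otimes v-\tfrac{|v|^2}{v\cdot n}\,n\otimes n$ being a tidier packaging of the paper's entrywise expansion. Your derivation of \eqref{prop d_R} via the chain-rule factorization through $\nabla_x n(x^1)$, which annihilates $v$ on the right, is a cleaner argument than the paper's direct matrix-vector verification, but reaches the same conclusion.
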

\begin{proof}
	Recall the definition of the reflection matrix $R_{x^1}$ and $-2A_{v,x^1}$:
	\begin{align*}
	R_{x^1}&=I-2n(x^1)\otimes n(x^1) = \begin{bmatrix}
	1-2n_1^2 & -2n_1n_2 \\ 
	-2n_1n_2 & 1-2n_2^2
	\end{bmatrix},\\
	-2A_{v,x^1}&= -2 \left[ \left((v\cdot n(x^1))I +(n(x^1)\otimes v)\right) \left(I-\frac{v\otimes n(x^1)}{v\cdot n(x^1)}\right)\right]\\
	&=\begin{bmatrix}
	-2v_2n_2 -\dfrac{2v_1v_2n_1n_2}{v\cdot n(x^1)} +\dfrac{2v_2^2 n_1^2}{ v\cdot n(x^1)} & 2v_1n_2 + \dfrac{2v_1^2n_1n_2}{v\cdot n(x^1)} -\dfrac{2v_1v_2n_1^2}{ v\cdot n(x^1)} \\
	2v_2n_1 -\dfrac{2v_1v_2n_2^2}{v\cdot n(x^1)} +\dfrac{2v_2^2n_1n_2}{v\cdot n(x^1)} & -2v_1n_1 -\dfrac{2v_1v_2n_1n_2}{v\cdot n(x^1)} + \dfrac{2v_1^2n_2^2}{v \cdot n(x^1)}
	\end{bmatrix}.
	\end{align*}
	To find $\nabla_x (R_{x^1(x,v)}^1),\nabla_x (R_{x^1(x,v)}^2)$, we use \eqref{normal} in Lemma \ref{d_n}:
	\begin{equation} \label{comp_dn}
	\nabla_x [n(x^1(x,v))] = I-\frac{v\otimes n(x^1)}{ v\cdot n(x^1)}= \begin{bmatrix}
	\dfrac{v_2n_2}{v\cdot n(x^1)} & -\dfrac{v_1n_2}{v\cdot n(x^1)} \\
	-\dfrac{v_2n_1}{v \cdot n(x^1)} & \dfrac{v_1n_1}{v\cdot n(x^1)}
	\end{bmatrix}. 
	\end{equation}
	Firstly, we directly calculate $\nabla_x (R_{x^1(x,v)}^1)$ and $\nabla_x(R_{x^1(x,v)}^2)$ using \eqref{comp_dn}: 
	\begin{align*}
	\nabla_x (R_{x^1(x,v)}^1) = \nabla_x \begin{bmatrix}
	1-2n_1^2 \\ -2n_1n_2
	\end{bmatrix}= 
	\begin{bmatrix} 
	\dfrac{-4v_2n_1n_2}{v\cdot n(x^1)}  & \dfrac{4v_1n_1n_2}{v\cdot n(x^1)} \\ 
	\dfrac{-2v_2(n_2^2-n_1^2)}{v\cdot n(x^1)} & \dfrac{2v_1(n_2^2-n_1^2)}{v\cdot n(x^1)}
	\end{bmatrix},\\ 
	\nabla_x (R_{x^1(x,v)}^2) = \nabla_x \begin{bmatrix}
	-2n_1n_2 \\ 1-2n_2^2
	\end{bmatrix}= 
	\begin{bmatrix} 
	\dfrac{-2v_2(n_2^2-n_1^2)}{v\cdot n(x^1)} & \dfrac{2v_1(n_2^2-n_1^2)}{v\cdot n(x^1)}\\
	\dfrac{4v_2n_1n_2}{v\cdot n(x^1)}  & \dfrac{-4v_1n_1n_2}{v\cdot n(x^1)}
	\end{bmatrix}.
	\end{align*}
	Next, we calculate the $v$-derivative of $[-2A_{v,x^1}^1]$:
	\begin{align*}
	(\nabla_v (-2A_{v,x^1}^1))_{(1,1)} &= -\frac{2v_2n_1n_2 (v \cdot n(x^1))-2v_1v_2n_1^2n_2}{(v\cdot n(x^1))^2}-\frac{2v_2^2n_1^3}{(v\cdot n(x^1))^2}=-\frac{2v_2^2n_1}{(v\cdot n(x^1))^2}, \\
	(\nabla_v (-2A_{v,x^1}^1))_{(1,2)} &=-2n_2-\frac{2v_1n_1n_2(v\cdot n(x^1))-2v_1v_2n_1n_2^2}{(v\cdot n(x^1))^2} +\frac{4v_2n_1^2 (v\cdot n(x^1)) -2v_2^2n_1^2n_2}{(v\cdot n(x^1))^2}\\
	&= -2n_2-\dfrac{2v_1^2n_1^2n_2}{(v\cdot n(x^1))^2} + \dfrac{4v_1v_2n_1^3}{(v\cdot n(x^1))^2} + \dfrac{2v_2^2n_1^2n_2}{(v\cdot n(x^1))^2},\\
	(\nabla_v (-2A_{v,x^1}^1))_{(2,1)}&=-\frac{2v_2n_2^2 (v\cdot n(x^1)) -2v_1v_2n_1n_2^2}{(v\cdot n(x^1))^2}-\frac{2v_2^2n_1^2n_2}{(v\cdot n(x^1))^2}=-\frac{2v_2^2n_2}{(v\cdot n(x^1))^2},\\
	(\nabla_v (-2A_{v,x^1}^1))_{(2,2)}&=2n_1-\frac{2v_1n_2^2(v \cdot n(x^1))-2v_1v_2n_2^3}{(v\cdot n(x^1))^2}+\frac{4v_2n_1n_2 (v\cdot n(x^1)) -2v_2^2n_1n_2^2}{(v\cdot n(x^1))^2}\\
	&= 2n_1 -\dfrac{2v_1^2 n_1n_2^2}{(v\cdot n(x^1))^2} + \dfrac{4v_1v_2 n_1^2n_2}{(v\cdot n(x^1))^2} + \dfrac{2v_2^2 n_1n_2^2}{(v\cdot n(x^1))^2}.
	\end{align*}
	Similarly, we deduce the $v$-derivative of $[-2A_{v,x^1}^2]$. We derived $\nabla_x(R_{x^1(x,v)}^1)$ and $\nabla_x(R_{x^1(x,v)}^2)$, and then \eqref{prop d_R} follows from direct calculation that 
	\begin{align*}
	\nabla_{x}(R_{x^1(x,v)}^1) v =  \begin{bmatrix} 
	\dfrac{-4v_2n_1n_2}{v\cdot n(x^1)}  & \dfrac{4v_1n_1n_2}{v\cdot n(x^1)} \\ 
	\dfrac{-2v_2(n_2^2-n_1^2)}{v\cdot n(x^1)} & \dfrac{2v_1(n_2^2-n_1^2)}{v\cdot n(x^1)}
	\end{bmatrix}\begin{bmatrix} 
	v_1 \\ v_2
	\end{bmatrix} = 0, \\
	\nabla_x(R_{x^1(x,v)}^2) v =\begin{bmatrix} 
	\dfrac{-2v_2(n_2^2-n_1^2)}{v\cdot n(x^1)} & \dfrac{2v_1(n_2^2-n_1^2)}{v\cdot n(x^1)}\\
	\dfrac{4v_2n_1n_2}{v\cdot n(x^1)}  & \dfrac{-4v_1n_1n_2}{v\cdot n(x^1)} 
	\end{bmatrix} \begin{bmatrix}
	v_1 \\ v_2
	\end{bmatrix}=0. 
	\end{align*}
\end{proof}
Back to the point, we find the condition of $\nabla_v f_0(x^1,Rv)$ satisfying \eqref{T invariant}. Since 
\begin{align*}
\begin{bmatrix}
\nabla_{v}f_{0}(x^{1}, Rv) \nabla_{x}(R_{x^{1}(x,v)}^1)
\\
\nabla_{v}f_{0}(x^{1}, Rv) \nabla_{x}(R_{x^{1}(x,v)}^2)
\end{bmatrix}^{T}=\begin{bmatrix}
\nabla_vf_0(x^1,Rv) \dfrac{\partial}{\partial x_1} (R_{x^1(x,v)}^1) & \nabla_vf_0(x^1,Rv) \dfrac{\partial}{\partial x_1} (R_{x^1(x,v)}^2)\\
\nabla_vf_0(x^1,Rv) \dfrac{\partial}{\partial x_2} (R_{x^1(x,v)}^1) & \nabla_vf_0(x^1,Rv) \dfrac{\partial}{\partial x_2} (R_{x^1(x,v)}^2)
\end{bmatrix}, \\
-2 
\begin{bmatrix}
\nabla_{v}f_{0}(x^{1}, Rv) \nabla_{v}A^{1}_{v,x^{1}}
\\
\nabla_{v}f_{0}(x^{1}, Rv) \nabla_{v}A^{2}_{v,x^{1}} .
\end{bmatrix}=\begin{bmatrix}
\nabla_v f_0(x^1,R_{x^1}v) \dfrac{\partial}{\partial v_1}(-2A^1_{v,x^1}) & \nabla_v f_0(x^1,R_{x^1}v) \dfrac{\partial}{\partial v_2}(-2A^1_{v,x^1})\\
\nabla_v f_0(x^1,R_{x^1}v) \dfrac{\partial}{\partial v_1}(-2A^2_{v,x^1}) & \nabla_v f_0(x^1,R_{x^1}v) \dfrac{\partial}{\partial v_2}(-2A^2_{v,x^1})
\end{bmatrix},
\end{align*}
it suffices to find the condition of $\nabla_v f_0(x^1,R_{x^1}v)$ such that 
\begin{align*}
\nabla_vf_0(x^1,R_{x^1}v) \left( \dfrac{\partial}{\partial x_1} (R_{x^1(x,v)}^1) -\dfrac{\partial}{\partial v_1} (-2A_{v,x^1}^1)\right) =0, \quad 
\nabla_vf_0(x^1,R_{x^1}v) \left( \dfrac{\partial}{\partial x_2} (R_{x^1(x,v)}^1) -\dfrac{\partial}{\partial v_1} (-2A_{v,x^1}^2)\right) =0,\\
\nabla_vf_0(x^1,R_{x^1}v) \left( \dfrac{\partial}{\partial x_1} (R_{x^1(x,v)}^2) -\dfrac{\partial}{\partial v_2} (-2A_{v,x^1}^1)\right) =0,\quad
\nabla_vf_0(x^1,R_{x^1}v) \left( \dfrac{\partial}{\partial x_2} (R_{x^1(x,v)}^2) -\dfrac{\partial}{\partial v_2} (-2A_{v,x^1}^2)\right) =0.
\end{align*}
We denote column vectors 
\begin{align*}
K_1 := \dfrac{\partial}{\partial x_1} (R_{x^1(x,v)}^1) -\dfrac{\partial}{\partial v_1} (-2A_{v,x^1}^1), \quad K_2:= \dfrac{\partial}{\partial x_2} (R_{x^1(x,v)}^1) -\dfrac{\partial}{\partial v_1} (-2A_{v,x^1}^2),\\
K_3 := \dfrac{\partial}{\partial x_1} (R_{x^1(x,v)}^2) -\dfrac{\partial}{\partial v_2} (-2A_{v,x^1}^1), \quad K_4:=\dfrac{\partial}{\partial x_2} (R_{x^1(x,v)}^2) -\dfrac{\partial}{\partial v_2} (-2A_{v,x^1}^2).
\end{align*}
To determine whether $\nabla_v f_0(x^1,R_{x^1}v)$ is a nonzero vector or not for \eqref{T invariant}, we need to calculate the following determinant 
\begin{align*}
\det \begin{bmatrix} \vert & \vert \\ K_i & K_j \\ \vert & \vert  \end{bmatrix}, \quad 1\leq i < j \leq 4.
\end{align*} 
If every determinant has a value of zero, then $\nabla_v f_0(x^1,R_{x^1}v)$ satisfying \eqref{T invariant} is not the zero vector. We now show that every determinant is $0$ and $\nabla_v f_0(x^1,R_{x^1}v)$ is parallel to a particular direction to satisfy \eqref{T invariant}. Using Lemma \ref{d_RA} and $\vert n(x^1) \vert = n_1^2 +n_2^2=1$,\\

\textrm{(Case 1)} $(K_1 \leftrightarrow K_4) $
\begin{align*}
\det \begin{bmatrix} \vert & \vert \\ K_1 & K_4 \\ \vert & \vert  \end{bmatrix}&= \left(\dfrac{-2}{v\cdot n(x^1)}\right)^2 \det 
\begin{bmatrix}
2v_2n_1n_2 -\dfrac{v_2^2n_1}{v\cdot n(x^1)} & v_1 (n_1^2-n_2^2)-\dfrac{v_1^2n_1}{v\cdot n(x^1)}\\
v_2(n_2^2-n_1^2)-\dfrac{v_2^2n_2}{v\cdot n(x^1)} & 2v_1n_1n_2 -\dfrac{v_1^2n_2}{v\cdot n(x^1)}
\end{bmatrix}\\
&=  \left(\dfrac{-2}{v\cdot n(x^1)}\right)^2\left[\left( 4v_1v_2n_1^2n_2^2-\frac{2v_1^2v_2n_1n_2^2}{v\cdot n(x^1)} -\frac{2v_1v_2^2n_1^2n_2}{v\cdot n(x^1)} +\dfrac{v_1^2v_2^2n_1n_2}{(v\cdot n(x^1))^2}\right)  \right. \\ 
&\quad \left. -\left(-v_1v_2(n_2^2-n_1^2)^2-\frac{v_1v_2^2n_2(n_1^2-n_2^2)}{v\cdot n(x^1)} -\frac{v_1^2v_2n_1(n_2^2-n_1^2)}{v\cdot n(x^1)}+\frac{v_1^2v_2^2n_1n_2}{(v\cdot n(x^1))^2}\right)  \right]\\
\hide
&=\left(\dfrac{-2}{v\cdot n(x^1)}\right)^2\left[\left( 4v_1v_2n_1^2n_2^2 +v_1v_2(n_2^2-n_1^2)^2\right)+\left(-\frac{2v_1^2v_2n_1n_2^2}{v\cdot n(x^1)}+\frac{v_1^2v_2n_1(n_2^2-n_1^2)}{v \cdot n(x^1)}\right) \right. \\
&\quad \left.+\left(\frac{v_1v_2^2n_2(n_1^2-n_2^2)}{v\cdot n(x^1)} -\frac{2v_1v_2^2n_1^2n_2}{v\cdot n(x^1)} \right) \right]\\
\unhide
&=\left(\frac{-2}{v\cdot n(x^1)}\right)^2\left( v_1v_2 -\frac{v_1^2v_2 n_1}{ v\cdot n(x^1)} -\frac{v_1v_2^2n_2}{v\cdot n(x^1)}\right)\\
&=0,
\end{align*}

\textrm{(Case 2)} $(K_1 \leftrightarrow K_2)$ 
\begin{align*}
\det \begin{bmatrix} \vert & \vert \\ K_1 & K_2 \\ \vert & \vert  \end{bmatrix}&=\left(\frac{-2}{v\cdot n(x^1)}\right)^2 \det
\begin{bmatrix}
2v_2n_1n_2 -\dfrac{v_2^2n_1}{v\cdot n(x^1)} & -v_1n_1n_2+v_2n_2^2 -\dfrac{(v_2^2-v_1^2)n_1^2n_2}{v\cdot n(x^1)} +\dfrac{2v_1v_2n_1n_2^2}{v\cdot n(x^1)} \\ 
v_2(n_2^2-n_1^2)-\dfrac{v_2^2n_2}{v\cdot n(x^1)} & -v_1n_2^2 -v_2n_1n_2 -\dfrac{(v_2^2-v_1^2)n_1n_2^2}{v\cdot n(x^1)} +\dfrac{2v_1v_2n_2^3}{v \cdot n(x^1)}
\end{bmatrix}\\
&=  \left(\dfrac{-2}{v\cdot n(x^1)}\right)^2 \left[ \left(-2v_1v_2 n_1 n_2^3 -2v_2^2 n_1^2 n_2^2 -\dfrac{2(v_2^2-v_1^2)v_2n_1^2n_2^3}{v\cdot n(x^1)}+\dfrac{4v_1v_2^2n_1n_2^4}{ v \cdot n(x^1)} \right. \right. \\ 
&\quad+ \left. \left.\dfrac{v_1v_2^2n_1n_2^2}{v\cdot n(x^1)} +\dfrac{v_2^3n_1^2n_2}{v\cdot n(x^1)} +\dfrac{(v_2^2-v_1^2)v_2^2n_1^2n_2^2}{(v \cdot n(x^1))^2} -\dfrac{2v_1v_2^3n_1n_2^3}{(v\cdot n(x^1))^2}  \right) \right. \\ 
&\quad \left. -\left(-v_1v_2n_1n_2(n_2^2-n_1^2)+v_2^2n_2^2(n_2^2-n_1^2)-\dfrac{(v_2^2-v_1^2)v_2n_1^2n_2(n_2^2-n_1^2)}{v\cdot n(x^1)}+\dfrac{2v_1v_2^2n_1n_2^2(n_2^2-n_1^2)}{v\cdot n(x^1)} \right.  \right.\\
&\quad \left. \left. +\dfrac{v_1v_2^2n_1n_2^2}{v\cdot n(x^1)} -\dfrac{v_2^3n_2^3}{v\cdot n(x^1)} +\dfrac{(v_2^2-v_1^2)v_2^2n_1^2n_2^2}{(v\cdot n(x^1))^2}-\dfrac{2v_1v_2^3n_1n_2^3}{(v\cdot n(x^1))^2} \right) \right] \\
\hide
&= \left(\dfrac{-2}{v\cdot n(x^1)}\right)^2 \left[ \left(-v_1v_2n_1n_2-v_2^2n_2^2\right)+\left(-\frac{(v_2^2-v_1^2)v_2n_1^2n_2}{v\cdot n(x^1)}+\frac{2v_1v_2^2n_1n_2^2}{v\cdot n(x^1)}+\frac{v_2^3n_2}{v \cdot n(x^1)} \right) \right]\\
\unhide
&=\left(\dfrac{-2}{v\cdot n(x^1)}\right)^2\left[ -\frac{v_2^3n_2^3}{v\cdot n(x^1)} -\frac{v_2^3n_1^2n_2}{v\cdot n(x^1)} +\frac{v_2^3n_2}{v\cdot n(x^1)}\right]\\
&=0,
\end{align*}

\textrm{(Case 3)}  $(K_1 \leftrightarrow K_3)$ 
\begin{align*}
\det \begin{bmatrix} \vert & \vert \\ K_1 & K_3 \\ \vert & \vert  \end{bmatrix}&=\left(\frac{-2}{v\cdot n(x^1)}\right)^2 \det
\begin{bmatrix}
2v_2n_1n_2 -\dfrac{v_2^2n_1}{v\cdot n(x^1)} &-v_2n_1^2 -v_1n_1n_2-\dfrac{(v_1^2-v_2^2)n_1^2n_2}{v\cdot n(x^1)} +\dfrac{2v_1v_2n_1^3}{v\cdot n(x^1)}\\
v_2(n_2^2-n_1^2)-\dfrac{v_2^2n_2}{v\cdot n(x^1)} & v_1n_1^2 -v_2n_1n_2 -\dfrac{(v_1^2-v_2^2)n_1n_2^2}{v\cdot n(x^1)} +\dfrac{2v_1v_2n_1^2n_2}{v\cdot n(x^1)}
\end{bmatrix}\\
&= \left(\dfrac{-2}{v\cdot n(x^1)}\right)^2 \left[ \left(2v_1v_2 n_1^3 n_2 -2v_2^2 n_1^2 n_2^2 -\dfrac{2(v_1^2-v_2^2)v_2n_1^2n_2^3}{v\cdot n(x^1)}+\dfrac{4v_1v_2^2n_1^3n_2^2}{ v \cdot n(x^1)} \right. \right. \\ 
&\quad- \left. \left.\dfrac{v_1v_2^2n_1^3}{v\cdot n(x^1)} +\dfrac{v_2^3n_1^2n_2}{v\cdot n(x^1)} +\dfrac{(v_1^2-v_2^2)v_2^2n_1^2n_2^2}{(v \cdot n(x^1))^2} -\dfrac{2v_1v_2^3n_1^3n_2}{(v\cdot n(x^1))^2}  \right) \right. \\ 
&\quad \left. -\left(-v_2^2n_1^2(n_2^2-n_1^2)-v_1v_2n_1n_2(n_2^2-n_1^2)-\dfrac{(v_1^2-v_2^2)v_2n_1^2n_2(n_2^2-n_1^2)}{v\cdot n(x^1)}+\dfrac{2v_1v_2^2n_1^3(n_2^2-n_1^2)}{v\cdot n(x^1)} \right.  \right.\\
&\quad \left. \left. +\dfrac{v_2^3n_1^2n_2}{v\cdot n(x^1)} +\dfrac{v_1v_2^2n_1n_2^2}{v\cdot n(x^1)} +\dfrac{(v_1^2-v_2^2)v_2^2n_1^2n_2^2}{(v\cdot n(x^1))^2}-\dfrac{2v_1v_2^3n_1^3n_2}{(v\cdot n(x^1))^2} \right) \right]\\
\hide
&= \left(\dfrac{-2}{v\cdot n(x^1)}\right)^2 \left[ \left(v_1v_2n_1n_2-v_2^2n_1^2\right)+\left(-\frac{(v_1^2-v_2^2)v_2n_1^2n_2}{v\cdot n(x^1)} +\frac{2v_1v_2^2n_1^3}{v \cdot n(x^1)} -\frac{v_1v_2^2n_1}{v\cdot n(x^1)}\right)\right]\\
\unhide
&=  \left(\dfrac{-2}{v\cdot n(x^1)}\right)^2\left[ \frac{v_1v_2^2n_1^3}{v\cdot n(x^1)} +\frac{v_1v_2^2n_1n_2^2}{v \cdot n(x^1)} -\frac{v_1v_2^2n_1}{v\cdot n(x^1)}\right]\\
&=0.
\end{align*}
Moreover, from (Case 1) and (Case 2), we deduce 
\begin{align*}
\det \begin{bmatrix} \vert & \vert \\ K_2 & K_4 \\ \vert & \vert  \end{bmatrix}=0.
\end{align*}
Likewise, it holds that 
\begin{align*}
\det \begin{bmatrix} \vert & \vert \\ K_2 & K_3 \\ \vert & \vert  \end{bmatrix}=0, \quad \det \begin{bmatrix} \vert & \vert \\ K_3 & K_4 \\ \vert & \vert  \end{bmatrix}=0.
\end{align*}
Therefore, it means that we can find a nonzero vector $\nabla_v f_0(x^1,R_{x^1}v)$ satisfying \eqref{T invariant}. Since 
\begin{align*}
\nabla_v f_0(x^1,R_{x^1}v) \begin{bmatrix} \vert \\ K_1 \\ \vert \end{bmatrix} = 0,
\end{align*}
$\nabla _v f_0 (x^1,R_{x^1}v)$ is orthogonal to the column vector $K_1$. More specifically, $\nabla_v f_0(x^1,R_{x^1}v)^T$ has the following direction 
\begin{align*}
\frac{-2}{v\cdot n(x^1)} \begin{bmatrix} 
-v_2(n_2^2-n_1^2) + \dfrac{v_2^2 n_2}{v\cdot n(x^1)} \\
2v_2n_1n_2-\dfrac{v_2^2n_1}{v\cdot n(x^1)}
\end{bmatrix}&=\frac{-2}{(v\cdot n(x^1))^2}
\begin{bmatrix}
-v_1v_2n_1(n_2^2-n_1^2) +2v_2^2n_1^2n_2\\
2v_1v_2n_1^2n_2 +v_2^2n_1(n_2^2-n_1^2) 
\end{bmatrix}\\
&=\frac{2v_2n_1}{(v\cdot n(x^1))^2} \begin{bmatrix}
n_2^2-n_1^2 & -2n_1n_2\\
-2n_1n_2 & n_1^2 -n_2^2 
\end{bmatrix} \begin{bmatrix}
v_1 \\ v_2
\end{bmatrix}= \frac{2v_2n_1}{(v\cdot n(x^1))^2} R_{x^1}v.
\end{align*}
Consequently, for \eqref{T invariant}, we get the following condition 
\begin{align} \label{Cond3}
\nabla _v f_0(x,R_xv) \parallel (R_xv)^T,
\end{align}
for any $x \in \partial \O$. \\

\subsubsection{$\nabla_{xx}^T =\nabla_{xx}$} From \eqref{Cond2 2}, we need  
\begin{equation*}
	\begin{split}
&\left(\begin{bmatrix}
\nabla_{x}f_{0}(x^{1}, R_{x^1}v) \nabla_{x}(R_{x^{1}(x,v)}^1)
\\
\nabla_{x}f_{0}(x^{1}, R_{x^1}v) \nabla_{x}(R_{x^{1}(x,v)}^2)
\end{bmatrix} +
\begin{bmatrix}
\nabla_{v}f_{0}(x^{1}, R_{x^1}v) \nabla_{x}(-2A_{v,x^{1}(x,v)}^1)
\\
\nabla_{v}f_{0}(x^{1}, R_{x^1}v) \nabla_{x}(-2A_{v,x^{1}(x,v)}^2)
\end{bmatrix}\right)^T 
\\
&= \begin{bmatrix}
\nabla_{x}f_{0}(x^{1}, R_{x^1}v) \nabla_{x}(R_{x^{1}(x,v)}^1)
\\
\nabla_{x}f_{0}(x^{1}, R_{x^1}v) \nabla_{x}(R_{x^{1}(x,v)}^2)
\end{bmatrix} +
\begin{bmatrix}
\nabla_{v}f_{0}(x^{1}, R_{x^1}v) \nabla_{x}(-2A_{v,x^{1}(x,v)}^1)
\\
\nabla_{v}f_{0}(x^{1}, R_{x^1}v) \nabla_{x}(-2A_{v,x^{1}(x,v)}^2)
\end{bmatrix}.
	\end{split}
\end{equation*}
Thus, it suffices to check that
\begin{align*}
&\nabla_x f_0(x^1,R_{x^1}v) \frac{\partial}{\partial x_2}(R_{x^1(x,v)}^1) +\nabla_v f_0(x^1,R_{x^1}v) \frac{\partial}{\partial x_2} (-2A_{v,x^1(x,v)}^1)\\
&= \nabla_x f_0(x^1,R_{x^1}v) \frac{\partial}{\partial x_1}(R_{x^1(x,v)}^2) +\nabla_v f_0(x^1,R_{x^1}v) \frac{\partial}{\partial x_1} (-2A_{v,x^1(x,v)}^2).
\end{align*}
In other words, we have to find the condition of $\nabla_x f_0 (x^1,R_{x^1}v)$ to satisfy 
\begin{align}\label{xx_sym2}
\nabla_xf_0(x^1,R_{x^1}v) \left[\frac{\partial}{\partial x_2}(R_{x^1(x,v)}^1)-\frac{\partial}{\partial x_1}(R_{x^1(x,v)}^2) \right] = \nabla_v f_0(x^1,R_{x^1}v) \left[ \frac{\partial}{\partial x_1} (-2A_{v,x^1(x,v)}^2)-\frac{\partial}{\partial x_2} (-2A_{v,x^1(x,v)}^1)\right].
\end{align}
Since we computed $\nabla_x (R_{x^1(x,v)}^1), \nabla_x (R_{x^1(x,v)}^2)$ in Lemma \ref{d_RA}, we represent $\nabla_x (-2A_{v,x^1(x,v)}^1)$ and $\nabla_x (-2A_{v,x^1(x,v)}^2)$ by components. 
\begin{lemma} \label{dx_A} Recall the matrix $A_{v,x}$ defined in \eqref{def A}, and then 
	\begin{equation*}
	A_{v,x^1} = \left[ \left((v\cdot n(x^1))I +(n(x^1)\otimes v)\right) \left(I-\frac{v\otimes n(x^1)}{v\cdot n(x^1)}\right)\right].
	\end{equation*}
	If we write that $A^i$ is the $i$th column of matrix $A$, then
	\begin{align*}
	&\nabla_x(-2A_{v,x^1(x,v)}^1) \\
	&= \begin{bmatrix}
	\dfrac{4v_1^2v_2^2n_1^3 +2v_1v_2^3(3n_1^2n_2-n_2^3)+ 2v_2^4(3n_1n_2^2+n_1^3)}{(v\cdot n(x^1))^3} & \dfrac{-4v_1^3v_2n_1^3-2v_1^2v_2^2(3n_1^2n_2-n_2^3)-2v_1v_2^3(3n_1n_2^2+n_1^3)}{(v\cdot n(x^1))^3}\\
	\dfrac{4v_2^4n_2^3+2v_1v_2^3(3n_1n_2^2-n_1^3)+2v_1^2v_2^2(3n_1^2n_2+n_2^3)}{(v\cdot n(x^1))^3} & \dfrac{-4v_1v_2^3n_2^3-2v_1^2v_2^2(3n_1n_2^2-n_1^3)-2v_1^3v_2(3n_1^2n_2+n_2^3)}{(v\cdot n(x^1))^3}
	\end{bmatrix},\\
	&\nabla_x(-2A_{v,x^1(x,v)}^2)\\
	 &= \begin{bmatrix}
	\dfrac{-4v_1^3v_2n_1^3-2v_1v_2^3(3n_1n_2^2+n_1^3) -2v_1^2v_2^2(3n_1^2n_2-n_2^3)}{(v\cdot n(x^1))^3} & \dfrac{4v_1^4n_1^3 +2v_1^2v_2^2(3n_1n_2^2+n_1^3)+2v_1^3v_2 (3n_1^2n_2-n_2^3)}{(v \cdot n(x^1))^3}\\
	\dfrac{-4v_1v_2^3n_2^3 -2v_1^3v_2(3n_1^2n_2+n_2^3) -2v_1^2v_2^2(3n_1n_2^2-n_1^3)}{(v\cdot n(x^1))^3} & \dfrac{4v_1^2 v_2^2 n_2^3 +2v_1^4(3n_1^2n_2+n_2^3)+2v_1^3v_2(3n_1n_2^2-n_1^3)}{(v \cdot n(x^1))^3}
	\end{bmatrix},
	\end{align*}
	where $v_i$ be the $i$th component of $v$. We denote the $i$th component $n_i(x,v)$ of $n(x^1)$ as $n_i$, that is, $n_i$ depends on $x,v$. Furthermore, it holds that 
	\begin{equation} \label{prop d_A}
	\nabla_x(-2A_{v,x^1(x,v)}^1)v =0, \quad \nabla_x (-2A_{v,x^1(x,v)}^2)v=0.
	\end{equation}
\end{lemma}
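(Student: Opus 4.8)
\textbf{Proof proposal for Lemma \ref{dx_A}.}

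The plan is to exploit the fact that, with $v$ held fixed, the matrix $A_{v,x^{1}(x,v)}$ depends on $x$ only through the unit normal $n(x^{1}(x,v))$. Hence each entry of $-2A^{1}_{v,x^{1}}$ and $-2A^{2}_{v,x^{1}}$ is a fixed rational function of the scalars $n_{1},n_{2},v_{1},v_{2}$ (the denominators being powers of $v\cdot n(x^{1})=v_{1}n_{1}+v_{2}n_{2}$), and its $x$-gradient reduces to a chain-rule computation once we know $\nabla_{x}[n(x^{1})]$. The two ingredients are already available: the explicit entrywise form of $-2A_{v,x^{1}}$ recorded in the proof of Lemma \ref{d_RA}, and the Jacobian of the normal from \eqref{normal} in Lemma \ref{d_n} (equivalently \eqref{comp_dn}),
\begin{equation*}
	\nabla_{x}[n(x^{1})]=\begin{bmatrix}
	\dfrac{v_{2}n_{2}}{v\cdot n(x^{1})} & -\dfrac{v_{1}n_{2}}{v\cdot n(x^{1})}\\[2mm]
	-\dfrac{v_{2}n_{1}}{v\cdot n(x^{1})} & \dfrac{v_{1}n_{1}}{v\cdot n(x^{1})}
	\end{bmatrix},
\end{equation*}
from which $\partial_{x_{j}}n_{i}$ is read off as the $(i,j)$ entry.

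Concretely, I would write each scalar entry of $-2A^{i}_{v,x^{1}}$ as $F(n_{1},n_{2})$ and apply $\partial_{x_{j}}F=(\partial_{n_{1}}F)\,\partial_{x_{j}}n_{1}+(\partial_{n_{2}}F)\,\partial_{x_{j}}n_{2}$, taking care that the factors $v\cdot n(x^{1})$ in the denominators also contribute when differentiated. After substituting the entries of $\nabla_{x}[n(x^{1})]$ and collapsing the outcome with $n_{1}^{2}+n_{2}^{2}=1$, one obtains the claimed component formulas. I would organize this column by column, first $\nabla_{x}(-2A^{1}_{v,x^{1}(x,v)})$, then $\nabla_{x}(-2A^{2}_{v,x^{1}(x,v)})$, verifying each of the four entries; this runs exactly parallel to the $\nabla_{x}(R^{i}_{x^{1}})$ computation already carried out in Lemma \ref{d_RA}, only with one higher degree in the denominator.

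For the null identities \eqref{prop d_A}, the cleanest route is structural rather than entrywise. From the Jacobian above, $(\nabla_{x}[n(x^{1})])v=v-\dfrac{(v\otimes n(x^{1}))v}{v\cdot n(x^{1})}=v-\dfrac{(n(x^{1})\cdot v)\,v}{v\cdot n(x^{1})}=0$, i.e.\ the directional derivative of $n(x^{1}(x,v))$ in the $x$-direction $v$ vanishes (equivalently, $n(x^{1})$ is unchanged under the shift $x\mapsto x+sv$). Since $A_{v,x^{1}(x,v)}$ is built solely from $n(x^{1}(x,v))$ and the frozen $v$, its directional $x$-derivative in direction $v$ also vanishes, which is exactly $\nabla_{x}(-2A^{i}_{v,x^{1}(x,v)})v=0$ for $i=1,2$; the very same remark re-proves \eqref{prop d_R}. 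One may alternatively substitute $v=(v_{1},v_{2})^{T}$ directly into the explicit matrices and reduce with $n_{1}^{2}+n_{2}^{2}=1$, as was done for \eqref{prop d_R}. The only real obstacle is computational bookkeeping: correctly differentiating the entries with $v\cdot n(x^{1})$ in the denominator and then matching numerators against the claimed cubic expressions in $v_{i},n_{i}$. There is no conceptual difficulty, so I would guard against algebra slips by checking each entry at convenient normals such as $n=(1,0)$ and $n=(0,1)$.
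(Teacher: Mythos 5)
Your plan for the explicit matrices is essentially the paper's own proof: write $-2A_{v,x^1}$ componentwise as rational functions of $n_1,n_2,v_1,v_2$, differentiate each entry via the chain rule through $n(x^1)$ using $\nabla_x[n(x^1)]=I-\frac{v\otimes n(x^1)}{v\cdot n(x^1)}$ from Lemma \ref{d_n}, and collapse the result with $n_1^2+n_2^2=1$; the paper carries this out entry by entry for $\nabla_x(-2A^1_{v,x^1(x,v)})$ and omits the analogous computation for the second column. Where you genuinely diverge is in \eqref{prop d_A}: the paper proves the null identities by multiplying the finished explicit matrices by $v=(v_1,v_2)^T$ and cancelling, whereas you derive them structurally from $(\nabla_x[n(x^1)])v=0$ combined with the observation that, with $v$ frozen, $A_{v,x^1(x,v)}$ depends on $x$ only through $n(x^1(x,v))$, so every entry satisfies $\nabla_x F\cdot v=(\nabla_n F)\,(\nabla_x[n(x^1)])v=0$. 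That argument is correct, is independent of the long explicit formulas (so it survives any algebra slip in them), and it explains in one line why $v$ spans the null space of all four matrices in \eqref{1st order} — the structural fact the introduction emphasizes. What it buys less of is verification: the paper's direct substitution doubles as a consistency check on the computed entries, which is worth keeping given how error-prone the cubic numerators are; your suggested spot-checks at $n=(1,0)$ and $n=(0,1)$ serve the same purpose.
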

\begin{proof}
	We write the matrix $-2A_{v,x^1}$ by components: 
	\begin{align*}
	-2A_{v,x^1}=\begin{bmatrix}
	-2v_2n_2 -\dfrac{2v_1v_2n_1n_2}{v\cdot n(x^1)} +\dfrac{2v_2^2 n_1^2}{ v\cdot n(x^1)} & 2v_1n_2 + \dfrac{2v_1^2n_1n_2}{v\cdot n(x^1)} -\dfrac{2v_1v_2n_1^2}{ v\cdot n(x^1)} \\
	2v_2n_1 -\dfrac{2v_1v_2n_2^2}{v\cdot n(x^1)} +\dfrac{2v_2^2n_1n_2}{v\cdot n(x^1)} & -2v_1n_1 -\dfrac{2v_1v_2n_1n_2}{v\cdot n(x^1)} + \dfrac{2v_1^2n_2^2}{v \cdot n(x^1)}
	\end{bmatrix}.
	\end{align*}
	For $\nabla_x(-2A_{v,x^1(x,v)}^1)$, we firstly take a derivative of $(1,1)$ component of $-2A_{v,x^1}$ with respect to $x_1$ 
	\begin{align*}
	&\frac{\partial}{\partial x_1} \left(-2v_2n_2 -\dfrac{2v_1v_2n_1n_2}{v\cdot n(x^1)} +\dfrac{2v_2^2 n_1^2}{ v\cdot n(x^1)} \right)\\
	&=-2v_2 \frac{\partial n_2}{\partial x_1} + \dfrac{\left (-2v_1v_2 n_2\frac{\partial n_1}{\partial x_1} - 2 v_1v_2n_1 \frac{\partial n_2}{\partial x_1}\right)(v\cdot n(x^1))+2v_1v_2n_1n_2\left(v_1\frac{\partial n_1}{\partial x_1}+v_2\frac{\partial n_2}{\partial x_1}\right)}{(v\cdot n(x^1))^2}\\
	& \quad+ \dfrac{\left(4v_2^2n_1\frac{\partial n_1}{\partial x_1}\right)(v\cdot n(x^1))-2v_2^2n_1^2\left(v_1\frac{\partial n_1}{\partial x_1}+v_2\frac{\partial n_2}{\partial x_1}\right)}{(v \cdot n(x^1))^2}\\
	\hide
	&=\frac{4v_1v_2^2n_1^2-2v_1v_2^2n_2^2+6v_2^3n_1n_2}{(v\cdot n(x^1))^2} +\frac{2v_1^2v_2^2n_1n_2^2-4v_1v_2^3n_1^2n_2+2v_2^4n_1^3}{(v\cdot n(x^1))^3}\\
	\unhide
	&=\dfrac{4v_1^2v_2^2n_1^3 +2v_1v_2^3(3n_1^2n_2-n_2^3)+ 2v_2^4(3n_1n_2^2+n_1^3)}{(v\cdot n(x^1))^3},
	\end{align*}
	where we used \eqref{normal} in Lemma \ref{d_n}. Similarly, using \eqref{normal} in Lemma \ref{d_n}, we get 
	\begin{align*}
	&\frac{\partial}{\partial x_2} \left(-2v_2n_2 -\dfrac{2v_1v_2n_1n_2}{v\cdot n(x^1)} +\dfrac{2v_2^2 n_1^2}{ v\cdot n(x^1)} \right)\\
	&=-2v_2 \frac{\partial n_2}{\partial x_2} + \dfrac{\left (-2v_1v_2 n_2\frac{\partial n_1}{\partial x_2} - 2 v_1v_2n_1 \frac{\partial n_2}{\partial x_2}\right)(v\cdot n(x^1))+2v_1v_2n_1n_2\left(v_1\frac{\partial n_1}{\partial x_2}+v_2\frac{\partial n_2}{\partial x_2}\right)}{(v\cdot n(x^1))^2}\\
	&  \quad+ \frac{\left(4v_2^2n_1\frac{\partial n_1}{\partial x_2}\right)(v\cdot n(x^1))-2v_2^2n_1^2\left(v_1\frac{\partial n_1}{\partial x_2}+v_2\frac{\partial n_2}{\partial x_2}\right)}{(v \cdot n(x^1))^2}\\
	\hide
	&=\frac{-4v_1^2v_2n_1^2-6v_1v_2^2n_1n_2+2v_1^2v_2n_2^2}{(v \cdot n(x^1))^2} + \frac{4v_1^2v_2^2n_1^2n_2-2v_1^3v_2n_1n_2^2-2v_1v_2^3n_1^3}{(v\cdot n(x^1))^3} \\
	\unhide
	&= \dfrac{-4v_1^3v_2n_1^3-2v_1^2v_2^2(3n_1^2n_2-n_2^3)-2v_1v_2^3(3n_1n_2^2+n_1^3)}{(v\cdot n(x^1))^3},\\
	&\frac{\partial}{\partial x_1} \left(2v_2n_1 -\dfrac{2v_1v_2n_2^2}{v\cdot n(x^1)} +\dfrac{2v_2^2n_1n_2}{v\cdot n(x^1)}\right)\\
	&=2v_2\frac{\partial n_1}{\partial x_1} - \frac{\left(4v_1v_2n_2\frac{\partial n_2}{\partial x_1}\right)(v\cdot n(x^1))-2v_1v_2n_2^2\left(v_1\frac{\partial n_1}{\partial x_1}+v_2\frac{\partial n_2}{\partial x_1}\right) }{(v \cdot n(x^1))^2}\\
	&\quad+ \frac{\left( 2v_2^2n_2\frac{\partial n_1}{\partial x_1} +2v_2^2n_1\frac{\partial n_2}{\partial x_1}\right) (v\cdot n(x^1))-2v_2^2n_1n_2\left( v_1 \frac{\partial n_1}{\partial x_1}+v_2 \frac{\partial n_2}{\partial x_1}\right)}{(v\cdot n(x^1))^2}\\
	\hide
	&=\frac{6v_1v_2^2n_1n_2+4v_2^3n_2^2-2v_2^3n_1^2}{(v\cdot n(x^1))^2} +\frac{2v_1^2v_2^2n_2^3-4v_1v_2^3n_1n_2^2+2v_2^4n_1^2n_2}{(v\cdot n(x^1))^3}\\
	\unhide
	&=\dfrac{4v_2^4n_2^3+2v_1v_2^3(3n_1n_2^2-n_1^3)+2v_1^2v_2^2(3n_1^2n_2+n_2^3)}{(v\cdot n(x^1))^3},\\
	&\frac{\partial}{\partial x_2} \left(2v_2n_1 -\dfrac{2v_1v_2n_2^2}{v\cdot n(x^1)} +\dfrac{2v_2^2n_1n_2}{v\cdot n(x^1)}\right)\\
	&=2v_2\frac{\partial n_1}{\partial x_2} - \frac{\left(4v_1v_2n_2\frac{\partial n_2}{\partial x_2}\right)(v\cdot n(x^1))-2v_1v_2n_2^2\left(v_1\frac{\partial n_1}{\partial x_2}+v_2\frac{\partial n_2}{\partial x_2}\right) }{(v \cdot n(x^1))^2}\\
	&\quad+ \frac{\left( 2v_2^2n_2\frac{\partial n_1}{\partial x_2} +2v_2^2n_1\frac{\partial n_2}{\partial x_2}\right) (v\cdot n(x^1))-2v_2^2n_1n_2\left( v_1 \frac{\partial n_1}{\partial x_2}+v_2 \frac{\partial n_2}{\partial x_2}\right)}{(v\cdot n(x^1))^2}\\
	\hide
	&\quad -\frac{2v_1v_2^2n_2^2}{(v\cdot n(x^1))^2} +\frac{2v_1v_2^2n_1^2}{(v\cdot n(x^1))^2} +\frac{2v_1^2v_2^2n_1n_2^2}{(v\cdot n(x^1))^3} -\frac{2v_1v_2^3n_1^2n_2}{(v\cdot n(x^1))^3}\\
	\unhide
	&=\dfrac{-4v_1v_2^3n_2^3-2v_1^2v_2^2(3n_1n_2^2-n_1^3)-2v_1^3v_2(3n_1^2n_2+n_2^3)}{(v\cdot n(x^1))^3}.
	\end{align*}
	Thus, we derived $\nabla_x(-2A_{v,x^1(x,v)}^1)$. Similar to $\nabla_x(-2A_{v,x^1(x,v)}^1)$, we can obtain $\nabla_x(-2A_{v,x^1(x,v)}^2)$, and the details are omitted. By the $\nabla_x(-2A_{v,x^1(x,v)}^1)$ and $\nabla_x(-2A_{v,x^1(x,v)}^2)$ formula above, direct calculation gives \eqref{prop d_A}: 
	\begin{footnotesize}
	\begin{align*} 
	&\nabla_x(-2A_{v,x^1(x,v)}^1) v \\
	&= \begin{bmatrix}
	\dfrac{4v_1^2v_2^2n_1^3 +2v_1v_2^3(3n_1^2n_2-n_2^3)+ 2v_2^4(3n_1n_2^2+n_1^3)}{(v\cdot n(x^1))^3} & \dfrac{-4v_1^3v_2n_1^3-2v_1^2v_2^2(3n_1^2n_2-n_2^3)-2v_1v_2^3(3n_1n_2^2+n_1^3)}{(v\cdot n(x^1))^3}\\
	\dfrac{4v_2^4n_2^3+2v_1v_2^3(3n_1n_2^2-n_1^3)+2v_1^2v_2^2(3n_1^2n_2+n_2^3)}{(v\cdot n(x^1))^3} & \dfrac{-4v_1v_2^3n_2^3-2v_1^2v_2^2(3n_1n_2^2-n_1^3)-2v_1^3v_2(3n_1^2n_2+n_2^3)}{(v\cdot n(x^1))^3}
	\end{bmatrix} \begin{bmatrix}
	v_1 \\ v_2 
	\end{bmatrix}=0,\\
	&\nabla_x(-2A_{v,x^1(x,v)}^2) v \\
	&=\begin{bmatrix} 
	\dfrac{-4v_1^3v_2n_1^3-2v_1v_2^3(3n_1n_2^2+n_1^3) -2v_1^2v_2^2(3n_1^2n_2-n_2^3)}{(v\cdot n(x^1))^3} & \dfrac{4v_1^4n_1^3 +2v_1^2v_2^2(3n_1n_2^2+n_1^3)+2v_1^3v_2 (3n_1^2n_2-n_2^3)}{(v \cdot n(x^1))^3}\\
	\dfrac{-4v_1v_2^3n_2^3 -2v_1^3v_2(3n_1^2n_2+n_2^3) -2v_1^2v_2^2(3n_1n_2^2-n_1^3)}{(v\cdot n(x^1))^3} & \dfrac{4v_1^2 v_2^2 n_2^3 +2v_1^4(3n_1^2n_2+n_2^3)+2v_1^3v_2(3n_1n_2^2-n_1^3)}{(v \cdot n(x^1))^3}
	\end{bmatrix} \begin{bmatrix}
	v_1 \\ v_2 
	\end{bmatrix}=0.
	\end{align*}
	\end{footnotesize}
\end{proof}
Now, back to our consideration \eqref{xx_sym2}. By Lemma \ref{dx_A}, we have 
\begin{align*}
\frac{\partial}{\partial x_2} (-2A_{v,x^1(x,v)}^1)= \frac{\partial}{\partial x_1}(-2A_{v,x^1(x,v)}^2),
\end{align*}
which implies that 
\begin{align*}
\nabla_xf_0(x^1,R_{x^1}v)  \left[\frac{\partial}{\partial x_2}(R_{x^1(x,v)}^1)-\frac{\partial}{\partial x_1}(R_{x^1(x,v)}^2) \right]=\frac{2}{v\cdot n(x^1)}\nabla_xf_0(x^1,R_{x^1}v) \begin{bmatrix}
2v_1n_1n_2 +v_2(n_2^2-n_1^2) \\
v_1(n_2^2-n_1^2)-2v_2n_1n_2
\end{bmatrix}=0.
\end{align*}
It means that $\nabla_x f_0(x^1,R_{x^1}v)$ is orthogonal to $\frac{\partial}{\partial x_2}(R_{x^1(x,v)}^1)-\frac{\partial}{\partial x_1}(R_{x^1(x,v)}^2)$ and $\nabla_xf_0(x^1,R_{x^1}v)^T$ has the following direction 
\begin{align*}
\begin{bmatrix}
-v_1(n_2^2-n_1^2)+2v_2n_1n_2 \\ 2v_1n_1n_2+v_2(n_2^2-n_1^2)
\end{bmatrix}=-\begin{bmatrix}
n_2^2-n_1^2 & -2n_1n_2 \\ -2n_1n_2 & n_1^2-n_2^2
\end{bmatrix}\begin{bmatrix}
v_1 \\ v_2 
\end{bmatrix}=-R_{x^1}v.
\end{align*}
To hold $\nabla_{xx} f_0(x^1,R_{x^1}v)^T = \nabla_{xx} f_0 (x^1,R_{x^1}v)$, the following condition 
\begin{align} \label{Cond4}
	\nabla_xf_0(x,R_xv) \parallel (R_xv)^T,
\end{align}
must be satisfied for $x \in \partial \O$. \\

\subsection{Conditions including $\p_{t}$}
In this subsection, we find conditions for $\p_{tt}, \p_{t}\nabla_{x}, \p_{t}\nabla_{v}, \nabla_{x}\p_{t}, \nabla_{v}\p_{t}$. In the last subsubsection, we show that all these $\p_{t}$ including compatibility conditions are covered by \eqref{Cond2 1}--\eqref{Cond2 4}, \eqref{Cond3}, and \eqref{Cond4}. \\

\subsubsection{$\partial_{tt}$} Using the same perturbation \eqref{Perb_t} in $C^1_t$ compatibility condition, we derive $C^2_t$ compatibility condition. For $\epsilon>0$, 
\begin{align*}
	\partial_t(f(t+\epsilon,x,v)-f(t,x,v))&= \partial_t (f_0(X^\epsilon(0),R_{x^1}v)-f_0(X(0),R_{x^1}v))\\
	&=\left( \nabla_x f_0(X^\epsilon(0),R_{x^1}v)-\nabla_xf_0(X(0),R_{x^1}v)\right) (-R_{x^1}v) \\
	&=(-R_{x^1}v)^T \left (\nabla_x f_0(X^\epsilon(0),R_{x^1}v) -\nabla_xf_0(X(0),R_{x^1}v) \right)^T,
\end{align*}
which implies 
\begin{align*}
	f_{tt}(t,x,v) &= \lim_{\epsilon \rightarrow 0+}\frac{ \partial_t f(t+\epsilon,x,v)-\partial_t f(t,x,v)}{\epsilon}\\ 
	&=(-R_{x^1}v)^T \nabla_{xx} f_0(x^1,R_{x^1}v) \lim_{\epsilon\rightarrow 0+}\frac{ X^\epsilon(0)-X(0)}{\epsilon}\\
	&=(-R_{x^1}v)^T \nabla_{xx} f_0(x^1,R_{x^1}v) (-R_{x^1}v).
\end{align*} 
On the other hand, for $\epsilon<0$, it holds that 
\begin{align*}
	\partial_t(f(t+\epsilon,x,v)-f(t,x,v))= \partial_t (f_0(X^\epsilon(0),v)-f_0(X(0),v))&=\left( \nabla_x f_0(X^\epsilon(0),v)-\nabla_xf_0(X(0),v)\right) (-v)\\
	&=(-v)^T \left( \nabla_x f_0(X^\epsilon(0),v)-\nabla_xf_0(X(0),v)\right)^T.
\end{align*}
Thus, we have 
\begin{align*}
	f_{tt}(t,x,v) &= \lim_{\epsilon \rightarrow 0-}\frac{ \partial_t f(t+\epsilon,x,v)-\partial_t f(t,x,v)}{\epsilon}\\ 
	&=(-v)^T \nabla_{xx} f_0(x^1,v) \lim_{\epsilon\rightarrow 0-}\frac{ X^\epsilon(0)-X(0)}{\epsilon}\\
	&=(-v)^T \nabla_{xx} f_0(x^1,v) (-v).
\end{align*}
To sum up, the condition 
\begin{align} \label{time cond}
	v^T \nabla_{xx}f_0(x^1,v)v = (R_{x^1}v)^T \nabla_{xx}f_0(x^1,R_{x^1}v)(R_{x^1}v),
\end{align}
must be satisfied to $f \in C^2_t$. \\

\subsubsection{$C^2_{t,x}$} We firstly use the perturbation \eqref{Perb_t} for $\epsilon <0$. From \eqref{c_3}, it holds that 
\begin{equation} \label{nabla_tx f case1}
	\begin{split}
	\partial_t [\nabla_xf(t,x,v)]&= \lim_{\epsilon \rightarrow 0-} \frac{ \nabla_x f(t+\epsilon,x,v) - \nabla_xf(t,x,v)}{\epsilon}\\
	&=\lim_{\epsilon \rightarrow 0-} \frac{1}{\epsilon} \left( \nabla_x \left[ f_0(X(0;t+\epsilon,x,v),V(0;t+\epsilon,x,v))\right]-\nabla_xf_0(X(0),v)\right)\\
	&=\lim_{\epsilon \rightarrow 0-} \frac{1}{\epsilon} \left( \nabla_x f_0(X^\epsilon(0),v)-\nabla_x f_0(X(0),v)\right)\\
	&=-v^T \nabla_{xx} f_0(x^1,v), 
	\end{split}
\end{equation}
where we used $\nabla_x X^{\epsilon}(0) = I_2$ and $\nabla_x V^{\epsilon}(0)=0$. On the other hand,  for $\epsilon>0$, 
\begin{align*}
	X^{\epsilon}(0):= X(0;t+\epsilon,x,v)=X(0;t,x-\epsilon v, v), \quad V^{\epsilon}(0):=V(0;t+\epsilon,x,v)=R_{x^1}v. 
\end{align*}
Similar to previous case $\nabla_{xx}$, using \eqref{nabla XV_x-} and \eqref{c_4}, 
\begin{align*}
	  \partial_t [\nabla_xf(t,x,v)]&= \lim_{\epsilon \rightarrow 0+} \frac{ \nabla_x f(t+\epsilon,x,v) - \nabla_xf(t,x,v)}{\epsilon}\\
	&=\lim_{\epsilon \rightarrow 0+} \frac{1}{\epsilon} \left( \nabla_x \left[ f_0(X(0;t+\epsilon,x,v),V(0;t+\epsilon,x,v))\right] \right. \\ 
	&\left.\quad - \left(\nabla_x f_0(X(0),R_{x^1}v)R_{x^1} -2\nabla_v f_0(X(0),R_{x^1}v)A_{v,x^1} \right) \right)\\
	&=\lim_{\epsilon \rightarrow 0+} \frac{1}{\epsilon} \left( \nabla_x f_0(X^{\epsilon}(0),V^{\epsilon}(0))\nabla_x X^{\epsilon}(0) +\nabla_v f_0(X^{\epsilon}(0),V^{\epsilon}(0)) \nabla_x V^{\epsilon}(0)\right. \\
	 &\quad \left. -\left( \nabla_x f_0(X(0),R_{x^1}v)R_{x^1} -2\nabla_v f_0(X(0),R_{x^1}v)A_{v,x^1}\right) \right)\\
	 &=\lim _{\epsilon \rightarrow 0+} \frac{1}{\epsilon} \left( \nabla_x f_0 (X^{\epsilon}(0),R_{x^1}v) \nabla_x X^{\epsilon}(0) -\nabla_x f_0(X(0),R_{x^1}v)R_{x^1}\right) \\
	 &\quad + \lim_{\epsilon \rightarrow 0+} \frac{1}{\epsilon} \left( \nabla_v f_0(X^{\epsilon}(0),R_{x^1}v) \nabla_xV^{\epsilon}(0) +2\nabla_v f_0(X(0),R_{x^1}v)A_{v,x^1}\right) \\
	 &:= I_{tx,1}+I_{tx,2},
\end{align*}
where
\begin{align*}
	I_{tx,1}&:=\lim_{\epsilon \rightarrow 0+}\frac{1}{\epsilon} \left(\nabla_x f_0 (X^{\epsilon}(0),R_{x^1}v) \nabla_x X^{\epsilon}(0) - \nabla_xf_0(X^\epsilon(0),R_{x^1}v) \lim_{s\rightarrow 0-}\nabla_x X(s) \right. \\
	&\quad \left. +\nabla_x f_0(X^\epsilon(0),R_{x^1}v) \lim_{s\rightarrow 0-} \nabla_x X(s) -\nabla_x f_0(X(0),R_{x^1}v)R_{x^1}\right)\\
	&\stackrel{r\leftrightarrow c}{=} \left[\nabla_xf_0(x^1,R_{x^1}v)\lim_{\epsilon \rightarrow 0+} \frac{1}{\epsilon} \left( \nabla_x X^{\epsilon}(0)-\lim_{s \rightarrow 0-} \nabla_x X(s) \right)\right]^T\\
	&\quad + R_{x^1}\nabla_{xx}f_0(x^1,R_{x^1}v)(-R_{x^1}v)\\
	&=\begin{bmatrix}
	\nabla_x f_0(x^1,R_{x^1}v)\nabla_x(R_{x^1(x,v)}^1) \\ \nabla_x f_0(x^1,R_{x^1}v) \nabla_x (R_{x^1(x,v)}^2) 
	\end{bmatrix} (-v)+R_{x^1}\nabla_{xx} f_0(x^1,R_{x^1}v) (-R_{x^1}v), \\
	I_{tx,2}&:= \lim_{\epsilon \rightarrow 0+}\frac{1}{\epsilon} \left(\nabla_v f_0 (X^{\epsilon}(0),R_{x^1}v) \nabla_x V^{\epsilon}(0) - \nabla_vf_0(X^\epsilon(0),R_{x^1}v) \lim_{s\rightarrow 0-}\nabla_x V(s) \right. \\
	&\quad \left. +\nabla_v f_0(X^\epsilon(0),R_{x^1}v) \lim_{s\rightarrow 0-} \nabla_x V(s) -2\nabla_v f_0(X(0),R_{x^1}v)A_{v,x^1}\right)\\
	&\stackrel{r\leftrightarrow c}{=} \left[\nabla_vf_0(x^1,R_{x^1}v)\lim_{\epsilon \rightarrow 0+} \frac{1}{\epsilon} \left( \nabla_x V^{\epsilon}(0)-\lim_{s \rightarrow 0-} \nabla_x V(s) \right)\right]^T\\
	&\quad +(-2A^T_{v,x^1})\nabla_{xv} f_0(x^1,R_{x^1}v)\lim_{\epsilon \rightarrow 0+} \frac{ X^{\epsilon}(0)-X(0)}{\epsilon}\\ 
	&= \begin{bmatrix}
	\nabla_v f_0(x^1,R_{x^1}v) \nabla_x (-2A_{v,x^1(x,v)}^1) \\ \nabla_v f_0(x^1,R_{x^1}v) \nabla_x(-2A_{v,x^1(x,v)}^2) 
	\end{bmatrix} (-v)+ (-2A^T_{v,x^1}) \nabla_{xv}f_0(x^1,R_{x^1}v) (-R_{x^1}v). 
\end{align*}
Thus, 
\begin{equation}\label{nabla_tx f case2}
	\begin{split}
	\partial_t [\nabla_x f(t,x,v)] &= (-v)^T \begin{bmatrix}
	\nabla_x f_0(x^1,R_{x^1}v) \nabla_x (R_{x^1(x,v)}^1) \\ \nabla_x f_0(x^1,R_{x^1}v) \nabla_x (R_{x^1(x,v)}^2)
	\end{bmatrix}^T+(-v)^T \begin{bmatrix}
	\nabla_v f_0(x^1,R_{x^1}v) \nabla_x (-2A_{v,x^1(x,v)}^1) \\ \nabla_v f_0(x^1,R_{x^1}v) \nabla_x(-2A_{v,x^1(x,v)}^2) 
	\end{bmatrix}^T\\
	& \quad +(-v^T)R_{x^1} \nabla_{xx} f_0(x^1,R_{x^1}v)R_{x^1}+(-v^T) R_{x^1} \nabla_{vx} f_0(x^1,R_{x^1}v)(-2A_{v,x^1}). 
	\end{split}
\end{equation}
From \eqref{nabla_tx f case1} and \eqref{nabla_tx f case2}, we have the following condition 
\begin{equation} \label{tx comp}
	\begin{split}
	 (-v^T) \nabla_{xx}f_0(x^1,v) &= (-v)^T \begin{bmatrix}
	\nabla_x f_0(x^1,R_{x^1}v) \nabla_x (R_{x^1(x,v)}^1) \\ \nabla_x f_0(x^1,R_{x^1}v) \nabla_x (R_{x^1(x,v)}^2)
	\end{bmatrix}^T+(-v)^T \begin{bmatrix}
	\nabla_v f_0(x^1,R_{x^1}v) \nabla_x (-2A_{v,x^1(x,v)}^1) \\ \nabla_v f_0(x^1,R_{x^1}v) \nabla_x(-2A_{v,x^1(x,v)}^2) 
	\end{bmatrix}^T\\
	& \quad +(-v^T)R_{x^1} \nabla_{xx} f_0(x^1,R_{x^1}v)R_{x^1}+(-v^T) R_{x^1} \nabla_{vx}f_0(x^1,R_{x^1}v) (-2A_{v,x^1}). 
	\end{split}
\end{equation}

\subsubsection{$C^2_{t,v}$} Similar to $C^2_{t,x}$, we use \eqref{c_1} and the perturbation \eqref{Perb_t} for $\epsilon<0$ to obtain
\begin{equation}\label{nabla_tv f case1}
	\begin{split}
		\partial_{t}[\nabla_{v}f(t,x,v)] &= \lim_{\epsilon \rightarrow 0-} \frac{ \nabla_v f(t+\epsilon,x,v) -\nabla_v f(t,x,v)}{ \epsilon}\\
		&= \lim_{\epsilon \rightarrow 0-} \frac{1}{\epsilon} \left( \nabla_v \left[ f_0(X(0;t+\epsilon,x,v),V(0;t+\epsilon,x,v))\right]-(-t\nabla_x f_0(X(0),v)+\nabla_vf_0(X(0),v)) \right)\\
		&= \lim_{\epsilon \rightarrow 0-} \frac{1}{\epsilon} \left( -(t+\epsilon) \nabla_x f_0(X^{\epsilon}(0),v) +\nabla_v f_0(X^{\epsilon}(0),v) +t\nabla_x f_0(X(0),v) -\nabla_v f_0(X(0),v) \right)\\
		&=-\nabla_x f_0(x^1,v) -t(-v^T) \nabla_{xx}f_0(x^1,v) + (-v^T) \nabla_{vx}f_0(x^1,v),
		\end{split}
\end{equation}
where we have used $\nabla_v X^{\epsilon}(0) = -(t+\epsilon) I_2, \nabla_v V^{\epsilon}(0) = I_2$.
For $\epsilon>0$, the perturbation \eqref{Perb_t} becomes 
\begin{equation*}
	X^{\epsilon}(0):=X(0;t+\epsilon,x,v) =X(0;t,x-\epsilon v,v) =x^1 -(t^1+\epsilon)R_{x^1}v, \quad V^{\epsilon}(0):=V(0;t+\epsilon,x,v)=R_{x^1}v.
\end{equation*}
By product rule, Lemma \ref{nabla xv b} and Lemma \ref{d_n}, one obtains that 
\begin{align*}
	\nabla_v [X^{\epsilon}(0)]&=\nabla_v [x^1-(t^1+\epsilon)R_{x^1}v] =-t\left(I-\frac{v \otimes n(x^1)}{v\cdot n(x^1)} \right) -R_{x^1}v \otimes \nabla_v t^1 -\epsilon \nabla_v (R_{x^1}v)\\
	&=-t \left(I-\frac{v \otimes n(x^1)}{v\cdot n(x^1)} \right)-t R_{x^1}v \otimes \frac{n(x^1}{v\cdot n(x^1)} -\epsilon (R_x^1 + 2t A_{v,x^1})\\
	&= -tR_{x^1} -\epsilon (R_{x^1}+2tA_{v,x^1}), \\
	\nabla_v[V^{\epsilon}(0)]&= \nabla_v [R_{x^1}v] = R_{x^1}+2tA_{v,x^1}. 
\end{align*}
Through the $v$-derivative of $X^{\epsilon}(0),V^{\epsilon}(0)$ above and \eqref{c_2}, 
\begin{equation} \label{nabla_tv f case2}
	\begin{split}
	 \partial_t[\nabla_{v} f(t,x,v)]&= \lim_{\epsilon \rightarrow 0+} \frac{\nabla_v f(t+\epsilon,x,v) -\nabla_v f(t,x,v)}{\epsilon}\\
	&=\lim_{\epsilon \rightarrow 0+} \frac{1}{\epsilon} (\nabla_v [f_0(X(0;t+\epsilon,x,v),V(0;t+\epsilon,x,v)]\\
	&\quad  -(-t\nabla_x f_0(X(0),R_{x^1}v)R_{x^1}+ \nabla_v f_0(X(0),R_{x^1}v)(R_{x^1}+2tA_{v,x^1})))\\
	&=-\nabla_x f_0(x^1,R_{x^1}v)\left(R_{x^1}+2tA_{v,x^1}\right) -t \left[ \lim_{\epsilon\rightarrow 0+} \frac{1}{\epsilon} \left(\nabla_xf_0(X^{\epsilon}(0),R_{x^1}v) -\nabla_xf_0(X^{\epsilon}(0),R_{x^1}v)\right)\right] R_{x^1} \\ 
	&\quad + \left [\lim_{\epsilon \rightarrow 0+} \frac{1}{\epsilon}\left( \nabla_v f_0(X^{\epsilon}(0),R_{x^1}v) -\nabla_v f_0(X(0),R_{x^1}v) \right)\right]\left(R_{x^1}+2tA_{v,x^1}\right)\\
	&\stackrel{r\leftrightarrow c}{=} -\left(R_{x^1}+2tA_{v,x^1}\right)^T\nabla_x f_0(x^1,R_{x^1}v)^T -tR_{x^1} \nabla_{xx}f_0(x^1,R_{x^1}v) \lim_{\epsilon \rightarrow 0+} \frac{X^{\epsilon}(0)-X(0)}{\epsilon}  \\
	&\quad + \left(R_{x^1}+2tA_{v,x^1}\right)^T \nabla_{xv} f_0(x^1,R_{x^1}v)  \lim_{\epsilon \rightarrow 0+} \frac{X^{\epsilon}(0)-X(0)}{\epsilon}\\
	&\stackrel{c\leftrightarrow r}{=} -\nabla_xf_0(x^1,R_{x^1}v) (R_{x^1}+2tA_{v,x^1}) -t(-v^T)R_{x^1} \nabla_{xx} f_0(x^1,R_{x^1}v) R_{x^1} \\
	&\quad +(-v^T) R_{x^1} \nabla_{vx} f_0(x^1,R_{x^1}v) (R_{x^1}+2tA_{v,x^1}).
	\end{split}
\end{equation}
Summing \eqref{nabla_tv f case1} and \eqref{nabla_tv f case2} yields that 
	\begin{equation} \label{tv comp}
		\begin{split}
			&-\nabla_x f_0(x^1,v) -t(-v^T) \nabla_{xx}f_0(x^1,v) + (-v^T) \nabla_{vx}f_0(x^1,v)\\
			&= -\nabla_xf_0(x^1,R_{x^1}v) (R_{x^1}+2tA_{v,x^1})-t(-v^T)R_{x^1} \nabla_{xx} f_0(x^1,R_{x^1}v) R_{x^1}  \\
			&\quad +(-v^T) R_{x^1} \nabla_{vx} f_0(x^1,R_{x^1}v) (R_{x^1}+2tA_{v,x^1}).
		\end{split}
	\end{equation}

\subsubsection{$C^2_{x,t}$} Similar to the $\nabla_{xv}$ case, using the same perturbation $\hat{r}_1$ of \eqref{set R_sp} and \eqref{c_3}, we have 
\begin{equation*}
	\begin{split}
	 \nabla_x[\partial_t f(t,x,v)]\hat{r}_1&=\lim_{\epsilon \rightarrow 0+} \frac{ \partial_t f(t,x+\epsilon \hat{r}_1,v)- \partial_t f(t,x,v)}{\epsilon} \\
	&= \lim_{\epsilon\rightarrow 0+} \left(\frac{ \nabla_x f(t,x+\epsilon \hat{r}_1,v) - \nabla_x f(t,x,v)}{\epsilon}\right)(-v)\\
	&=  \lim_{\epsilon \rightarrow 0+} \frac{1}{\epsilon} \left(\nabla_x [f_0(X(0;t,x+\epsilon \hat{r}_1,v),V(0;t,x+\epsilon \hat{r}_1,v))] -\nabla_x f_0(X(0),v)\right)(-v)\\
	&= (-v^T) \nabla_{xx} f_0(x^1,v) \hat{r}_1,
	\end{split} 
\end{equation*}
where we have used $\nabla_x X^{\epsilon}(0)=I_2, \nabla_x V^{\epsilon}(0)=0$. Next, for $\hat{r}_2$ of \eqref{set R_sp}, using \eqref{Av=0} in Lemma \ref{lem_RA}, \eqref{c_4},\eqref{xx star1}, and \eqref{xx star2} gives 
\begin{equation*}
	\begin{split}
	\nabla_x[\partial_t f(t,x,v)] \hat{r}_2 &= \lim_{\epsilon \rightarrow 0+} \frac{ \partial_t f(t,x+\epsilon \hat{r}_2,v)- \partial_t f(t,x,v)}{\epsilon}\\
	&= \lim_{\epsilon\rightarrow 0+} \left(\frac{ \nabla_x f(t,x+\epsilon \hat{r}_2,v) - \nabla_x f(t,x,v)}{\epsilon}\right)(-v)\\
	&= \lim_{\epsilon \rightarrow 0+} \frac{1}{\epsilon} (\nabla_x [f_0(X(0;t,x+\epsilon \hat{r}_2,v),V(0;t,x+\epsilon \hat{r}_2,v))] \\
	&\quad - (\nabla_x f_0(X(0),R_{x^1}v)R_{x^1} -2 \nabla_v f_0(X(0),R_{x^1}v) A_{v,x^1}) )(-v)\\
	&=\lim_{\epsilon \rightarrow 0+} \frac{1}{\epsilon} \left( \nabla_x f_0(X^{\epsilon}(0),V^{\epsilon}(0))\nabla_ xX^{\epsilon}(0) - \nabla_x f_0(X(0),R_{x^1}v)R_{x^1}\right) (-v) \\
	&\quad + \lim_{\epsilon \rightarrow 0+} \frac{1}{\epsilon} \left( \nabla_v f_0(X^{\epsilon}(0),V^{\epsilon}(0))\nabla_x V^{\epsilon}(0) -\nabla_v f_0(X(0),R_{x^1}v)(-2A_{v,x^1}) \right) (-v) \\
	&:=I_{xt,1}+I_{xt,2}, 
	\end{split}
\end{equation*}
where
\begin{align*}
	I_{xt,1}&=\lim_{\epsilon \rightarrow 0+} \frac{1}{\epsilon} \left( \nabla_x f_0(X^{\epsilon}(0),V^{\epsilon}(0))\nabla_ xX^{\epsilon}(0) - \nabla_x f_0(X^{\epsilon}(0),V^{\epsilon}(0)) \lim_{s \rightarrow 0-} \nabla_x X(s) \right. \\
	&\quad + \left. \nabla_x f_0(X^{\epsilon}(0),V^{\epsilon}(0))\lim_{s\rightarrow 0-} \nabla_x X(s) - \nabla_x f_0(X(0),R_{x^1}v)R_{x^1}\right)(-v)\\
	&= (-v^T) \begin{bmatrix}
		\nabla_x f_0(x^1,R_{x^1}v) \nabla_x (R_{x^1(x,v)}^1) \\ 
		\nabla_x f_0(x^1,R_{x^1}v) \nabla_x (R_{x^1(x,v)}^2)
	\end{bmatrix} \hat{r}_2\\
	&\quad +(-v^T)R_{x^1} \nabla_{xx} f_0(x^1,R_{x^1}v)R_{x^1}\hat{r}_2 +(-v^T)R_{x^1} \nabla_{vx} f_0(x^1,R_{x^1}v)(-2A_{v,x^1})\hat{r}_2,\\
	I_{xt,2} &=  \lim_{\epsilon \rightarrow 0+} \frac{1}{\epsilon} \left( \nabla_v f_0(X^{\epsilon}(0),V^{\epsilon}(0))\nabla_x V^{\epsilon}(0) -\nabla_v f_0(X^\epsilon(0),V^{\epsilon}(0))\lim_{s\rightarrow 0-} \nabla_x V(s) \right. \\
	&\quad + \left. \nabla_v f_0(X^\epsilon(0),V^{\epsilon}(0)) \lim_{s \rightarrow 0-} \nabla_x V(s) -\nabla_v f_0(X(0),R_{x^1}v)(-2A_{v,x^1})\right)(-v) \\
	&=(-v^T) \begin{bmatrix} 
	\nabla_vf_0(x^1,R_{x^1}v) \nabla_x (-2A_{v,x^1(x,v)}^1) \\ \nabla_v f_0(x^1,R_{x^1}v) \nabla_x (-2A_{v,x^1(x,v)}^2)
	\end{bmatrix}\hat{r}_2 \\
	& \quad +(-v^T)(-2A^T_{v,x^1}) \left( \nabla_{xv} f_0(x^1,R_{x^1}v) R_{x^1} +\nabla_{vv} f_0(x^1,R_{x^1}v) (-2A_{v,x^1}) \right) \hat{r}_2,\\
	&=(-v^T) \begin{bmatrix} 
	\nabla_vf_0(x^1,R_{x^1}v) \nabla_x (-2A_{v,x^1(x,v)}^1) \\ \nabla_v f_0(x^1,R_{x^1}v) \nabla_x (-2A_{v,x^1(x,v)}^2)
	\end{bmatrix}\hat{r}_2.
\end{align*}
To sum up the above, we get the following condition: 
\begin{equation} \label{xt comp}
	\begin{split}
		(-v^T) \nabla_{xx} f_0(x^1,v)&= (-v^T) \begin{bmatrix}
		\nabla_x f_0(x^1,R_{x^1}v) \nabla_x (R_{x^1(x,v)}^1)\\ 
		\nabla_x f_0(x^1,R_{x^1}v) \nabla_x (R_{x^1(x,v)}^2)\end{bmatrix}
		+(-v^T) 
		\begin{bmatrix} 
	\nabla_vf_0(x^1,R_{x^1}v) \nabla_x (-2A_{v,x^1(x,v)}^1) \\ \nabla_v f_0(x^1,R_{x^1}v) \nabla_x (-2A_{v,x^1(x,v)}^2)
	\end{bmatrix}\\
	&\quad  +(-v^T)R_{x^1} \nabla_{xx} f_0(x^1,R_{x^1}v)R_{x^1}+(-v^T)R_{x^1} \nabla_{vx} f_0(x^1,R_{x^1}v)(-2A_{v,x^1}).
	\end{split}
\end{equation}

\subsubsection{$C^2_{v,t}$} Using the perturbation $\hat{r}_1$ of \eqref{set R_sp} and \eqref{c_3}, 

\begin{equation*}
	\begin{split}
		\nabla_v [\partial_t f(t,x,v)] \hat{r}_1 &=\lim_{\epsilon\rightarrow 0+} \frac{ \partial_t f(t,x,v+\epsilon \hat{r}_1) -\partial_t f(t,x,v)}{\epsilon}\\
		&=\lim_{\epsilon \rightarrow 0+} \left(\frac{\nabla_x f(t,x,v+\epsilon \hat{r}_1) (-(v+\epsilon \hat{r}_1)) -\nabla_x f(t,x,v) (-v)}{\epsilon} \right) \\
		&=-\lim_{\epsilon \rightarrow 0+} \nabla_x [f_0(X(0;t,x,v+\epsilon \hat{r}_1), V(0;t,x,v+\epsilon \hat{r}_1))]\hat{r}_1 \\
		&+\lim_{\epsilon \rightarrow 0+} \frac{1}{\epsilon} ( \nabla_x [f_0(X(0;t,x,v+\epsilon \hat{r}_1), V(0;t,x,v+\epsilon \hat{r}_1))]-\nabla_x f_0(X(0),v))(-v)\\
		&=-\nabla_x f_0(X(0),v) \hat{r}_1 +\lim_{\epsilon \rightarrow 0+} \frac{1}{\epsilon} (\nabla_x f_0(X^{\epsilon}(0),V^{\epsilon}(0))-\nabla_x f_0(X(0),v))(-v)\\ 
		&=-\nabla_x f_0(x^1,v)\hat{r}_1 +(-v^T) \nabla_{xx} f_0(x^1,v) (-t\hat{r}_1) +(-v^T) \nabla_{vx} f_0(x^1,v) \hat{r}_1,
	\end{split}
\end{equation*}
where $X^{\epsilon}(0):=X(0;t,x,v+\epsilon \hat{r}_1) = x-t(v+\epsilon \hat{r}_1), V^{\epsilon}(0):=V(0;t,x,v+\epsilon \hat{r}_1) =v+\epsilon \hat{r}_1$.  Similar to the case $\nabla_{vx}$, for the perturbation $\hat{r}_2$ of \eqref{set R_sp}, using \eqref{nabla XV_x-}, \eqref{c_4} and \eqref{Av=0} in Lemma \ref{lem_RA} yields:
\begin{equation*}
	\begin{split}
		 \nabla_v[\partial_t f(t,x,v)] \hat{r}_2 &= \lim_{\epsilon \rightarrow 0+} \frac{\partial_t f(t,x,v+\epsilon \hat{r}_2) -\partial_t f(t,x,v)}{\epsilon}\\
		&=\lim_{\epsilon \rightarrow 0+} \left( \frac{ \nabla_x f(t,x,v+\epsilon \hat{r}_2)(-(v+\epsilon \hat{r}_2))-\nabla_x f(t,x,v)(-v)}{\epsilon}\right)\\
		&=-\lim_{\epsilon \rightarrow 0+} \nabla_x [f_0(X(0;t,x,v+\epsilon \hat{r}_2),V(0;t,x,v+\epsilon \hat{r}_2))]\hat{r}_2 \\
		&\quad +\lim_{\epsilon \rightarrow 0+} \frac{1}{\epsilon} \left( \nabla_x [f_0(X(0;t,x,v+\epsilon \hat{r}_2), V(0;t,x,v+\epsilon \hat{r}_2))] \right. \\
		&\quad - \left. \left(\nabla_x f_0(x^1,R_{x^1}v)R_{x^1}-2\nabla_v f_0(x^1,R_{x^1}v)A_{v,x^1} \right)\right)(-v)\\
		&=-\left(\nabla_x f_0(x^1,R_{x^1}v) R_{x^1} +\nabla_v f_0(x^1,R_{x^1}v)(-2A_{v,x^1})\right) \hat{r}_2 \\
		&\quad + \lim_{\epsilon \rightarrow 0+} \frac{1}{\epsilon} \left( \nabla_x f_0(X^{\epsilon}(0),V^{\epsilon}(0))\nabla_x X^{\epsilon}(0) -\nabla_x f_0(X(0),R_{x^1}v)R_{x^1}\right)(-v) \\
		&\quad + \lim_{\epsilon \rightarrow 0+} \frac{1}{\epsilon} \left(  \nabla_v f_0(X^{\epsilon}(0),V^{\epsilon}(0))\nabla_x V^{\epsilon}(0)-\nabla_v f_0(X(0),R_{x^1}v)(-2A_{v,x^1})\right)(-v)\\ 
		&:=-\left(\nabla_x f_0(x^1,R_{x^1}v) R_{x^1} +\nabla_v f_0(x^1,R_{x^1}v)(-2A_{v,x^1})\right) \hat{r}_2 + I_{vt,1}+I_{vt,2},
	\end{split} 
\end{equation*}
where
\begin{align*}
	I_{vt,1}&:=  \lim_{\epsilon \rightarrow 0+} \frac{1}{\epsilon} \left( \nabla_x f_0(X^{\epsilon}(0),V^{\epsilon}(0))\nabla_x X^{\epsilon}(0)-\nabla_x f_0(X^{\epsilon}(0),V^{\epsilon}(0))\lim_{s\rightarrow 0-} \nabla_x X(s) \right. \\
	&\quad + \left. \nabla_x f_0(X^\epsilon(0),V^\epsilon(0))\lim_{s \rightarrow 0-} \nabla_x X(s) - \nabla_xf_0(X(0),R_{x^1}v)R_{x^1}\right) (-v) \\
	&=(-v^T) \begin{bmatrix} 
	\nabla_x f_0(x^1,R_{x^1}v) \nabla_v (R_{x^1(x,v)}^1) \\ \nabla_x f_0(x^1,R_{x^1}v) \nabla_v (R_{x^1(x,v)}^2) 
	\end{bmatrix}\hat{r}_2 \\
	&\quad + (-v^T)R_{x^1} \left( \nabla_{xx} f_0(x^1,R_{x^1}v) (-tR_{x^1}) +\nabla_{vx} f_0(x^1,R_{x^1}v) (R_{x^1}+2tA_{v,x^1})\right)\hat{r}_2,\\
	I_{vt,2}&:=   \lim_{\epsilon \rightarrow 0+} \frac{1}{\epsilon} \left(  \nabla_v f_0(X^{\epsilon}(0),V^{\epsilon}(0))\nabla_x V^{\epsilon}(0)-\nabla_v f_0(X^\epsilon(0),V^\epsilon(0))\lim_{s\rightarrow 0-} \nabla_xV(s) \right. \\
	& \quad \left.+ \nabla_v f_0(X^\epsilon(0),V^\epsilon(0))\lim_{s\rightarrow 0-} \nabla_xV(s) -\nabla_v f_0(X(0),R_{x^1}v)(-2A_{v,x^1}) \right)(-v)\\
	&=(-v^T)\begin{bmatrix} 
	\nabla_v f_0(x^1,R_{x^1}v) \nabla_v (-2A_{v,x^1(x,v)}^1) \\ \nabla_v f_0(x^1,R_{x^1}v) \nabla_v (-2A_{v,x^1(x,v)}^2) 
	\end{bmatrix}\hat{r}_2\\
	&\quad +(-v^T)(-2A^T_{v,x^1}) \left( \nabla_{xv} f_0(x^1,R_{x^1}v)(-tR_{x^1}) +\nabla_{vv} f_0(x^1,R_{x^1}v)(R_{x^1}+2tA_{v,x^1}) \right) \hat{r}_2\\
	&=(-v^T)\begin{bmatrix} 
	\nabla_v f_0(x^1,R_{x^1}v) \nabla_v (-2A_{v,x^1(x,v)}^1) \\ \nabla_v f_0(x^1,R_{x^1}v) \nabla_v (-2A_{v,x^1(x,v)}^2) 
	\end{bmatrix}\hat{r}_2.
\end{align*}
Thus, we have the following compatibility condition: 
	\begin{equation} \label{vt comp}
		\begin{split}
		&-\nabla_x f_0(x^1,v) +tv^T\nabla_{xx} f_0(x^1,v) +(-v^T) \nabla_{vx} f_0(x^1,v) \\
		&=- \left(\nabla_x f_0(x^1,R_{x^1}v) R_{x^1} +\nabla_v f_0(x^1,R_{x^1}v)(-2A_{v,x^1})\right)\\
		&\quad +(-v^T) \begin{bmatrix} 
	\nabla_x f_0(x^1,R_{x^1}v) \nabla_v (R_{x^1(x,v)}^1) \\ \nabla_x f_0(x^1,R_{x^1}v) \nabla_v (R_{x^1(x,v)}^2) 
	\end{bmatrix} + (-v^T) \begin{bmatrix} 
	\nabla_v f_0(x^1,R_{x^1}v) \nabla_v (-2A_{v,x^1(x,v)}^1) \\ \nabla_v f_0(x^1,R_{x^1}v) \nabla_v (-2A_{v,x^1(x,v)}^2) 
	\end{bmatrix}\\
	&\quad +tv^T R_{x^1} \nabla_{xx} f_0(x^1,R_{x^1}v) R_{x^1} +(-v^T)R_{x^1} \nabla_{vx} f_0(x^1,R_{x^1}v)(R_{x^1}+2tA_{v,x^1}).
		\end{split}
	\end{equation}

\subsubsection{Derive $C^2_{tt},C^2_{tx}, C^2_{tv},C^2_{xt},C^2_{vt}$ compatibility conditions from \eqref{Cond2 1}--\eqref{Cond2 4},\eqref{Cond3} and \eqref{Cond4}} So far, we have derived \eqref{Cond2 1}--\eqref{Cond2 4} to satisfy $f\in C^2_{xv},C^2_{xx},C^2_{vx},C^2_{vv}$. In \eqref{Cond2 1}--\eqref{Cond2 4}, since $\nabla_{xv} f_0(x^1,v)$ is the same as $\nabla_{vx} f_0(x^1,v)^T$, we need to assume \eqref{Cond3}. Similarly, we obtained \eqref{Cond3} because $\nabla_{xx} f_0(x^1,v)$ is a symmetric matrix. In this subsection, we will show that the compatibility conditions $C^2_{tt}$ \eqref{time cond}, $C^2_{tx}$ \eqref{tx comp}, $C^2_{tv}$ \eqref{tv comp}, $C^2_{xt}$ \eqref{xt comp}, and $C^2_{vt}$ \eqref{vt comp} are induced under \eqref{Cond2 1}--\eqref{Cond2 4},\eqref{Cond3}, and \eqref{Cond4}. Firstly, we consider $C^2_{tt}$ compatibility condition. Using \eqref{Av=0} in Lemma \ref{lem_RA}, \eqref{prop d_R}, and \eqref{prop d_A}, one has 
\begin{equation*}
	\begin{split}
	 v^T \nabla_{xx}f_0(x^1,v) v &= v^T \Bigg(R_{x^{1}} \nabla_{xx}f_{0}(x^{1}, R_{x^1}v) R_{x^{1}} + R_{x^{1}} \nabla_{vx}f_{0}(x^{1}, R_{x^1}v)(-2A_{v,x^{1}})  \\
&\quad + (-2A^{T}_{v,x^{1}}) \nabla_{xv}f_{0}(x^{1}, R_{x^1}v) R_{x^{1}} + (-2A^{T}_{v,x^{1}}) \nabla_{vv}f_{0}(x^{1}, R_{x^1}v)  (-2A_{v,x^{1}})  \\
&\quad +  \begin{bmatrix}
\nabla_{x}f_{0}(x^{1}, R_{x^1}v) \nabla_{x}(R_{x^{1}(x,v)}^1)
\\
\nabla_{x}f_{0}(x^{1}, R_{x^1}v) \nabla_{x}(R_{x^{1}(x,v)}^2)
\end{bmatrix} 
- 2
\begin{bmatrix}
\nabla_{v}f_{0}(x^{1}, R_{x^1}v) \nabla_{x}(A_{v,x^{1}(x,v)}^1)
\\
\nabla_{v}f_{0}(x^{1}, R_{x^1}v) \nabla_{x}(A_{v,x^{1}(x,v)}^2)
\end{bmatrix}
 \Bigg) v\\
 &=v^TR_{x^1}\nabla_{xx}f_0(x^1,R_{x^1}v)R_{x^1} v +v^T  \begin{bmatrix}
\nabla_{x}f_{0}(x^{1}, R_{x^1}v) \nabla_{x}(R_{x^{1}(x,v)}^1)
\\
\nabla_{x}f_{0}(x^{1}, R_{x^1}v) \nabla_{x}(R_{x^{1}(x,v)}^2)
\end{bmatrix}  v \\
&\quad + v^T \begin{bmatrix}
\nabla_{v}f_{0}(x^{1}, R_{x^1}v) \nabla_{x}(-2A_{v,x^{1}(x,v)}^1)
\\
\nabla_{v}f_{0}(x^{1}, R_{x^1}v) \nabla_{x}(-2A_{v,x^{1}(x,v)}^2)
\end{bmatrix}v\\
&= (R_{x^1}v)^T \nabla_{xx} f_0(x^1,R_{x^1}v) (R_{x^1}v).
 	\end{split}
\end{equation*}
In \eqref{tx comp}, the left-hand side is 
\begin{align*}
	(-v^T) \nabla_{xx} f_0(x^1,v) &= (-v^T) \Bigg(R_{x^{1}} \nabla_{xx}f_{0}(x^{1}, R_{x^1}v) R_{x^{1}} + R_{x^{1}} \nabla_{vx}f_{0}(x^{1}, R_{x^1}v)(-2A_{v,x^{1}})  \\
&\quad + (-2A^{T}_{v,x^{1}}) \nabla_{xv}f_{0}(x^{1}, R_{x^1}v) R_{x^{1}} + (-2A^{T}_{v,x^{1}}) \nabla_{vv}f_{0}(x^{1}, R_{x^1}v)  (-2A_{v,x^{1}})  \\
&\quad +  \begin{bmatrix}
\nabla_{x}f_{0}(x^{1}, R_{x^1}v) \nabla_{x}(R_{x^{1}(x,v)}^1)
\\
\nabla_{x}f_{0}(x^{1}, R_{x^1}v) \nabla_{x}(R_{x^{1}(x,v)}^2)
\end{bmatrix} 
- 2
\begin{bmatrix}
\nabla_{v}f_{0}(x^{1}, R_{x^1}v) \nabla_{x}(A_{v,x^{1}(x,v)}^1)
\\
\nabla_{v}f_{0}(x^{1}, R_{x^1}v) \nabla_{x}(A_{v,x^{1}(x,v)}^2)
\end{bmatrix}
 \Bigg)\\
 &= (-v^T) R_{x^1}\nabla_{xx} f_0(x^1,R_{x^1}v) R_{x^1} + (-v^T)R_{x^1} \nabla_{vx} f_0(x^1,R_{x^1}v) (-2A_{v,x^1}) \\
 &\quad + (-v^T) \begin{bmatrix}
\nabla_{x}f_{0}(x^{1}, R_{x^1}v) \nabla_{x}(R_{x^{1}(x,v)}^1)
\\
\nabla_{x}f_{0}(x^{1}, R_{x^1}v) \nabla_{x}(R_{x^{1}(x,v)}^2)
\end{bmatrix}  +(-v^T) \begin{bmatrix}
\nabla_{v}f_{0}(x^{1}, R_{x^1}v) \nabla_{x}(-2A_{v,x^{1}(x,v)}^1)
\\
\nabla_{v}f_{0}(x^{1}, R_{x^1}v) \nabla_{x}(-2A_{v,x^{1}(x,v)}^2)
\end{bmatrix},
\end{align*}
where we have used \eqref{Av=0}. When we assume \eqref{Cond4}, it holds that $\nabla_{xx}f_0(x^1,v)$ is a symmetric matrix. In other words, 
\begin{align*}
&\left(\begin{bmatrix}
	\nabla_{x}f_{0}(x^{1}, R_{x^1}v) \nabla_{x}(R_{x^{1}(x,v)}^1)
\\
\nabla_{x}f_{0}(x^{1}, R_{x^1}v) \nabla_{x}(R_{x^{1}(x,v)}^2)
\end{bmatrix} +
\begin{bmatrix}
\nabla_{v}f_{0}(x^{1}, R_{x^1}v) \nabla_{x}(-2A_{v,x^{1}(x,v)}^1)
\\
\nabla_{v}f_{0}(x^{1}, R_{x^1}v) \nabla_{x}(-2A_{v,x^{1}(x,v)}^2)
\end{bmatrix}\right)^T \\
&= \begin{bmatrix}
	\nabla_{x}f_{0}(x^{1}, R_{x^1}v) \nabla_{x}(R_{x^{1}(x,v)}^1)
\\
\nabla_{x}f_{0}(x^{1}, R_{x^1}v) \nabla_{x}(R_{x^{1}(x,v)}^2)
\end{bmatrix} +
\begin{bmatrix}
\nabla_{v}f_{0}(x^{1}, R_{x^1}v) \nabla_{x}(-2A_{v,x^{1}(x,v)}^1)
\\
\nabla_{v}f_{0}(x^{1}, R_{x^1}v) \nabla_{x}(-2A_{v,x^{1}(x,v)}^2)
\end{bmatrix},
\end{align*}
which implies that 
\begin{equation} \label{vRA prop} 
	\begin{split}
	&(-v^T) \begin{bmatrix}
\nabla_{x}f_{0}(x^{1}, R_{x^1}v) \nabla_{x}(R_{x^{1}(x,v)}^1)
\\
\nabla_{x}f_{0}(x^{1}, R_{x^1}v) \nabla_{x}(R_{x^{1}(x,v)}^2)
\end{bmatrix}  +(-v^T) \begin{bmatrix}
\nabla_{v}f_{0}(x^{1}, R_{x^1}v) \nabla_{x}(-2A_{v,x^{1}(x,v)}^1)
\\
\nabla_{v}f_{0}(x^{1}, R_{x^1}v) \nabla_{x}(-2A_{v,x^{1}(x,v)}^2)
\end{bmatrix}\\
&=\left( \left( \begin{bmatrix}
\nabla_{x}f_{0}(x^{1}, R_{x^1}v) \nabla_{x}(R_{x^{1}(x,v)}^1)
\\
\nabla_{x}f_{0}(x^{1}, R_{x^1}v) \nabla_{x}(R_{x^{1}(x,v)}^2)
\end{bmatrix}  + \begin{bmatrix}
\nabla_{v}f_{0}(x^{1}, R_{x^1}v) \nabla_{x}(-2A_{v,x^{1}(x,v)}^1)
\\
\nabla_{v}f_{0}(x^{1}, R_{x^1}v) \nabla_{x}(-2A_{v,x^{1}(x,v)}^2)
\end{bmatrix}\right)(-v)\right)^T=0,
	\end{split}
\end{equation}
due to \eqref{prop d_R} and \eqref{prop d_A}. Therefore, the left-hand side in \eqref{tx comp} becomes 
\begin{equation} \label{tx comp left}
	 (-v^T) \nabla_{xx} f_0(x^1,v) = (-v^T) R_{x^1} \nabla_{xx} f_0(x^1,R_{x^1}v) R_{x^1} + (-v^T) \nabla_{vx} f_0(x^1,R_{x^1}v) (-2A_{v,x^1}).
\end{equation}
Using \eqref{vRA prop}, the right-hand side in \eqref{tx comp} is 
\begin{equation}\label{tx comp right}
	\begin{split}
	&(-v)^T \begin{bmatrix}
	\nabla_x f_0(x^1,R_{x^1}v) \nabla_x (R_{x^1(x,v)}^1) \\ \nabla_x f_0(x^1,R_{x^1}v) \nabla_x (R_{x^1(x,v)}^2)
	\end{bmatrix}^T+(-v)^T \begin{bmatrix}
	\nabla_v f_0(x^1,R_{x^1}v) \nabla_x (-2A_{v,x^1(x,v)}^1) \\ \nabla_v f_0(x^1,R_{x^1}v) \nabla_x(-2A_{v,x^1(x,v)}^2)
	\end{bmatrix}^T\\
	& \quad +(-v^T)R_{x^1} \nabla_{xx} f_0(x^1,R_{x^1}v)R_{x^1}+(-v^T) R_{x^1} \nabla_{vx}f_0(x^1,R_{x^1}v) (-2A_{v,x^1})\\
	& =(-v^T)R_{x^1} \nabla_{xx} f_0(x^1,R_{x^1}v)R_{x^1}+(-v^T) R_{x^1} \nabla_{vx}f_0(x^1,R_{x^1}v) (-2A_{v,x^1}).\\
	\end{split}
\end{equation}
From \eqref{tx comp left} and \eqref{tx comp right}, we derive \eqref{tx comp} under the assumption \eqref{Cond2 1}--\eqref{Cond2 4},\eqref{Cond3}, and \eqref{Cond4}. For the left-hand side in \eqref{tv comp}, we use \eqref{Av=0}, the $C^1$ compatibility condition \eqref{c_x}, \eqref{Cond2 1}--\eqref{Cond2 4}, and \eqref{vRA prop}: 
\begin{align*}
	&-\nabla_x f_0(x^1,v) +tv^T \nabla_{xx} f_0(x^1,v) + (-v^T) \nabla_{vx} f_0(x^1,v) \\
	&= -\nabla_x f_0(x^1,R_{x^1}v)R_{x^1} - \nabla_v f_0(x^1,R_{x^1}v) (-2A_{v,x^1}) \\
	&\quad +tv^TR_{x^1}\nabla_{xx} f_0(x^1,R_{x^1}v)R_{x^1} +tv^T R_{x^1} \nabla_{vx} f_0(x^1,R_{x^1}v) (-2A_{v,x^1})\\
	&\quad +(-v^T)R_{x^1} \nabla_{vx}f_0(x^1,R_{x^1}v)R_{x^1} +(-v)^T \begin{bmatrix}
		\nabla_v f_0(x^1,R_{x^1}v)\nabla_v(-2A_{v,x^1}^1)\\ \nabla_v f_0(x^1,R_{x^1}v)\nabla_v(-2A_{v,x^1}^2)
	\end{bmatrix}.
\end{align*}
Since $\nabla_{xv}f_0(X(0),v)^T = \nabla_{vx} f_0(X(0),v)$ under \eqref{Cond3}, it holds that 
\begin{equation} \label{RA prop}
	\begin{bmatrix}
		\nabla_v f_0(x^1,R_{x^1}v)\nabla_v(-2A_{v,x^1}^1)\\ \nabla_v f_0(x^1,R_{x^1}v)\nabla_v(-2A_{v,x^1}^2)
	\end{bmatrix}^T = \begin{bmatrix}
		\nabla_v f_0(x^1,R_{x^1}v) \nabla_x (R_{x^1(x,v)}^1)\\ \nabla_v f_0(x^1,R_{x^1}v) \nabla_x (R_{x^1(x,v)}^2) 
	\end{bmatrix}.
\end{equation} 
Since \eqref{RA} in Lemma \ref{lem_RA}, \eqref{prop d_R}, \eqref{Cond3}, and the formula \eqref{RA prop} above,  it follows that 
\begin{equation} \label{A prop}
	\begin{split}
	&\nabla_v f_0(x^1,R_{x^1}v) (-2A_{v,x^1}) = C(R_{x^1}v)^T (-2A_{v,x^1})=-\frac{2C}{v\cdot n(x^1)} v^T (Qv) \otimes (Qv) =0, \\
	&(-v)^T\begin{bmatrix}
		\nabla_v f_0(x^1,R_{x^1}v)\nabla_v(-2A_{v,x^1}^1)\\ \nabla_v f_0(x^1,R_{x^1}v)\nabla_v(-2A_{v,x^1}^2)
	\end{bmatrix}= \left( \begin{bmatrix}
		\nabla_v f_0(x^1,R_{x^1}v) \nabla_x (R_{x^1(x,v)}^1) \\ \nabla_v f_0(x^1,R_{x^1}v) \nabla_x (R_{x^1(x,v)}^2) 
	\end{bmatrix} (-v) \right)^T=0, 
	\end{split}
\end{equation}
where $C$ is an arbitrary constant. And then, one obtains that  
\begin{equation} \label{tv comp left}
	\begin{split}
	&-\nabla_x f_0(x^1,v) +tv^T \nabla_{xx} f_0(x^1,v) + (-v^T) \nabla_{vx} f_0(x^1,v) \\ 
	&= -\nabla_x f_0(x^1,R_{x^1}v)R_{x^1} +tv^TR_{x^1} \nabla_{xx} f_0(x^1,R_{x^1}v)R_{x^1}  +tv^T R_{x^1} \nabla_{vx} f_0(x^1,R_{x^1}v) (-2A_{v,x^1})\\
	&\quad +(-v^T)R_{x^1} \nabla_{vx}f_0(x^1,R_{x^1}v)R_{x^1}.
	\end{split}
\end{equation} 
By \eqref{Av=0} and \eqref{Cond4}, the right-hand side in \eqref{tv comp} is 
\begin{equation*}
	\begin{split}
	& -\nabla_xf_0(x^1,R_{x^1}v) (R_{x^1}+2tA_{v,x^1})-t(-v^T)R_{x^1} \nabla_{xx} f_0(x^1,R_{x^1}v) R_{x^1}  \\
	&\quad +(-v^T) R_{x^1} \nabla_{vx} f_0(x^1,R_{x^1}v) (R_{x^1}+2tA_{v,x^1})  \\
	&=-\nabla_x f_0(x^1,R_{x^1}v) R_{x^1} -2Ct (R_{x^1}v)^TA_{v,x^1} +tv^T R_{x^1} \nabla_{xx}f_0(x^1,R_{x^1}v) R_{x^1}  \\
	&\quad +tv^T R_{x^1} \nabla_{vx} f_0(x^1,R_{x^1}v) (-2A_{v,x^1}) + (-v^T) R_{x^1} \nabla_{vx} f_0(x^1,R_{x^1}v) R_{x^1}\\
	& = -\nabla_x f_0(x^1,R_{x^1}v)R_{x^1} +tv^TR_{x^1} \nabla_{xx} f_0(x^1,R_{x^1}v)R_{x^1} +tv^T R_{x^1} \nabla_{vx} f_0(x^1,R_{x^1}v) (-2A_{v,x^1})\\
	&\quad +(-v^T)R_{x^1} \nabla_{vx}f_0(x^1,R_{x^1}v)R_{x^1},
	\end{split}
\end{equation*}
where $C$ is an arbitrary constant. Thus, the left-hand side in \eqref{tv comp} is the same as the right-hand side in \eqref{tv comp} under \eqref{Cond2 1}--\eqref{Cond2 4}, \eqref{Cond3}, and \eqref{Cond4}. The left-hand side in \eqref{xt comp} is as follows:  
\begin{equation*}
	 (-v^T) \nabla_{xx}f_0(x^1,v) = (-v^T) R_{x^1} \nabla_{xx} f_0(x^1,R_{x^1}v) R_{x^1} +(-v^T) \nabla_{vx} f_0(x^1,R_{x^1}v) (-2A_{v,x^1}),
\end{equation*}
by \eqref{tx comp left}. Using \eqref{vRA prop}, the right-hand side in \eqref{xt comp} can be further computed by 
 
\begin{equation*}
	\begin{split}
	 &(-v^T) \begin{bmatrix}
		\nabla_x f_0(x^1,R_{x^1}v) \nabla_x (R_{x^1(x,v)}^1) \\ 
		\nabla_x f_0(x^1,R_{x^1}v) \nabla_x (R_{x^1(x,v)}^2)\end{bmatrix}
		+(-v^T) 
		\begin{bmatrix} 
	\nabla_vf_0(x^1,R_{x^1}v) \nabla_x (-2A_{v,x^1(x,v)}^1) \\ \nabla_v f_0(x^1,R_{x^1}v) \nabla_x (-2A_{v,x^1(x,v)}^2)
	\end{bmatrix}\\
	&\quad  +(-v^T)R_{x^1} \nabla_{xx} f_0(x^1,R_{x^1}v)R_{x^1}+(-v^T)R_{x^1} \nabla_{vx} f_0(x^1,R_{x^1}v)(-2A_{v,x^1})\\
	&= (-v^T)R_{x^1} \nabla_{xx} f_0(x^1,R_{x^1}v)R_{x^1}+(-v^T)R_{x^1} \nabla_{vx} f_0(x^1,R_{x^1}v)(-2A_{v,x^1}).
	\end{split}
\end{equation*}
 
Hence, the \eqref{xt comp} condition can be deduced by \eqref{Cond2 1}--\eqref{Cond2 4},\eqref{Cond3}, and \eqref{Cond4}. Finally, the \eqref{vt comp} condition is the last remaining case. The left-hand side in \eqref{vt comp} comes from \eqref{tv comp left}:
\begin{align*}
	&-\nabla_x f_0(x^1,v) +tv^T \nabla_{xx} f_0(x^1,v) + (-v^T) \nabla_{vx} f_0(x^1,v) \\ 
	&= -\nabla_x f_0(x^1,R_{x^1}v)R_{x^1} +tv^TR_{x^1} \nabla_{xx} f_0(x^1,R_{x^1}v)R_{x^1} +tv^T R_{x^1} \nabla_{vx} f_0(x^1,R_{x^1}v) (-2A_{v,x^1})\\
	&\quad +(-v^T)R_{x^1} \nabla_{vx}f_0(x^1,R_{x^1}v)R_{x^1}.
\end{align*}

Since \eqref{Av=0} in Lemma \ref{lem_RA}, \eqref{vRA prop}, \eqref{A prop}, and 
\begin{align*}
	\quad \begin{bmatrix}
	\nabla_x f_0(x^1,R_{x^1}v) \nabla_v(R_{x^1(x,v)}^1) \\ \nabla_x f_0(x^1,R_{x^1}v) \nabla_v(R_{x^1(x,v)}^2)
	\end{bmatrix}&= (-t)\begin{bmatrix}
	\nabla_x f_0(x^1,R_{x^1}v) \nabla_x(R_{x^1(x,v)}^1) \\\nabla_x f_0(x^1,R_{x^1}v) \nabla_x(R_{x^1(x,v)}^2)
	\end{bmatrix},\\
	 \begin{bmatrix} 
	\nabla_v f_0(x^1,R_{x^1}v) \nabla_v (-2A_{v,x^1(x,v)}^1) \\ \nabla_v f_0(x^1,R_{x^1}v) \nabla_v (-2A_{v,x^1(x,v)}^2) 
	\end{bmatrix}&=(-t)  \begin{bmatrix} 
	\nabla_v f_0(x^1,R_{x^1}v) \nabla_x (-2A_{v,x^1(x,v)}^1) \\ \nabla_v f_0(x^1,R_{x^1}v) \nabla_x (-2A_{v,x^1(x,v)}^2) 
	\end{bmatrix}+ \begin{bmatrix} 
	\nabla_v f_0(x^1,R_{x^1}v) \nabla_v (-2A_{v,x^1}^1)\\ \nabla_v f_0(x^1,R_{x^1}v) \nabla_v (-2A_{v,x^1}^2)
	\end{bmatrix},
\end{align*}
 the right-hand side in \eqref{vt comp} can be simplified as 
\begin{equation*}
	\begin{split}
	& - \left(\nabla_x f_0(x^1,R_{x^1}v) R_{x^1} +\nabla_v f_0(x^1,R_{x^1}v)(-2A_{v,x^1})\right)\\
		&\quad +(-v^T) \begin{bmatrix} 
	\nabla_x f_0(x^1,R_{x^1}v) \nabla_v (R_{x^1(x,v)}^1) \\ \nabla_x f_0(x^1,R_{x^1}v) \nabla_v (R_{x^1(x,v)}^2) 
	\end{bmatrix} + (-v^T) \begin{bmatrix} 
	\nabla_v f_0(x^1,R_{x^1}v) \nabla_v (-2A_{v,x^1(x,v)}^1) \\ \nabla_v f_0(x^1,R_{x^1}v) \nabla_v (-2A_{v,x^1(x,v)}^2) 
	\end{bmatrix}\\
	&\quad +  tv^T R_{x^1} \nabla_{xx} f_0(x^1,R_{x^1}v) R_{x^1} +(-v^T)R_{x^1} \nabla_{vx} f_0(x^1,R_{x^1}v)(R_{x^1}+2tA_{v,x^1})  \\
	&=-\nabla_x f_0(x^1,R_{x^1}v)R_{x^1} +tv^T \begin{bmatrix}
	\nabla_x f_0(x^1,R_{x^1}v) \nabla_x(R_{x^1(x,v)}^1) \\\nabla_x f_0(x^1,R_{x^1}v) \nabla_x(R_{x^1(x,v)}^2)
	\end{bmatrix}+tv^T \begin{bmatrix} 
	\nabla_v f_0(x^1,R_{x^1}v) \nabla_x (-2A_{v,x^1(x,v)}^1) \\ \nabla_v f_0(x^1,R_{x^1}v) \nabla_x (-2A_{v,x^1(x,v)}^2) 
	\end{bmatrix} \\
	&\quad +(-v^T) \begin{bmatrix} 
	\nabla_v f_0(x^1,R_{x^1}v) \nabla_v (-2A_{v,x^1}^1) \\ \nabla_v f_0(x^1,R_{x^1}v) \nabla_v (-2A_{v,x^1}^2)
	\end{bmatrix} 
	+ tv^T R_{x^1} \nabla_{xx} f_0(x^1,R_{x^1}v) R_{x^1} \\
	&\quad +(-v^T)R_{x^1} \nabla_{vx} f_0(x^1,R_{x^1}v)(R_{x^1}+2tA_{v,x^1})\\
	&= -\nabla_x f_0(x^1,R_{x^1}v)R_{x^1}+tv^T R_{x^1}\nabla_{xx} f_0(x^1,R_{x^1}v) R_{x^1}+tv^T R_{x^1}\nabla_{vx} f_0(x^1,R_{x^1}v)(-2A_{v,x^1}) \\
	&\quad +(-v^T)R_{x^1} \nabla_{vx} f_0(x^1,R_{x^1}v) R_{x^1}.
	\end{split}
\end{equation*}
Hence, the \eqref{vt comp} condition can be obtained under \eqref{Cond2 1}--\eqref{Cond2 4},\eqref{Cond3}, and \eqref{Cond4}. \\
\hide
\subsubsection{Symmetric presentation of \eqref{Cond} under assumption \eqref{if}}
 Above can be more simplified. Note that the 3rd condition of \eqref{Cond2 1}--\eqref{Cond2 4} is just obvious by specular reflection BC (taking $\nabla_{v}$ twice).  From 1st and 4th condition, we have
\begin{equation}
\begin{split}
	(-2A^{T}_{v,x^{1}}) \nabla_{xv}f_{0}(x^{1}, R_{x^1}v) R_{x^{1}} &= (-2A^{T}_{v,x^{1}}) R_{x^{1}} \nabla_{xv}f_{0}(x^{1}, v) - (-2A^{T}_{v,x^{1}}) \nabla_{vv}f_{0}(x^{1}, R_{x^1}v) (-2A_{v,x^{1}}) \\
	R_{x^{1}} \nabla_{vx}f_{0}(x^{1}, R_{x^1}v)(-2A_{v,x^{1}}) &= \nabla_{vx}f_{0}(x^{1}, v)R_{x^{1}}(-2A_{v,x^{1}}) - (-2A^{T}_{v,x^{1}})\nabla_{vv}f_{0}(x^{1}, R_{x^1}v)(-2A_{v,x^{1}}).
\end{split}
\end{equation}
Plugging into 2nd condition, \eqref{Cond2 2} is rewritten as
\begin{equation} \label{re 2}
\begin{split}
 	 & \nabla_{xx}f_{0}(x^{1},v)  + \nabla_{vx}f_{0}(x^{1}, v)R_{x^{1}}A_{v,x^{1}}   + (R_{x^{1}}A_{v,x^{1}})^{T}  \nabla_{xv}f_{0}(x^{1}, v)  \\
 	 &= R_{x^{1}}\nabla_{xx}f_{0}(x^{1}, R_{x^1}v)R_{x^{1}}  +  R_{x^{1}}\nabla_{vx}f_{0}(x^{1}, R_{x^1}v)(-A_{v,x^{1}} ) \\
 	 &\quad + (-A^{T}_{v,x^{1}}) \nabla_{xv}f_{0}(x^{1}, R_{x^1}v) R_{x^{1}}  
 	 +
 	 {\color{blue}
 	 \begin{bmatrix}
 	 \nabla_{x}f_{0}(x^{1}, R_{x^1}v) \nabla_{x}[R_{x^{1}}]_{1}
 	 \\
 	 \nabla_{x}f_{0}(x^{1}, R_{x^1}v) \nabla_{x}[R_{x^{1}}]_{2}
 	 \end{bmatrix}
 	} 
\end{split}
\end{equation}
\\
{\bf Conclusion}
From  \eqref{Cond2 3} and Lemma \ref{lem_RA}, \\
the \eqref{Cond2 1} can be written as symmetric form \\
\begin{equation}  \label{sym Cond2_1} 
\begin{split}	
R_{x^{1}} \Big[ \nabla_{xv}f_{0}(x^{1},v) + \nabla_{vv}f_{0}(x^{1},v) \frac{ (Qv)\otimes (Qv)}{v\cdot n} \Big] R_{x^{1}}
&= \nabla_{xv}f_{0}(x^{1}, R_{x^1}v)  + \nabla_{vv}f_{0}(x^{1}, R_{x^1}v) \frac{(QR_{x^1}v)\otimes (QR_{x^1}v)}{R_{x^1}v\cdot n(x^1)} .
\end{split}
\end{equation}
Similarly, 4th one give
\begin{equation}    \label{sym Cond2_2} 
\begin{split}	
 R_{x^{1}} \Big[ \nabla_{vx}f_{0}(x^{1},v) +  \frac{ (Qv)\otimes (Qv)}{v\cdot n} \nabla_{vv}f_{0}(x^{1}, v) \Big] R_{x^{1}}
 &= \nabla_{vx}f_{0}(x^{1}, R_{x^1}v)  + \frac{(QR_{x^1}v)\otimes (QR_{x^1}v)}{R_{x^1}v\cdot n(x^1)} \nabla_{vv}f_{0}(x^{1}, R_{x^1}v) .
\end{split}
\end{equation}
\eqref{re 2} condition (\eqref{Cond2 2}) yields
\begin{equation}   \label{sym Cond2_3} 
\begin{split}
&R_{x^{1}}\Big[ \nabla_{xx}f_{0}(x^{1},v)  + \nabla_{vx}f_{0}(x^{1}, v) \frac{ (Qv)\otimes (Qv)}{v\cdot n}  + \frac{ (Qv)\otimes (Qv)}{v\cdot n}  \nabla_{xv}f_{0}(x^{1}, v) \Big]  R_{x^{1}}  \\
&= \nabla_{xx}f_{0}(x^{1}, R_{x^1}v)  +  \nabla_{vx}f_{0}(x^{1}, R_{x^1}v)\frac{(QR_{x^1}v)\otimes (QR_{x^1}v)}{R_{x^1}v\cdot n(x^1)}
+ \frac{(QR_{x^1}v)\otimes (QR_{x^1}v)}{R_{x^1}v\cdot n(x^1)} \nabla_{xv}f_{0}(x^{1}, R_{x^1}v)   \\
&\quad + 
{\color{blue}
	\underbrace{
R_{x^{1}}
\begin{bmatrix}
\nabla_{x}f_{0}(x^{1}, R_{x^1}v) \nabla_{x}[R_{x^{1}}]_{1}
\\
\nabla_{x}f_{0}(x^{1}, R_{x^1}v) \nabla_{x}[R_{x^{1}}]_{2}
\end{bmatrix} 
R_{x^{1}}
}_{(?)}
 	} 
\end{split}
\end{equation}
\unhide

\subsection{Proof of  Theorem \ref{thm 2}}

\begin{proof} [Proof of Theorem \ref{thm 2}]
	By the same argument of the proof of Theorem \ref{thm 1}, it suffices to set $k=1$. Through this section,  we have shown that \eqref{Cond2 1}--\eqref{Cond2 4}, \eqref{Cond3}, and \eqref{Cond4} yield $C^{2}_{t,x,v}$ regularity of $f(t,x,v)$ of \eqref{solution}. However, \eqref{Cond2 3} is just an obvious consequence of \eqref{BC} and \eqref{Cond2 4} is identical to \eqref{Cond2 1} since we assume \eqref{C2 cond34} which is the same as \eqref{Cond3} and \eqref{Cond4}. So, we omit \eqref{Cond2 3} and \eqref{Cond2 4} in the statement. 

In Remark \ref{extension C2 cond34}, under \eqref{C2 cond34}, we derived that 
\begin{equation*}
	\nabla_x f_0(x,v)R_x = \nabla_x f_0(x,R_xv) \quad \textrm{and} \quad \nabla_v f_0(x,v) \frac{(Qv) \otimes (Qv)}{v\cdot n} R_x = \nabla_v f_0(x,R_xv)\frac{(QR_xv)\otimes(QR_xv)}{R_x v\cdot n},
\end{equation*}
for all $(x,v) \in \gamma_- \cup \gamma_+$. In Remark \ref{example}, we showed that 
\begin{equation*}
	f_0(x,v)=G(x,\vert v \vert), \quad (x,v) \in \partial \O \times \R^2,
\end{equation*}
where $G$ is a $C^1_{x,v}$ function. Notice that the function $G$ must be $C^2_{x,v}$ to be $f_0 \in C^2_{x,v}(\bar\O\times \R^2)$ in Theorem \ref{thm 2}. Since $f_0(x,v)=G(x,\vert v \vert)$ be a radial function with respect to $v$ and $\nabla_x f_0(x,v) \parallel v^T$ for all $(x,v)\in \gamma_-\cup \gamma_+$, $\nabla_x f_0(x,v)$ must be $0$ on $\partial \O$.

	Now let us change \eqref{Cond2 1} and \eqref{Cond2 2} into symmetric forms. First, we multiply \eqref{Cond2 1} by $R_{x^1}$ from both left and right. Then applying \eqref{Cond2 3} and \eqref{RA}, we obtain 
	\begin{align*}
	R_{x^1} \Big[ \nabla_{xv}f_{0}(x^1,v) + \nabla_{vv}f_{0}(x^1,v) \frac{ (Qv)\otimes (Qv)}{v\cdot n(x^1)} \Big] R_{x^1}
	&= \nabla_{xv}f_{0}(x^1, R_{x^1}v)  + \nabla_{vv}f_{0}(x^1, R_{x^1}v) \frac{(QR_{x^1}v)\otimes (QR_{x^1}v)}{R_{x^1}v\cdot n(x^1)}  \notag \\
	&\quad+ 
	R_{x^1}
	\begin{bmatrix}
	\nabla_{v}f_{0}(x^1 , R_{x^1}v) \nabla_x(R^1_{x^1(x,v)})
	\\
	\nabla_{v}f_{0}(x^1, R_{x^1}v) \nabla_x(R^2_{x^1(x,v)})
	\end{bmatrix}
	R_{x^1}.
	\end{align*}
	Also, plugging the above into \eqref{Cond2 2} and using \eqref{RA} again, we obtain  
	\begin{align*}
		&R_{x^1}\Big[ \nabla_{xx}f_{0}(x^1,v)  + \nabla_{vx}f_{0}(x^1, v) \frac{ (Qv)\otimes (Qv)}{v\cdot n(x^1)}  + \frac{ (Qv)\otimes (Qv)}{v\cdot n(x^1)}  \nabla_{xv}f_{0}(x^1, v) \Big]  R_{x^1}   \\
	&= \nabla_{xx}f_{0}(x^1, R_{x^1}v)  +  \nabla_{vx}f_{0}(x^1, R_{x^1}v)\frac{(QR_{x^1}v)\otimes (QR_{x^1}v)}{R_{x^1}v\cdot n(x^1)}
	+ \frac{(QR_{x^1}v)\otimes (QR_{x^1}v)}{R_{x^1}v\cdot n(x^1)} \nabla_{xv}f_{0}(x^1, R_{x^1}v)   \\
	&\quad	-2R_x^1
		\begin{bmatrix}
		\nabla_{v}f_{0}(x^{1}, R_{x^1}v) \nabla_{v}A^{1}_{v,x^1}
		\\
		\nabla_{v}f_{0}(x^{1}, R_{x^1}v) \nabla_{v}A^{2}_{v,x^1} 
		\end{bmatrix} R_{x^1}A_{v,x^1}R_{x^1} 
		+ 
		A_{v,x^1}\begin{bmatrix}
		\nabla_{v}f_{0}(x^{1}, R_{x^1}v) \nabla_x(R^1_{x^1(x,v)})  \\
		\nabla_{v}f_{0}(x^{1}, R_{x^1}v) \nabla_x(R^2_{x^1(x,v)})
		\end{bmatrix}R_{x^1}   \\  
	&\quad
		+ R_{x^1} \begin{bmatrix}
		\nabla_{x}f_{0}(x^{1}, R_{x^1}v) \nabla_x (R^1_{x^1(x,v)}) 
		\\
		\nabla_{x}f_{0}(x^{1}, R_{x^1}v) \nabla_x(R^2_{x^1(x,v)}) 
		\end{bmatrix} R_{x^1} 
		- 2 R_{x^1}
		\begin{bmatrix}
		\nabla_{v}f_{0}(x^{1}, R_{x^1}v) \nabla_x(A^1_{v,x^1(x,v)})
		\\
		\nabla_{v}f_{0}(x^{1}, R_{x^1}v) \nabla_x(A^2_{v,x^1(x,v)})
		\end{bmatrix} R_{x^1}.
	\end{align*}
	By Lemma \ref{d_RA} and Lemma \ref{dx_A}, $\nabla_x(R^1_{x^1(x,v)}), \nabla_x(R^2_{x^1(x,v)}), \nabla_x(A^1_{v,x^1(x,v)})$, and $\nabla_v(A^2_{v,x^1(x,v)})$ depend only on $n(x^1)$ and $v$. We rewrite $x^1$ as $x$ for $(x,v) \in \gamma_-$ because $n(x^1)=x^1$. Since $\nabla_x f_0(x,v)=0$ for $x\in \partial \O$, we obtain \eqref{C2 cond 1} and \eqref{C2 cond 2}. 

Lastly, we will prove that $f(t,x,v)$ is not of class $C^2_{t,x,v}$ at time $t$ such that $t^k(t,x,v)=0$ for some $k$ if one of these conditions \eqref{C2 cond34}, \eqref{C2 cond 1}, and \eqref{C2 cond 2} for $(x,v)\in \gamma_-$ does not hold. Similar to the proof of Theorem \ref{thm 1}, it suffices to set $k=1$ and prove that $f(t,x,v)$ is not a class of $C^2_{t,x,v}$ at time $t$ satisfying $t^1(t,x,v)=0$. 

 Let $t^*$ be time $t$ such that $t^1(t,x,v)=0$. Remind that the condition \eqref{C2 cond34} was necessary to satisfy $\nabla_{xv}^Tf_0(x,v)=\nabla_{vx}f_0(x,v)$ and $\nabla_{xx}^T f_0(x,v) = \nabla_{xx}f_0(x,v)$ for $x\in \partial \O$. In other words, $\nabla_{xv}f_0(x,v)^T \neq \nabla_{vx} f_0(x,v)$ and $\nabla_{xx}^Tf_0(x,v)\neq \nabla_{xx}f_0(x,v)$ without \eqref{C2 cond34}. In $\nabla_{xv}$ and $\nabla_{vx}$ with direction $\hat{r}_1$, we derived \eqref{nabla_xv f case1} and \eqref{nabla_vx f case1} at $t^*$:
\begin{equation*}
	\nabla_{xv}f(t,x,v) = (-t) \nabla_{xx} f_0(x^1,v) + \nabla_{xv} f_0(x^1,v), \quad \nabla_{vx}f(t,x,v)=(-t) \nabla_{xx}f_0(x^1,v)+\nabla_{vx}f_0(x^1,v).
\end{equation*}
Thus, if \eqref{C2 cond34} is not provided, $\nabla_{xv}^T f(t,x,v) \neq \nabla_{vx}f(t,x,v)$. This implies that $f(t,x,v)$ is not $C^2_{t,x,v}$ at time $t^*$. Next, we do not assume \eqref{C2 cond 1} for $(x,v)\in \gamma_-$. The condition \eqref{C2 cond 1} is derived from $\nabla_{xv}$ compatibility condition \eqref{Cond2 1}. Therefore, directional derivatives  \eqref{nabla_xv f case1} and \eqref{nabla_xv f case2}  with respect to $\hat{r}_1$ and $\hat{r}_2$ are not the same. It means that $f(t,x,v)$ is not $C^2_{t,x,v}$ at time $t^*$. 

Finally, we assume that \eqref{C2 cond 2} does not hold for $(x,v)\in \gamma_-$.  
The condition \eqref{C2 cond 2} comes from $\nabla_{xx},\nabla_{xv}$ and $\nabla_{vx}$ compatibility conditions \eqref{Cond2 1}, \eqref{Cond2 2}, and \eqref{Cond2 4}. One may assume without loss of generality that the initial data $f_0$ satisfies \eqref{C2 cond34} and \eqref{C2 cond 1}. Then, only $\nabla_{xx}$ compatibility condition \eqref{Cond2 2} is not satisfied. Similar to the above, directional derivatives $\nabla_{xx}$ with respect to $\hat{r}_1$ and $\hat{r}_2$ are not the same. Then, $f(t,x,v)$ is not $C^2_{t,x,v}$ at time $t^*$ without \eqref{C2 cond 2}. This finishes the proof.   
\end{proof}

\section{Regularity estimate of $f$}
\subsection{First order estimates of characteristics} Using Definition \ref{notation},
\begin{equation*}
	V(0;t,x,v) = R_{\ell} R_{\ell-1} \cdots R_{2} R_{1} v, \quad \text{for some $\ell$ such that}\quad  t^{\ell+1} < 0 \leq t^{\ell},
\end{equation*}
where 
\[
	R_{j} = I - 2 n(x^{j})\otimes n(x^{j}).
\]
For above $\ell$,
\begin{equation*}
	X(0;t,x,v) = x^{\ell} - v^{\ell}t^{\ell},  
\end{equation*}
where inductively,
\[
	x^{k} = x^{k-1} - v^{k-1}(t^{k-1} - t^{k}),\quad 2\leq k \leq \ell,
\]
and
\[
	x^{1} = x - v(t-t^{1}) = x - v\tb.
\]
Or using rotational symmetry, we can also express
\[
	x^{\ell} = Q_{\theta}^{\ell-1}x^{1},
\]
where $Q_{\theta}$ is operator (matrix) which means rotation(on the boundary of the disk) by $\theta$. $\theta$ is uniquely determined by its first (backward in time) bounce angle $v\cdot n(\xb)$. \\

\begin{lemma} \label{der theta}
Here, $\theta$ is the angle at which $v$ is rotated to $v^1$. Moreover, $\theta>0$ is the same as the angle of rotation from $x^{k}$ to $x^{k+1}$ for $k=1,2,\cdots,l-1$. Then, derivatives of $\theta$ with respect to $x$ and $v$ are
\begin{equation} \label{d_theta}
		\nabla_x \theta =-\frac{2}{\sin \frac{\theta}{2}} Q_{-\frac{\theta}{2}}n(x^{1}), \quad \nabla_v \theta = 2\left( \frac{\tb}{\sin \frac{\theta}{2}} - \frac{1}{\vert v \vert}\right) Q_{-\frac{\theta}{2}}n(x^{1}),
\end{equation}
provided $n(x^1)\cdot v \neq0$. 
\end{lemma}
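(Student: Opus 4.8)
The plan is to write $\theta$ explicitly as a function of $(x,v)$ and then differentiate. Since $\theta$ is the turning angle carrying $v$ to $v^{1}=R_{x^{1}}v$ (here $x^{1}=\xb(x,v)$) and $|v^{1}|=|v|$, we have $v\cdot v^{1}=|v|^{2}\cos\theta$, while a direct computation with $R_{x^{1}}=I-2n(x^{1})\otimes n(x^{1})$ gives $v\cdot v^{1}=|v|^{2}-2(v\cdot n(x^{1}))^{2}$. Hence $\sin^{2}\tfrac{\theta}{2}=\tfrac{1-\cos\theta}{2}=\tfrac{(v\cdot n(x^{1}))^{2}}{|v|^{2}}$, and because $(x^{1},v)\in\gamma_{-}$ forces $v\cdot n(x^{1})<0$ while non-grazing forces $\theta\in(0,\pi]$, this yields
\[
\sin\tfrac{\theta}{2}=\frac{-\,v\cdot n(x^{1})}{|v|},\qquad\text{equivalently}\qquad\theta=2\arcsin\!\Big(\tfrac{-v\cdot n(x^{1}(x,v))}{|v|}\Big),
\]
the argument of $\arcsin$ lying in $(0,1]$; in particular $\theta$ is smooth in $(x,v)$ on $\{v\cdot n(x^{1})\neq0\}$.

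Next I would differentiate, writing $g(x,v):=-v\cdot n(x^{1})/|v|$, so that the chain rule gives $\nabla_{x,v}\theta=\tfrac{2}{\sqrt{1-g^{2}}}\nabla_{x,v}g=\tfrac{2}{\cos(\theta/2)}\nabla_{x,v}g$ (using $\sqrt{1-\sin^{2}(\theta/2)}=\cos\tfrac{\theta}{2}$; the single point $\theta=\pi$ is treated by continuity, or by the alternative below). The gradients of $v\cdot n(x^{1})$ follow from the product rule of Lemma~\ref{matrix notation} and the formulas $\nabla_{x}n(x^{1})=I-\tfrac{v\otimes n(x^{1})}{v\cdot n(x^{1})}$, $\nabla_{v}n(x^{1})=-\tb\big(I-\tfrac{v\otimes n(x^{1})}{v\cdot n(x^{1})}\big)$ of Lemma~\ref{d_n}, together with $\nabla_{v}(1/|v|)=-v^{T}/|v|^{3}$. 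Substituting the identity $v\cdot n(x^{1})=-|v|\sin(\theta/2)$ from the previous step, all coefficients collapse and both gradients turn out to be explicit scalars times the single vector $n(x^{1})+\tfrac{\sin(\theta/2)}{|v|}v$; concretely,
\[
\nabla_{x}\theta=-\frac{2}{\sin(\theta/2)\cos(\theta/2)}\Big(n(x^{1})+\tfrac{\sin(\theta/2)}{|v|}v\Big),\qquad
\nabla_{v}\theta=\frac{2}{\cos(\theta/2)}\Big(\tfrac{\tb}{\sin(\theta/2)}-\tfrac{1}{|v|}\Big)\Big(n(x^{1})+\tfrac{\sin(\theta/2)}{|v|}v\Big).
\]
The last ingredient is the half-angle identity
\[
n(x^{1})+\frac{\sin(\theta/2)}{|v|}\,v=\cos\tfrac{\theta}{2}\;Q_{-\theta/2}\,n(x^{1}),
\]
which, once inserted, cancels the remaining $\cos(\theta/2)$ and delivers exactly \eqref{d_theta}. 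To prove it I would expand $v$ in the orthonormal frame $\{n(x^{1}),\tau\}$, $\tau:=Q_{-\pi/2}n(x^{1})$: we already have $v\cdot n(x^{1})=-|v|\sin(\theta/2)$, hence $|v\cdot\tau|=|v|\cos(\theta/2)$, and the orientation convention for $\theta$ (the same one that makes $x^{\ell}=Q_{\theta}^{\,\ell-1}x^{1}$, i.e.\ $v^{1}=Q_{\theta}v$ with $\theta>0$) fixes $v\cdot\tau=|v|\cos(\theta/2)$; then $n(x^{1})+\tfrac{\sin(\theta/2)}{|v|}v=\cos^{2}\tfrac{\theta}{2}\,n(x^{1})+\sin\tfrac{\theta}{2}\cos\tfrac{\theta}{2}\,\tau=\cos\tfrac{\theta}{2}\big(\cos\tfrac{\theta}{2}\,n(x^{1})+\sin\tfrac{\theta}{2}\,Q_{-\pi/2}n(x^{1})\big)=\cos\tfrac{\theta}{2}\,Q_{-\theta/2}n(x^{1})$.

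The main obstacle is twofold: the differentiation step is lengthy though entirely mechanical, and — more delicately — one must pin down the sign of $v\cdot\tau$, equivalently orient $\theta$ so that $Q_{-\theta/2}$ rather than $Q_{+\theta/2}$ appears, consistently with the direction in which the disk billiard rotates and with the convention $x^{\ell}=Q_{\theta}^{\,\ell-1}x^{1}$ fixed earlier. A cleaner route that sidesteps both the $\arcsin$ and the $\theta=\pi$ edge case is to differentiate $v^{1}=R_{x^{1}}v=Q_{\theta}v$ directly: by \eqref{nabla XV_x-}, $\nabla_{x}(R_{x^{1}}v)=-2A_{v,x^{1}}$, which by Lemma~\ref{lem_RA} and $R_{x^{1}}Q=-QR_{x^{1}}$ equals $\tfrac{2}{v\cdot n(x^{1})}(Qv^{1})\otimes(Qv)$, whereas $\nabla_{x}(Q_{\theta}v)=(\partial_{\theta}Q_{\theta})v\otimes\nabla_{x}\theta=(Qv^{1})\otimes\nabla_{x}\theta$; matching the two rank-one factorizations gives $\nabla_{x}\theta=\tfrac{2}{v\cdot n(x^{1})}Qv$, and \eqref{nabla XV_v-} similarly yields $\nabla_{v}\theta$, after which $v\cdot n(x^{1})=-|v|\sin(\theta/2)$ and $Qv/|v|=Q_{-\theta/2}n(x^{1})$ convert these into \eqref{d_theta}.
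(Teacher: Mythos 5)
Your main argument is correct and is essentially the paper's own proof: both start from $\sin\frac{\theta}{2}=-n(x^1)\cdot v/|v|$, differentiate it using the product rule and Lemma~\ref{d_n}, and then convert the resulting combination of $n(x^1)$ and $v$ into a multiple of $Q_{-\theta/2}n(x^1)$ via the orientation relation $-v/|v|=Q_{\pi/2-\theta/2}\,n(x^1)$ (you phrase this as the frame decomposition $v=(v\cdot n)n+(v\cdot\tau)\tau$ with $v\cdot\tau=|v|\cos\frac{\theta}{2}$, the paper as an explicit $2\times 2$ matrix identity — the same fact). Your intermediate formulas check out, and you correctly flag the one genuinely delicate point, namely the sign of $v\cdot\tau$, which the paper fixes by the same convention. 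The only genuinely different content is your closing alternative: matching the two rank-one factorizations $\nabla_x(R_{x^1}v)=-2A_{v,x^1}=\tfrac{2}{v\cdot n(x^1)}(Qv^1)\otimes(Qv)$ and $\nabla_x(Q_\theta v)=(Qv^1)\otimes\nabla_x\theta$ to read off $\nabla_x\theta=\tfrac{2}{v\cdot n(x^1)}(Qv)^T$, then using $Qv/|v|=Q_{-\theta/2}n(x^1)$. That route is valid, avoids the $\arcsin$ derivative and the $\theta=\pi$ degeneracy of $\cos\frac{\theta}{2}$ (which the paper silently glosses over), and leans on Lemma~\ref{lem_RA} and \eqref{nabla XV_x-} already established; its cost is that it presupposes the differentiability of $\theta$ rather than deriving it, so it is best used as a cross-check or combined with your explicit formula for $\theta$.
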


\begin{proof}
	From the definition of $\theta$, 
	\begin{equation} \label{theta}
		\cos \left( \frac{\pi}{2} - \frac{\theta}{2} \right) = \sin \left( \frac{\theta}{2} \right) = - \left[ n(x^1) \cdot \frac{v}{\vert v \vert} \right].
	\end{equation}
	Thus, taking $\nabla_x$ yields 
	\begin{align*}
		\frac{1}{2} \cos \frac{\theta}{2} \nabla_x \theta= -\frac{v}{\vert v \vert} \nabla_x \left( n(x^1)\right)=-\frac{v}{\vert v \vert} \left( I - \frac{v \otimes n(x^1)}{v \cdot n(x^1)} \right)=-\frac{v}{\vert v \vert}+ \frac{\vert v \vert}{v\cdot n(x^1)} n(x^1),
	\end{align*}	
	where we used the product rule in Lemma \ref{matrix notation} and \eqref{normal} in Lemma \ref{d_n}. Note that rotating an angle $\phi=\frac{\pi}{2}-\frac{\theta}{2}>0$ on a normal vector $n(x^1)$ gives the vector $- \frac{v}{\vert v \vert}$. In other words, it holds that  
	\begin{equation} \label{v_n}
		-\frac{v}{\vert v \vert} =  Q_{\phi} n (x^1),
	\end{equation} 
	where $Q_{\phi}= \begin{bmatrix} \cos \phi & -\sin \phi \\ \sin \phi & \cos \phi \end{bmatrix} =\begin{bmatrix} \sin \frac{\theta}{2} & -\cos \frac{\theta}{2} \\ \cos \frac{\theta}{2} & \sin \frac{\theta}{2} \end{bmatrix}$. Thus, 
	\begin{align*}
		\nabla_x \theta &= \frac{2}{\cos \frac{\theta}{2}} \left( Q_{\phi} - \frac{1}{\sin \frac{\theta}{2}} I \right) n(x^1)=\frac{2}{\cos \frac{\theta}{2}\sin \frac{\theta}{2}} \begin{bmatrix} \sin^2 \frac{\theta}{2} -1 & -\cos \frac{\theta}{2} \sin \frac{\theta}{2} \\ \sin\frac{\theta}{2}\cos \frac{\theta}{2}& \sin^2 \frac{\theta}{2} -1\end{bmatrix}n(x^1)  \\
		&= -\frac{2}{\sin \frac{\theta}{2}}  \begin{bmatrix} \cos \frac{\theta}{2} & \sin \frac{\theta}{2} \\ -\sin \frac{\theta}{2} & \cos \frac{\theta}{2} \end{bmatrix}n(x^1) = -\frac{2}{\sin \frac{\theta}{2}} Q_{-\frac{\theta}{2}}n(x^{1}).
	\end{align*}
	Similarly, taking the derivative $\nabla_v$ of both sides in \eqref{theta}: 
	\begin{align*}
		\frac{1}{2} \cos \frac{\theta}{2} \nabla_v \theta=-\frac{v}{\vert v \vert} \nabla_v \left( n(x^1)\right) -n(x^1) \left( \frac{1}{\vert v \vert} I- \frac{v \otimes v}{\vert v \vert^3} \right)&=\tb\frac{v}{\vert v \vert} \Big(I - \frac{v\otimes n(x^1)}{v\cdot n(x^1)} \Big)-n(x^1) \left( \frac{1}{\vert v \vert} I- \frac{v \otimes v}{\vert v \vert^3} \right)\\
		&=\tb \frac{v}{\vert v \vert} -\tb \frac{ \vert v \vert}{v\cdot n(x^1)} n (x^1) - \frac{1}{\vert v \vert} n(x^1) + \frac{v \cdot n(x^1)}{\vert v \vert^2} \frac{v}{\vert v \vert},
	\end{align*}
where we used the product rule in Lemma \ref{matrix notation} and \eqref{normal} in Lemma \ref{d_n}. From \eqref{v_n}, 
\begin{align*}
	\nabla_v \theta &= \frac{2}{\cos \frac{\theta}{2} \sin \frac{\theta}{2}} \left( -\tb\sin\frac{\theta}{2} \left[Q_{\phi}-\frac{1}{\sin \frac{\theta}{2}}I \right]n(x^1)+\frac{\sin^2\frac{\theta}{2}}{\vert v \vert} \left[ Q_{\phi} - \frac{1}{\sin \frac{\theta}{2}} I\right]n(x^1) \right)\\
	&=\frac{2 \tb}{\sin \frac{\theta}{2}} \begin{bmatrix} \cos \frac{\theta}{2}& \sin \frac{\theta}{2} \\ -\sin \frac{\theta}{2} &  \cos \frac{\theta}{2} \end{bmatrix}n(x^1) -\frac{2}{\vert v \vert}  \begin{bmatrix} \cos \frac{\theta}{2} & \sin\frac{\theta}{2} \\ -\sin\frac{\theta}{2} & \cos \frac{\theta}{2}  \end{bmatrix} n(x^1)\\
	&=2\left( \frac{\tb}{\sin \frac{\theta}{2}} - \frac{1}{\vert v \vert}\right) \begin{bmatrix} \cos \frac{\theta}{2} & \sin\frac{\theta}{2} \\ -\sin\frac{\theta}{2} & \cos \frac{\theta}{2}  \end{bmatrix} n(x^1)
	= 2\left( \frac{\tb}{\sin \frac{\theta}{2}} - \frac{1}{\vert v \vert}\right) Q_{-\frac{\theta}{2}}n(x^{1}).
\end{align*}
\end{proof}
\begin{lemma} \label{X,V} 
Let $(t,x,v) \in \R_+\times \O\times \R^2$. The specular characteristics $X(0;t,x,v)$ and $V(0;t,x,v)$ are defined in Definition \ref{notation}. Whenever $n(x^1)\cdot v\neq0$,  we have derivatives of the characteristics $X(0;t,x,v)$ and $V(0;t,x,v)$:
\begin{align} \label{n_x,v}
	\begin{split}
		\nabla_x X(0;t,x,v) &=  Q_{\theta}^{l-1}\left( I - \frac{v \otimes n(x^1)}{v\cdot n(x^1)} \right) +t^l l Q_{l\theta-\frac{\pi}{2}} \left( v \otimes \nabla_x \theta \right) - \frac{1}{\vert v \vert \sin \frac{\theta}{2}}Q_\theta^l  \left(v \otimes n(x^1)\right) \\
		&\quad -\frac{\vert v \vert(t-\tb-t^l)}{2}Q_{(l-\frac{1}{2})\theta -\pi} \left(n(x^1) \otimes \nabla_x \theta \right),  \\
			\nabla_v X(0;t,x,v)&=-\tb Q_{\theta}^{l-1}\left( I - \frac{v \otimes n(x^1)}{v\cdot n(x^1)} \right) -t^l Q_\theta ^l+t^l l Q_{l\theta-\frac{\pi}{2}} \left( v \otimes \nabla_v \theta \right)+\frac{\tb}{\vert v \vert \sin \frac{\theta}{2}} Q_{\theta}^l \left(v \otimes n(x^1)\right)  \\
			&\quad  -   \frac{2(l-1)\sin\frac{\theta}{2}}{\vert v \vert^3} Q_\theta^l \left(v \otimes v\right) -\frac{\vert v \vert(t-\tb-t^l)}{2}Q_{(l-\frac{1}{2})\theta - \pi}\left(n(x^1) \otimes \nabla_v \theta\right), \\		
			\nabla_x V(0;t,x,v)&= -lQ_{l\theta-\frac{\pi}{2}} \left( v\otimes \nabla_x \theta \right),\\
			\nabla_v V(0;t,x,v)&= Q_{\theta} ^l  -lQ_{l\theta-\frac{\pi}{2}} \left( v\otimes \nabla_v \theta \right),
	\end{split}
\end{align}
where $\theta$ is the angle given in Lemma \ref{der theta}, $\tb$ is the backward exit time defined in Definition \ref{notation}, $l$ is the bouncing number, and $Q_\theta$ is a rotation matrix by $\theta$. 
\end{lemma}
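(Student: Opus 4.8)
\textbf{Plan for Lemma \ref{X,V}.} The approach is to first collapse the multi-reflection expressions for $X(0;t,x,v)$ and $V(0;t,x,v)$ into closed forms in the single angle $\theta$ of Lemma \ref{der theta}, using that billiards in a disk are rigid rotations, and then to differentiate those closed forms term by term via the product and chain rules of Lemma \ref{matrix notation}, inserting the derivatives of $\theta$, $\tb$ and $n(\xb)$ already computed in Lemmas \ref{der theta}, \ref{nabla xv b} and \ref{d_n}.

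\textbf{Step 1 (closed forms).} I work at a point $(t,x,v)$ with $n(x^1)\cdot v\neq 0$ and $t^k(t,x,v)\neq 0$ for every $k$; this is an open set on which the bouncing number $l$ (determined by $t^{l+1}<0\le t^l$) is locally constant and $X(0;\cdot),V(0;\cdot)$ are smooth. Since $\p\O$ is a circle and every reflection is specular, the billiard map $x^{k}\mapsto x^{k+1}$ on $\p\O$ is rotation by a fixed central angle, which—by the computation in the proof of Lemma \ref{der theta}—equals the angle $\theta$ carrying $v$ to $v^1=R_{x^1}v$, with $\sin\tfrac\theta2=-\,n(x^1)\cdot\tfrac{v}{|v|}$. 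Hence $x^{k}=Q_\theta^{\,k-1}x^1$, and conjugating the reflection $R_{x^1}$ by $Q_\theta^{\,k-1}$ (using $R_{Qy}=QR_yQ^{-1}$) gives inductively $v^{k}=R_{x^k}v^{k-1}=Q_\theta^{\,k-1}R_{x^1}v=Q_\theta^{\,k}v$ for $1\le k\le l$. Moreover all chords have length $2\sin\tfrac\theta2$, so all inter-bounce times equal $2\sin\tfrac\theta2/|v|$, whence $t^l=t-\tb-(l-1)\tfrac{2\sin(\theta/2)}{|v|}$, i.e.\ $t-\tb-t^l=(l-1)\tfrac{2\sin(\theta/2)}{|v|}$. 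Together with $V(0;t,x,v)=v^l$ and $X(0;t,x,v)=x^l-t^lv^l$ this yields
\[
V(0;t,x,v)=Q_\theta^{\,l}\,v,\qquad X(0;t,x,v)=Q_\theta^{\,l-1}x^1-t^l\,Q_\theta^{\,l}\,v .
\]

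\textbf{Step 2 (differentiation).} On the same set $l$ is constant and $Q_\theta^{\,m}$ depends on $(x,v)$ only through $\theta$, with $\partial_\theta Q_\theta^{\,m}=m\,Q_\theta^{\,m}Q=mQ_{m\theta+\pi/2}$ and $Q_\pi=-I$. Applying \eqref{d_matrix_theta} to $Q_\theta^{\,l-1}x^1$ and the product rule to $t^l\,Q_\theta^{\,l}v$ (a scalar times a vector), and directly differentiating $Q_\theta^{\,l}v$ for $V$, one gets $\nabla_x V(0)=l\,Q_{l\theta+\pi/2}(v\otimes\nabla_x\theta)$, $\nabla_v V(0)=Q_\theta^{\,l}+l\,Q_{l\theta+\pi/2}(v\otimes\nabla_v\theta)$, and $\nabla_x X(0)$, $\nabla_v X(0)$ as sums of $Q_\theta^{\,l-1}\nabla_{x}\xb$ (resp.\ $\nabla_v\xb$), $(l-1)Q_{(l-1)\theta+\pi/2}(x^1\otimes\nabla\theta)$, $-t^l\,l\,Q_{l\theta+\pi/2}(v\otimes\nabla\theta)$ and $-(Q_\theta^{\,l}v)\otimes\nabla t^l$. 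Into these I substitute $\nabla_x\xb,\nabla_v\xb$ from Lemma \ref{nabla xv b}, $\nabla_x n(x^1)=\nabla_x\xb$ and $\nabla_v n(x^1)$ from \eqref{normal}, $\nabla_x\theta,\nabla_v\theta$ from Lemma \ref{der theta}, and $\nabla_x t^l=-\nabla_x\tb-(l-1)\tfrac{\cos(\theta/2)}{|v|}\nabla_x\theta$, $\nabla_v t^l=-\nabla_v\tb-(l-1)\big(\tfrac{\cos(\theta/2)}{|v|}\nabla_v\theta-\tfrac{2\sin(\theta/2)}{|v|^3}v\big)$. Finally, using $n(x^1)=x^1$, $v\cdot n(x^1)=-|v|\sin\tfrac\theta2$, the identity \eqref{v_n}, $Q_{m\theta+\pi/2}=-Q_{m\theta-\pi/2}$, $Q_\alpha Q_\beta=Q_{\alpha+\beta}$ and $t-\tb-t^l=(l-1)\tfrac{2\sin(\theta/2)}{|v|}$, all the rank-one terms collapse to precisely the four expressions in \eqref{n_x,v}; in particular the two pieces carrying $x^1\otimes\nabla\theta$ merge (after the $t-\tb-t^l$ substitution) into the $\tfrac{|v|(t-\tb-t^l)}{2}\,Q_{(l-\frac12)\theta-\pi}(n(x^1)\otimes\nabla\theta)$ terms.

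\textbf{Main obstacle.} The conceptual content is Step 1: once one sees that the whole iterated trajectory in the disk is driven by the single rotation $\theta$, $X(0)$ and $V(0)$ acquire two-line closed forms and Step 2 becomes a lengthy but mechanical exercise in $2\times2$ rotation algebra. The two points demanding care are (i) verifying that $l$ is locally constant, which is exactly where the non-grazing hypothesis $n(x^1)\cdot v\neq 0$ is used and which explains why \eqref{n_x,v} must fail at the critical times $t^k=0$ highlighted in Theorems \ref{thm 1}--\ref{thm 2}; and (ii) keeping the angle shifts ($l\theta\pm\pi/2$, $(l-\tfrac12)\theta-\pi$, \dots) and the chord-length identity consistent so that the proliferation of $\otimes$-terms telescopes to exactly \eqref{n_x,v} rather than a larger, unsimplified sum—this is where the relations $\sin\tfrac\theta2=-n(x^1)\cdot\tfrac{v}{|v|}$ and $t-\tb-t^l=(l-1)2\sin\tfrac\theta2/|v|$ are indispensable.
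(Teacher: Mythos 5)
Your proposal is correct and follows essentially the same route as the paper: both reduce $X(0)$ and $V(0)$ to the closed forms $Q_\theta^{\,l-1}x^1-t^lQ_\theta^{\,l}v$ and $Q_\theta^{\,l}v$ with $t^l=t-\tb-2(l-1)\sin\frac{\theta}{2}/|v|$, differentiate term by term using Lemmas \ref{matrix notation}, \ref{nabla xv b}, \ref{d_n} and \ref{der theta}, and invoke the same cancellation (the paper's \eqref{cancel}, built on \eqref{v_n}) to collapse the $x^1\otimes\nabla\theta$ and $v\otimes\nabla\theta$ pieces into the $\frac{|v|(t-\tb-t^l)}{2}Q_{(l-\frac12)\theta-\pi}(n(x^1)\otimes\nabla\theta)$ terms. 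Your sign convention $lQ_{l\theta+\pi/2}=-lQ_{l\theta-\pi/2}$ is consistent with the stated formulas, and your extra justifications (local constancy of $l$, the conjugation argument for $v^k=Q_\theta^{\,k}v$) only make explicit what the paper leaves implicit.
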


\begin{proof}
	Recall 
	\begin{align*}
		X(0;t,x,v) = x^l - v^l t^l , \quad V(0;t,x,v) = v^l.
	\end{align*}
	Using the rotation matrix $Q_\theta$, $x^l$ and $v^l$ can be expressed by 
	\begin{align} \label{x,v_l} 
		x^l = Q_{\theta}^{l-1} x^1, \quad v^l = Q_{\theta}^l v. 
	\end{align}
	By the chain rule, 
	\begin{align*}
		\frac{\partial{(X(0;t,x,v),V(0;t,x,v)})}{\partial{(x,v)}}= \frac{\partial{(X(0;t,x,v),V(0;t,x,v))}}{\partial{(t^l,x^l,v^l)}} \frac{\partial(t^l,x^l,v^l)}{\partial(x,v)}= \begin{bmatrix} -v^l & I & -t^l I \\ \textbf{0}_{2\times 1} & \textbf{0}_{2 \times 2} & I\end{bmatrix} \begin{bmatrix} \nabla_x t^l & \nabla_v t^l \\ \nabla_x x^l & \nabla_v x^l \\ \nabla_x v^l & \nabla_v v^l \end{bmatrix},
	\end{align*}
	where $I$ is a $2\times 2$ identity matrix. For the derivative of $X(0;t,x,v),V(0;t,x,v)$, it is necessary to find the derivative of $t^l,x^l,$ and $v^l$. Using the expression \eqref{x,v_l} and \eqref{d_matrix} in Lemma \ref{matrix notation}, we derive 
	\begin{align*}
		\nabla_x x^l &= \nabla_x \left[ Q_\theta ^{l-1} x^1 \right]=Q_{\theta}^{l-1} \nabla_x x^1 -(l-1)\left(\begin{bmatrix} \sin(l-1)\theta & \cos(l-1)\theta \\ - \cos (l-1)\theta & \sin(l-1)\theta \end{bmatrix} x^1\right) \otimes \nabla_x \theta\\
		&\hspace{.3cm} \qquad \qquad \qquad =Q_{\theta}^{l-1}\left( I - \frac{v \otimes n(x^1)}{v\cdot n(x^1)} \right)  -(l-1)\left(\begin{bmatrix} \sin(l-1)\theta & \cos(l-1)\theta \\ - \cos (l-1)\theta & \sin(l-1)\theta \end{bmatrix} x^1\right) \otimes \nabla_x \theta,\\
		\nabla_v x^l &=  \nabla_v \left[ Q_\theta ^{l-1} x^1 \right]=Q_{\theta}^{l-1} \nabla_v x^1 -(l-1)\left(\begin{bmatrix} \sin(l-1)\theta & \cos(l-1)\theta \\ - \cos (l-1)\theta & \sin(l-1)\theta \end{bmatrix} x^1\right) \otimes \nabla_v \theta\\
		&\hspace{.3cm} \qquad \qquad \qquad =-\tb Q_{\theta}^{l-1}\left( I - \frac{v \otimes n(x^1)}{v\cdot n(x^1)} \right)  -(l-1)\left(\begin{bmatrix} \sin(l-1)\theta & \cos(l-1)\theta \\ - \cos (l-1)\theta & \sin(l-1)\theta \end{bmatrix} x^1\right) \otimes \nabla_v \theta,\\
		\nabla_x v^l &= \nabla_x \left[ Q_\theta^l v \right]= -l \left( \begin{bmatrix} \sin l \theta & \cos l \theta \\ -\cos l \theta & \sin l \theta \end{bmatrix} v\right) \otimes \nabla_x \theta, \\
		\nabla_v v^l &= \nabla_v \left [ Q_\theta^l v \right]= Q_{\theta} ^l  -l \left( \begin{bmatrix} \sin l \theta & \cos l \theta \\ -\cos l \theta & \sin l \theta \end{bmatrix} v\right) \otimes \nabla_v \theta.
	\end{align*}
	For the derivative of $t^l$, we rewrite $t^l$ as
	\begin{align*}
		t^l = t-(t-t^1) - \sum_{k=1}^{l-1}(t^k-t^{k+1})= t- \tb -\sum_{k=1}^{l-1} (t^k-t^{k+1}).
	\end{align*}
	Since $\displaystyle t^k-t^{k+1}=\frac{2\sin\frac{\theta}{2}}{\vert v \vert}$ for all $k=1,2,\dots,l-1$, it holds that
	\begin{align} \label{t_ell}
		t^l = t-\tb- \frac{2(l-1)\sin \frac{\theta}{2}}{\vert v \vert}, \quad l-1 = \frac{\vert v \vert}{ 2 \sin \frac{\theta}{2}} \left(t-\tb -t^l\right).
	\end{align}
	Taking the derivative of $t^l$ with respect to $x,v$ 
	\begin{align} \label{nabla x,v t_ell}
		\begin{split}
			\nabla_x t^l &= -\nabla_x \tb -\frac{(l-1) \cos \frac{\theta}{2}}{\vert v \vert} \nabla_x \theta=-\frac{n(x^1)}{v \cdot n(x^1)}-\frac{(l-1) \cos \frac{\theta}{2}}{\vert v \vert} \nabla_x \theta=\frac{1}{\vert v \vert \sin\frac{\theta}{2}} n(x^1) -\frac{(l-1) \cos \frac{\theta}{2}}{\vert v \vert} \nabla_x \theta, \\
			\nabla_v t^l &= -\nabla_v \tb + \frac{2(l-1)\sin \frac{\theta}{2}}{\vert v \vert^3} v -\frac{(l-1) \cos \frac{\theta}{2}}{\vert v \vert} \nabla_v \theta=\tb \frac{n(x^1)}{v \cdot n(x^1)} +\frac{2(l-1)\sin \frac{\theta}{2}}{\vert v \vert^3} v -\frac{(l-1) \cos \frac{\theta}{2}}{\vert v \vert} \nabla_v \theta\\
			&\hspace{7.1cm}=-\frac{\tb}{\vert v \vert \sin \frac{\theta}{2}} n(x^1) +\frac{2(l-1)\sin \frac{\theta}{2}}{\vert v \vert^3} v -\frac{(l-1) \cos \frac{\theta}{2}}{\vert v \vert} \nabla_v \theta.
		\end{split}
	\end{align}
	Also note that, from \eqref{v_n} and \eqref{t_ell}, we have
	\begin{equation} \label{cancel}
	\begin{split}
		&-(l-1)Q_{(l-1)\theta-\frac{\pi}{2}} \left(x^1 \otimes \nabla  \theta\right)  +\frac{(l-1)\cos\frac{\theta}{2}}{\vert v \vert} Q_{\theta}^l \left(v \otimes \nabla  \theta\right) \\
		&= -(l-1)\left(Q_{(l-1)\theta -\frac{\pi}{2}} +\cos \frac{\theta}{2} Q_{l\theta}Q_{\frac{\pi}{2}-\frac{\theta}{2}}\right) \left(n(x^1) \otimes \nabla  \theta \right) \\
		&= - (l-1) Q_{(l-1)\theta -\frac{\pi}{2}} \begin{bmatrix} \sin^2 \frac{\theta}{2} & \sin \frac{\theta}{2} \cos \frac{\theta}{2} \\ -\sin \frac{\theta}{2} \cos \frac{\theta}{2} & \sin^2 \frac{\theta}{2} \end{bmatrix} \left(n(x^1) \otimes \nabla \theta \right) \\
		&= -\frac{\vert v \vert(t-\tb-t^l)}{2}Q_{(l-\frac{1}{2})\theta -\pi} \left(n(x^1) \otimes \nabla \theta \right).
	\end{split}
	\end{equation}
	Hence, using \eqref{cancel} and $x^{1}=n(x^{1})$,
	\begin{align*}
		\begin{split}
			\nabla_x X(0;t,x,v) &= \nabla_x x^l - t^l \nabla_x v^l -v^l \otimes \nabla_x t^l\\
			&= Q_{\theta}^{l-1}\left( I - \frac{v \otimes n(x^1)}{v\cdot n(x^1)} \right)  -(l-1)\left(\begin{bmatrix} \sin(l-1)\theta & \cos(l-1)\theta \\ - \cos (l-1)\theta & \sin(l-1)\theta \end{bmatrix} x^1\right) \otimes \nabla_x \theta \\
			& \quad +t^l l  \left( \begin{bmatrix} \sin l \theta & \cos l \theta \\ -\cos l \theta & \sin l \theta \end{bmatrix} v\right) \otimes \nabla_x \theta-\frac{1}{\vert v \vert \sin \frac{\theta}{2}}Q_\theta^l \left(v \otimes n(x^1)\right) +\frac{(l-1)\cos\frac{\theta}{2}}{\vert v \vert} Q_{\theta}^l \left(v \otimes \nabla_x \theta\right) \\
			&=Q_{\theta}^{l-1}\left( I - \frac{v \otimes n(x^1)}{v\cdot n(x^1)} \right) +t^l l Q_{l\theta-\frac{\pi}{2}} \left( v \otimes \nabla_x \theta \right) - \frac{1}{\vert v \vert \sin \frac{\theta}{2}}Q_\theta^l  \left(v \otimes n(x^1)\right) \\
			&\quad -(l-1)Q_{(l-1)\theta-\frac{\pi}{2}} \left(x^1 \otimes \nabla_x \theta\right)  +\frac{(l-1)\cos\frac{\theta}{2}}{\vert v \vert} Q_{\theta}^l \left(v \otimes \nabla_x \theta\right) \\
			&=Q_{\theta}^{l-1}\left( I - \frac{v \otimes n(x^1)}{v\cdot n(x^1)} \right) +t^l l Q_{l\theta-\frac{\pi}{2}} \left( v \otimes \nabla_x \theta \right) - \frac{1}{\vert v \vert \sin \frac{\theta}{2}}Q_\theta^l  \left(v \otimes n(x^1)\right) \\
			&\quad -\frac{\vert v \vert(t-\tb-t^l)}{2}Q_{(l-\frac{1}{2})\theta -\pi} \left(n(x^1) \otimes \nabla_x \theta \right), \\ 
			\nabla_v X(0;t,x,v)&=\nabla_v x^l -t^l \nabla_v v^l-v^l \otimes \nabla_v t^l\\
			&=-\tb Q_{\theta}^{l-1}\left( I - \frac{v \otimes n(x^1)}{v\cdot n(x^1)} \right)  -(l-1)\left(\begin{bmatrix} \sin(l-1)\theta & \cos(l-1)\theta \\ - \cos (l-1)\theta & \sin(l-1)\theta \end{bmatrix} x^1\right) \otimes \nabla_v \theta  \\
			&\quad   -t^l Q_\theta ^l+t^l l  \left( \begin{bmatrix} \sin l \theta & \cos l \theta \\ -\cos l \theta & \sin l \theta \end{bmatrix} v\right) \otimes \nabla_v \theta\\
			&\quad +\frac{\tb}{\vert v \vert \sin \frac{\theta}{2}} Q_{\theta}^l \left(v \otimes n(x^1)\right) -   \frac{2(l-1)\sin\frac{\theta}{2}}{\vert v \vert^3} Q_\theta^l \left(v \otimes v\right)+ \frac{(l-1)\cos \frac{\theta}{2}}{\vert v \vert} Q_\theta^l\left( v \otimes \nabla_v \theta \right),\\ 
			&= -\tb Q_{\theta}^{l-1}\left( I - \frac{v \otimes n(x^1)}{v\cdot n(x^1)} \right)  -t^l Q_\theta ^l+t^l l Q_{l\theta-\frac{\pi}{2}} \left( v \otimes \nabla_v \theta \right)
			+\frac{\tb}{\vert v \vert \sin \frac{\theta}{2}} Q_{\theta}^l \left(v \otimes n(x^1)\right)\\
			&\quad  - \frac{2(l-1)\sin\frac{\theta}{2}}{\vert v \vert^3} Q_\theta^l \left(v \otimes v\right)   -(l-1)Q_{(l-1)\theta-\frac{\pi}{2}}\left(x^1 \otimes \nabla_v \theta \right) + \frac{(l-1)\cos \frac{\theta}{2}}{\vert v \vert} Q_\theta^l\left( v \otimes \nabla_v \theta \right)\\
			&=-\tb Q_{\theta}^{l-1}\left( I - \frac{v \otimes n(x^1)}{v\cdot n(x^1)} \right) - t^l Q_\theta ^l+t^l l Q_{l\theta-\frac{\pi}{2}} \left( v \otimes \nabla_v \theta \right)+\frac{\tb}{\vert v \vert \sin \frac{\theta}{2}} Q_{\theta}^l \left(v \otimes n(x^1)\right) \\
			&\quad - \frac{2(l-1)\sin\frac{\theta}{2}}{\vert v \vert^3} Q_\theta^l \left(v \otimes v\right) 
			 -\frac{\vert v \vert(t-\tb-t^l)}{2}Q_{(l-\frac{1}{2})\theta - \pi}\left(n(x^1) \otimes \nabla_v \theta\right) ,\\
			\nabla_x V(0;t,x,v)&=\nabla_x v^l =  -l \left( \begin{bmatrix} \sin l \theta & \cos l \theta \\ -\cos l \theta & \sin l \theta \end{bmatrix} v\right) \otimes \nabla_x \theta=-lQ_{l\theta-\frac{\pi}{2}} \left( v\otimes \nabla_x \theta \right),\\
			\nabla_v V(0;t,x,v)&= \nabla_v v^l = Q_{\theta} ^l  -l \left( \begin{bmatrix} \sin l \theta & \cos l \theta \\ -\cos l \theta & \sin l \theta \end{bmatrix} v\right) \otimes \nabla_v \theta=Q_{\theta}^l -l Q_{l \theta-\frac{\pi}{2}} \left( v\otimes \nabla_v \theta \right).
		\end{split}
	\end{align*}
	\end{proof}
\begin{lemma} 
The exit backward time $\tb$ and the $l$-th bouncing backward time $t^l$ are defined in Definition \ref{notation}. Then, it holds that 
\begin{align}\label{tb esti}
	\tb \leq \frac{2\sin \frac{\theta}{2}}{ \vert v \vert}, \quad t^l \leq \frac{2\sin \frac{\theta}{2}}{ \vert v \vert}.
\end{align}
	
\end{lemma}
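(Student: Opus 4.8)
The claim is a geometric estimate on a disk of radius $1$, so the plan is to work directly with the chord geometry of the billiard trajectory rather than with the derivative formulas. First I would set up the basic picture: the backward trajectory from $(x,v)$ first hits the boundary at $x^1=\xb$ after time $\tb$, and thereafter performs a polygonal path inscribed in the unit circle, each chord subtending the same central angle. In Lemma~\ref{der theta} the angle $\theta$ is defined by $\sin(\theta/2) = -\,n(x^1)\cdot v/|v|$; geometrically $\theta$ is the central angle between two consecutive bounce points $x^k, x^{k+1}$, and each such chord has length $2\sin(\theta/2)$. Since the particle travels that chord at speed $|v|$, the time between consecutive bounces is exactly $(t^k - t^{k+1}) = 2\sin(\theta/2)/|v|$, a fact already recorded right before \eqref{t_ell}.

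The estimate for $t^l$ is then immediate: the $l$-th backward bounce time $t^l$ is nonnegative by the choice of $l$ (we have $t^{l+1} < 0 \le t^l$), and $t^l - t^{l+1} = 2\sin(\theta/2)/|v|$, so $t^l = t^{l+1} + 2\sin(\theta/2)/|v| \le 2\sin(\theta/2)/|v|$ because $t^{l+1}<0$. For $\tb$, the point is that the segment from $x = x^0$ back to $x^1$ is a \emph{sub-chord} of the circle: $x$ lies in the closed disk, $x^1$ lies on the circle, and the segment $[x^1, x]$ extended forward in the direction $-v$ would exit the disk, so the full chord through $x^1$ in direction $v$ has length at least $|x - x^1| = \tb|v|$. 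That full chord is itself of the form $2\sin(\text{some half-angle})$, and one checks that the relevant half-angle is precisely $\theta/2$ because the chord through $x^1$ with direction $v$ makes the same angle with the inward normal at $x^1$; hence $\tb|v| \le 2\sin(\theta/2)$, i.e. $\tb \le 2\sin(\theta/2)/|v|$.

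Concretely, I would make the chord-length computation explicit: for a chord of the unit circle with one endpoint $x^1$ and direction $\hat v = v/|v|$, the second intersection with the circle is at $x^1 + s\hat v$ where $s$ solves $|x^1 + s\hat v|^2 = 1$, giving $s = -2\,x^1\cdot \hat v = -2\,n(x^1)\cdot\hat v = 2\sin(\theta/2)$ using $|x^1|=1$ and the definition of $\theta$. Since $x = x^1 + \tb v = x^1 + (\tb|v|)\hat v$ lies in the closed disk, we must have $0 \le \tb|v| \le s = 2\sin(\theta/2)$, which is the first inequality. The same $s$-computation applied at $x^k$ with direction $v^{k-1}$ (all bounce angles equal by the reflection law and rotational symmetry) gives the inter-bounce time, completing the argument for $t^l$.

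\textbf{Main obstacle.} The only delicate point is the degenerate/grazing regime and the bookkeeping of which half-angle appears. One must be careful that $\theta$ as defined in Lemma~\ref{der theta} via $\sin(\theta/2) = -n(x^1)\cdot v/|v|$ is the same angle that governs \emph{both} the first sub-chord $[x^1,x]$ and all subsequent full chords; this relies on the specular reflection law preserving the angle between the velocity and the normal at each bounce, together with the rotational symmetry of the disk that was used to write $x^k = Q_\theta^{k-1}x^1$. I would verify this equality of angles once and then the two bounds drop out. The near-grazing case $n(x^1)\cdot v \to 0$ (i.e. $\theta \to 0$) is not actually an obstruction for these inequalities — both sides tend to $0$ — but it does explain why $\tb$ and $t^l$ can be arbitrarily small, which is consistent with the singular denominators $v\cdot n(\xb)$ appearing elsewhere; I would simply note that the inequalities are stated for $n(x^1)\cdot v\neq 0$ in any case.
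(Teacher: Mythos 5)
Your proposal is correct and follows essentially the same route as the paper: both bounds come from observing that each backward segment of the trajectory is contained in a chord of the unit circle of length $2\sin\frac{\theta}{2}$ traversed at speed $|v|$, which is exactly the paper's (much terser) argument via $\tb=|x-x^1|/|v|$, $t^l=|x^l-X(0;t,x,v)|/|v^l|$ and $|x-x^1|,\,|x^l-X(0;t,x,v)|\le 2\sin\frac{\theta}{2}$. Your explicit computation of the chord length from $|x^1+s\hat v|^2=1$ and the alternative bookkeeping $t^l=t^{l+1}+2\sin\frac{\theta}{2}/|v|<2\sin\frac{\theta}{2}/|v|$ merely fill in details the paper leaves implicit.
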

\begin{proof}
	Note that 
	\begin{align*}
		\tb = t-t^1= \frac{\vert x - x^1\vert}{ \vert v \vert }, \quad t^l = \frac{\vert x^l - X(0;t,x,v) \vert}{\vert v^l \vert}.
	\end{align*}
	Whenever $\theta$ is the angle at which $v$ is rotated to $v^1$, one obtains that 
	\begin{align*}
	\vert x-x^1 \vert\leq 2 \sin \frac{\theta}{2}, \quad  \vert x^l - X(0;t,x,v)\vert \leq 2 \sin \frac{\theta}{2}.
	\end{align*}
	From the above inequalities and $\vert v^l \vert = \vert v \vert $, we obtain
	\begin{align*}
		\tb \leq \frac{2\sin \frac{\theta}{2}}{ \vert v \vert}, \quad t^l \leq \frac{2\sin \frac{\theta}{2}}{ \vert v \vert}.
	\end{align*}
\end{proof}
\begin{lemma} \label{est der X,V}
	Under the same assumption in Lemma \ref{X,V}, we have estimates of derivatives for the characteristics $X(0;t,x,v)$ and $V(0;t,x,v)$
	\begin{align*}
		\begin{split}
			\vert \nabla_x X(0;t,x,v) \vert &\lesssim \frac{\vert v \vert} { \vert v \cdot n(\xb) \vert}\left( 1 + \vert v \vert t\right),\\
			\vert \nabla_v X(0;t,x,v) \vert &\lesssim \frac{1} { \vert v  \vert}\left( 1 + \vert v \vert t \right), \\ 
			\vert \nabla_x V(0;t,x,v) \vert & \lesssim \frac{\vert v \vert^3}{ \vert v \cdot n(\xb) \vert^2} \left( 1+ \vert v \vert t \right), \\
			\vert \nabla_v V(0;t,x,v) \vert & \lesssim \frac{\vert v \vert}{ \vert v \cdot n(\xb) \vert} \left( 1+ \vert v \vert t \right),
		\end{split}
	\end{align*}
	where $n(\xb)$ is outward unit normal vector at $\xb = x-\tb v \in \p\O$.  \\
\end{lemma}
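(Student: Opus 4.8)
The plan is to feed the explicit formulas of Lemma~\ref{X,V} into the triangle inequality, using that every rotation matrix $Q_{\bullet}$ has operator norm $1$, and then to convert the geometric quantities $\sin\frac{\theta}{2}$, $\tb$, $t^{l}$ and the bouncing number $l$ into powers of $|v|$ and $|v\cdot n(\xb)|$. First I would record the three elementary facts that drive the whole computation. From \eqref{theta} together with $x^{1}=\xb\in\p\O$ and $(x^{1},v)\in\gamma_{-}$ one has $\sin\frac{\theta}{2}=-\,n(x^{1})\cdot\frac{v}{|v|}=\frac{|v\cdot n(\xb)|}{|v|}\in(0,1]$, so $\frac{1}{\sin\frac{\theta}{2}}=\frac{|v|}{|v\cdot n(\xb)|}$ and in particular $|v\cdot n(\xb)|\le|v|$. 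Next, Lemma~\ref{der theta} with $|n(x^{1})|=1$ and \eqref{tb esti} gives $|\nabla_{x}\theta|\lesssim\frac{1}{\sin\frac{\theta}{2}}=\frac{|v|}{|v\cdot n(\xb)|}$ and $|\nabla_{v}\theta|\lesssim\frac{\tb}{\sin\frac{\theta}{2}}+\frac{1}{|v|}\lesssim\frac{1}{|v|}$, while $\tb\,|v|\lesssim\sin\frac{\theta}{2}$ and $t^{l}|v|\lesssim\sin\frac{\theta}{2}$. Finally, \eqref{t_ell} and $0\le t-\tb-t^{l}\le t$ yield $l-1=\frac{|v|}{2\sin\frac{\theta}{2}}(t-\tb-t^{l})\le\frac{|v|t}{2\sin\frac{\theta}{2}}$, hence $l\lesssim1+\frac{|v|^{2}t}{|v\cdot n(\xb)|}$.

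With these in hand I would bound each line of \eqref{n_x,v} termwise. For $\nabla_{v}V=Q_{\theta}^{l}-lQ_{l\theta-\frac{\pi}{2}}(v\otimes\nabla_{v}\theta)$ one gets $|\nabla_{v}V|\lesssim 1+l|v|\cdot\frac{1}{|v|}=1+l\lesssim\frac{|v|}{|v\cdot n(\xb)|}(1+|v|t)$ after using $|v\cdot n(\xb)|\le|v|$. For $\nabla_{x}V=-lQ_{l\theta-\frac{\pi}{2}}(v\otimes\nabla_{x}\theta)$ one has $|\nabla_{x}V|\lesssim l|v|\cdot\frac{|v|}{|v\cdot n(\xb)|}\lesssim\frac{|v|^{2}}{|v\cdot n(\xb)|}+\frac{|v|^{4}t}{|v\cdot n(\xb)|^{2}}\lesssim\frac{|v|^{3}}{|v\cdot n(\xb)|^{2}}(1+|v|t)$. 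For $\nabla_{x}X$ the four summands are controlled respectively by $\frac{1}{\sin\frac{\theta}{2}}$, by $t^{l}l|v|\,|\nabla_{x}\theta|\lesssim l$ (one factor $\frac{1}{\sin\frac{\theta}{2}}$ absorbed by $t^{l}|v|\lesssim\sin\frac{\theta}{2}$), by $\frac{1}{\sin\frac{\theta}{2}}$ again, and by $|v|t\,|\nabla_{x}\theta|\lesssim\frac{|v|^{2}t}{|v\cdot n(\xb)|}$, whose sum is $\lesssim\frac{|v|}{|v\cdot n(\xb)|}(1+|v|t)$. For $\nabla_{v}X$ the six summands are, in order, $\tb\cdot\frac{1}{\sin\frac{\theta}{2}}\lesssim\frac{1}{|v|}$, $t^{l}\lesssim\frac{1}{|v|}$, $t^{l}l|v|\,|\nabla_{v}\theta|\lesssim t^{l}l\lesssim\frac{1}{|v|}+t$, $\frac{\tb}{|v|\sin\frac{\theta}{2}}\cdot|v|=\frac{\tb}{\sin\frac{\theta}{2}}\lesssim\frac{1}{|v|}$, $\frac{(l-1)\sin\frac{\theta}{2}}{|v|}\lesssim t$, and $|v|t\,|\nabla_{v}\theta|\lesssim t$, summing to $\lesssim\frac{1}{|v|}(1+|v|t)$.

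I expect the main obstacle to be the bookkeeping for the bouncing number $l$ and the accompanying singular factors: one must verify that every appearance of the dangerous quantity $\frac{1}{\sin\frac{\theta}{2}}=\frac{|v|}{|v\cdot n(\xb)|}$ is either genuinely present to at most the second power (as in $\nabla_{x}V$, where it is multiplied by $l$, which itself carries one further power of $\frac{1}{\sin\frac{\theta}{2}}$) or else is damped; in $\nabla_{v}X$ the first and fourth lines carry $\frac{1}{\sin\frac{\theta}{2}}$ but are always multiplied by $\tb$ or by the combination $(l-1)\sin\frac{\theta}{2}$, each of which is $O(\sin\frac{\theta}{2}/|v|)$ or better, producing exactly the non-singular estimate $\frac{1}{|v|}(1+|v|t)$ — this is the cancellation found in \cite{GKTT2017}. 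Once the powers of $\frac{1}{\sin\frac{\theta}{2}}$ are tracked correctly, the rest is a routine application of $|Q_{\bullet}|=1$, the triangle inequality, the three displayed facts, and the inequality $|v\cdot n(\xb)|\le|v|$ used to absorb all lower-order terms.
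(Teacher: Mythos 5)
Your proposal is correct and follows essentially the same route as the paper: substitute the explicit formulas of Lemma \ref{X,V}, bound rotations by $1$ and tensor products by products of norms, and use $\sin\frac{\theta}{2}=\frac{|v\cdot n(\xb)|}{|v|}$ together with $\tb,t^{l}\le\frac{2\sin\frac{\theta}{2}}{|v|}$, the bounds $|\nabla_x\theta|\lesssim\frac{|v|}{|v\cdot n(\xb)|}$, $|\nabla_v\theta|\lesssim\frac{1}{|v|}$, and $t^{l}l\le t+\frac{2}{|v|}$ to track the powers of $\frac{1}{\sin\frac{\theta}{2}}$ term by term. Your observation that the potentially singular terms in $\nabla_vX$ are damped by the prefactors $\tb$ and $(l-1)\sin\frac{\theta}{2}$ is exactly the cancellation the paper exploits.
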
	
\begin{remark}
	First-order derivatives of characteristics $(X,V)$ for general 3D convex domain were obtained in \cite{GKTT2017}. Lemma \ref{est der X,V} is simple version in 2D disk and its singular orders coincide with the results of \cite{GKTT2017}.  \\
\end{remark}
\begin{proof}
	By \eqref{n_x,v} in Lemma \ref{X,V}, we have
	\begin{align*}
	\nabla_x X(0;t,x,v) &=  Q_{\theta}^{l-1}\left( I - \frac{v \otimes n(\xb)}{v\cdot n(\xb)} \right)-\frac{\vert v \vert(t-\tb-t^l)}{2}Q_{(l-\frac{1}{2})\theta -\pi} \left(n(\xb) \otimes \nabla_x \theta \right)\\
			&\quad +t^l l Q_{l\theta-\frac{\pi}{2}} \left( v \otimes \nabla_x \theta \right) - \frac{1}{\vert v \vert \sin \frac{\theta}{2}}Q_\theta^l  \left(v \otimes n(\xb)\right).
	\end{align*}
	We define a matrix norm by 
	\begin{equation*}
		\vert A \vert = \max _{i,j} a_{i,j},
	\end{equation*}
	where $a_{i,j}$ is the $(i,j)$ component of the matrix $A$. Then, we can easily check that
	\begin{equation*}
		\vert a \otimes b \vert \leq \vert a \vert \vert b \vert, 
	\end{equation*}
	for any $a,b \in \R^n$. To find upper bound of $\nabla_x X(0;t,x,v)$, we only need to consider $\nabla_x \theta$ and $t^l \times l$. By \eqref{d_theta},\eqref{t_ell}, and \eqref{tb esti}, 
	\begin{align} \label{e_1}
		\vert \nabla_x \theta \vert =\left \vert \frac{2}{\sin \frac{\theta}{2}} Q_{-\frac{\theta}{2}} n(\xb)  \right \vert \leq \frac{2 \vert v \vert}{\vert v \cdot n(\xb) \vert}, \quad t^l \times l \leq \frac{2 \sin\frac{\theta}{2}}{\vert v \vert}\times \left(\frac{\vert v \vert}{2\sin \frac{\theta}{2}}t +1 \right) \leq t+\frac{2}{\vert v \vert}. 
	\end{align}
	Using the above inequalities, we derive that 
	\begin{align*}
		\vert \nabla_x X(0;t,x,v) \vert &\leq 1+ \frac{ \vert v \vert}{\vert v \cdot n(\xb) \vert} + \frac{ \vert v \vert t}{2} \vert \nabla_x \theta \vert + t^l l \vert v \vert \vert \nabla_x \theta \vert + \frac{1}{ \vert v \cdot n(\xb) \vert} \vert v \vert \\
		&\leq 1+\frac{ \vert v \vert}{ \vert v \cdot n(\xb) \vert } + \frac{ \vert v \vert^2}{ \vert v \cdot n(\xb) \vert} t + \frac{ 2\vert v \vert^2}{\vert v \cdot n(\xb) \vert } \left( t + \frac{2}{\vert v \vert} \right) +\frac{\vert v \vert}{ \vert v \cdot n(\xb) \vert}\\
		&\lesssim \frac{\vert v \vert}{\vert v \cdot n(\xb) \vert} \left( 1+ \vert v \vert t\right).
	\end{align*} 
\end{proof}
Recall the derivative $\nabla_v X(0;t,x,v)$ in Lemma \ref{X,V}. 
\begin{align*}
\nabla_v X(0;t,x,v)&=-\tb Q_{\theta}^{l-1}\left( I - \frac{v \otimes n(\xb)}{v\cdot n(\xb)} \right)-\frac{\vert v \vert(t-\tb-t^l)}{2}Q_{(l-\frac{1}{2})\theta - \pi}\left(n(\xb) \otimes \nabla_v \theta\right) \\ 
			&\quad -t^l Q_\theta ^l+t^l l Q_{l\theta-\frac{\pi}{2}} \left( v \otimes \nabla_v \theta \right)+\frac{\tb}{\vert v \vert \sin \frac{\theta}{2}} Q_{\theta}^l \left(v \otimes n(\xb)\right)-   \frac{2(l-1)\sin\frac{\theta}{2}}{\vert v \vert^3} Q_\theta^l \left(v \otimes v\right).
\end{align*}
Similarly, to estimate $\nabla_v X(0;t,x,v)$, we need to estimate $\nabla_v \theta$. From \eqref{d_theta} and \eqref{tb esti}, we directly compute
\begin{align} \label{e_2} 
	\vert \nabla_v \theta \vert = 2 \left \vert \left( \frac{\tb}{\sin \frac{\theta}{2}}- \frac{1}{\vert v \vert} \right) Q_{-\frac{\theta}{2}}n(\xb) \right \vert \leq \frac{6}{\vert v \vert}.
\end{align}
Thus, 
\begin{align*}
	\vert \nabla_v X(0;t,x,v) \vert &\leq \tb \left( 1+ \frac{\vert v \vert}{ \vert v \cdot n(\xb) \vert} \right) +\frac{\vert v \vert t}{2} \vert \nabla_v \theta \vert + t^l + \left( t^l \times l \right) \vert v \vert \vert \nabla_v \theta \vert + \frac{ \tb}{ \vert v \vert \sin \frac{\theta}{2}} \vert v \vert + \frac{2(l-1) \sin\frac{\theta}{2}}{\vert v \vert^3} \vert v \vert^2 \\
	&\leq \frac{2\sin \frac{\theta}{2}}{\vert v \vert} \left( 1+ \frac{\vert v \vert}{ \vert v \cdot n(\xb) \vert} \right)+\frac{\vert v \vert t}{2} \times \frac{6}{\vert v \vert}+\frac{2\sin \frac{\theta}{2}}{\vert v \vert} + 6\left(t +\frac{2}{\vert v \vert} \right)\\
	&\quad + \frac{2\sin \frac{\theta}{2}}{\vert v \vert}  \frac{1}{\vert v \vert \sin \frac{\theta}{2}} \vert v \vert + \frac{(t-\tb-t^l)}{\vert v \vert^2} \vert v \vert^2\\
	&\lesssim \frac{1}{\vert v \vert} \left (1+ \vert v \vert t \right),
\end{align*}
where we used \eqref{tb esti} and \eqref{e_1}. For $\nabla_{x,v} V(0;t,x,v)$, using \eqref{n_x,v}, \eqref{t_ell}, \eqref{e_1}, and \eqref{e_2} gives
\begin{align*}
	\vert \nabla_x V(0;t,x,v) \vert &= \left \vert -l Q_{l\theta-\frac{\pi}{2}} \left( v \otimes \nabla_x \theta \right) \right \vert  \leq \left(\frac{\vert v \vert}{\vert 2\sin \frac{\theta}{2}\vert} t +1\right) \vert v \vert \vert \nabla_x \theta \vert \lesssim \frac{ \vert v \vert^3}{\vert v \cdot n(\xb) \vert^2}(1+ \vert v \vert t), \\
	\vert \nabla_v V(0;t,x,v) \vert&= \left \vert Q_{\theta}^l - lQ_{l\theta -\frac{\pi}{2}} (v \otimes \nabla_v \theta) \right \vert \leq 1+  \left(\frac{\vert v \vert}{\vert 2 \sin \frac{\theta}{2}\vert} t +1\right) \vert v \vert \vert \nabla_v \theta \vert\lesssim \frac{\vert v \vert}{\vert v \cdot n(\xb) \vert} (1+ \vert v \vert t).  
\end{align*}

\subsection{Second-order estimates of characteristics} 
\begin{lemma}
	$n(\xb)$ is outward unit normal vector at $\xb\in \partial \O$. For $(\xb,v) \notin \gamma_0$, it follows that 
	\begin{equation} \label{est der n}
		\vert \nabla_x [n(\xb)] \vert \lesssim \frac{\vert v \vert}{\vert v \cdot n(\xb) \vert}, \quad \vert \nabla_v [n(\xb)] \vert \lesssim \frac{1}{\vert v \vert}. 
	\end{equation}
\end{lemma}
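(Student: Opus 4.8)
The statement to prove is the estimate \eqref{est der n}, namely $\vert\nabla_x[n(\xb)]\vert \lesssim \frac{\vert v\vert}{\vert v\cdot n(\xb)\vert}$ and $\vert\nabla_v[n(\xb)]\vert\lesssim \frac{1}{\vert v\vert}$, where the matrix norm is the entrywise max-norm introduced in the proof of Lemma \ref{est der X,V}. The plan is to read these bounds off directly from the exact formulas for $\nabla_x[n(\xb)]$ and $\nabla_v[n(\xb)]$ already established in Lemma \ref{d_n}, equation \eqref{normal}. There is essentially no new analytic content; the work is purely a matter of estimating each factor in those closed-form expressions.

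First I would recall \eqref{normal}:
\[
\nabla_x[n(\xb)] = I - \frac{v\otimes n(\xb)}{v\cdot n(\xb)},\qquad
\nabla_v[n(\xb)] = -\tb\Big(I - \frac{v\otimes n(\xb)}{v\cdot n(\xb)}\Big).
\]
For the $x$-derivative, I bound $\vert I\vert \lesssim 1$ and, using the submultiplicativity $\vert a\otimes b\vert \le \vert a\vert\,\vert b\vert$ of the max-norm (as used in the proof of Lemma \ref{est der X,V}), I estimate $\big\vert \frac{v\otimes n(\xb)}{v\cdot n(\xb)}\big\vert \le \frac{\vert v\vert\,\vert n(\xb)\vert}{\vert v\cdot n(\xb)\vert} = \frac{\vert v\vert}{\vert v\cdot n(\xb)\vert}$ since $\vert n(\xb)\vert = 1$. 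Because $(\xb,v)\notin\gamma_0$ we have $v\cdot n(\xb)\neq 0$, so this is finite, and moreover $\frac{\vert v\vert}{\vert v\cdot n(\xb)\vert}\ge 1$ by Cauchy--Schwarz, so the constant term $\vert I\vert$ is absorbed. Hence $\vert\nabla_x[n(\xb)]\vert \lesssim \frac{\vert v\vert}{\vert v\cdot n(\xb)\vert}$.

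For the $v$-derivative, the same reasoning gives $\big\vert I - \frac{v\otimes n(\xb)}{v\cdot n(\xb)}\big\vert \lesssim \frac{\vert v\vert}{\vert v\cdot n(\xb)\vert}$, and then I multiply by $\tb$. The one extra ingredient is the bound $\tb \le \frac{2\sin\frac\theta2}{\vert v\vert}$ from \eqref{tb esti}, together with the geometric identity $\sin\frac\theta2 = -\,n(\xb)\cdot\frac{v}{\vert v\vert} = \frac{\vert v\cdot n(\xb)\vert}{\vert v\vert}$ from \eqref{theta} (valid since $\xb$ is the first backward bounce point so $v\cdot n(\xb)<0$). Combining, $\tb\cdot\frac{\vert v\vert}{\vert v\cdot n(\xb)\vert} \le \frac{2\sin\frac\theta2}{\vert v\vert}\cdot\frac{\vert v\vert}{\vert v\cdot n(\xb)\vert} = \frac{2}{\vert v\vert}\cdot\frac{\vert v\vert\sin\frac\theta2}{\vert v\cdot n(\xb)\vert} = \frac{2}{\vert v\vert}$, which yields $\vert\nabla_v[n(\xb)]\vert\lesssim\frac1{\vert v\vert}$.

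There is no serious obstacle here; the only thing to be careful about is bookkeeping with the two equivalent descriptions of the bounce angle — $\sin\frac\theta2$ appearing in \eqref{tb esti} versus the quantity $\vert v\cdot n(\xb)\vert/\vert v\vert$ appearing naturally when estimating $\frac{v\otimes n(\xb)}{v\cdot n(\xb)}$ — and making sure the cancellation $\tb\cdot\frac{\vert v\vert}{\vert v\cdot n(\xb)\vert}\lesssim \frac1{\vert v\vert}$ is displayed cleanly. If one prefers to avoid invoking \eqref{theta}, the alternative is simply to use $\sin\frac\theta2\le 1$ in \eqref{tb esti} to get $\tb\le\frac2{\vert v\vert}$, but that gives the weaker bound $\vert\nabla_v[n(\xb)]\vert\lesssim\frac1{\vert v\vert}\cdot\frac{\vert v\vert}{\vert v\cdot n(\xb)\vert}$; to recover the stated $\frac1{\vert v\vert}$ one genuinely needs the identity $\tb\le \frac{2\vert v\cdot n(\xb)\vert}{\vert v\vert^2}$, i.e. the sharp form of \eqref{tb esti} combined with \eqref{theta}. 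I would therefore present the argument using that sharp form.
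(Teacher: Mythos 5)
Your proposal is correct and follows essentially the same route as the paper: both read the bounds off the explicit formula \eqref{normal} and, for the $v$-derivative, use the sharp form of \eqref{tb esti} together with $\sin\frac{\theta}{2}=\vert v\cdot n(\xb)\vert/\vert v\vert$ to cancel the $\vert v\vert/\vert v\cdot n(\xb)\vert$ factor. The only cosmetic difference is that the paper writes $I-\frac{v\otimes n(\xb)}{v\cdot n(\xb)}$ out entrywise (where the identity cancels exactly into entries of the form $\pm v_in_j/(v\cdot n(\xb))$), whereas you bound the two terms separately and absorb $\vert I\vert$ using $\vert v\vert/\vert v\cdot n(\xb)\vert\ge 1$; both give the same estimate.
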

\begin{proof}
We denote the components of $v$ and $n(\xb)$ by $(v_1,v_2)$ and $(n_1,n_2)$. By \eqref{normal} in Lemma \ref{d_n} and \eqref{tb esti}, we have 
\begin{align*}
	\nabla_x[n(\xb)] &= I- \frac{v\otimes n(\xb)}{v\cdot n(\xb)} = \frac{1}{v\cdot n(\xb)}\begin{bmatrix}
	v_2n_2 & -v_1n_2 \\
	-v_2n_1 & v_1n_1
	\end{bmatrix},\\
	\nabla_v [n(\xb)]&=-\tb\left(I- \frac{v\otimes n(\xb)}{v\cdot n(\xb)}\right)=\frac{-\tb}{v\cdot n(\xb)} \begin{bmatrix}
	v_2n_2 & -v_1n_2 \\
	-v_2n_1 & v_1n_1
	\end{bmatrix},
\end{align*}
	which is further bounded by 
\begin{equation*}
	\vert \nabla_x [n(\xb)] \vert \lesssim \frac{\vert v \vert  }{\vert v \cdot n(\xb)\vert}, \quad \vert \nabla_v [n(\xb)] \vert \lesssim \frac{1}{\vert v \vert}. 
\end{equation*}
\end{proof}
\begin{lemma}
The exit backward time $\tb$ and the $l$-th bouncing backward time $t^l$ are defined in Definition \ref{notation}. Then, we have the following estimates 
\begin{equation} \label{est der t_ell}
	\begin{split}
	&\vert \nabla_x t^1 \vert \lesssim \frac{1}{\vert v \vert \vert \sin\frac{\theta}{2} \vert}, \quad \vert \nabla_v t^1 \vert \lesssim \frac{1}{\vert v \vert^2},\\
	&\vert \nabla_x t^l \vert \lesssim \frac{1}{\vert v \vert \sin^2\frac{\theta}{2}}(1+\vert v\vert t), \quad \vert \nabla_v t^l \vert \lesssim \frac{1}{\vert v \vert^2 \vert \sin \frac{\theta}{2} \vert }(1+\vert v \vert t),
	\end{split}
\end{equation}
whenever $v\cdot n(\xb) \neq 0$. 
\end{lemma}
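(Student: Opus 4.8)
The plan is to differentiate the closed-form expressions for $\tb$ and $t^l$ that are already at our disposal, and then to bound each resulting term using the $\theta$-derivative estimates obtained in the proof of Lemma \ref{est der X,V}.

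\textbf{Estimates for $t^1$.} Since $t^1 = t - \tb(x,v)$ with $t$ a fixed parameter, we have $\nabla_x t^1 = -\nabla_x \tb$ and $\nabla_v t^1 = -\nabla_v \tb$, and Lemma \ref{nabla xv b} gives $\nabla_x t^1 = -n(\xb)/(v\cdot n(\xb))$ and $\nabla_v t^1 = \tb\, n(\xb)/(v\cdot n(\xb))$. Using $|n(\xb)| = 1$ together with the identity $|v\cdot n(\xb)| = |v|\,|\sin\frac{\theta}{2}|$ that follows from \eqref{theta} (recall $\xb = x^1$ and $n(\xb) = \xb$), we immediately get $|\nabla_x t^1| = 1/(|v|\,|\sin\frac{\theta}{2}|)$; combining this with the elementary bound $\tb \le 2\sin\frac{\theta}{2}/|v|$ of \eqref{tb esti} yields $|\nabla_v t^1| \le 2/|v|^2$.

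\textbf{Estimates for $t^l$.} Here I would start from the explicit formulas \eqref{nabla x,v t_ell} for $\nabla_x t^l$ and $\nabla_v t^l$, each of which is a short sum of terms built from $n(x^1)$, $v$, $\nabla_x\theta$, $\nabla_v\theta$, and the factor $l-1$. The building blocks needed are: $|n(x^1)| = 1$; the bounds $|\nabla_x\theta| \le 2|v|/|v\cdot n(\xb)| = 2/\sin\frac{\theta}{2}$ and $|\nabla_v\theta| \le 6/|v|$ recorded in \eqref{e_1}--\eqref{e_2}; the counting estimate $l-1 = \frac{|v|}{2\sin\frac{\theta}{2}}(t-\tb-t^l) \le \frac{|v|t}{2\sin\frac{\theta}{2}}$ from \eqref{t_ell}; and once more $\tb \le 2\sin\frac{\theta}{2}/|v|$. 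Applying the triangle inequality term by term, the dominant contribution to $\nabla_x t^l$ is the $(l-1)$-term, bounded by $\frac{(l-1)|\cos\frac{\theta}{2}|}{|v|}|\nabla_x\theta| \lesssim t/\sin^2\frac{\theta}{2}$, while the remaining term is $1/(|v|\sin\frac{\theta}{2})$; since $\sin\frac{\theta}{2}\le 1$ these combine into $\frac{1}{|v|\sin^2\frac{\theta}{2}}(1+|v|t)$. Likewise the three terms of $\nabla_v t^l$ are controlled by $2/|v|^2$, $t/|v|$, and $\frac{(l-1)|\cos\frac{\theta}{2}|}{|v|}|\nabla_v\theta| \lesssim t/(|v|\sin\frac{\theta}{2})$ respectively, and after again using $\sin\frac{\theta}{2}\le 1$ their sum is $\lesssim \frac{1}{|v|^2\sin\frac{\theta}{2}}(1+|v|t)$. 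This gives exactly the four claimed estimates in \eqref{est der t_ell}.

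\textbf{Main obstacle.} There is no genuine analytic difficulty; the only point requiring care is the bookkeeping of the discrete bounce number $l$. One must exploit the exact identity $t^k - t^{k+1} = 2\sin\frac{\theta}{2}/|v|$ for $1\le k \le l-1$ — a consequence of the rotational symmetry of the disk, already used to derive \eqref{t_ell} — in order to replace $l$ by the continuous quantity $|v|t/\sin\frac{\theta}{2}$; once this is done, every estimate reduces to routine manipulations of powers of $\sin\frac{\theta}{2}$. The degenerate case $l=1$ (a single backward bounce before time $0$) needs no separate argument, since then all $(l-1)$-terms vanish and \eqref{nabla x,v t_ell} collapses to $\nabla_{x,v}t^l = \nabla_{x,v}t^1$, which was already handled above.
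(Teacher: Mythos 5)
Your argument is correct and follows essentially the same route as the paper: both obtain $\nabla_{x,v}t^1$ from Lemma \ref{nabla xv b}, start from the explicit formulas \eqref{nabla x,v t_ell} for $\nabla_{x,v}t^l$, and close the estimates using \eqref{tb esti}, the bounds \eqref{e_1}--\eqref{e_2} on $\nabla_{x,v}\theta$, and the bounce-count identity \eqref{t_ell} (equivalently \eqref{ell est}) to trade $l-1$ for $\frac{|v|}{\sin\frac{\theta}{2}}(1+|v|t)$-type quantities. The only cosmetic difference is that you invoke the identity $|v\cdot n(\xb)|=|v|\,|\sin\frac{\theta}{2}|$ explicitly, which the paper uses implicitly.
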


\begin{proof}
Since $t^1 = t-t_b$, it follows from Lemma \ref{nabla xv b} that 
\begin{align*}
	\nabla_x t^1 = -\nabla_x \tb = -\frac{n(\xb)}{v\cdot n(\xb)}, \quad \nabla_v t^1 = -\nabla_v \tb = \tb \frac{n(\xb)}{v\cdot n(\xb)}.
\end{align*}
Using the above and \eqref{tb esti} implies that
\begin{align*}
	\vert \nabla_x t^1 \vert \lesssim \frac{1}{\vert v \vert \left \vert \sin \frac{\theta}{2}\right \vert}, \quad \vert \nabla_v t^1 \vert \lesssim \frac{1}{\vert v \vert^2}. 
\end{align*}
By \eqref{nabla x,v t_ell} in the proof of Lemma \ref{X,V}, we have 
\begin{align*}
	\nabla_x t^l &= \frac{1}{\vert v \vert \sin \frac{\theta}{2}} n(\xb) - \frac{(l-1)\cos \frac{\theta}{2}}{\vert v \vert} \nabla_x \theta, \\
	\nabla_v t^l &= -\frac{\tb}{\vert v \vert \sin \frac{\theta}{2}} n(\xb) +\frac{2(l-1)\sin \frac{\theta}{2}}{\vert v \vert^3} v -\frac{(l-1)\cos \frac{\theta}{2}}{\vert v \vert} \nabla_v \theta. 
\end{align*}
By \eqref{t_ell} in the proof of Lemma \ref{X,V}, the bouncing number $l$ can be bounded by
\begin{equation} \label{ell est}
	l= 1+ \frac{\vert v \vert}{2\sin \frac{\theta}{2}} (t-\tb-t^l) \leq 1+ \frac{\vert v \vert }{2\left \vert \sin \frac{\theta}{2}\right \vert} t \lesssim \frac{1}{\left \vert \sin\frac{\theta}{2}\right \vert} (1+\vert v \vert t). 
\end{equation}
Then, from \eqref{tb esti}, \eqref{e_1},\eqref{e_2}, and \eqref{ell est}, one obtains that 
\begin{align*}
	\vert \nabla_x t^l \vert &\lesssim \frac{1}{\vert v \vert \vert \sin \frac{\theta}{2}\vert} + \frac{1}{\vert v \vert \sin^2 \frac{\theta}{2}}(1+\vert v \vert t)\lesssim \frac{1}{\vert v \vert \sin^2 \frac{\theta}{2}} (1+\vert v \vert t), \\
	\vert\nabla_v t^l \vert &\lesssim \frac{1}{\vert v \vert^2}+\frac{1}{\vert v \vert^2}(1+\vert v \vert t) + \frac{1}{\vert v \vert^2}(1+\vert v \vert t) \lesssim \frac{1}{\vert v \vert^2} (1+\vert v \vert t).
\end{align*}
\end{proof}

\begin{lemma} \label{2nd est der X,V}
The characteristics $X(0;t,x,v)$ and $V(0;t,x,v)$ are defined in Definition \ref{notation}. Under the same assumption in Lemma \ref{X,V}, we have estimates for the second derivatives of characteristics
\begin{equation*}
	\begin{split}
		&\vert \nabla_{xx} X(0;t,x,v) \vert \lesssim \frac{\vert v \vert^4}{\vert v \cdot n(\xb)\vert^4}(1+\vert v \vert^2 t^2), \quad \vert \nabla_{vx} X(0;t,x,v) \vert \lesssim \frac{\vert v \vert^2}{\vert v \cdot n(\xb) \vert^3}(1+\vert v \vert^2 t^2), \\
		&\vert \nabla_{xv} X(0;t,x,v) \vert \lesssim \frac{\vert v \vert^2}{\vert v \cdot n(\xb) \vert^3}(1+\vert v \vert^2 t^2), \quad \vert \nabla_{vv}X(0;t,x,v) \vert \lesssim \frac{1}{\vert v \cdot n(\xb) \vert^2}(1+\vert v \vert^2 t^2),\\ 
		&\vert \nabla_{xx} V(0;t,x,v) \vert \lesssim \frac{\vert v \vert^5}{\vert v \cdot n(\xb) \vert^4} (1+\vert v \vert^2t^2), \quad \vert \nabla_{vx} V(0;t,x,v) \vert \lesssim \frac{\vert v \vert^3}{\vert v \cdot n(\xb)\vert^3}(1+\vert v \vert^2 t^2),\\ 
		&\vert \nabla_{xv} V(0;t,x,v) \vert \lesssim \frac{\vert v \vert^3}{\vert v \cdot n(\xb)\vert^3}(1+\vert v \vert^2 t^2), \quad \vert \nabla_{vv} V(0;t,x,v) \vert \lesssim \frac{\vert v \vert}{\vert v \cdot n(\xb)\vert^2} (1+\vert v \vert^2 t^2), 		
	\end{split}
\end{equation*}
where $\vert \nabla_{xx,xv,vv} X(0;t,x,v) (or \; V(0;t,x,v))\vert $ is given by $\sup_{i,j} \vert \nabla_{ij}X(0;t,x,v) (or \; \nabla_{ij}V(0;t,x,v))\vert$ for $i,j \in \{x_1,x_2,v_1,v_2\}$. 
\end{lemma}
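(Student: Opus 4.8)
\textbf{Proof strategy for Lemma \ref{2nd est der X,V}.}
The plan is to differentiate the first-order formulas \eqref{n_x,v} in Lemma \ref{X,V} once more in $x$ and $v$, and then track only the most singular contributions rather than expanding every term. The starting point is that each of the four expressions in \eqref{n_x,v} is a finite sum of terms of the schematic form $(\text{scalar coefficient})\cdot Q_{(\ast)\theta + \ast}\,(a\otimes b)$, where $a,b\in\{v, n(\xb), \nabla_{x}\theta, \nabla_{v}\theta\}$ and the scalar coefficients are built from $\tb$, $t^{l}$, $l$, $|v|$, $\sin\frac{\theta}{2}$, and $\cos\frac{\theta}{2}$. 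Differentiating such a term produces (i) a derivative hitting the scalar coefficient, (ii) a derivative hitting the rotation angle, which brings down a factor $l\,\nabla_{x,v}\theta$ (or $(l-1)\nabla_{x,v}\theta$) times a rotation of $a\otimes b$, and (iii) a derivative hitting $a$ or $b$ inside the tensor product, which by Lemma \ref{matrix notation}(4) and Lemma \ref{d_n} gives either $\nabla_{x,v}[n(\xb)]$, $\nabla_{x,v}\theta$, or the already-known $\nabla_{x,v}[\nabla_{x,v}\theta]$. So the proof reduces to: first establish second-derivative bounds for the elementary quantities $\tb$, $t^l$, $l$, $n(\xb)$, and $\theta$, and then count powers.

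First I would record the elementary second-order estimates. From Lemma \ref{nabla xv b} and \eqref{normal}, differentiating $\nabla_{x}\tb = n(\xb)/(v\cdot n(\xb))$ and $\nabla_{v}\tb = -\tb\,n(\xb)/(v\cdot n(\xb))$ once more, together with \eqref{est der n} and \eqref{tb esti}, gives $|\nabla_{xx}\tb|\lesssim |v|^{2}/|v\cdot n(\xb)|^{2}$, $|\nabla_{xv}\tb|,|\nabla_{vx}\tb|\lesssim |v|/|v\cdot n(\xb)|^{2}$, $|\nabla_{vv}\tb|\lesssim 1/|v\cdot n(\xb)|^{2}$. Similarly, differentiating \eqref{d_theta} and using \eqref{e_1}, \eqref{e_2}, \eqref{est der n}, and \eqref{tb esti} yields $|\nabla_{xx}\theta|\lesssim |v|^{2}/|v\cdot n(\xb)|^{2}$, $|\nabla_{xv}\theta|,|\nabla_{vx}\theta|\lesssim |v|/|v\cdot n(\xb)|^{2}$, $|\nabla_{vv}\theta|\lesssim 1/|v\cdot n(\xb)|^{2}$ (here the worst term comes from $\nabla(1/\sin\frac{\theta}{2})$, which costs one extra factor $|v|/|v\cdot n(\xb)|$ since $\sin\frac{\theta}{2}=|v\cdot n(\xb)|/|v|$). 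Then from \eqref{t_ell} one has $l - 1 = \tfrac{|v|}{2\sin\frac{\theta}{2}}(t-\tb-t^{l})$, so differentiating and using \eqref{ell est} gives $|\nabla_{x,v}l|\lesssim |v|^{2}/|v\cdot n(\xb)|^{2}\cdot(1+|v|t)$ and $|\nabla_{xx,xv,vv}l|\lesssim |v|^{4}/|v\cdot n(\xb)|^{4}\cdot(1+|v|t)$ in the worst entry; and \eqref{nabla x,v t_ell} differentiated once more yields the stated second-order bounds for $t^{l}$ with one more power of $|v|/|v\cdot n(\xb)|$ than in \eqref{est der t_ell}, which is consistent with the claimed $|v|$-and-$|v\cdot n(\xb)|$ powers.

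With these in hand, the main estimate is a bookkeeping argument: in each term of $\nabla_{xx}X$, $\nabla_{xv}X$, etc., identify which single factor, when differentiated, is the most singular, and observe that every differentiation costs at most one extra power of $|v|/|v\cdot n(\xb)|$ in the $x$-direction (or a bounded factor plus one power of $1/|v\cdot n(\xb)|$ in the $v$-direction) and at most one extra factor $(1+|v|t)$ through $l$ or $t^{l}$. Since the first-order bounds in Lemma \ref{est der X,V} are $|\nabla_{x}X|\lesssim \tfrac{|v|}{|v\cdot n(\xb)|}(1+|v|t)$, $|\nabla_{v}X|\lesssim\tfrac1{|v|}(1+|v|t)$, $|\nabla_{x}V|\lesssim\tfrac{|v|^{3}}{|v\cdot n(\xb)|^{2}}(1+|v|t)$, $|\nabla_{v}V|\lesssim\tfrac{|v|}{|v\cdot n(\xb)|}(1+|v|t)$, applying one further $x$-derivative multiplies the bound by $|v|/|v\cdot n(\xb)|$ and $(1+|v|t)$, giving exactly the claimed $\nabla_{xx}$, $\nabla_{xv}$ rates; applying a $v$-derivative multiplies by $1/|v\cdot n(\xb)|$ and $(1+|v|t)$, giving the $\nabla_{vx}$, $\nabla_{vv}$ rates (after squaring $(1+|v|t)$ to $(1+|v|^{2}t^{2})$, which is harmless up to a constant). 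I would present this as: write the generic differentiated term, bound each factor, take the sup over the finitely many terms, and collect powers.

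The main obstacle is not any single estimate but the sheer combinatorial volume: $\nabla_{xx}X$ alone, expanded honestly, has on the order of a few dozen terms once the product rule is applied to the four summands of $\nabla_{x}X$ in \eqref{n_x,v}, and one must be careful that no hidden cancellation is needed — unlike the first-order case (where a cancellation gave the non-singular bound for $\nabla_{v}X$), here I expect no cancellation is available or needed, so the honest worst-case count already gives the stated bounds. The delicate point to check is that the "extra power of $|v|/|v\cdot n(\xb)|$ per $x$-derivative" rule is genuinely tight in every term and that terms involving $\nabla_{xx}\theta$ or $\nabla_{xx}t^{l}$ (the most singular elementary pieces) do not push any entry above the claimed order; this is exactly where the elementary second-order lemmas above are used, and verifying it term-by-term — especially for $\nabla_{xx}X$ and $\nabla_{xx}V$ — is the part that requires care rather than ingenuity. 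I would therefore isolate the worst terms (those in which a derivative hits $1/\sin\frac{\theta}{2}$ or $\nabla\theta$ twice, or $l$ together with $\nabla\theta$) and verify those explicitly, then remark that all remaining terms are of lower or equal order by inspection.
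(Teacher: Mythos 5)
Your overall strategy --- differentiate the formulas of Lemma \ref{X,V} once more, reduce to elementary derivative estimates for $\tb$, $t^{l}$, $n(\xb)$, $\theta$, and then count powers on the most singular terms --- is exactly the route the paper takes. But the quantitative core of your proposal, the uniform rule that ``each further $x$-derivative costs at most one power of $|v|/|v\cdot n(\xb)|$ and one factor $(1+|v|t)$,'' is false, and this is not a detail that the deferred term-by-term check would merely confirm: it is where the check fails. A first red flag is that your rule does not even reproduce the claimed exponents: from $|\nabla_{x}X|\lesssim \frac{|v|}{|v\cdot n(\xb)|}(1+|v|t)$ your rule gives $|\nabla_{xx}X|\lesssim \frac{|v|^{2}}{|v\cdot n(\xb)|^{2}}(1+|v|t)^{2}$, whereas the lemma asserts $\frac{|v|^{4}}{|v\cdot n(\xb)|^{4}}(1+|v|^{2}t^{2})$ --- a jump of \emph{three} powers of $|v|/|v\cdot n(\xb)|$, not one (similarly for $\nabla_{xx}V$ and $\nabla_{xv}V$). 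The reason is visible already at the level of the elementary quantities: by \eqref{tb esti} one has $t^{l}\lesssim \sin\frac{\theta}{2}/|v|$, while \eqref{nabla x,v t_ell} gives $|\nabla_{x}t^{l}|\lesssim \frac{1}{|v|\sin^{2}\frac{\theta}{2}}(1+|v|t)$ because the term $\frac{(l-1)\cos\frac{\theta}{2}}{|v|}\nabla_{x}\theta$ carries $l-1\lesssim \frac{|v|t}{\sin\frac{\theta}{2}}$ times $|\nabla_{x}\theta|=\frac{2}{\sin\frac{\theta}{2}}$; so differentiating $t^{l}$ in $x$ costs three powers of $\frac{1}{\sin\frac{\theta}{2}}=\frac{|v|}{|v\cdot n(\xb)|}$. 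Likewise $\nabla_{x}\big(\tfrac{1}{\sin\frac{\theta}{2}}\big)=-\tfrac{\cos\frac{\theta}{2}}{2\sin^{2}\frac{\theta}{2}}\nabla_{x}\theta$ costs two powers, so your claimed bound $|\nabla_{xx}\theta|\lesssim |v|^{2}/|v\cdot n(\xb)|^{2}$ is also short by at least one power (the worst contribution to $\nabla_{xx}\theta$ is of order $|v|^{3}/|v\cdot n(\xb)|^{3}$). These corrections are precisely what produce the exponents $4$ and $3$ in the statement, so replacing the false rule by the actual per-term costs is the entire content of the proof, not a verification step.

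Two further points. First, $l$ is integer-valued and locally constant away from grazing, so it has no derivative to estimate; the identity $l-1=\frac{|v|}{2\sin\frac{\theta}{2}}(t-\tb-t^{l})$ cannot be differentiated treating $l$ as a variable, and the paper accordingly keeps $l$ fixed when differentiating (your spurious ``$\nabla_{x,v}l$'' terms would at best add nonexistent contributions). Second, the paper sidesteps the need for any second derivatives of $\theta$ or for $\nabla_{xx}\tb$: it first substitutes the closed forms $\nabla_{x}\theta=-\frac{2}{\sin\frac{\theta}{2}}Q_{-\frac{\theta}{2}}n(x^{1})$ and $\nabla_{v}\theta=2(\frac{\tb}{\sin\frac{\theta}{2}}-\frac{1}{|v|})Q_{-\frac{\theta}{2}}n(x^{1})$ into the entries of $\nabla_{x,v}X$ and $\nabla_{x,v}V$, so that the resulting $(1,1)$ entries are explicit functions of $\theta,t^{l},\tb,l,v,n(\xb)$ only; one more derivative then requires only the \emph{first}-order estimates \eqref{e_1}, \eqref{e_2}, \eqref{est der n}, \eqref{est der t_ell}, \eqref{ell est}. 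If you want to keep your more abstract route via $\nabla_{xx}\theta$, $\nabla_{xx}t^{l}$, you must first derive the correct (larger) bounds for those quantities and then redo the power count; as written, your intermediate estimates and your counting rule do not support the conclusion.
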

\begin{proof}
We denote the components of $v$ and $n(\xb)$ by $(v_1,v_2)$ and $(n_1,n_2)$. To estimate $\vert \nabla_{xx} X(0;t,x,v)\vert$, we need to determine which component in the matrix $\nabla_x X(0;t,x,v)$ has the highest singularity $\frac{1}{\sin \frac{\theta}{2}}$ and travel length $(1+\vert v \vert t)$ order when we take the derivative with respect to $x$. In estimates \eqref{e_1},\eqref{est der n},\eqref{est der t_ell}, and \eqref{ell est}, we already checked singularity and travel length order for some terms. Considering these estimates, we get the highest singularity and travel length order in the $x$-derivative of the (1,1) component of the matrix $\nabla_x X(0;t,x,v)$. Hence, we only consider the (1,1) component among components in the matrix $\nabla_x X(0;t,x,v)$. In fact, from Lemma \ref{X,V}, the (1,1) component $[\nabla_x X(0;t,x,v)]_{(1,1)}$ of the matrix $\nabla_x X(0;t,x,v)$ is 
\begin{align*}
	&[\nabla_x X(0;t,x,v)]_{(1,1)}\\
	&= \cos((l-1)\theta) \frac{v_2n_2}{v\cdot n(\xb)} + \sin((l-1)\theta) \frac{v_2 n_1}{v\cdot n(\xb)}\\
	&\quad +\frac{\vert v \vert(t^1-t^l)}{\sin \frac{\theta}{2}}\left(-n_1^2 \cos ((l-\frac{1}{2})\theta) \cos \frac{\theta}{2} -n_1n_2 \cos ((l-\frac{1}{2})\theta) \sin \frac{\theta}{2}+n_1n_2 \sin((l-\frac{1}{2})\theta)\cos \frac{\theta}{2} \right. \\
	&\left. \qquad \qquad \qquad \qquad  +n_2^2 \sin ((l-\frac{1}{2})\theta) \sin \frac{\theta}{2} \right)\\
	&\quad-\frac{2t^l l}{\sin \frac{\theta}{2}} \left( v_1n_1 \sin l\theta \cos \frac{\theta}{2} +v_1 n_2 \sin l\theta \sin \frac{\theta}{2} +v_2n_1 \cos l\theta \cos \frac{\theta}{2} +v_2n_2 \cos l\theta \sin \frac{\theta}{2}\right)\\
	&\quad -\frac{1}{\vert v \vert \sin \frac{\theta}{2}}\left( v_1 n_1 \cos l\theta -v_2n_1 \sin l\theta\right)\\
	&\lesssim \frac{\vert v \vert}{\vert v \cdot n(\xb) \vert} + \frac{1}{\vert \sin \frac{\theta}{2} \vert} (1+\vert v \vert t) +\frac{1}{\vert \sin \frac{\theta}{2}\vert} \lesssim \frac{\vert v \vert }{\vert v \cdot n(\xb) \vert}(1+\vert v \vert t) ,
\end{align*}
where the first inequality comes from \eqref{tb esti}, \eqref{ell est}, and
\begin{equation*}
	t^1-t^l= \frac{2(l-1)\sin\frac{\theta}{2}}{\vert v \vert} \lesssim \frac{1}{\vert v \vert}(1+\vert v \vert t).
\end{equation*}
Similarly, the $(1,1)$ components of matrices $\nabla_v X(0;t,x,v), \nabla_x V(0;t,x,v)$, and $\nabla_v V(0;t,x,v)$ satisfy inequalities in Lemma \ref{est der X,V}. Similar as estimate $\vert \nabla_{xx} X(0;t,x,v)\vert$, we only consider $(1,1)$ components of derivative matrices for $X(0;t,x,v)$ and $V(0;t,x,v)$ to get estimates. When we differentiate $[\nabla_x X(0;t,x,v)]_{(1,1)}$ with respect to $x$, the terms containing $\frac{t^l l}{\sin \frac{\theta}{2}}$ are main terms that increase the singularity $\frac{1}{\sin \frac{\theta}{2}}$ and travel length $(1+\vert v \vert t)$ order. $\frac{t^l l}{\sin \frac{\theta}{2}}$ has a singularity order 1 and travel length order 1 because 
\begin{equation*}
	\left \vert \frac{t^l l}{\sin \frac{\theta}{2}}\right \vert  \lesssim \frac{1}{\vert \sin \frac{\theta}{2}\vert } \times \frac{\vert \sin \frac{\theta}{2}\vert }{\vert v \vert} \times \frac{1}{\vert \sin \frac{\theta}{2}\vert}(1+\vert v \vert t)=\frac{ \vert v \vert}{\vert v \cdot n(\xb)\vert}(1+\vert v \vert t),
\end{equation*} 
where we have used \eqref{tb esti} and \eqref{ell est}. On the other hand, if we take of the term $\frac{t^l l}{\sin \frac{\theta}{2}}$ with respect to $x$, the singularity and travel length order become $4$ and $2$ respectively: 
\begin{align*}
	\left \vert \nabla_x \left(\frac{t^l l}{\sin \frac{\theta}{2}}\right)\right \vert =\left \vert \frac{l}{\sin \frac{\theta}{2}} \nabla_x t^l -\frac{t^l l\cos\frac{\theta}{2}}{2\sin^2 \frac{\theta}{2}} \nabla_x \theta\right \vert &\lesssim \frac{1}{\vert v \vert \sin^4 \frac{\theta}{2}}(1+\vert v \vert^2t^2) + \frac{1}{\vert v \vert \vert \sin^3 \frac{\theta}{2}\vert}(1+\vert v \vert t)\\
	&\lesssim \frac{\vert v \vert^3}{\vert v\cdot n(\xb)\vert^4} (1+\vert v \vert ^2 t^2), 
\end{align*}
where \eqref{tb esti}, \eqref{e_1}, \eqref{est der t_ell}, and \eqref{ell est} have been used. Hence, it suffices to estimate the following terms in $[\nabla_x X(0;t,x,v)]_{(1,1)}$ 
\begin{equation*}
	-\frac{2t^l l}{\sin \frac{\theta}{2}} \left( v_1n_1 \sin l\theta \cos \frac{\theta}{2} +v_1 n_2 \sin l\theta \sin \frac{\theta}{2} +v_2n_1 \cos l\theta \cos \frac{\theta}{2} +v_2n_2 \cos l\theta \sin \frac{\theta}{2}\right):=I_{1},
\end{equation*}
to obtain estimate for $\vert \nabla_{xx} X(0;t,x,v) \vert$. Taking the $x$-derivative to the above terms, one obtains
\begin{align*}
	\nabla_x I_{1} &= \left ( \frac{-2l\nabla_x t^l}{\sin \frac{\theta}{2}}  +\frac{2t^l l\cos\frac{\theta}{2}\nabla_x \theta}{2\sin^2 \frac{\theta}{2}} \right )\Big( v_1n_1 \sin l\theta \cos \frac{\theta}{2} +v_1 n_2 \sin l\theta \sin \frac{\theta}{2} +v_2n_1 \cos l\theta \cos \frac{\theta}{2} +v_2n_2 \cos l\theta \sin \frac{\theta}{2}\Big)\\
	&\quad -\frac{2t^l l}{\sin \frac{\theta}{2}} \left( v_1 \sin l \theta \cos \frac{\theta}{2}\nabla_x n_1  +lv_1 n_1 \cos l\theta \cos \frac{\theta}{2}\nabla_x \theta -\frac{1}{2} v_1 n_1 \sin l\theta \sin \frac{\theta}{2} \nabla_x \theta \right. \\ 
	&\qquad \qquad \quad  \left. + v_1 \sin l \theta \sin \frac{\theta}{2}\nabla_x n_2  +lv_1 n_2 \cos l\theta \sin \frac{\theta}{2}\nabla_x \theta +\frac{1}{2} v_1 n_2 \sin l\theta \cos \frac{\theta}{2} \nabla_x \theta \right. \\
	&\qquad \qquad \quad  \left. + v_2  \cos l\theta \cos \frac{\theta}{2}\nabla_x n_1 -lv_2 n_1 \sin l \theta \cos \frac{\theta}{2} \nabla_x \theta -\frac{1}{2} v_2n_1\cos l\theta \sin \frac{\theta}{2}\nabla_x \theta \right. \\ 
	&\qquad \qquad \quad \left. + v_2 \cos l\theta \sin \frac{\theta}{2}\nabla_x n_2  -lv_2 n_2 \sin l\theta \sin \frac{\theta}{2} \nabla_x \theta + \frac{1}{2} v_2 n_2 \cos l \theta \cos \frac{\theta}{2}\nabla_x \theta \right).
\end{align*}
Using \eqref{tb esti},\eqref{e_1},\eqref{est der n},\eqref{est der t_ell}, and \eqref{ell est}, one can further bound the above as 
\begin{align*}
	\vert \nabla_x I_{1}\vert &\lesssim \frac{\vert v \vert^3}{\vert v \cdot n(\xb)\vert^4} (1+\vert v \vert^2t^2) \times \vert v \vert + \frac{1}{\vert v \cdot n(\xb)\vert}(1+\vert v \vert t) \times \left( \vert v \vert \vert \nabla_x n(\xb) \vert + l\vert v \vert\vert \nabla_x \theta \vert\right )\\
	&\lesssim \frac{\vert v \vert^4}{\vert v \cdot n(\xb) \vert^4} (1+\vert v \vert^2 t^2). 
\end{align*}
Therefore, we get 
\begin{align*}
	\vert \nabla_{xx} X(0;t,x,v) \vert \lesssim \frac{\vert v \vert^4}{\vert v\cdot n(\xb) \vert^4} (1+\vert v \vert^2t^2). 
\end{align*}
For estimate of $\vert \nabla_{vx} X(0;t;x,v)\vert$, similar to the case $\vert \nabla_{xx} X(0;t,x,v) \vert$, we only consider terms $I_1$. By taking the $v$-derivative to $I_1$, we obtain 
\begin{align*}
	\nabla_v I_1&=\left ( \frac{-2l \nabla_v t^l}{\sin \frac{\theta}{2}} +\frac{2t^l l\cos\frac{\theta}{2}\nabla_v \theta}{2\sin^2 \frac{\theta}{2}} \right )\Big( v_1n_1 \sin l\theta \cos \frac{\theta}{2} +v_1 n_2 \sin l\theta \sin \frac{\theta}{2} +v_2n_1 \cos l\theta \cos \frac{\theta}{2} +v_2n_2 \cos l\theta \sin \frac{\theta}{2}\Big)\\
	&\quad  -\frac{2t^l l}{\sin \frac{\theta}{2}} \left(  n_1 \sin l \theta \cos \frac{\theta}{2}\nabla_v v_1 + v_1  \sin l \theta \cos \frac{\theta}{2}\nabla_v n_1 +lv_1 n_1 \cos l\theta \cos \frac{\theta}{2}\nabla_v \theta -\frac{1}{2} v_1 n_1 \sin l\theta \sin \frac{\theta}{2} \nabla_v \theta \right. \\ 
	&\qquad \qquad \quad  \left. + n_2\sin l\theta \sin \frac{\theta}{2} \nabla_v v_1+ v_1\sin l \theta \sin \frac{\theta}{2} \nabla_v n_2  +lv_1 n_2 \cos l\theta \sin \frac{\theta}{2}\nabla_v \theta +\frac{1}{2} v_1 n_2 \sin l\theta \cos \frac{\theta}{2} \nabla_v \theta \right. \\
	&\qquad \qquad \quad  \left. + n_1 \cos l\theta \cos \frac{\theta}{2} \nabla_v v_2+v_2\cos l\theta \cos \frac{\theta}{2} \nabla_v n_1  -lv_2 n_1 \sin l \theta \cos \frac{\theta}{2} \nabla_v \theta -\frac{1}{2} v_2n_1\cos l\theta \sin \frac{\theta}{2}\nabla_v \theta \right. \\ 
	&\qquad \qquad \quad \left. +n_2\cos l \theta \sin \frac{\theta}{2} \nabla_v v_2+ v_2\cos l\theta \sin \frac{\theta}{2} \nabla_v n_2  -lv_2 n_2 \sin l\theta \sin \frac{\theta}{2} \nabla_v \theta + \frac{1}{2} v_2 n_2 \cos l \theta \cos \frac{\theta}{2}\nabla_v \theta \right).
\end{align*}
Using \eqref{tb esti},\eqref{e_2},\eqref{est der n},\eqref{est der t_ell}, and \eqref{ell est} yields that 
\begin{align*}
	\vert \nabla_v I_1 \vert &\lesssim \left( \frac{1}{\vert v \vert^2 \vert \sin^3 \frac{\theta}{2}\vert}  (1+\vert v \vert^2 t^2)+\frac{1}{\vert v \vert^2 \vert \sin ^2 \frac{\theta}{2}\vert}(1+\vert v \vert t) \right)\times \vert v \vert+\frac{1}{\vert v \cdot n(\xb) \vert} ( 1+ \vert v \vert \vert \nabla_v n(\xb) \vert +l\vert v \vert \vert \nabla_v \theta\vert)\\
	&\lesssim \frac{\vert v \vert^2}{\vert v \cdot n(\xb) \vert^3} (1+\vert v \vert ^2 t^2). 
\end{align*} 
Hence, one obtains that 
\begin{align*}
	\vert \nabla_{vx} X(0;t,x,v) \vert \lesssim \frac{\vert v \vert^2}{\vert v \cdot n(\xb) \vert^3} (1+\vert v \vert ^2 t^2). 
\end{align*}
By Lemma \ref{X,V}, we write the $(1,1)$ component of $\nabla_v X(0;t,x,v)$: 
\begin{align*}
	&[\nabla_v X(0;t,x,v)]_{(1,1)}\\
	&=-\tb \left(\cos (l-1)\theta \frac{v_2n_2}{v\cdot n(\xb)} +\sin (l-1)\theta \frac{v_2n_1}{v \cdot n(\xb)} \right) -t^l \cos l\theta \\
	&\quad +2lt^l  \left( \frac{\tb}{\sin \frac{\theta}{2}} -\frac{1}{\vert v \vert}\right) \left( v_1 n_1 \sin l\theta \cos \frac{\theta}{2} +v_1 n_2 \sin l\theta \sin \frac{\theta}{2} +v_2 n_1 \cos l\theta \cos \frac{\theta}{2} +v_2 n_2 \cos l\theta \sin \frac{\theta}{2}\right) \\
	&\quad + \frac{\tb}{\vert v \vert \sin \frac{\theta}{2}} (v_1n_1\cos l\theta -v_2 n_1 \sin l \theta) -\frac{2(l-1)\sin \frac{\theta}{2}}{\vert v \vert^3}(v_1^2\cos l\theta - v_1v_2 \sin l \theta) \\
	&\quad -\vert v \vert (t^1-t^l) \left(\frac{\tb}{\sin \frac{\theta}{2}}-\frac{1}{\vert v \vert}\right) \left(-n_1^2 \cos (l-\frac{1}{2})\theta \cos \frac{\theta}{2} +n_1n_2 \sin (l-1)\theta +n_2^2 \sin (l-\frac{1}{2})\theta \sin \frac{\theta}{2}\right).
\end{align*}
Similar to $\nabla_x X(0;t,x,v)$, main terms in $\nabla_v X(0;t,x,v)$ are 
\begin{align*}
	2lt^l  \left( \frac{\tb}{\sin \frac{\theta}{2}} -\frac{1}{\vert v \vert}\right) \left( v_1 n_1 \sin l\theta \cos \frac{\theta}{2} +v_1 n_2 \sin l\theta \sin \frac{\theta}{2} +v_2 n_1 \cos l\theta \cos \frac{\theta}{2} +v_2 n_2 \cos l\theta \sin \frac{\theta}{2}\right):=I_2.
\end{align*}
As we take derivative to $\nabla_v X(0;t,x,v)$ with respect to $x$ and $v$, $I_2$ mainly contributes to increase singularity and travel length order. Thus, we only differentiate terms $I_2$ to get estimate for $\vert \nabla_{xv} X(0;t,x,v) \vert $ and $\vert \nabla_{vv} X(0;t,x,v)\vert$. Firstly, taking $x$ derivative to $I_2$ gives 
\begin{align*} 
	\nabla_x I_2 &= 2l \nabla_x t^l \left( \frac{\tb}{\sin \frac{\theta}{2}} -\frac{1}{\vert v \vert}\right) \left( v_1 n_1 \sin l\theta \cos \frac{\theta}{2} +v_1 n_2 \sin l\theta \sin \frac{\theta}{2} +v_2 n_1 \cos l\theta \cos \frac{\theta}{2} +v_2 n_2 \cos l\theta \sin \frac{\theta}{2}\right)\\
	&\quad + 2lt^l \left( \frac{\nabla_x \tb}{\sin \frac{\theta}{2}} -\frac{\tb\cos \frac{\theta}{2}\nabla_x \theta}{2\sin^2 \frac{\theta}{2}} \right)\Big( v_1 n_1 \sin l\theta \cos \frac{\theta}{2} +v_1 n_2 \sin l\theta \sin \frac{\theta}{2} +v_2 n_1 \cos l\theta \cos \frac{\theta}{2} +v_2 n_2 \cos l\theta \sin \frac{\theta}{2}\Big)\\
	&\quad +2lt^l  \left( \frac{\tb}{\sin \frac{\theta}{2}} -\frac{1}{\vert v \vert}\right)\left( v_1 \sin l\theta \cos \frac{\theta}{2} \nabla_xn_1+lv_1 n_1 \cos l\theta \cos \frac{\theta}{2} \nabla_x \theta -\frac{1}{2}v_1 n_1 \sin l \theta \sin \frac{\theta}{2} \nabla_x \theta \right. \\ 
	&\qquad \qquad \qquad \qquad \qquad \quad +\left. v_1\sin l \theta \sin \frac{\theta}{2} \nabla_x n_2+lv_1n_2\cos l \theta \sin \frac{\theta}{2} \nabla_x \theta +\frac{1}{2} v_1 n_2 \sin l\theta \cos \frac{\theta}{2} \nabla_x \theta  \right.\\ 
	&\qquad \qquad \qquad \qquad \qquad \quad +\left. v_2\cos l\theta \cos \frac{\theta}{2}\nabla_x n_1 -lv_2n_1 \sin l \theta \cos \frac{\theta}{2} \nabla_x \theta -\frac{1}{2} v_2 n_1 \cos l\theta \sin \frac{\theta}{2} \nabla_x \theta \right. \\ 
	&\qquad \qquad \qquad \qquad \qquad \quad +\left. v_2 \cos l\theta \sin \frac{\theta}{2} \nabla_x n_2 -lv_2n_2 \sin l \theta \sin \frac{\theta}{2} \nabla_x \theta +\frac{1}{2} v_2n_2 \cos l\theta \cos \frac{\theta}{2} \nabla_x \theta \right).
\end{align*}
Hence, it follows from \eqref{tb esti},\eqref{e_1},\eqref{est der n},\eqref{est der t_ell}, and \eqref{ell est} that 
\begin{align*}
	\vert \nabla_x I_2 \vert &\lesssim \frac{1}{\vert v \vert \vert \sin ^3 \frac{\theta}{2} \vert} (1+\vert v \vert^2 t^2) \times \frac{1}{\vert v \vert} \times \vert v \vert + \frac{1}{\vert v\vert}(1+\vert v \vert t) \times\frac{1}{\vert v \vert \sin^2 \frac{\theta}{2}} \times \vert v \vert \\
	&\quad + \frac{1}{\vert v \vert}(1+\vert v \vert t)\times \frac{1}{\vert v \vert} \times \frac{\vert v \vert}{\sin ^2\frac{\theta}{2}} (1+\vert v \vert t)\\
	&\lesssim \frac{\vert v \vert^2}{\vert v \cdot n(\xb) \vert^3} (1+\vert v \vert^2 t^2),
\end{align*}
which yields $\vert \nabla_{xv} X(0;t,x,v) \vert$ estimate 
\begin{align*}
	\vert \nabla_{xv} X(0;t,x,v) \lesssim \frac{\vert v \vert^2}{\vert v \cdot n(\xb) \vert^3} (1+\vert v \vert^2 t^2).
\end{align*}
Similarly, we consider $\nabla_v I_2$: 
\begin{align*}
\nabla_v I_2 &= 2l \nabla_v t^l \left( \frac{\tb}{\sin \frac{\theta}{2}} -\frac{1}{\vert v \vert}\right) \left( v_1 n_1 \sin l\theta \cos \frac{\theta}{2} +v_1 n_2 \sin l\theta \sin \frac{\theta}{2} +v_2 n_1 \cos l\theta \cos \frac{\theta}{2} +v_2 n_2 \cos l\theta \sin \frac{\theta}{2}\right)\\
	&\quad + 2lt^l \left( \frac{\nabla_v \tb}{\sin \frac{\theta}{2}} -\frac{\tb\cos \frac{\theta}{2}}{2\sin^2 \frac{\theta}{2}} \nabla_v \theta+\frac{v}{\vert v \vert^3}\right)\left( v_1 n_1 \sin l\theta \cos \frac{\theta}{2} +v_1 n_2 \sin l\theta \sin \frac{\theta}{2} +v_2 n_1 \cos l\theta \cos \frac{\theta}{2} \right.\\
	&\left. \qquad \hspace{5.7cm} +v_2 n_2 \cos l\theta \sin \frac{\theta}{2}\right)\\
	&\quad +2lt^l  \left( \frac{\tb}{\sin \frac{\theta}{2}} -\frac{1}{\vert v \vert}\right)\left( n_1 \sin l\theta \cos \frac{\theta}{2} \nabla_v v_1+v_1 \sin l\theta \cos \frac{\theta}{2} \nabla_vn_1+lv_1 n_1 \cos l\theta \cos \frac{\theta}{2} \nabla_v \theta \right.\\
	&\qquad \qquad \qquad \qquad \qquad \quad\left. -\frac{1}{2}v_1 n_1 \sin l \theta \sin \frac{\theta}{2} \nabla_v \theta 
	 +n_2 \sin l \theta \sin \frac{\theta}{2} \nabla_v v_1 +v_1\sin l \theta \sin \frac{\theta}{2} \nabla_v n_2 \right. \\
	 &\qquad \qquad \qquad \qquad \qquad \quad\left. +lv_1n_2\cos l \theta \sin \frac{\theta}{2} \nabla_v\theta +\frac{1}{2} v_1 n_2 \sin l\theta \cos \frac{\theta}{2} \nabla_v \theta +n_1\cos l\theta \cos \frac{\theta}{2} \nabla_v v_2 \right.\\ 
	&\qquad \qquad \qquad \qquad \qquad \quad +\left. v_2\cos l\theta \cos \frac{\theta}{2}\nabla_v n_1 -lv_2n_1 \sin l \theta \cos \frac{\theta}{2} \nabla_v \theta -\frac{1}{2} v_2 n_1 \cos l\theta \sin \frac{\theta}{2} \nabla_v \theta \right. \\ 
	&\qquad \qquad \qquad \qquad \qquad \quad +\left. n_2 \cos l\theta \sin \frac{\theta}{2} \nabla_v v_2+v_2 \cos l\theta \sin \frac{\theta}{2} \nabla_v n_2 -lv_2n_2 \sin l \theta \sin \frac{\theta}{2} \nabla_v \theta \right.\\
	&\qquad \qquad \qquad \qquad \qquad \quad \left.+\frac{1}{2} v_2n_2 \cos l\theta \cos \frac{\theta}{2} \nabla_v \theta \right).
\end{align*}
By \eqref{tb esti},\eqref{e_2},\eqref{est der n},\eqref{est der t_ell}, and \eqref{ell est}, the above can be further bounded by 
\begin{align*}
	\vert \nabla_v I_2 \vert &\lesssim \frac{1}{\vert v \vert^2 \sin^2 \frac{\theta}{2}} (1+\vert v \vert^2 t^2) \times \frac{1}{\vert v \vert} \times \vert v \vert + \frac{1}{\vert v \vert}(1+\vert v \vert t)\times \frac{1}{\vert v \vert^2 \vert \sin \frac{\theta}{2}\vert} \times \vert v \vert +\frac{1}{\vert v \vert}(1+\vert v \vert t)\times \frac{1}{\vert v \vert \vert \sin \frac{\theta}{2} \vert} (1+\vert v \vert t)\\
	&\lesssim \frac{1}{\vert v \cdot n(\xb) \vert^2} (1+\vert v \vert^2 t^2) .
\end{align*}
Hence, $\vert \nabla_{vv} X(0;t,x,v)\vert$ is bounded by 
\begin{align*}
	\vert \nabla_{vv} X(0;t,x,v) \vert \lesssim \frac{1}{\vert v \cdot n(\xb) \vert^2} (1+\vert v \vert^2 t^2). 
\end{align*}
To get estimate for $\vert \nabla_{xx} V(0;t,x,v)\vert$ and $\vert \nabla_{vx} V(0;t,x,v)\vert$, we now consider $[\nabla_{x} V(0;t,x,v)]_{(1,1)}$: 
\begin{align*}
	[\nabla_x V(0;t,x,v)]_{(1,1)} = \frac{2l}{\sin \frac{\theta}{2}} \left(v_1n_1 \sin l \theta \cos \frac{\theta}{2} +v_1n_2 \sin l \theta \sin \frac{\theta}{2} +v_2n_1 \cos l\theta \cos \frac{\theta}{2} + v_2n_2 \cos l\theta \sin \frac{\theta}{2}\right), 
\end{align*}
by Lemma \ref{X,V}. In $[\nabla_x V(0;t,x,v)]_{(1,1)}$, the main terms are 
\begin{align*}
	\frac{2l}{\sin \frac{\theta}{2}} v_1n_1 \sin l \theta \cos \frac{\theta}{2} \quad \textrm{and} \quad \frac{2l}{\sin \frac{\theta}{2}} v_2 n_1 \cos l\theta \cos \frac{\theta}{2},
\end{align*}
because these terms have the highest singularity order in  $[\nabla_x V(0;t,x,v)]_{(1,1)}$. Thus, for $\vert \nabla_{xx}V(0;t,x,v) \vert$, we now take the $x$-derivative for main terms:
\begin{align*}
	&\nabla_x \left(\frac{2l}{\sin \frac{\theta}{2}}\left(v_1n_1\sin l \theta \cos \frac{\theta}{2} + v_2n_1 \cos l \theta \cos \frac{\theta}{2}\right)\right)\\
	&= -\frac{l\cos \frac{\theta}{2}}{\sin ^2 \frac{\theta}{2}}\nabla_x \theta \left(v_1n_1\sin l \theta \cos \frac{\theta}{2} + v_2n_1 \cos l \theta \cos \frac{\theta}{2}\right)\\
	&\quad + \frac{2l}{\sin \frac{\theta}{2}} \left(v_1 \sin l \theta \cos \frac{\theta}{2} \nabla_x n_1 +lv_1 n_1 \cos l\theta \cos \frac{\theta}{2} \nabla_x \theta -\frac{1}{2} v_1 n_1 \sin l\theta \sin \frac{\theta}{2} \nabla_x \theta \right. \\
	&\qquad \qquad \quad + \left. v_2\cos l \theta \cos \frac{\theta}{2} \nabla_x n_1 -lv_2 n_1 \sin l\theta \cos \frac{\theta}{2} \nabla_x \theta -\frac{1}{2} v_2n_1 \cos l\theta \sin \frac{\theta}{2} \nabla_x \theta \right)\\
	&:=I_3.
\end{align*}
By \eqref{e_1},\eqref{est der n}, and \eqref{ell est}, $I_3$ can be further bounded by 
\begin{align*}
	\vert I_3 \vert \lesssim \frac{\vert v \vert}{\sin^4 \frac{\theta}{2}}(1+\vert v \vert t) + \frac{\vert v \vert }{ \sin^4\frac{\theta}{2}}(1+\vert v \vert^2 t^2)\lesssim \frac{\vert v \vert^5}{\vert v \cdot n(\xb) \vert^4}(1+\vert v \vert^2 t^2),
\end{align*}
which implies that 
\begin{align*}
	\vert \nabla_{xx} V(0;t,x,v) \vert \lesssim \frac{\vert v \vert^5}{\vert v \cdot n(\xb) \vert^4} (1+\vert v \vert^2t^2). 
\end{align*}
Similarly, we firstly take the $v$-derivative for main terms in $[\nabla_x V(0;t,x,v)]_{(1,1)}$ and then estimate $v$-derivatives. Then, we deduce 
\begin{align*}
	\vert \nabla_{vx} V(0;t,x,v) \vert \lesssim \frac{ \vert v \vert^3}{\vert v \cdot n(\xb) \vert^3} (1+ \vert v \vert^2 t^2), 
\end{align*}
where we have used \eqref{e_2},\eqref{est der n}, and \eqref{ell est}. Lastly, it remains to estimate $\vert \nabla_{xv}V(0;t,x,v)\vert$ and $\vert \nabla_{vv} V(0;t,x,v)\vert$. Let us consider the $(1,1)$ component of $\nabla_v V(0;t,x,v)$:
\begin{align*}
	[\nabla_v V(0;t,x,v)]_{(1,1)}&=\cos l\theta -2l \left( \frac{\tb}{\sin \frac{\theta}{2}} -\frac{1}{\vert v \vert}\right) \left(v_1n_1 \sin l \theta \cos \frac{\theta}{2} +v_1 n_2 \sin l \theta \sin \frac{\theta}{2} \right.\\
	&\left. \qquad \qquad \qquad \qquad \qquad \qquad \quad + v_2 n_1 \cos l\theta \cos \frac{\theta}{2} +v_2n_2 \cos l\theta \sin \frac{\theta}{2}\right),
\end{align*}
by Lemma \ref{X,V}. Similar to previous cases, main terms in $[\nabla_v V(0;t,x,v)]_{(1,1)}$ are 
\begin{align*}
	 -2l \left( \frac{\tb}{\sin \frac{\theta}{2}} -\frac{1}{\vert v \vert}\right)\left(v_1n_1 \sin l \theta \cos \frac{\theta}{2} +v_2 n_1 \cos l\theta \cos \frac{\theta}{2}\right):=I_4,
\end{align*}
by the same reason. Taking the $x$-derivative for $I_4$, we get 
\begin{align*}
	\nabla_x I_4 &= -2l \left( \frac{\nabla_x \tb}{\sin \frac{\theta}{2}} -\frac{\tb \cos \frac{\theta}{2}}{2\sin^2\frac{\theta}{2}} \nabla_x \theta\right)\left(v_1n_1 \sin l \theta \cos \frac{\theta}{2} +v_2 n_1 \cos l\theta \cos \frac{\theta}{2}\right)\\
	&\quad -2l \left( \frac{\tb}{\sin \frac{\theta}{2}} -\frac{1}{\vert v \vert}\right)\left(v_1 \sin l \theta \cos \frac{\theta}{2} \nabla_x n_1 + lv_1n_1 \cos l\theta \cos \frac{\theta}{2}\nabla_x \theta -\frac{1}{2} v_1n_1 \sin l \theta \sin \frac{\theta}{2} \nabla_x \theta \right. \\ 
	& \qquad \qquad \qquad \qquad \qquad +\left. v_2 \cos l \theta \cos \frac{\theta}{2} \nabla_x n_1 -l v_2n_1 \sin l \theta \cos \frac{\theta}{2} \nabla_x \theta -\frac{1}{2} v_2 n_1 \cos l\theta \sin \frac{\theta}{2} \nabla_x \theta\right).
\end{align*}
Using \eqref{tb esti},\eqref{e_1},\eqref{est der n},\eqref{est der t_ell}, and \eqref{ell est}, one obtains that 
\begin{align*}
	\vert \nabla_x I_4 \vert \lesssim \frac{1}{\vert \sin\frac{\theta}{2}\vert } (1+\vert v \vert t)\times \frac{1}{\vert v \vert \sin^2 \frac{\theta}{2}} \times \vert v \vert +\frac{1}{\vert \sin \frac{\theta}{2} \vert}(1+\vert v \vert t) \times \frac{1}{\vert v \vert} \times \frac{\vert v \vert}{ \sin^2 \frac{\theta}{2}}(1+\vert v \vert t)\lesssim \frac{\vert v \vert^3}{\vert v \cdot n(\xb) \vert^3} (1+\vert v \vert^2t^2). 
\end{align*}
Hence, we get estimate for $\vert\nabla_{xv}V(0;t,x,v)\vert$
\begin{align*}
	\vert \nabla_{xv} V(0;t,x,v) \vert \lesssim \frac{\vert v \vert^3}{\vert v \cdot n(\xb) \vert^3} (1+\vert v \vert^2 t^2). 
\end{align*}
Similarly, we take the $v$-derivative to main terms $I_4$ and estimate $\nabla_v I_4$ to get $\vert\nabla_{vv} V(0;t,x,v)\vert$. From \eqref{tb esti},\eqref{e_2},\eqref{est der n}, \eqref{est der t_ell}, and \eqref{ell est}, we derive 
\begin{align*}
	\vert \nabla_{vv} V(0;t,x,v) \vert \lesssim \frac{\vert v \vert}{ \vert v \cdot n(\xb) \vert^2} (1+\vert v \vert^2 t^2). 
\end{align*}
\end{proof}

\subsection{Proof of  Theorem \ref{thm 3}}

\begin{proof} [Proof of Theorem \ref{thm 3}]
	\textit{Step 1} First, we prove $C^{1}$ estimate. Note that it is easy to derive 
	\Be \label{dt XV}
		\p_{t}X(0;t,x,v) = -v^{k},\quad \p_{tt}X(0;t,x,v) = 0,\quad \p_{t}V(0;t,x,v) = 0, \quad \p_{tt}V(0;t,x,v) = 0, 
	\Ee
	where we assumed $t^{k+1} < 0 < t^{k}$ for some integer $k$. For $i\in\{t,x,v\}$,
	\Be \label{chain}
		\nabla_{i}f(t,x,v) = \nabla_{x}f_0 \nabla_{i}X(0;t,x,v) + \nabla_{v}f_0 \nabla_{i} V(0;t,x,v). 
	\Ee
	Hence using Lemma \ref{est der X,V} and \eqref{dt XV}, we obtain 
	\begin{equation*}
	\begin{split}
			|\p_{t}f| &\lesssim \|f_0\|_{C^{1}}|v|, \\
			|\nabla_{x}f| &\lesssim \|f_0\|_{C^{1}} \frac{|v|^{2}}{|v\cdot n(\xb)|^{2}} \langle v \rangle (1 + |v|t), \\
			|\nabla_{v}f| &\lesssim \|f_0\|_{C^{1}} \frac{1}{|v\cdot n(\xb)|} \langle v \rangle (1 + |v|t), \\
	\end{split}	
	\end{equation*}
	where $\xb = \xb(x,v)$ and $\langle v \rangle := 1 + |v|$. So we obtain \eqref{C1 bound}. \\
	\textit{Step 2} Now we compute second-order estimate. For $\nabla_{xx}f$, from \eqref{chain}, Lemma\ref{est der X,V}, and Lemma \ref{2nd est der X,V}, we obtain
	\begin{equation*}
	\begin{split}
		|\nabla_{xx}f| &= |\nabla_{x} \big(  \nabla_{x}f_0 \nabla_{x}X(0;t,x,v) + \nabla_{v}f_0 \nabla_{x} V(0;t,x,v)  \big)|  \\
		&\lesssim \|f_0\|_{C^{1}} \big( |\nabla_{xx}X(0) | + |\nabla_{xx}V(0)| \big)  
			+ \|f_0\|_{C^{2}} \big( |\nabla_{x}X(0)| + |\nabla_{x}V(0)| \big)^{2} \\
		&\lesssim \|f_0\|_{C^{2}} \frac{|v|^{4}}{|v\cdot n(\xb)|^{4}} \langle v \rangle^{2} (1 + |v|t)^{2},
	\end{split}
	\end{equation*}
	\begin{equation*}
	\begin{split}
	|\nabla_{vx}f| &= |\nabla_{v} \big(  \nabla_{x}f_0 \nabla_{x}X(0;t,x,v) + \nabla_{v}f_0 \nabla_{x} V(0;t,x,v)  \big)|  \\
	&\lesssim \|f_0\|_{C^{1}} \big( |\nabla_{vx}X(0) | + |\nabla_{vx}V(0)| \big)  
	+ \|f_0\|_{C^{2}} \big( |\nabla_{x}X(0)| + |\nabla_{x}V(0)| \big)\big( |\nabla_{v}X(0)| + |\nabla_{v}V(0)| \big) \\
	&\lesssim \|f_0\|_{C^{2}} \frac{|v|^{2}}{|v\cdot n(\xb)|^{3}} \langle v \rangle^{2} (1 + |v|t)^{2},
	\end{split}
	\end{equation*}
	and
	\begin{equation*}
	\begin{split}
	|\nabla_{vv}f| &= |\nabla_{v} \big(  \nabla_{x}f_0 \nabla_{v}X(0;t,x,v) + \nabla_{v}f_0 \nabla_{v} V(0;t,x,v)  \big)|  \\
	&\lesssim \|f_0\|_{C^{1}} \big( |\nabla_{vv}X(0) | + |\nabla_{vv}V(0)| \big)  
	+ \|f_0\|_{C^{2}} \big( |\nabla_{v}X(0)| + |\nabla_{v}V(0)| \big)^{2} \\
	&\lesssim \|f_0\|_{C^{2}} \frac{1}{|v\cdot n(\xb)|^{2}} \langle v \rangle^{2} (1 + |v|t)^{2},
	\end{split}
	\end{equation*}
	where $|\nabla_{xx, vx, vv}X|$ means $\sup_{i,j,k}|\nabla_{ij}X_{k}(0;t,x,v)|$ for $i,j \in\{ x_{1}, x_{2}, v_{1}, v_{2}\}$ and $k \in \{1,2\}$. (Also similar for $\nabla_{ij}V$.) Combining above three estimates, we obtain \eqref{C2 bound}. Second derivative estimates which contain at least one $\p_{t}$ also yield the same upper bound from \eqref{dt XV}. We omit the details. \\
\end{proof}

\noindent{\bf Acknowledgments.}
The authors thank Haitao Wang for suggestion and fruitful discussion. Their research is supported by the National Research Foundation of Korea(NRF) grant funded by the Korean government(MSIT)(No. NRF-2019R1C1C1010915). DL is also supported by the POSCO Science Fellowship of POSCO TJ Park Foundation. The authors sincerely appreciate the anonymous referees for their valuable comments and suggestions on the paper. 

\bibliographystyle{plain}

\begin{thebibliography}{10}

\bibitem{AMSY2020}
R~Alonso, Y~Morimoto, W~Sun, and T~Yang.
\newblock De giorgi argument for weighted $ {L^2}\cap {L^\infty}$ solutions to
  the non-cutoff boltzmann equation.
\newblock {\em arXiv preprint arXiv:2010.10065}, 2020.

\bibitem{CKLVPB}
Yunbai Cao, Chanwoo Kim, and Donghyun Lee.
\newblock Global strong solutions of the {V}lasov-{P}oisson-{B}oltzmann system
  in bounded domains.
\newblock {\em Arch. Ration. Mech. Anal.}, 233(3):1027--1130, 2019.

\bibitem{DV}
L.~Desvillettes and C.~Villani.
\newblock On the trend to global equilibrium for spatially inhomogeneous
  kinetic systems: the {B}oltzmann equation.
\newblock {\em Invent. Math.}, 159(2):245--316, 2005.

\bibitem{DHWY2017}
Renjun Duan, Feimin Huang, Yong Wang, and Tong Yang.
\newblock Global well-posedness of the {B}oltzmann equation with large
  amplitude initial data.
\newblock {\em Arch. Ration. Mech. Anal.}, 225(1):375--424, 2017.

\bibitem{DHWZ2019}
Renjun Duan, Feimin Huang, Yong Wang, and Zhu Zhang.
\newblock Effects of soft interaction and non-isothermal boundary upon
  long-time dynamics of rarefied gas.
\newblock {\em Arch. Ration. Mech. Anal.}, 234(2):925--1006, 2019.

\bibitem{DKL2020}
Renjun Duan, Gyounghun Ko, and Donghyun Lee.
\newblock The boltzmann equation with large-amplitude initial data and specular
  reflection boundary condition.
\newblock {\em arXiv preprint arXiv:2011.01503}, 2020.

\bibitem{DLSS2021}
Renjun Duan, Shuangqian Liu, Shota Sakamoto, and Robert~M. Strain.
\newblock Global mild solutions of the {L}andau and non-cutoff {B}oltzmann
  equations.
\newblock {\em Comm. Pure Appl. Math.}, 74(5):932--1020, 2021.

\bibitem{DW2019}
Renjun Duan and Yong Wang.
\newblock The {B}oltzmann equation with large-amplitude initial data in bounded
  domains.
\newblock {\em Adv. Math.}, 343:36--109, 2019.

\bibitem{StrainJAMS}
Philip~T. Gressman and Robert~M. Strain.
\newblock Global classical solutions of the {B}oltzmann equation without
  angular cut-off.
\newblock {\em J. Amer. Math. Soc.}, 24(3):771--847, 2011.

\bibitem{GKTT2016}
Y.~Guo, C.~Kim, D.~Tonon, and A.~Trescases.
\newblock B{V}-regularity of the {B}oltzmann equation in non-convex domains.
\newblock {\em Arch. Ration. Mech. Anal.}, 220(3):1045--1093, 2016.

\bibitem{GuoVPB}
Yan Guo.
\newblock The {V}lasov-{P}oisson-{B}oltzmann system near {M}axwellians.
\newblock {\em Comm. Pure Appl. Math.}, 55(9):1104--1135, 2002.

\bibitem{GuoVMB}
Yan Guo.
\newblock The {V}lasov-{M}axwell-{B}oltzmann system near {M}axwellians.
\newblock {\em Invent. Math.}, 153(3):593--630, 2003.

\bibitem{Guo10}
Yan Guo.
\newblock Decay and continuity of the {B}oltzmann equation in bounded domains.
\newblock {\em Arch. Ration. Mech. Anal.}, 197(3):713--809, 2010.

\bibitem{GHJO2020}
Yan Guo, Hyung~Ju Hwang, Jin~Woo Jang, and Zhimeng Ouyang.
\newblock The {L}andau equation with the specular reflection boundary
  condition.
\newblock {\em Arch. Ration. Mech. Anal.}, 236(3):1389--1454, 2020.

\bibitem{GKTT2017}
Yan Guo, Chanwoo Kim, Daniela Tonon, and Ariane Trescases.
\newblock Regularity of the {B}oltzmann equation in convex domains.
\newblock {\em Invent. Math.}, 207(1):115--290, 2017.

\bibitem{CILS}
Cyril Imbert and Luis~Enrique Silvestre.
\newblock Global regularity estimates for the {B}oltzmann equation without
  cut-off.
\newblock {\em J. Amer. Math. Soc.}, 35(3):625--703, 2022.

\bibitem{Kim2011}
Chanwoo Kim.
\newblock Formation and propagation of discontinuity for {B}oltzmann equation
  in non-convex domains.
\newblock {\em Comm. Math. Phys.}, 308(3):641--701, 2011.

\bibitem{KimLee}
Chanwoo Kim and Donghyun Lee.
\newblock The {B}oltzmann equation with specular boundary condition in convex
  domains.
\newblock {\em Comm. Pure Appl. Math.}, 71(3):411--504, 2018.

\bibitem{KimLeeNonconvex}
Chanwoo Kim and Donghyun Lee.
\newblock Decay of the {B}oltzmann equation with the specular boundary
  condition in non-convex cylindrical domains.
\newblock {\em Arch. Ration. Mech. Anal.}, 230(1):49--123, 2018.

\bibitem{KimLee2021}
Chanwoo Kim and Donghyun Lee.
\newblock H\"older regularity of the boltzmann equation past an obstacle.
\newblock {\em arXiv preprint arXiv:2111.07558}, 2021.

\bibitem{KGH2020}
Jinoh Kim, Yan Guo, and Hyung~Ju Hwang.
\newblock An {$L^2$} to {$L^\infty$} framework for the {L}andau equation.
\newblock {\em Peking Math. J.}, 3(2):131--202, 2020.

\bibitem{KLP2022}
Gyounghun Ko, Donghyun Lee, and Kwanghyuk Park.
\newblock The large amplitude solution of the {B}oltzmann equation with soft
  potential.
\newblock {\em J. Differential Equations}, 307:297--347, 2022.

\bibitem{LLWW2022}
Yu-Chu Lin, Ming-Jiea Lyu, Haitao Wang, and Kung-Chien Wu.
\newblock Space-time behavior of the solution to the {B}oltzmann equation with
  soft potentials.
\newblock {\em J. Differential Equations}, 322:180--236, 2022.

\bibitem{LY2017}
Shuangqian Liu and Xiongfeng Yang.
\newblock The initial boundary value problem for the {B}oltzmann equation with
  soft potential.
\newblock {\em Arch. Ration. Mech. Anal.}, 223(1):463--541, 2017.

\bibitem{LY2004}
Tai-Ping Liu and Shih-Hsien Yu.
\newblock The {G}reen's function and large-time behavior of solutions for the
  one-dimensional {B}oltzmann equation.
\newblock {\em Comm. Pure Appl. Math.}, 57(12):1543--1608, 2004.

\bibitem{UY2006}
Seiji Ukai and Tong Yang.
\newblock The {B}oltzmann equation in the space {$L^2\cap L^\infty_\beta$}:
  global and time-periodic solutions.
\newblock {\em Anal. Appl. (Singap.)}, 4(3):263--310, 2006.

\end{thebibliography}

\end{document}